 \tikzstyle{int}=[circle, draw,fill=black,outer sep=0,minimum size=3pt, inner sep=0]
  \tikzstyle{ext}=[circle, draw=black,outer sep=0,inner sep=1pt]
\def\id{{\mbox{1 \hskip -8pt 1}}}
\newcommand{\sgn}{{\mathit s  \mathit g\mathit  n}}
 \newcommand{\lon}{\longrightarrow}
 \newcommand{\bu}{\bullet}
 \newcommand{\rar}{\rightarrow}
 \newcommand{\End}{{\mathsf E\mathsf n \mathsf d}}
\newcommand{\p}{{\partial}}
\newcommand{\Id}{{\mathrm I\mathrm d}}
 \newcommand{\Z}{{\mathbb Z}}
 \newcommand{\bS}{{\mathbb S}}
 \newcommand{\R}{{\mathbb R}}
 \newcommand{\N}{{\mathbb N}}
 \newcommand{\K}{{\mathbb K}}
\newcommand{\GC}{\mathsf{GC}}
\newcommand{\LB}{{\mathcal L}{\mathit i}{\mathit e} {\mathcal B}}
\newcommand{\LoB}{{\mathcal L}{\mathit i}{\mathit e}\hspace{-0.3mm}^\diamond\hspace{-0.4mm}
{\mathcal B}}
\newcommand{\BV}{{\mathcal B}{\mathcal V}}
\newcommand{\Koz}{{\mbox {\scriptsize !`}}}
\newcommand{\alg}[1]{\mathfrak{#1}}
 \newcommand{\ot}{\otimes}
\newcommand{\sC}{{\mathsf C}}
\newcommand{\sG}{{\mathsf G}}
\newcommand{\sP}{{\mathsf P}}
\newcommand{\Lie}{\mathsf{ Lie}}
\newcommand{\Def}{{\mathsf D\mathsf e\mathsf f }}
 \newcommand{\Beq}{\begin{equation}}
 \newcommand{\Eeq}{\end{equation}}
 \newcommand{\Beqr}{\begin{eqnarray}}
 \newcommand{\Eeqr}{\end{eqnarray}}
 \newcommand{\Beqrn}{\begin{eqnarray*}}
 \newcommand{\Eeqrn}{\end{eqnarray*}}
 \newcommand{\Ba}{\begin{array}}
 \newcommand{\Ea}{\end{array}}
 \newcommand{\Bi}{\begin{itemize}}
 \newcommand{\Ei}{\end{itemize}}
 \newcommand{\Bc}{\begin{center}}
 \newcommand{\Ec}{\end{center}}
 \newcommand{\fg}{{\mathfrak g}}
\newcommand{\ft}{{\mathfrak t}}
\newcommand{\fr}{{\mathfrak r}}
 \newcommand{\f}{{\mathcal O}}
 \newcommand{\cA}{{\mathcal A}}
 \newcommand{\cB}{{\mathcal B}}
 \newcommand{\cC}{{\mathcal C}}
 \newcommand{\cE}{{\mathcal E}}
 \newcommand{\cF}{{\mathcal F}}
 \newcommand{\cG}{{\mathcal G}}
 \newcommand{\caL}{{\mathcal L}}
 \newcommand{\cP}{{\mathcal P}}
 \newcommand{\cQ}{{\mathcal Q}}
 \newcommand{\cR}{{\mathcal R}}
 \newcommand{\cV}{{\mathcal V}}
 \newcommand{\ga}{\gamma}
 \newcommand{\Ga}{\Gamma}
 \newcommand{\la}{\lambda}
 \newcommand{\om}{\omega}
 \newcommand{\Hom}{{\mathrm H\mathrm o\mathrm m}}
 \newcommand{\sip}{\smallskip}
 \newcommand{\bip}{\bigskip}
 \newcommand{\mip}{\vspace{2.5mm}}
\newcommand{\hoe}{\mathrm{hoe}}
\newcommand{\Aut}{\mathrm{Aut}}
 \newcommand{\GCor}{\GC^{or}}
 \newcommand{\hGCor}{\widehat{\GC}^{or}}
 \newcommand{\fGCor}{\mathsf{fGC}^{or}}
 \DeclareMathOperator{\Exp}{\mathrm{Exp}}
 \newcommand{\LieBi}{{\caL ie\cB}}
  \newcommand{\hLieBi}{\widehat{\LieBi}}
  \newcommand{\hLoB}{\widehat{\LoB}}
 \newcommand{\coLieBi}{{\caL ie\cB^*}}
 \newcommand{\invcoLieBi}{(\LoB)^*}
 \newcommand{\LieBiP}{\LieBi P}
  \newcommand{\Frob}{{\cF rob}}
  \newcommand{\hFrob}{{\widehat{\Frob}}}
 \newcommand{\invFrob}{\Frob^\diamond}
 \newcommand{\coFrob}{{\cF rob^*}}
 \newcommand{\invcoFrob}{(\cF rob^\diamond)^*}
  \newcommand{\invcoFrobtwo}{(\cF rob^\diamond_2)^*}
 \newcommand{\invLieBi}{\LoB}
 \newcommand{\grt}{\mathfrak{grt}}
 \newcommand{\Der}{\mathrm{Der}}
 \newcommand{\gr}{\mathrm{gr}}
 \newcommand{\vecspan}{\mathrm{span}}
\theoremstyle{plain}
\newtheorem{theorem}{Theorem}[subsection]
\newtheorem{corollary}[theorem]{Corollary}
\newtheorem{lemma}[theorem]{Lemma}
\newtheorem{proposition}[theorem]{Proposition}
\newtheorem{prop-def}[theorem]{Proposition-definition}
\newtheorem{main-theorem}{Main~Theorem}[section]
\newtheorem{section-theorem}{Theorem}[section]
\newtheorem{section-corollary}{Corollary}[section]
\theoremstyle{definition}
\newtheorem{remark}[theorem]{Remark}
\renewcommand{\thesection}{{\bf\arabic{section}}}
\renewcommand{\thesubsection}{{\bf\arabic{section}.\arabic{subsection}}}
\renewcommand{\thesubsubsection}{\bf\arabic{section}.\arabic{subsection}.\arabic{subsubsection}}
\begin{document}

\sloppy

 \newenvironment{proo}{\begin{trivlist} \item{\sc {Proof.}}}
  {\hfill $\square$ \end{trivlist}}

\long\def\symbolfootnote[#1]#2{\begingroup%
\def\thefootnote{\fnsymbol{footnote}}\footnote[#1]{#2}\endgroup}

  \title{The Frobenius properad is Koszul}

\author{Ricardo~Campos}
\address{Ricardo~Campos: Institute of Mathematics, University of Zurich, Zurich, Switzerland}
\email{ricardo.campos@math.uzh.ch}

\author{Sergei~Merkulov}
\address{Sergei~Merkulov:  Department of Mathematics, Stockholm University, Sweden and Mathematics Research Unit, Luxembourg University,  Grand Duchy of Luxembourg (present address) }
\email{sergei.merkulov@uni.lu}

\author{Thomas~Willwacher}
\address{Thomas~Willwacher: Institute of Mathematics, University of Zurich, Zurich, Switzerland}
\email{thomas.willwacher@math.uzh.ch}

 \begin{abstract}
  We show Koszulness of the prop governing involutive Lie bialgebras and also of the props governing non-unital and unital-counital Frobenius algebras, solving a long-standing problem. This gives us
  minimal models for  their deformation complexes, and for deformation complexes of their algebras which are discussed in detail.

  Using an operad of graph complexes we prove, with the help of an earlier result of one of the authors \cite{Wi2}, that there is a highly non-trivial action
  of the  Grothendieck-Teichm\"uller group $GRT_1$ on (completed versions of) the minimal models of the properads governing Lie bialgebras and involutive Lie bialgebras by automorphisms.  As a corollary one obtains a large class of universal deformations of any (involutive) Lie bialgebra and any Frobenius algebra, parameterized by elements of the Grothendieck-Teichm\"uller Lie algebra.

We also prove that, for any given homotopy involutive Lie bialgebra structure in a vector space,
 there is an associated homotopy Batalin-Vilkovisky algebra structure on the associated Chevalley-Eilenberg complex.

\end{abstract}
 \maketitle

{\large
\section{\bf Introduction}
}

The notion of Lie bialgebra was introduced by Drinfeld in \cite{D1}  in the context of the
theory of Yang-Baxter equations. Later this notion played a fundamental role  in his theory
of Hopf algebra deformations of universal enveloping algebras, see the book \cite{ES} and
references cited therein.

 \sip

Many interesting examples of Lie bialgebras automatically satisfy an additional algebraic
condition, the so called {\em involutivity}, or  ``diamond" $\diamondsuit$ constraint.
 A remarkable example of such a Lie bialgebra structure was discovered by Turaev \cite{Tu}
 on the vector space generated by all non-trivial free homotopy classes of curves on an
orientable surface. Chas proved  \cite{Ch} that such a structure is in fact always
involutive. This example was generalized to arbitrary manifolds within  the framework of
{\em string topology}: the equivariant homology
of the free loop space of a compact manifold was shown by Chas and Sullivan \cite{ChSu} to carry the
structure of a graded involutive Lie bialgebra. An involutive Lie bialgebra structure was
also found by Cieliebak and Latschev \cite{CL} in the contact homology of an arbitrary exact symplectic manifold, while Schedler \cite{Sch} introduced a natural involutive Lie bialgebra structure
on the necklace Lie algebra associated to a quiver. It is worth pointing out that
the construction of quantum $A_\infty$-algebras given in \cite{Ba1} (see also \cite{Ha})
stems from the fact that the vector space of cyclic words in elements of a graded vector space $W$ equipped with a
(skew)symmetric pairing admits a canonical involutive Lie bialgebra structure.
Therefore, involutive Lie bialgebras appear in many different areas of modern research.


\mip

In the study of the deformation theory of dg involutive Lie bialgebras
one needs to know a minimal resolution
of the associated properad. Such a minimal resolution is particularly nice and explicit if
the properad happens to be {\em Koszul} \cite{Va}. Koszulness of the prop(erad) of Lie bialgebras $\caL ie\cB$ was established by Markl and Voronov \cite{MaVo} following an idea of Kontsevich \cite{Ko}. The proof made use of a new category of  {\em small props}, which are often called
$\frac{1}{2}$-{\em props} nowadays, and a new technical tool, the {\em path
filtration}\, of a dg free properad. Attempts to settle the question of Koszulness or non-Koszulness of the properad of involutive Lie bialgebras, $\caL
ie\cB^\diamondsuit$, have been made since 2004.
The Koszulness proof of $\caL ie\cB$ does not carry over to $\LoB$ since the additional involutivity relation is not $\frac{1}{2}$-properadic in nature.
Motivated by some computer calculations the authors of
\cite{DCTT} conjectured in 2009 that the properad of involutive Lie bialgebras, $\caL
ie\cB^\diamondsuit$, {\em is}\, Koszul. In Section 2 of this paper we settle this long-standing problem.
There are at least two not very straightforward steps in our solution. First, we extend Kontsevich's exact functor from small props to props by twisting it
with the relative simplicial cohomologies of graphs involved. This step allows us to incorporate operations in arities $(1,1)$, $(1,0)$ and $(0,1)$ into the story, which were strictly prohibited in the Kontsevich construction as they fail exactness of his functor. Second, we reduce
the cohomology computation of some important auxiliary dg properad to a computation checking Koszulness of some ordinary quadratic algebra, which might be of independent interest.

\bip

By Koszul duality theory of properads \cite{Va}, our result implies immediately that the properad of non-unital Frobenius algebras is Koszul. By the curved Koszul duality theory \cite{HM}, the latter result implies, after some extra work, the Koszulness  of the prop of unital-counital Frobenius algebras. These Frobenius properads also admit many applications in various areas of mathematics and mathematical physics, e.~g.\ in representation theory, algebraic geometry, combinatorics, and recently, in 2-dimensional topological quantum field theory.

\bip

Another main result of this paper is a construction of a highly non-trivial action of the
Grothendieck-Teichm\"uller group $GRT_1$ {\cite{D2}} on minimal models of the operads of involutive Lie bialgebras/Frobenius algebras, and hence on the sets of homotopy involutive Lie bialgebra/Frobenius structures
on an arbitrary dg vector space $\fg$. The Grothendieck-Teichm\"uller group $GRT_1$ has recently been shown to include a pro-unipotent subgrop freely generated by an infinite number of generators \cite{Br}, hence our construction provides a rich class of universal symmetries of the aforementioned objects.

 \mip

 In \S 5 of this paper we establish a surprising link between the theory of involutive Lie bialgebras and the operad of framed little disks whose cohomology was proven by Getzler \cite{Ge} to be the operad $\cB\cV$ of Batalin-Vilkovisky algebras, an important structure
 in mathematical physics and algebra.   We prove that, for any homotopy involutive Lie bialgebra structure on a complex $\fg$,
 there is an associated  $\cB\cV^{com}_\infty$ algebra  (and hence
  a $\cB\cV_\infty$ algebra) structure on the associated Chevalley-Eilenberg complex
  $\odot^\bu (\fg[-1])$. The operad $\cB\cV_\infty^{com}$  (introduced in \cite{Kr}) occurs naturally not only in our story, but also in the construction
  of the Koszul resolution of the operad $\cB\cV$  in \cite{GTV}, and in its recent applications  to (Poisson) geometry. We furthermore prove that the cohomology
 of the dg operad $\cB\cV^{com}_\infty$  is isomorphic to the operad $\cB\cV$.

\mip

\subsection*{Acknowledgements}
We are grateful to B. Vallette for helpful discussions.
R.C. and T.W. acknowledge partial support by the Swiss National Science Foundaton, grant 200021\_150012.
Research of T.W. was supported in part by the NCCR SwissMAP of the Swiss National Science Foundation.
S.M.\ is grateful to the Max Planck Institute for Mathematics in Bonn for hospitality and excellent working conditions.

\mip

{\bf Some notation}. In this paper $\mathbb K$ denotes a field of characteristic $0$. The set $\{1,2, \ldots, n\}$ is abbreviated to $[n]$. Its group of automorphisms is
denoted by $\bS_n$. The sign representation of $\bS_n$ is denoted by $\sgn_n$. The
cardinality of a finite set
$A$ is denoted by $\# A$. If $V=\oplus_{i\in \Z} V^i$ is a graded vector space, then
$V[k]$ stands for the graded vector space with $V[k]^i:=V^{i+k}$. For $v\in V^i$ we set $|v|:=i$.
The phrase \emph{differential graded} is abbreviated by dg.
The $n$-fold symmetric product of a (dg) vector space $V$ is denoted  by $\odot^n V$, the full symmetric product space by  $\odot^\bullet V$ or just $\odot V$ and the completed symmetric product by $\hat \odot^\bullet V$.
For a finite group G acting on a vector space $V$, we
denote via $V^G$ the space of invariants with respect to the action of G, and by $V_G$
the space of coinvariants $V_G = V/\{gv- v| v\in V, g\in G\}$. We always work over a field $\K$ of characteristic zero so that, for finite $G$, we have a canonical isomorphism $V_G\cong V^G$.

We  use freely the language of operads and properads and their Koszul duality theory. For a background on operads we refer to the textbook \cite{LV}, while the Koszul duality theory of properads has been developed in \cite{Va}.
For a properad $\cP$ we denote by $\cP\{k\}$ the unique properad which has the following property:
for any graded vector space $V$ there is a one-to-one correspondence between representations of
$\cP\{k\}$ in $V$ and representations of
$\cP$ in $V[-k]$; in particular, $\cE nd_V\{k\}=\cE nd_{V[k]}$.
For $\cC$ a coaugmented co(pr)operad, we will denote by $\Omega(\cC)$ its cobar construction.
Concretely, $\Omega(\cC)=\cF ree\langle\bar \cC[1]\rangle$ as a graded (pr)operad where $\bar \cC$ the cokernel of the coaugmetation and $\cF ree\langle\dots\rangle$ denotes the free (pr)operad generated by an $\bS$-(bi)module.
We will often complexes of derivations of (pr)operads and deformation complexes of (pr)operad maps.
The notation $\Der(\cP)$ shall be used for the complex of derivations of $\cP$, considered as non-unital properad.
For a map of properads $f: \Omega(\cC){\to} \cP$, we will denote by
\[
\Def( \Omega(\cC)\stackrel{f}{\to} \cP )\cong \prod_{n,m} \Hom_{\bS_n\times \bS_m}(\cC(n,m), \cP(n,m))
\]
the associated convolution complex.

\bip

{\large
\section{\bf Koszulness of the prop of involutive Lie bialgebras}
}

\subsection{Involutive Lie bialgebras} A  {\em Lie bialgebra}\, is a graded vector space
$\fg$,
equipped with degree zero linear maps,
$$
\vartriangle: \fg\rightarrow \fg\wedge \fg \ \ \ \mbox{and}\ \ \  [\ , \ ]: \wedge^2 \fg
\rightarrow \fg,
$$
such that
\Bi
\item the data $(\fg,\vartriangle)$ is a Lie coalgebra;
\item the data $(\fg, [\ ,\ ])$ is a Lie algebra;
\item the compatibility condition,
$$
\vartriangle [a, b] = \sum a_1\otimes [a_2, b] +  [a,
b_1]\otimes b_2 + (-1)^{|a||b|}( [b, a_1]\otimes a_2
+ b_1\otimes [b_2, a]),
$$
holds for any $a,b\in \fg$. Here $\vartriangle a=:\sum a_1\otimes a_2$, $\vartriangle
b=:\sum
b_1\otimes b_2$.
\Ei
 A Lie bialgebra $(\fg, [\ ,\ ], \vartriangle)$ is called {\em
involutive}\, if the composition map
$$
\Ba{ccccc}
V & \stackrel{\vartriangle}{\lon} & \Lambda^2V & \stackrel{[\ ,\ ]}{\lon} & V\\
a & \lon &    \sum a'\otimes a'' &\lon & \sum [a',a'']
\Ea
$$
vanishes. A dg (involutive) Lie bialgebra is a
complex $(\fg,d)$ equipped with the structure of an (involutive) Lie bialgebra such that
the maps $[\ ,\ ]$ and $\Delta$ are morphisms of complexes.

\subsubsection{\bf An example}\label{2: subsection on cyclic words}
Let $W$ be a finite dimensional graded vector space over a field $\K$ of
characteristic zero equipped with a degree $0$
skewsymmetric pairing,
$$
\Ba{rccl}
\om: & W\ot W & \lon & \K \\
  &  w_1\ot w_2 & \lon & \om(w_1,w_2)=-(-1)^{|w_1||w_2|}\om(w_2,w_1).
\Ea
$$
 Then the associated
vector space of ``cyclic words in $W$",
$$
Cyc^\bu(W):=\bigoplus_{n\geq 0} (W^{\ot n})_{\Z_n},
$$
admits an involutive Lie bialgebra structure  given by (see, e.g., \cite{Go} and references cited there)
$$
[(w_1\ot...\ot w_n)_{\Z_n}, (v_1\ot ...\ot v_m)_{\Z_n}]:=\hspace{-2mm} \sum_{i\in[n]\atop
j\in [m]}  \pm
\om(w_i,w_j) (w_1\ot ...\ot  w_{i-1}\ot v_{j+1}\ot ... \ot v_m\ot v_1\ot ... \ot v_{j-1}\ot w_{i+1}\ot\ldots\ot w_n)_{\Z_{n+m-2}}
$$
and
\Beqrn
\vartriangle (w_1\ot\ldots\ot w_n)_{\Z_n}:&=&\sum_{i\neq j}
\pm\om(w_i,w_j)(w_{i+1}\ot ...\ot w_{j-1})_{\Z_{j-i-1}}\bigotimes
(w_{j+1}\ot ...\ot w_{i-1})_{\Z_{n-j+i-1}}    \\
\Eeqrn

This example has many applications in various  areas of modern research (see, e.g., \cite{Ba1,Ch, CL, Ha}).


\mip

\subsection{Properad of involutive Lie bialgebras}
By definition, the properad, $\LoB$, of involutive Lie bialgebras is a quadratic properad
given as the quotient,
$$
\LoB:=\cF ree\langle E\rangle/<\cR>,
$$
of the free properad generated by an  $\bS$-bimodule $E=\{E(m,n)\}_{m,n\geq 1}$ with
 all $E(m,n)=0$ except
$$
E(2,1):=\id_1\ot \sgn_2=\mbox{span}\left\langle
\begin{xy}
 <0mm,-0.55mm>*{};<0mm,-2.5mm>*{}**@{-},
 <0.5mm,0.5mm>*{};<2.2mm,2.2mm>*{}**@{-},
 <-0.48mm,0.48mm>*{};<-2.2mm,2.2mm>*{}**@{-},
 <0mm,0mm>*{\circ};<0mm,0mm>*{}**@{},
 <0mm,-0.55mm>*{};<0mm,-3.8mm>*{_1}**@{},
 <0.5mm,0.5mm>*{};<2.7mm,2.8mm>*{^2}**@{},
 <-0.48mm,0.48mm>*{};<-2.7mm,2.8mm>*{^1}**@{},
 \end{xy}
=-
\begin{xy}
 <0mm,-0.55mm>*{};<0mm,-2.5mm>*{}**@{-},
 <0.5mm,0.5mm>*{};<2.2mm,2.2mm>*{}**@{-},
 <-0.48mm,0.48mm>*{};<-2.2mm,2.2mm>*{}**@{-},
 <0mm,0mm>*{\circ};<0mm,0mm>*{}**@{},
 <0mm,-0.55mm>*{};<0mm,-3.8mm>*{_1}**@{},
 <0.5mm,0.5mm>*{};<2.7mm,2.8mm>*{^1}**@{},
 <-0.48mm,0.48mm>*{};<-2.7mm,2.8mm>*{^2}**@{},
 \end{xy}
   \right\rangle
$$
$$
E(1,2):= \sgn_2\ot \id_1=\mbox{span}\left\langle
\begin{xy}
 <0mm,0.66mm>*{};<0mm,3mm>*{}**@{-},
 <0.39mm,-0.39mm>*{};<2.2mm,-2.2mm>*{}**@{-},
 <-0.35mm,-0.35mm>*{};<-2.2mm,-2.2mm>*{}**@{-},
 <0mm,0mm>*{\circ};<0mm,0mm>*{}**@{},
   <0mm,0.66mm>*{};<0mm,3.4mm>*{^1}**@{},
   <0.39mm,-0.39mm>*{};<2.9mm,-4mm>*{^2}**@{},
   <-0.35mm,-0.35mm>*{};<-2.8mm,-4mm>*{^1}**@{},
\end{xy}=-
\begin{xy}
 <0mm,0.66mm>*{};<0mm,3mm>*{}**@{-},
 <0.39mm,-0.39mm>*{};<2.2mm,-2.2mm>*{}**@{-},
 <-0.35mm,-0.35mm>*{};<-2.2mm,-2.2mm>*{}**@{-},
 <0mm,0mm>*{\circ};<0mm,0mm>*{}**@{},
   <0mm,0.66mm>*{};<0mm,3.4mm>*{^1}**@{},
   <0.39mm,-0.39mm>*{};<2.9mm,-4mm>*{^1}**@{},
   <-0.35mm,-0.35mm>*{};<-2.8mm,-4mm>*{^2}**@{},
\end{xy}
\right\rangle
$$
modulo the ideal generated by the following relations
\Beq\label{R for LieB}
\cR:\left\{
\Ba{c}
\begin{xy}
 <0mm,0mm>*{\circ};<0mm,0mm>*{}**@{},
 <0mm,-0.49mm>*{};<0mm,-3.0mm>*{}**@{-},
 <0.49mm,0.49mm>*{};<1.9mm,1.9mm>*{}**@{-},
 <-0.5mm,0.5mm>*{};<-1.9mm,1.9mm>*{}**@{-},
 <-2.3mm,2.3mm>*{\circ};<-2.3mm,2.3mm>*{}**@{},
 <-1.8mm,2.8mm>*{};<0mm,4.9mm>*{}**@{-},
 <-2.8mm,2.9mm>*{};<-4.6mm,4.9mm>*{}**@{-},
   <0.49mm,0.49mm>*{};<2.7mm,2.3mm>*{^3}**@{},
   <-1.8mm,2.8mm>*{};<0.4mm,5.3mm>*{^2}**@{},
   <-2.8mm,2.9mm>*{};<-5.1mm,5.3mm>*{^1}**@{},
 \end{xy}
\ + \
\begin{xy}
 <0mm,0mm>*{\circ};<0mm,0mm>*{}**@{},
 <0mm,-0.49mm>*{};<0mm,-3.0mm>*{}**@{-},
 <0.49mm,0.49mm>*{};<1.9mm,1.9mm>*{}**@{-},
 <-0.5mm,0.5mm>*{};<-1.9mm,1.9mm>*{}**@{-},
 <-2.3mm,2.3mm>*{\circ};<-2.3mm,2.3mm>*{}**@{},
 <-1.8mm,2.8mm>*{};<0mm,4.9mm>*{}**@{-},
 <-2.8mm,2.9mm>*{};<-4.6mm,4.9mm>*{}**@{-},
   <0.49mm,0.49mm>*{};<2.7mm,2.3mm>*{^2}**@{},
   <-1.8mm,2.8mm>*{};<0.4mm,5.3mm>*{^1}**@{},
   <-2.8mm,2.9mm>*{};<-5.1mm,5.3mm>*{^3}**@{},
 \end{xy}
\ + \
\begin{xy}
 <0mm,0mm>*{\circ};<0mm,0mm>*{}**@{},
 <0mm,-0.49mm>*{};<0mm,-3.0mm>*{}**@{-},
 <0.49mm,0.49mm>*{};<1.9mm,1.9mm>*{}**@{-},
 <-0.5mm,0.5mm>*{};<-1.9mm,1.9mm>*{}**@{-},
 <-2.3mm,2.3mm>*{\circ};<-2.3mm,2.3mm>*{}**@{},
 <-1.8mm,2.8mm>*{};<0mm,4.9mm>*{}**@{-},
 <-2.8mm,2.9mm>*{};<-4.6mm,4.9mm>*{}**@{-},
   <0.49mm,0.49mm>*{};<2.7mm,2.3mm>*{^1}**@{},
   <-1.8mm,2.8mm>*{};<0.4mm,5.3mm>*{^3}**@{},
   <-2.8mm,2.9mm>*{};<-5.1mm,5.3mm>*{^2}**@{},
 \end{xy}\ =\ 0,
 \vspace{3mm}\\
 \begin{xy}
 <0mm,0mm>*{\circ};<0mm,0mm>*{}**@{},
 <0mm,0.69mm>*{};<0mm,3.0mm>*{}**@{-},
 <0.39mm,-0.39mm>*{};<2.4mm,-2.4mm>*{}**@{-},
 <-0.35mm,-0.35mm>*{};<-1.9mm,-1.9mm>*{}**@{-},
 <-2.4mm,-2.4mm>*{\circ};<-2.4mm,-2.4mm>*{}**@{},
 <-2.0mm,-2.8mm>*{};<0mm,-4.9mm>*{}**@{-},
 <-2.8mm,-2.9mm>*{};<-4.7mm,-4.9mm>*{}**@{-},
    <0.39mm,-0.39mm>*{};<3.3mm,-4.0mm>*{^3}**@{},
    <-2.0mm,-2.8mm>*{};<0.5mm,-6.7mm>*{^2}**@{},
    <-2.8mm,-2.9mm>*{};<-5.2mm,-6.7mm>*{^1}**@{},
 \end{xy}
\ + \
 \begin{xy}
 <0mm,0mm>*{\circ};<0mm,0mm>*{}**@{},
 <0mm,0.69mm>*{};<0mm,3.0mm>*{}**@{-},
 <0.39mm,-0.39mm>*{};<2.4mm,-2.4mm>*{}**@{-},
 <-0.35mm,-0.35mm>*{};<-1.9mm,-1.9mm>*{}**@{-},
 <-2.4mm,-2.4mm>*{\circ};<-2.4mm,-2.4mm>*{}**@{},
 <-2.0mm,-2.8mm>*{};<0mm,-4.9mm>*{}**@{-},
 <-2.8mm,-2.9mm>*{};<-4.7mm,-4.9mm>*{}**@{-},
    <0.39mm,-0.39mm>*{};<3.3mm,-4.0mm>*{^2}**@{},
    <-2.0mm,-2.8mm>*{};<0.5mm,-6.7mm>*{^1}**@{},
    <-2.8mm,-2.9mm>*{};<-5.2mm,-6.7mm>*{^3}**@{},
 \end{xy}
\ + \
 \begin{xy}
 <0mm,0mm>*{\circ};<0mm,0mm>*{}**@{},
 <0mm,0.69mm>*{};<0mm,3.0mm>*{}**@{-},
 <0.39mm,-0.39mm>*{};<2.4mm,-2.4mm>*{}**@{-},
 <-0.35mm,-0.35mm>*{};<-1.9mm,-1.9mm>*{}**@{-},
 <-2.4mm,-2.4mm>*{\circ};<-2.4mm,-2.4mm>*{}**@{},
 <-2.0mm,-2.8mm>*{};<0mm,-4.9mm>*{}**@{-},
 <-2.8mm,-2.9mm>*{};<-4.7mm,-4.9mm>*{}**@{-},
    <0.39mm,-0.39mm>*{};<3.3mm,-4.0mm>*{^1}**@{},
    <-2.0mm,-2.8mm>*{};<0.5mm,-6.7mm>*{^3}**@{},
    <-2.8mm,-2.9mm>*{};<-5.2mm,-6.7mm>*{^2}**@{},
 \end{xy}\ =\ 0,
 \\
 \begin{xy}
 <0mm,2.47mm>*{};<0mm,0.12mm>*{}**@{-},
 <0.5mm,3.5mm>*{};<2.2mm,5.2mm>*{}**@{-},
 <-0.48mm,3.48mm>*{};<-2.2mm,5.2mm>*{}**@{-},
 <0mm,3mm>*{\circ};<0mm,3mm>*{}**@{},
  <0mm,-0.8mm>*{\circ};<0mm,-0.8mm>*{}**@{},
<-0.39mm,-1.2mm>*{};<-2.2mm,-3.5mm>*{}**@{-},
 <0.39mm,-1.2mm>*{};<2.2mm,-3.5mm>*{}**@{-},
     <0.5mm,3.5mm>*{};<2.8mm,5.7mm>*{^2}**@{},
     <-0.48mm,3.48mm>*{};<-2.8mm,5.7mm>*{^1}**@{},
   <0mm,-0.8mm>*{};<-2.7mm,-5.2mm>*{^1}**@{},
   <0mm,-0.8mm>*{};<2.7mm,-5.2mm>*{^2}**@{},
\end{xy}
\  - \
\begin{xy}
 <0mm,-1.3mm>*{};<0mm,-3.5mm>*{}**@{-},
 <0.38mm,-0.2mm>*{};<2.0mm,2.0mm>*{}**@{-},
 <-0.38mm,-0.2mm>*{};<-2.2mm,2.2mm>*{}**@{-},
<0mm,-0.8mm>*{\circ};<0mm,0.8mm>*{}**@{},
 <2.4mm,2.4mm>*{\circ};<2.4mm,2.4mm>*{}**@{},
 <2.77mm,2.0mm>*{};<4.4mm,-0.8mm>*{}**@{-},
 <2.4mm,3mm>*{};<2.4mm,5.2mm>*{}**@{-},
     <0mm,-1.3mm>*{};<0mm,-5.3mm>*{^1}**@{},
     <2.5mm,2.3mm>*{};<5.1mm,-2.6mm>*{^2}**@{},
    <2.4mm,2.5mm>*{};<2.4mm,5.7mm>*{^2}**@{},
    <-0.38mm,-0.2mm>*{};<-2.8mm,2.5mm>*{^1}**@{},
    \end{xy}
\  +\
\begin{xy}
 <0mm,-1.3mm>*{};<0mm,-3.5mm>*{}**@{-},
 <0.38mm,-0.2mm>*{};<2.0mm,2.0mm>*{}**@{-},
 <-0.38mm,-0.2mm>*{};<-2.2mm,2.2mm>*{}**@{-},
<0mm,-0.8mm>*{\circ};<0mm,0.8mm>*{}**@{},
 <2.4mm,2.4mm>*{\circ};<2.4mm,2.4mm>*{}**@{},
 <2.77mm,2.0mm>*{};<4.4mm,-0.8mm>*{}**@{-},
 <2.4mm,3mm>*{};<2.4mm,5.2mm>*{}**@{-},
     <0mm,-1.3mm>*{};<0mm,-5.3mm>*{^2}**@{},
     <2.5mm,2.3mm>*{};<5.1mm,-2.6mm>*{^1}**@{},
    <2.4mm,2.5mm>*{};<2.4mm,5.7mm>*{^2}**@{},
    <-0.38mm,-0.2mm>*{};<-2.8mm,2.5mm>*{^1}**@{},
    \end{xy}
\  - \
\begin{xy}
 <0mm,-1.3mm>*{};<0mm,-3.5mm>*{}**@{-},
 <0.38mm,-0.2mm>*{};<2.0mm,2.0mm>*{}**@{-},
 <-0.38mm,-0.2mm>*{};<-2.2mm,2.2mm>*{}**@{-},
<0mm,-0.8mm>*{\circ};<0mm,0.8mm>*{}**@{},
 <2.4mm,2.4mm>*{\circ};<2.4mm,2.4mm>*{}**@{},
 <2.77mm,2.0mm>*{};<4.4mm,-0.8mm>*{}**@{-},
 <2.4mm,3mm>*{};<2.4mm,5.2mm>*{}**@{-},
     <0mm,-1.3mm>*{};<0mm,-5.3mm>*{^2}**@{},
     <2.5mm,2.3mm>*{};<5.1mm,-2.6mm>*{^1}**@{},
    <2.4mm,2.5mm>*{};<2.4mm,5.7mm>*{^1}**@{},
    <-0.38mm,-0.2mm>*{};<-2.8mm,2.5mm>*{^2}**@{},
    \end{xy}
\ + \
\begin{xy}
 <0mm,-1.3mm>*{};<0mm,-3.5mm>*{}**@{-},
 <0.38mm,-0.2mm>*{};<2.0mm,2.0mm>*{}**@{-},
 <-0.38mm,-0.2mm>*{};<-2.2mm,2.2mm>*{}**@{-},
<0mm,-0.8mm>*{\circ};<0mm,0.8mm>*{}**@{},
 <2.4mm,2.4mm>*{\circ};<2.4mm,2.4mm>*{}**@{},
 <2.77mm,2.0mm>*{};<4.4mm,-0.8mm>*{}**@{-},
 <2.4mm,3mm>*{};<2.4mm,5.2mm>*{}**@{-},
     <0mm,-1.3mm>*{};<0mm,-5.3mm>*{^1}**@{},
     <2.5mm,2.3mm>*{};<5.1mm,-2.6mm>*{^2}**@{},
    <2.4mm,2.5mm>*{};<2.4mm,5.7mm>*{^1}**@{},
    <-0.38mm,-0.2mm>*{};<-2.8mm,2.5mm>*{^2}**@{},
    \end{xy}\ =\ 0,\\
\Ba{c}
\xy
 (0,0)*{\circ}="a",
(0,6)*{\circ}="b",
(3,3)*{}="c",
(-3,3)*{}="d",
 (0,9)*{}="b'",
(0,-3)*{}="a'",
\ar@{-} "a";"c" <0pt>
\ar @{-} "a";"d" <0pt>
\ar @{-} "a";"a'" <0pt>
\ar @{-} "b";"c" <0pt>
\ar @{-} "b";"d" <0pt>
\ar @{-} "b";"b'" <0pt>
\endxy
\Ea\ =\ 0.
\Ea
\right.
\Eeq

The properad governing Lie bialgebras $\LieBi$ is defined in the same manner, except that the last relation of \eqref{R for LieB} is omitted.

\sip

Recall \cite{Va} that any quadratic properad $\cP$ has an associated
Koszul dual coproperad $\cP^\Koz$ such that its cobar construction,
$$
\Omega(\cP^\Koz)=\cF ree\langle\bar\cP^\Koz[1]\rangle,
$$
comes equipped with a differential $d$ and with a canonical surjective map of dg
properads,
$$
(\Omega(\cP^\Koz), d) \lon (\cP, 0).
$$
This map always induces an isomorphism in cohomology in degree 0. If, additionally, the map is a quasi-isomorphism, then the properad $\cP$ is called {\em Koszul}. In
this
case the cobar construction $\Omega(\cP^\Koz)$ gives us a minimal resolution of $\cP$
and is denoted by $\cP_\infty$. It is well known, for example, that the properad governing Lie bialgebras $\LieBi$ is Koszul \cite{MaVo}.

\sip

We shall study below the Koszul dual properad $\LoB^\Koz$, its
cobar construction $\Omega(\LoB^\Koz)$ and prove that the natural surjection
 $\Omega(\LoB^\Koz)\rar \LoB$ is a quasi-isomorphism. Anticipating this conclusion, we
 often use the symbol $\LoB_\infty$ as a shorthand for  $\Omega(\LoB^\Koz)$.

\subsection{An explicit description of the dg properad $\LoB_\infty$}\label{sec:explicit Koszul dual of Lob}
The Koszul dual of $\LoB$ is a coproperad $\LoB^\Koz$ whose linear dual, $(\LoB^\Koz)^*$,
 is the properad generated by  degree $1$ corollas,
$$
 \begin{xy}
 <0mm,-0.55mm>*{};<0mm,-2.5mm>*{}**@{-},
 <0.5mm,0.5mm>*{};<2.2mm,2.2mm>*{}**@{-},
 <-0.48mm,0.48mm>*{};<-2.2mm,2.2mm>*{}**@{-},
 <0mm,0mm>*{\bu};<0mm,0mm>*{}**@{},
 <0mm,-0.55mm>*{};<0mm,-3.8mm>*{_1}**@{},
 <0.5mm,0.5mm>*{};<2.7mm,2.8mm>*{^2}**@{},
 <-0.48mm,0.48mm>*{};<-2.7mm,2.8mm>*{^1}**@{},
 \end{xy}
=-
\begin{xy}
 <0mm,-0.55mm>*{};<0mm,-2.5mm>*{}**@{-},
 <0.5mm,0.5mm>*{};<2.2mm,2.2mm>*{}**@{-},
 <-0.48mm,0.48mm>*{};<-2.2mm,2.2mm>*{}**@{-},
 <0mm,0mm>*{\bu};<0mm,0mm>*{}**@{},
 <0mm,-0.55mm>*{};<0mm,-3.8mm>*{_1}**@{},
 <0.5mm,0.5mm>*{};<2.7mm,2.8mm>*{^1}**@{},
 <-0.48mm,0.48mm>*{};<-2.7mm,2.8mm>*{^2}**@{},
 \end{xy}\ \ \ , \ \ \
 \begin{xy}
 <0mm,0.66mm>*{};<0mm,3mm>*{}**@{-},
 <0.39mm,-0.39mm>*{};<2.2mm,-2.2mm>*{}**@{-},
 <-0.35mm,-0.35mm>*{};<-2.2mm,-2.2mm>*{}**@{-},
 <0mm,0mm>*{\bu};<0mm,0mm>*{}**@{},
   <0mm,0.66mm>*{};<0mm,3.4mm>*{^1}**@{},
   <0.39mm,-0.39mm>*{};<2.9mm,-4mm>*{^2}**@{},
   <-0.35mm,-0.35mm>*{};<-2.8mm,-4mm>*{^1}**@{},
\end{xy}=-
\begin{xy}
 <0mm,0.66mm>*{};<0mm,3mm>*{}**@{-},
 <0.39mm,-0.39mm>*{};<2.2mm,-2.2mm>*{}**@{-},
 <-0.35mm,-0.35mm>*{};<-2.2mm,-2.2mm>*{}**@{-},
 <0mm,0mm>*{\bu};<0mm,0mm>*{}**@{},
   <0mm,0.66mm>*{};<0mm,3.4mm>*{^1}**@{},
   <0.39mm,-0.39mm>*{};<2.9mm,-4mm>*{^1}**@{},
   <-0.35mm,-0.35mm>*{};<-2.8mm,-4mm>*{^2}**@{},
\end{xy}
$$
with the following relations,
$$
\Ba{c}
\begin{xy}
 <0mm,0mm>*{\bu};<0mm,0mm>*{}**@{},
 <0mm,-0.49mm>*{};<0mm,-3.0mm>*{}**@{-},
 <0.49mm,0.49mm>*{};<1.9mm,1.9mm>*{}**@{-},
 <-0.5mm,0.5mm>*{};<-1.9mm,1.9mm>*{}**@{-},
 <-2.3mm,2.3mm>*{\bu};<-2.3mm,2.3mm>*{}**@{},
 <-1.8mm,2.8mm>*{};<0mm,4.9mm>*{}**@{-},
 <-2.8mm,2.9mm>*{};<-4.6mm,4.9mm>*{}**@{-},
   <0.49mm,0.49mm>*{};<2.7mm,2.3mm>*{^3}**@{},
   <-1.8mm,2.8mm>*{};<0.4mm,5.3mm>*{^2}**@{},
   <-2.8mm,2.9mm>*{};<-5.1mm,5.3mm>*{^1}**@{},
 \end{xy}\Ea
\ =- \
\Ba{c}
\begin{xy}
 <0mm,0mm>*{\bu};<0mm,0mm>*{}**@{},
 <0mm,-0.49mm>*{};<0mm,-3.0mm>*{}**@{-},
 <0.49mm,0.49mm>*{};<1.9mm,1.9mm>*{}**@{-},
 <-0.5mm,0.5mm>*{};<-1.9mm,1.9mm>*{}**@{-},
 <2.3mm,2.3mm>*{\bu};<-2.3mm,2.3mm>*{}**@{},
 <1.8mm,2.8mm>*{};<0mm,4.9mm>*{}**@{-},
 <2.8mm,2.9mm>*{};<4.6mm,4.9mm>*{}**@{-},
   <0.49mm,0.49mm>*{};<-2.7mm,2.3mm>*{^1}**@{},
   <-1.8mm,2.8mm>*{};<0mm,5.3mm>*{^2}**@{},
   <-2.8mm,2.9mm>*{};<5.1mm,5.3mm>*{^3}**@{},
 \end{xy}\Ea, \ \ \ \ \
 \Ba{c}\begin{xy}
 <0mm,0mm>*{\bu};<0mm,0mm>*{}**@{},
 <0mm,0.69mm>*{};<0mm,3.0mm>*{}**@{-},
 <0.39mm,-0.39mm>*{};<2.4mm,-2.4mm>*{}**@{-},
 <-0.35mm,-0.35mm>*{};<-1.9mm,-1.9mm>*{}**@{-},
 <-2.4mm,-2.4mm>*{\bu};<-2.4mm,-2.4mm>*{}**@{},
 <-2.0mm,-2.8mm>*{};<0mm,-4.9mm>*{}**@{-},
 <-2.8mm,-2.9mm>*{};<-4.7mm,-4.9mm>*{}**@{-},
    <0.39mm,-0.39mm>*{};<3.3mm,-4.0mm>*{^3}**@{},
    <-2.0mm,-2.8mm>*{};<0.5mm,-6.7mm>*{^2}**@{},
    <-2.8mm,-2.9mm>*{};<-5.2mm,-6.7mm>*{^1}**@{},
 \end{xy}\Ea
\ =- \
 \Ba{c}\begin{xy}
 <0mm,0mm>*{\bu};<0mm,0mm>*{}**@{},
 <0mm,0.69mm>*{};<0mm,3.0mm>*{}**@{-},
 <0.39mm,-0.39mm>*{};<2.4mm,-2.4mm>*{}**@{-},
 <-0.35mm,-0.35mm>*{};<-1.9mm,-1.9mm>*{}**@{-},
 <2.4mm,-2.4mm>*{\bu};<-2.4mm,-2.4mm>*{}**@{},
 <2.0mm,-2.8mm>*{};<0mm,-4.9mm>*{}**@{-},
 <2.8mm,-2.9mm>*{};<4.7mm,-4.9mm>*{}**@{-},
    <0.39mm,-0.39mm>*{};<-3mm,-4.0mm>*{^1}**@{},
    <-2.0mm,-2.8mm>*{};<0mm,-6.7mm>*{^2}**@{},
    <-2.8mm,-2.9mm>*{};<5.2mm,-6.7mm>*{^3}**@{},
 \end{xy}\Ea,\ \ \ \ \ \
 \begin{xy}
 <0mm,2.47mm>*{};<0mm,0.12mm>*{}**@{-},
 <0.5mm,3.5mm>*{};<2.2mm,5.2mm>*{}**@{-},
 <-0.48mm,3.48mm>*{};<-2.2mm,5.2mm>*{}**@{-},
 <0mm,3mm>*{\bu};<0mm,3mm>*{}**@{},
  <0mm,-0.8mm>*{\bu};<0mm,-0.8mm>*{}**@{},
<-0.39mm,-1.2mm>*{};<-2.2mm,-3.5mm>*{}**@{-},
 <0.39mm,-1.2mm>*{};<2.2mm,-3.5mm>*{}**@{-},
     <0.5mm,3.5mm>*{};<2.8mm,5.7mm>*{^2}**@{},
     <-0.48mm,3.48mm>*{};<-2.8mm,5.7mm>*{^1}**@{},
   <0mm,-0.8mm>*{};<-2.7mm,-5.2mm>*{^1}**@{},
   <0mm,-0.8mm>*{};<2.7mm,-5.2mm>*{^2}**@{},
\end{xy}
\  =- \
\begin{xy}
 <0mm,-1.3mm>*{};<0mm,-3.5mm>*{}**@{-},
 <0.38mm,-0.2mm>*{};<2.0mm,2.0mm>*{}**@{-},
 <-0.38mm,-0.2mm>*{};<-2.2mm,2.2mm>*{}**@{-},
<0mm,-0.8mm>*{\bu};<0mm,0.8mm>*{}**@{},
 <2.4mm,2.4mm>*{\bu};<2.4mm,2.4mm>*{}**@{},
 <2.77mm,2.0mm>*{};<4.4mm,-0.8mm>*{}**@{-},
 <2.4mm,3mm>*{};<2.4mm,5.2mm>*{}**@{-},
     <0mm,-1.3mm>*{};<0mm,-5.3mm>*{^1}**@{},
     <2.5mm,2.3mm>*{};<5.1mm,-2.6mm>*{^2}**@{},
    <2.4mm,2.5mm>*{};<2.4mm,5.7mm>*{^2}**@{},
    <-0.38mm,-0.2mm>*{};<-2.8mm,2.5mm>*{^1}**@{},
    \end{xy}.
$$
Hence the following graphs
\Beq\label{2: basis of inv Frob}
\resizebox{10mm}{!}{
\xy
(0,-6)*{};
(0,0)*+{a}*\cir{}
**\dir{-};
(0,6)*{};
(0,0)*+{a}*\cir{}
**\dir{-};
(0,6)*-{\bu}="1",
(-3,9)*-{\bu}="2",
(3,9)*{}="2'",
(-6,12)*{}="3",
(-0,12)*{}="3'",
(-6.7,12.7)*{\cdot};
(-7.7,13.7)*{\cdot};
(-9,15)*-{\bu}="4",
(-12,18)*{}="4'",
(-6,18)*{}="4''",
(0,-6)*-{\bu}="-1",
(-3,-9)*-{\bu}="-2",
(3,-9)*{}="-2'",
(-6,-12)*{}="-3",
(-0,-12)*{}="-3'",
(-6.7,-12.7)*{\cdot};
(-7.7,-13.7)*{\cdot};
(-9,-15)*-{\bu}="-4",
(-12,-18)*{}="-4'",
(-6,-18)*{}="-4''",
\ar @{-} "1";"2" <0pt>
\ar @{-} "1";"2'" <0pt>
\ar @{-} "2";"3" <0pt>
\ar @{-} "2";"3'" <0pt>
\ar @{-} "4";"4'" <0pt>
\ar @{-} "4";"4''" <0pt>
\ar @{-} "-1";"-2" <0pt>
\ar @{-} "-1";"-2'" <0pt>
\ar @{-} "-2";"-3" <0pt>
\ar @{-} "-2";"-3'" <0pt>
\ar @{-} "-4";"-4'" <0pt>
\ar @{-} "-4";"-4''" <0pt>
\endxy
}
\Eeq
where
\Beq\label{2: a weight in LieB-Koszul}
\resizebox{3.1mm}{!}{\xy
(0,-5)*{};
(0,0)*+{_a}*\cir{}
**\dir{-};
(0,5)*{};
(0,0)*+{_a}*\cir{}
**\dir{-};
\endxy}:=\Ba{c}\resizebox{4mm}{!}{\xy
 (0,0)*-{\bu}="a",
(0,6)*-{\bu}="b",
(3,3)*{}="c",
(-3,3)*{}="d",
 (0,9)*{}="b'",
(0,-3)*{}="a'",
\ar @{-} "a";"c" <0pt>
\ar @{-} "a";"d" <0pt>
\ar @{-} "a";"a'" <0pt>
\ar @{-} "b";"c" <0pt>
\ar @{-} "b";"d" <0pt>
\ar @{-} "b";"b'" <0pt>
\endxy}\\
\cdot\vspace{-2mm}\\
\cdot\\
\resizebox{4mm}{!}{\xy
 (0,0)*-{\bu}="a",
(0,6)*-{\bu}="b",
(3,3)*{}="c",
(-3,3)*{}="d",
 (0,9)*{}="b'",
(0,-3)*{}="a'",
\ar @{-} "a";"c" <0pt>
\ar @{-} "a";"d" <0pt>
\ar @{-} "a";"a'" <0pt>
\ar @{-} "b";"c" <0pt>
\ar @{-} "b";"d" <0pt>
\ar @{-} "b";"b'" <0pt>
\endxy}
\Ea
\Eeq
is the composition of $a=0,1,2,\dots$ graphs of the form $\resizebox{4mm}{!}{\xy
 (0,0)*-{\bu}="a",
(0,6)*-{\bu}="b",
(3,3)*{}="c",
(-3,3)*{}="d",
 (0,9)*{}="b'",
(0,-3)*{}="a'",
\ar @{-} "a";"c" <0pt>
\ar @{-} "a";"d" <0pt>
\ar @{-} "a";"a'" <0pt>
\ar @{-} "b";"c" <0pt>
\ar @{-} "b";"d" <0pt>
\ar @{-} "b";"b'" <0pt>
\endxy}$\, ,
form  a basis of $(\LoB^\Koz)^*$.
If the graph (\ref{2: basis of inv Frob}) has $n$ input legs and $m$ output legs,
then it has $m+n+2a-2$ vertices and its degree is equal to
$m+n+2a-2$. Hence the properad
$\Omega((\LoB)^\Koz)=\cF ree\langle \overline{(\LoB)^\Koz}[1]\rangle$ is a free properad generated
by the following skewsymmetric corollas of degree $3-m-n-2a$,
\Beq\label{2: generating corollas of LoB infty}
\resizebox{15mm}{!}{
\xy
(-9,-6)*{};
(0,0)*+{a}*\cir{}
**\dir{-};
(-5,-6)*{};
(0,0)*+{a}*\cir{}
**\dir{-};
(9,-6)*{};
(0,0)*+{a}*\cir{}
**\dir{-};
(5,-6)*{};
(0,0)*+{a}*\cir{}
**\dir{-};
(0,-6)*{\ldots};
(-10,-8)*{_1};
(-6,-8)*{_2};
(10,-8)*{_n};
(-9,6)*{};
(0,0)*+{a}*\cir{}
**\dir{-};
(-5,6)*{};
(0,0)*+{a}*\cir{}
**\dir{-};
(9,6)*{};
(0,0)*+{a}*\cir{}
**\dir{-};
(5,6)*{};
(0,0)*+{a}*\cir{}
**\dir{-};
(0,6)*{\ldots};
(-10,8)*{_1};
(-6,8)*{_2};
(10,8)*{_m};
\endxy}
=(-1)^{\sigma+\tau}
\resizebox{18mm}{!}{
\xy
(-9,-6)*{};
(0,0)*+{a}*\cir{}
**\dir{-};
(-5,-6)*{};
(0,0)*+{a}*\cir{}
**\dir{-};
(9,-6)*{};
(0,0)*+{a}*\cir{}
**\dir{-};
(5,-6)*{};
(0,0)*+{a}*\cir{}
**\dir{-};
(0,-6)*{\ldots};
(-12,-8)*{_{\tau(1)}};
(-6,-8)*{_{\tau(2)}};
(12,-8)*{_{\tau(n)}};
(-9,6)*{};
(0,0)*+{a}*\cir{}
**\dir{-};
(-5,6)*{};
(0,0)*+{a}*\cir{}
**\dir{-};
(9,6)*{};
(0,0)*+{a}*\cir{}
**\dir{-};
(5,6)*{};
(0,0)*+{a}*\cir{}
**\dir{-};
(0,6)*{\ldots};
(-12,8)*{_{\sigma(1)}};
(-6,8)*{_{\sigma(2)}};
(12,8)*{_{\sigma(m)}};
\endxy}\ \ \ \forall \sigma\in \bS_m, \forall \tau\in \bS_n,
\Eeq
where $m+n+ a\geq 3$, $m\geq 1$, $n\geq 1$, $a\geq 0$. The non-negative number $a$ is
called the {\em weight}\, of the generating corolla (\ref{2: generating corollas of LoB
infty}). The differential in
$\Omega((\LoB)^\Koz)$ is given by\footnote{The precise sign factors in this formula can be
determined via a usual trick: the analogous differential in the degree shifted properad
$\Omega((\LoB)^\Koz)\{1\}$ must be given by the same formula but with all sign factors equal to
$+1$.}
\Beq\label{2: d on Lie inv infty}
\delta\Ba{c}
\resizebox{15mm}{!}{\xy
(-9,-6)*{};
(0,0)*+{a}*\cir{}
**\dir{-};
(-5,-6)*{};
(0,0)*+{a}*\cir{}
**\dir{-};
(9,-6)*{};
(0,0)*+{a}*\cir{}
**\dir{-};
(5,-6)*{};
(0,0)*+{a}*\cir{}
**\dir{-};
(0,-6)*{\ldots};
(-10,-8)*{_1};
(-6,-8)*{_2};
(10,-8)*{_n};
(-9,6)*{};
(0,0)*+{a}*\cir{}
**\dir{-};
(-5,6)*{};
(0,0)*+{a}*\cir{}
**\dir{-};
(9,6)*{};
(0,0)*+{a}*\cir{}
**\dir{-};
(5,6)*{};
(0,0)*+{a}*\cir{}
**\dir{-};
(0,6)*{\ldots};
(-10,8)*{_1};
(-6,8)*{_2};
(10,8)*{_m};
\endxy}\Ea=
\sum_{a=b+c+l-1}\sum_{[m]=I_1\sqcup I_2\atop
[n]=J_1\sqcup J_2} \pm
\Ba{c}
%
%
\resizebox{20mm}{!}{\xy
(0,0)*+{b}*\cir{}="b",
(10,10)*+{c}*\cir{}="c",
%
(-9,6)*{}="1",
(-7,6)*{}="2",
(-2,6)*{}="3",
(-3.5,5)*{...},
(-4,-6)*{}="-1",
(-2,-6)*{}="-2",
(4,-6)*{}="-3",
(1,-5)*{...},
(0,-8)*{\underbrace{\ \ \ \ \ \ \ \ }},
(0,-11)*{_{J_1}},
(-6,8)*{\overbrace{ \ \ \ \ \ \ }},
(-6,11)*{_{I_1}},
(6,16)*{}="1'",
(8,16)*{}="2'",
(14,16)*{}="3'",
(11,15)*{...},
(11,6)*{}="-1'",
(16,6)*{}="-2'",
(18,6)*{}="-3'",
(13.5,6)*{...},
(15,4)*{\underbrace{\ \ \ \ \ \ \ }},
(15,1)*{_{J_2}},
(10,18)*{\overbrace{ \ \ \ \ \ \ \ \ }},
(10,21)*{_{I_2}},
%
(0,2)*-{};(8.0,10.0)*-{}
**\crv{(0,10)};
(0.5,1.8)*-{};(8.5,9.0)*-{}
**\crv{(0.4,7)};
(1.5,0.5)*-{};(9.1,8.5)*-{}
**\crv{(5,1)};
(1.7,0.0)*-{};(9.5,8.6)*-{}
**\crv{(6,-1)};
(5,5)*+{...};
\ar @{-} "b";"1" <0pt>
\ar @{-} "b";"2" <0pt>
\ar @{-} "b";"3" <0pt>
\ar @{-} "b";"-1" <0pt>
\ar @{-} "b";"-2" <0pt>
\ar @{-} "b";"-3" <0pt>
\ar @{-} "c";"1'" <0pt>
\ar @{-} "c";"2'" <0pt>
\ar @{-} "c";"3'" <0pt>
\ar @{-} "c";"-1'" <0pt>
\ar @{-} "c";"-2'" <0pt>
\ar @{-} "c";"-3'" <0pt>
\endxy}
\Ea
\Eeq
where  the parameter $l$ counts the number of internal edges connecting
the two vertices
on the right-hand side. We have, in particular,
$$
\delta\Ba{c}
\resizebox{5mm}{!}{\xy
(-3,-4)*{};
(0,0)*+{_0}*\cir{}
**\dir{-};
(3,-4)*{};
(0,0)*+{_0}*\cir{}
**\dir{-};
(0,5)*{};
(0,0)*+{_0}*\cir{}
**\dir{-};
\endxy}\Ea
=0, \ \ \ \ \ \delta\Ba{c} \resizebox{5mm}{!}{\xy
(-3,4)*{};
(0,0)*+{_0}*\cir{}
**\dir{-};
(3,4)*{};
(0,0)*+{_0}*\cir{}
**\dir{-};
(0,-5)*{};
(0,0)*+{_0}*\cir{}
**\dir{-};
\endxy}\Ea=0,\ \ \ \
\delta\Ba{c} \resizebox{3mm}{!}{\xy
(0,5)*{};
(0,0)*+{_1}*\cir{}
**\dir{-};
(0,-5)*{};
(0,0)*+{_1}*\cir{}
**\dir{-};
\endxy}\Ea=
\Ba{c}\resizebox{4mm}{!}{\xy
(-3,0)*{};
(0,4)*+{_0}*\cir{}
**\dir{-};
(3,0)*{};
(0,4)*+{_0}*\cir{}
**\dir{-};
(0,9)*{};
(0,4)*+{_0}*\cir{}
**\dir{-};
(-3,0)*{};
(0,-4)*+{_0}*\cir{}
**\dir{-};
(3,0)*{};
(0,-4)*+{_0}*\cir{}
**\dir{-};
(0,-9)*{};
(0,-4)*+{_0}*\cir{}
**\dir{-};
\endxy}\Ea
$$
so that the map
\Beq\label{2: pi quasi-iso}
\pi: \LoB_\infty \lon \LoB,
\Eeq
which sends to zero all generators of $\LoB_\infty$ except the following ones,
$$
\pi\left(\Ba{c}\resizebox{5mm}{!}{ \xy
(-3,-4)*{};
(0,0)*+{_0}*\cir{}
**\dir{-};
(3,-4)*{};
(0,0)*+{_0}*\cir{}
**\dir{-};
(0,5)*{};
(0,0)*+{_0}*\cir{}
**\dir{-};
\endxy}\Ea\right)
=\Ba{c}\resizebox{3.6mm}{!}{\begin{xy}
 <0mm,0.66mm>*{};<0mm,4mm>*{}**@{-},
 <0.39mm,-0.39mm>*{};<2.2mm,-3.2mm>*{}**@{-},
 <-0.35mm,-0.35mm>*{};<-2.2mm,-3.2mm>*{}**@{-},
 <0mm,0mm>*{\circ};<0mm,0mm>*{}**@{},
\end{xy}}\Ea\ ,\ \ \ \ \ \
\pi\left(\Ba{c}\resizebox{5mm}{!}{ \xy
(-3,4)*{};
(0,0)*+{_0}*\cir{}
**\dir{-};
(3,4)*{};
(0,0)*+{_0}*\cir{}
**\dir{-};
(0,-5)*{};
(0,0)*+{_0}*\cir{}
**\dir{-};
\endxy}\Ea\right)
=\Ba{c}\resizebox{3.6mm}{!}{\begin{xy}
 <0mm,-0.66mm>*{};<0mm,-4mm>*{}**@{-},
 <0.4mm,0.4mm>*{};<2.2mm,3.2mm>*{}**@{-},
 <-0.4mm,0.4mm>*{};<-2.2mm,3.2mm>*{}**@{-},
 <0mm,-0.1mm>*{\circ};<0mm,0mm>*{}**@{},
\end{xy}}\Ea\ ,
$$
is a morphism of dg properads, as expected. To prove that the properad $\LoB$ is Koszul is
equivalent to showing that the map $\pi$ is a quasi-isomorphism. As $\LoB_\infty$ is non-negatively
graded, the map $\pi$ is a quasi-isomorphism if and only if the cohomology of the dg
properad
$\LoB_\infty$ is concentrated in degree zero. We shall prove this property below in \S
{\ref{2: Theorem on Koszulness}}
with the help of several auxiliary constructions which we discuss next.

\subsection{A decomposition of the complex  $\LoB_\infty$}\label{sec:decomposition}
As a vector space the properad  $\LoB_\infty=\{\LoB_\infty(m,n)\}_{m,n\geq 1}$ is spanned
by oriented graphs built from corollas (\ref{2: generating corollas of LoB infty}). For
such a
graph $\Ga \in  \LoB_\infty$ we set
$$
||\Ga||:= g(\Ga)+
w(\Ga)\in \N
$$
where  $g(\Ga)$ is its genus and  $w(\Ga)$ is its total weight defined as the sum of
weights of its vertices(-corollas). It is obvious that the differential $\delta$ in
$\LoB_\infty$ respects this total grading,
$$
|| \delta \Ga||=||\Ga||.
$$
Therefore each complex $(\LoB_\infty(m,n),\delta)$ decomposes into a direct sum of
subcomplexes,
$$
\LoB_\infty(m,n)=\sum_{s\geq 0}\LoB_\infty(m,n)^{(s)}
$$
where
$$
\LoB_\infty(m,n)^{(s)}:=\mbox{span}\langle\left\{  \Ga : \mbox{graphs}\ \Ga\in
\LoB_\infty(m,n)\ \mbox{with}\
||\Ga||=s\right\}\rangle
$$
\subsubsection{\bf Lemma}
{\em For any fixed  $m,n\geq 1$ and $s\geq 0$ the subcomplex $\LoB_\infty(m,n)^{(s)}$ is
finite-dimensional}.
\begin{proof}
The number of bivalent vertices in every graph $\Ga$ with $||\Ga||=s$ is finite.
As the genus of the graph $\Ga$ is also finite, it must have a finite number of vertices of
valence
$\geq 3$ as well.
\end{proof}

 This lemma guarantees convergence of all spectral sequences which we consider below in the
 context of computing cohomology of $\LoB_\infty$  and which, for general dg free
 properads, can be ill-behaved.

\subsection{An auxiliary graph complex}\label{2 subsect on aux graph comlxes} Let us
consider a graph complex,
$$
C =\bigoplus_{n\geq 1} C^n.
$$
where  $C^n$ is spanned by graphs of the form,
$\Ba{c}\resizebox{27mm}{!}{\xy
(-8,2)*+{};
(0,2)*+{_{a_1}}*\cir{}
**\dir{-};
(10,2)*+{_{a_2}}*\cir{}
**\dir{-};
(20,2)*+{...}
**\dir{-};
(30,2)*+{_{a_{n}}}*\cir{};
**\dir{-};
(31.5,2)*+{};(37,2)
**\dir{-};
\endxy}\Ea$\ , \ \ with $a_1,\ldots, a_n\in \N$.
The differential is given on the generators of the graphs (viewed as
elements of a $\frac{1}{2}$-prop) by
$$
d\Ba{c}\resizebox{8.0
mm}{!}{\xy(0,1)*+{_{a}}*\cir{};
(5,1)
**\dir{-};(0,1)*+{_{a}}*\cir{};
(-5,1)
**\dir{-};
\endxy}\Ea = \sum_{a=b+c\atop b\geq 1, c\geq 1}\Ba{c}\resizebox{15mm}{!}{\xy(0,1)*+{_{b}}*\cir{};
(8,1)*+{_{c}}*\cir{}
**\dir{-};
(0,1)*+{_{b}}*\cir{};
(-5,1)**\dir{-};
(8,1)*+{_{c}}*\cir{};
(13,1)**\dir{-};
\endxy}\Ea.
$$

\subsubsection{\bf Proposition}\label{2: propos on aux graph complexes} {\em One has}
$H^\bu(C)= \mbox{span} \langle \xy(0,0)*+{_{1}}*\cir{};
(5,0)
**\dir{-};(0,0)*+{_1}*\cir{};
(-5,0)
**\dir{-};
\endxy   \rangle$.

\begin{proof} It is well-known that the cohomology of the cobar construction
  $\Omega(T^c(V))$ of the free tensor coalgebra $T^c(V)$ generated by any vector space $V$
  over a field $\K$ equals to $\K\oplus V$, so that the cohomology of the reduced cobar
  construction, $\overline{\Omega}(T^c(V))$, equals $V$. The complex $C$ is
  isomorphic to  $\overline{\Omega}(T^c(V))$ for a one-dimensional vector space $V$ via the
  following identification
  $$
\xy(0,0)*+{_{a}}*\cir{};
\endxy \cong  V^{\ot a}.
  $$
  Hence the claim.
\end{proof}

\subsection{An auxiliary dg properad}\label{2: subsection on P} Let $\cP$ be a dg properad
 generated  generated by a degree $-1$ corolla
$\begin{xy}
 <0mm,-0.55mm>*{};<0mm,-3mm>*{}**@{-},
 <0mm,0.5mm>*{};<0mm,3mm>*{}**@{-},
 <0mm,0mm>*{\bu};<0mm,0mm>*{}**@{},
 \end{xy}$\, and degree zero corollas,
$
\Ba{c}
 \begin{xy}
 <0mm,-0.55mm>*{};<0mm,-2.5mm>*{}**@{-},
 <0.5mm,0.5mm>*{};<2.2mm,2.2mm>*{}**@{-},
 <-0.48mm,0.48mm>*{};<-2.2mm,2.2mm>*{}**@{-},
 <0mm,0mm>*{\circ};<0mm,0mm>*{}**@{},
 <0.5mm,0.5mm>*{};<2.7mm,2.8mm>*{^2}**@{},
 <-0.48mm,0.48mm>*{};<-2.7mm,2.8mm>*{^1}**@{},
 \end{xy}
=-
\begin{xy}
 <0mm,-0.55mm>*{};<0mm,-2.5mm>*{}**@{-},
 <0.5mm,0.5mm>*{};<2.2mm,2.2mm>*{}**@{-},
 <-0.48mm,0.48mm>*{};<-2.2mm,2.2mm>*{}**@{-},
 <0mm,0mm>*{\circ};<0mm,0mm>*{}**@{},
 <0.5mm,0.5mm>*{};<2.7mm,2.8mm>*{^1}**@{},
 <-0.48mm,0.48mm>*{};<-2.7mm,2.8mm>*{^2}**@{},
 \end{xy}\Ea
 $ and $\Ba{c}\begin{xy}
 <0mm,0.66mm>*{};<0mm,3mm>*{}**@{-},
 <0.39mm,-0.39mm>*{};<2.2mm,-2.2mm>*{}**@{-},
 <-0.35mm,-0.35mm>*{};<-2.2mm,-2.2mm>*{}**@{-},
 <0mm,0mm>*{\circ};<0mm,0mm>*{}**@{},
   <0.39mm,-0.39mm>*{};<2.9mm,-4mm>*{^2}**@{},
   <-0.35mm,-0.35mm>*{};<-2.8mm,-4mm>*{^1}**@{},
\end{xy}=-
\begin{xy}
 <0mm,0.66mm>*{};<0mm,3mm>*{}**@{-},
 <0.39mm,-0.39mm>*{};<2.2mm,-2.2mm>*{}**@{-},
 <-0.35mm,-0.35mm>*{};<-2.2mm,-2.2mm>*{}**@{-},
 <0mm,0mm>*{\circ};<0mm,0mm>*{}**@{},
   <0.39mm,-0.39mm>*{};<2.9mm,-4mm>*{^1}**@{},
   <-0.35mm,-0.35mm>*{};<-2.8mm,-4mm>*{^2}**@{},
\end{xy}\Ea$,
 modulo relations \Beq\label{2: relation in P}
\xy
(0,-1.9)*{\bu}="0",
 (0,1.9)*{\bu}="1",
(0,-5)*{}="d",
(0,5)*{}="u",
\ar @{-} "0";"u" <0pt>
\ar @{-} "0";"1" <0pt>
\ar @{-} "1";"d" <0pt>
\endxy=0, \ \ \ \
\xy
(0,-1.9)*{\circ}="0",
 (0,1.9)*{\bu}="1",
(-2.5,-5)*{}="d1",
(2.5,-5)*{}="d2",
(0,5)*{}="u",
\ar @{-} "1";"u" <0pt>
\ar @{-} "0";"1" <0pt>
\ar @{-} "0";"d1" <0pt>
\ar @{-} "0";"d2" <0pt>
\endxy  -
\xy
(-2.5,-1.9)*{\bu}="0",
 (0,1.9)*{\circ}="1",
(-2.5,-1.9)*{}="d1",
(2.5,-1.9)*{}="d2",
(-2.5,-5)*{}="d",
(0,5)*{}="u",
\ar @{-} "1";"u" <0pt>
\ar @{-} "0";"1" <0pt>
\ar @{-} "0";"d" <0pt>
\ar @{-} "1";"d1" <0pt>
\ar @{-} "1";"d2" <0pt>
\endxy
  -
\xy
(2.5,-1.9)*{\bu}="0",
 (0,1.9)*{\circ}="1",
(-2.5,-1.9)*{}="d1",
(2.5,-1.9)*{}="d2",
(2.5,-5)*{}="d",
(0,5)*{}="u",
\ar @{-} "1";"u" <0pt>
\ar @{-} "0";"1" <0pt>
\ar @{-} "0";"d" <0pt>
\ar @{-} "1";"d1" <0pt>
\ar @{-} "1";"d2" <0pt>
\endxy
=0\ , \ \ \ \
\xy
(0,1.9)*{\circ}="0",
 (0,-1.9)*{\bu}="1",
(-2.5,5)*{}="d1",
(2.5,5)*{}="d2",
(0,-5)*{}="u",
\ar @{-} "1";"u" <0pt>
\ar @{-} "0";"1" <0pt>
\ar @{-} "0";"d1" <0pt>
\ar @{-} "0";"d2" <0pt>
\endxy  -
\xy
(-2.5,1.9)*{\bu}="0",
 (0,-1.9)*{\circ}="1",
(-2.5,1.9)*{}="d1",
(2.5,1.9)*{}="d2",
(-2.5,5)*{}="d",
(0,-5)*{}="u",
\ar @{-} "1";"u" <0pt>
\ar @{-} "0";"1" <0pt>
\ar @{-} "0";"d" <0pt>
\ar @{-} "1";"d1" <0pt>
\ar @{-} "1";"d2" <0pt>
\endxy
  -
\xy
(2.5,1.9)*{\bu}="0",
 (0,-1.9)*{\circ}="1",
(-2.5,1.9)*{}="d1",
(2.5,1.9)*{}="d2",
(2.5,5)*{}="d",
(0,-5)*{}="u",
\ar @{-} "1";"u" <0pt>
\ar @{-} "0";"1" <0pt>
\ar @{-} "0";"d" <0pt>
\ar @{-} "1";"d1" <0pt>
\ar @{-} "1";"d2" <0pt>
\endxy
=0.
\Eeq and the first three relations in (\ref{R for
 LieB}).
 The differential in $\cP$ is given on the generators by
\Beq\label{2: differential in P}
d\, \begin{xy}
 <0mm,0.66mm>*{};<0mm,3mm>*{}**@{-},
 <0.39mm,-0.39mm>*{};<2.2mm,-2.2mm>*{}**@{-},
 <-0.35mm,-0.35mm>*{};<-2.2mm,-2.2mm>*{}**@{-},
 <0mm,0mm>*{\circ};<0mm,0mm>*{}**@{},
\end{xy}=0, \ \ \ \
d\, \begin{xy}
 <0mm,-0.55mm>*{};<0mm,-2.5mm>*{}**@{-},
 <0.5mm,0.5mm>*{};<2.2mm,2.2mm>*{}**@{-},
 <-0.48mm,0.48mm>*{};<-2.2mm,2.2mm>*{}**@{-},
 <0mm,0mm>*{\circ};<0mm,0mm>*{}**@{},
 \end{xy} =0, \ \ \ \
d\, \begin{xy}
 <0mm,-0.55mm>*{};<0mm,-3mm>*{}**@{-},
 <0mm,0.5mm>*{};<0mm,3mm>*{}**@{-},
 <0mm,0mm>*{\bu};<0mm,0mm>*{}**@{},
 \end{xy}=\Ba{c}\xy
 (0,0)*{\circ}="a",
(0,6)*{\circ}="b",
(3,3)*{}="c",
(-3,3)*{}="d",
 (0,9)*{}="b'",
(0,-3)*{}="a'",
\ar@{-} "a";"c" <0pt>
\ar @{-} "a";"d" <0pt>
\ar @{-} "a";"a'" <0pt>
\ar @{-} "b";"c" <0pt>
\ar @{-} "b";"d" <0pt>
\ar @{-} "b";"b'" <0pt>
\endxy\Ea.
\Eeq
\begin{theorem}\label{2: proposition on nu quasi-iso} { The surjective morphism of
dg properads,
\Beq\label{2: nu quasi-iso to P}
\nu: \LoB_\infty \lon \cP,
\Eeq
which sends all generators to zero except for the following ones
\Beq\label{2: map from P to P}
\nu\left( \xy
(-3,-4)*{};
(0,0)*+{_0}*\cir{}
**\dir{-};
(3,-4)*{};
(0,0)*+{_0}*\cir{}
**\dir{-};
(0,5)*{};
(0,0)*+{_0}*\cir{}
**\dir{-};
\endxy\right)
=\begin{xy}
 <0mm,0.66mm>*{};<0mm,4mm>*{}**@{-},
 <0.39mm,-0.39mm>*{};<2.2mm,-3.2mm>*{}**@{-},
 <-0.35mm,-0.35mm>*{};<-2.2mm,-3.2mm>*{}**@{-},
 <0mm,0mm>*{\circ};<0mm,0mm>*{}**@{},
\end{xy}\ ,\ \ \ \ \ \
\nu\left( \xy
(-3,4)*{};
(0,0)*+{_0}*\cir{}
**\dir{-};
(3,4)*{};
(0,0)*+{_0}*\cir{}
**\dir{-};
(0,-5)*{};
(0,0)*+{_0}*\cir{}
**\dir{-};
\endxy\right)
=\begin{xy}
 <0mm,-0.66mm>*{};<0mm,-4mm>*{}**@{-},
 <0.4mm,0.4mm>*{};<2.2mm,3.2mm>*{}**@{-},
 <-0.4mm,0.4mm>*{};<-2.2mm,3.2mm>*{}**@{-},
 <0mm,-0.1mm>*{\circ};<0mm,0mm>*{}**@{},
\end{xy}\ \ , \ \
\nu\left(
\xy
(0,5)*{};
(0,0)*+{_1}*\cir{}
**\dir{-};
(0,-5)*{};
(0,0)*+{_1}*\cir{}
**\dir{-};
\endxy\right)=\begin{xy}
 <0mm,-0.55mm>*{};<0mm,-3mm>*{}**@{-},
 <0mm,0.5mm>*{};<0mm,3mm>*{}**@{-},
 <0mm,0mm>*{\bu};<0mm,0mm>*{}**@{},
 \end{xy}
\Eeq
is a quasi-isomorphism.}
\end{theorem}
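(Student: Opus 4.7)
The plan is to introduce a filtration on $\LoB_\infty$ whose $E_0$-page identifies with the bivalent-chain complex of Section 2.5, so that Proposition 2.5.1 does most of the cohomological work. By Lemma 2.4.1 we may work inside each finite-dimensional piece $\LoB_\infty(m,n)^{(s)}$ separately, so convergence of all the spectral sequences below is automatic.

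Call a generating corolla of $\LoB_\infty$ \emph{bivalent} if it is of type $(1,1,a)$ (which forces $a\geq 1$) and \emph{multivalent} otherwise. Inspecting formula \eqref{2: d on Lie inv infty} under the constraint $m_i, n_i \geq 1$ on every vertex one checks: (i) a bivalent vertex split with $l=1$ internal edge gives two bivalent vertices, and the resulting differential is precisely that of the complex $C$ of Section 2.5 along the edge carrying the chain; (ii) a bivalent vertex split with $l\geq 2$ gives two multivalent vertices; (iii) a multivalent vertex can never split into two bivalent pieces. Consequently, filtering $\LoB_\infty(m,n)^{(s)}$ by the number of multivalent vertices is a well-defined filtration, and the $E_0$-differential is entirely of type (i), acting independently on each maximal bivalent chain along an edge (internal or external) of the reduced graph obtained by contracting bivalent vertices. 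By Proposition 2.5.1 the $E_0$-cohomology is spanned by graphs whose edges carry either the ``empty'' decoration or a single weight-$1$ bivalent vertex, which we now identify with a $\bu$-mark in the sense of $\cP$.

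On $E_1$ the induced differential consists of the minimal type-(ii) splitting of each $\bu$-mark --- the case $(l,b,c)=(2,0,0)$ yielding exactly the diamond graph $d(\bu)$ of $\cP$ --- together with the ordinary properadic splittings of the surviving multivalent vertices. Identifying the weight-$0$ corollas $(2,1,0)$ and $(1,2,0)$ with the generators $\circ$ of $\cP$, one recovers the Jacobi, co-Jacobi and bialgebra compatibility relations as the $E_1$-differentials of the three weight-$0$ generators with $m+n=4$, the $\bu$-derivation identities as the $E_1$-differentials of the $(2,1,1)$ and $(1,2,1)$ corollas, and the relation $\bu^2=0$ for free, since two adjacent $\bu$-marks on the same edge would form a length-$2$ bivalent chain already killed at the $E_0$-stage. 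Hence there is a canonical surjection from $E_1$ onto $\cP$ intertwining the induced map with $\nu$.

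The main obstacle is to prove that the remaining higher generators of $\LoB_\infty$ --- multivalent corollas with $m+n\geq 4$, and multivalent corollas of positive weight beyond the distinguished $(1,1,1)$ --- contribute acyclic summands on $E_1$, so that the surjection onto $\cP$ is actually a quasi-isomorphism. I plan to treat this via a secondary filtration on $E_1$ grouping such generators by an ``excess'' parameter such as $m+n+2a-3$: on the associated graded, the leading differential should reduce, after decoupling the $\bu$-decorated edges, to the Koszul differential of a quadratic properad very close to $\LieBi$, whose Koszulness established in \cite{MaVo} (combined with a twist that records the $\bu$/no-$\bu$ choice on each edge) should furnish the required acyclicity. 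Assembling the two spectral sequences and invoking Lemma 2.4.1 for convergence then identifies the abutment with $\cP$, proving that $\nu$ is a quasi-isomorphism.
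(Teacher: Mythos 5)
Your filtration by the number of multivalent vertices does not do what you claim. Besides the chain splittings (i), there is a third kind of filtration-preserving term that you have overlooked: a multivalent corolla $(m,n,a)$ with $m+n\geq 3$ and $a\geq 1$ can split with $l=1$, $I_1=\emptyset$ and $\# J_1=1$ (or dually), shedding a bivalent vertex $(1,1,b)$ onto one of its legs while becoming $(m,n,a-b)$. This keeps the number of multivalent vertices constant, so it lives on your $E_0$-page. Consequently the $E_0$-differential does \emph{not} decouple into independent bivalent chains with the multivalent vertices as spectators --- the weights of the multivalent vertices feed into the adjacent chains --- and Proposition {\ref{2: propos on aux graph complexes}} cannot be applied chain by chain to compute $E_0$-cohomology. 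This is precisely why the paper first passes to the genus-associated graded and then filters by the \emph{sum of the weights} of the non-bivalent vertices: shedding a weight-$b$ bivalent vertex drops that sum by $b\geq 1$, so these terms are pushed off the zeroth page, and on the next page they are identified with the simplicial coboundary $d:C^0(\Ga,\p\Ga)\to C^1(\Ga,\p\Ga)$ (a vertex weight being a power of the $0$-cochain dual to that vertex, a $\bu$-mark being the $1$-cochain dual to that edge).

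The second, and more serious, gap is that your closing step is exactly the hard part of the theorem and is only asserted. Koszulness of $\LieBi$ as a properad (or of $\LB_{\frac12}$ as a $\frac12$-prop) does not by itself give acyclicity of the complex of $\bu$-decorated graphs: the edge decorations are not $\frac12$-properadic data, and the ``twist that records the $\bu$/no-$\bu$ choice on each edge'' that you invoke is precisely the content of the paper's exact functor $F$, which tensors the $\frac12$-prop decoration of each graph $\Ga$ with $\odot H^1(\Ga,\p\Ga)$. The paper's argument needs (a) exactness of $F$ (Lemma {\ref{2: exact functor}}), (b) the identification of the page carrying vertex weights and edge marks with $\odot C^\bu(\Ga,\p\Ga)$ together with $H^0(\Ga,\p\Ga)=0$, and (c) Koszulness of the $\frac12$-prop $\LB_{\frac12}$ transported through $F$ to conclude $\cQ=F(\Omega_{\frac12}(\LB^{\Koz}_{\frac12}))\to\gr\cP=F(\LB_{\frac12})$ is a quasi-isomorphism. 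None of these is supplied by your ``secondary filtration by excess'' sketch, and the introduction of the paper explicitly flags this step as the reason the Markl--Voronov proof does not carry over to $\LoB$. Until you construct the analogue of $F$ (or an equivalent mechanism absorbing the edge marks into relative cohomology), the claimed acyclicity of the higher-generator summands remains unproved.
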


\begin{proof} The argument is based on several converging spectral sequences.
\mip

{\em Step 1: An exact functor}.

We define the following functor:
$$
F : \mbox{category of dg}\ \frac{1}{2}\mbox{-props} \lon \mbox{category of dg properads},
$$
by
$$
F(s)(m,n) = \bigoplus_{\Gamma \in \overline{\text{Gr}}(m,n)} \left(\bigotimes_{v\in v(\Gamma)} s(Out(v),In(v)) \otimes \odot H^1(\Gamma, \partial \Gamma)\right) _{Aut(\Gamma)},
$$
where $\overline{\text{Gr}}(m,n)$ represents the set of all (isomorphism classes of) directed acyclic graphs with $n$ output legs and $m$ input legs that are irreducible in the sense that they do not allow any $\frac{1}{2}\mbox{-prop}$-ic contractions. The differential acts trivially on the $H^1(\Gamma, \partial \Gamma)$ part.

\begin{lemma}\label{2: exact functor}
$F$ is an exact functor.
\end{lemma}
\begin{proof}
Since the differential preserves the underlying graph, we get
\begin{eqnarray}\label{functor1}
H_\bullet(F(s)(m,n)) = \bigoplus_{\Gamma \in \overline{\text{Gr}}(m,n)} H_\bullet \left(\bigotimes_{v\in v(\Gamma)} s(Out(v),In(v)) \otimes \odot H^1(\Gamma, \partial \Gamma)\right) _{Aut(\Gamma)}
\end{eqnarray}
Since the differential commutes with elements of $Aut(\Gamma)$, $Aut(\Gamma)$ is finite and $\K$ is a field of characteristic zero, by Maschke's Theorem we have
\begin{eqnarray}\label{functor2}
(\ref{functor1}) = \bigoplus_{\Gamma \in \overline{\text{Gr}}(m,n)} \left( H_\bullet \left(\bigotimes_{v\in v(\Gamma)} s(Out(v),In(v)) \otimes \odot H^1(\Gamma, \partial \Gamma)\right) \right) _{Aut(\Gamma)}
\end{eqnarray}
Applying the K\"{u}nneth formula twice, together with the fact that the differential is trivial on $H^1(\Gamma, \partial \Gamma)$  we get
\begin{eqnarray*}
(\ref{functor2}) = \bigoplus_{\Gamma \in \overline{\text{Gr}}(m,n)}  \left(\bigotimes_{v\in v(\Gamma)} H_\bullet(s(Out(v),In(v))) \otimes \odot H^1(\Gamma, \partial \Gamma)\right)  _{Aut(\Gamma)} = F(H_\bullet(s))(m,n).
\end{eqnarray*}
\end{proof}

\sip

{\em Step 2: A genus filtration}.

 Consider the genus filtration of  $(\LoB_\infty, \delta)$,
and denote by
 $(\gr\LoB_\infty, \delta^{gen})$ the associated  graded properad.   The
 differential $\delta^{gen}$ in the
 complex $\gr\LoB_\infty$ is given by the formula,
 \Beq\label{2: delta_0 in E_0 diamond}
\delta^{gen}
\resizebox{14mm}{!}{\xy
(-9,-6)*{};
(0,0)*+{_a}*\cir{}
**\dir{-};
(-5,-6)*{};
(0,0)*+{_a}*\cir{}
**\dir{-};
(9,-6)*{};
(0,0)*+{_a}*\cir{}
**\dir{-};
(5,-6)*{};
(0,0)*+{_a}*\cir{}
**\dir{-};
(0,-6)*{\ldots};
(-10,-8)*{_1};
(-6,-8)*{_2};
(10,-8)*{_n};
(-9,6)*{};
(0,0)*+{_a}*\cir{}
**\dir{-};
(-5,6)*{};
(0,0)*+{_a}*\cir{}
**\dir{-};
(9,6)*{};
(0,0)*+{_a}*\cir{}
**\dir{-};
(5,6)*{};
(0,0)*+{_a}*\cir{}
**\dir{-};
(0,6)*{\ldots};
(-10,8)*{_1};
(-6,8)*{_2};
(10,8)*{_m};
\endxy}=
\sum_{a=b+c}\sum_{[m]=I_1\sqcup I_2\atop
[n]=J_1\sqcup J_2} \pm
\Ba{c}
%
%
\resizebox{18mm}{!}{\xy
(0,0)*+{_b}*\cir{}="b",
(10,10)*+{_c}*\cir{}="c",
%
(-9,6)*{}="1",
(-7,6)*{}="2",
(-2,6)*{}="3",
(-3.5,5)*{...},
(-4,-6)*{}="-1",
(-2,-6)*{}="-2",
(4,-6)*{}="-3",
(1,-5)*{...},
(0,-8)*{\underbrace{\ \ \ \ \ \ \ \ }},
(0,-11)*{_{J_1}},
(-6,8)*{\overbrace{ \ \ \ \ \ \ }},
(-6,11)*{_{I_1}},
(6,16)*{}="1'",
(8,16)*{}="2'",
(14,16)*{}="3'",
(11,15)*{...},
(11,6)*{}="-1'",
(16,6)*{}="-2'",
(18,6)*{}="-3'",
(13.5,6)*{...},
(15,4)*{\underbrace{\ \ \ \ \ \ \ }},
(15,1)*{_{J_2}},
(10,18)*{\overbrace{ \ \ \ \ \ \ \ \ }},
(10,21)*{_{I_2}},
%
\ar @{-} "b";"c" <0pt>
\ar @{-} "b";"1" <0pt>
\ar @{-} "b";"2" <0pt>
\ar @{-} "b";"3" <0pt>
\ar @{-} "b";"-1" <0pt>
\ar @{-} "b";"-2" <0pt>
\ar @{-} "b";"-3" <0pt>
\ar @{-} "c";"1'" <0pt>
\ar @{-} "c";"2'" <0pt>
\ar @{-} "c";"3'" <0pt>
\ar @{-} "c";"-1'" <0pt>
\ar @{-} "c";"-2'" <0pt>
\ar @{-} "c";"-3'" <0pt>
\endxy}
\Ea
\Eeq
Consider also the genus filtration of the dg properad $\cP$ and denote by
$(\gr\LoB_\infty\cP, 0)$ the associated graded. The morphism (\ref{2: nu quasi-iso
to P}) of filtered complexes
induces a sequence of morphisms of the associated graded complexes,
\Beq\label{2: morphism of 1st terms of genus spectral seq}
\nu: \gr\LoB_\infty \lon \gr\cP,
\Eeq
Thanks to the Spectral Sequences Comparison Theorem, Proposition {\ref{2: proposition
on nu quasi-iso}} will be proven if we  show that the map $\nu$ is a quasi-isomorphism of complexes. We shall compute below
 the cohomology $H^\bu(\gr \LoB_\infty,\delta^{gen})$ which will make it evident
 that the map
 $
 \nu
 $ is an isomorphism indeed.
\bip

{\em Step 3: An auxiliary prop}.
Let us consider a properad $\cQ = F( \Omega_{\frac{1}{2}} ( \LB^{\text{!`}}_{\frac{1}{2}}))$, where $\LB_{\frac{1}{2}}$ is the $\frac{1}{2}$-prop governing Lie bialgebras. Explicitly, $\cQ$ is generated by corollas with either $a=m=n=1$ or $a=0$ and $m+n\geq 3$ subject to the relations
$$
\Ba{c}
\resizebox{3mm}{!}{\xy
(0,0)*+{_1}*\cir{}="b",
(0,6)*+{_1}*\cir{}="c",
(0,-4)*{}="-1",
(0,10)*{}="1'",
\ar @{-} "b";"c" <0pt>
\ar @{-} "b";"-1" <0pt>
\ar @{-} "c";"1'" <0pt>
\endxy}
\Ea=0\ \ \ , \ \ \
\sum_{i=1}^m
\Ba{c}
\resizebox{17mm}{!}{\xy
(-9,12.5)*{^{1}},
(-4,12.5)*{^{^{i-1}}},
(0,21)*{^{^{i}}},
(5,12.5)*{^{^{i+1}}},
(9.9,12.5)*{^{^m}},
(-9,-1.5)*{_{_1}},
(-4.5,-1.5)*{_{_{2}}},
(4.5,-1.5)*{_{_{n-1}}},
(9.9,-1.5)*{_{_n}},
(-5,11)*{...},
(5,11)*{...},
(0,1)*{...},
(0,6)*+{_0}*\cir{}="b",
(0,15)*+{_1}*\cir{}="c",
(0,-4)*{}="-1",
(-9,12)*{}="1'",
(-3.5,12)*{}="2'",
(3.5,12)*{}="3'",
(9,12)*{}="4'",
(0,21)*{}="u",
(-9,0)*{}="-1",
(-4.5,0)*{}="-2",
(4.5,0)*{}="-3",
(9,0)*{}="-4",
\ar @{-} "u";"c" <0pt>
\ar @{-} "b";"c" <0pt>
\ar @{-} "b";"-1" <0pt>
\ar @{-} "b";"1'" <0pt>
\ar @{-} "b";"2'" <0pt>
\ar @{-} "b";"3'" <0pt>
\ar @{-} "b";"4'" <0pt>
\ar @{-} "b";"-1" <0pt>
\ar @{-} "b";"-2" <0pt>
\ar @{-} "b";"-3" <0pt>
\ar @{-} "b";"-4" <0pt>
\endxy}
\Ea
-
\sum_{i=1}^n
\Ba{c}
\resizebox{17mm}{!}{\xy
(-9,-13.5)*{_{_1}},
(-4,-13.5)*{_{_{i-1}}},
(0,-22)*{_{_{i}}},
(5,-13.5)*{_{_{i+1}}},
(9.9,-13.5)*{_{_n}},
(-9,1.5)*{_{_1}},
(-4.5,1.5)*{_{_{2}}},
(4.5,1.5)*{^{_{m-1}}},
(9.9,1.5)*{_{_m}},
(-5,-11)*{...},
(5,-11)*{...},
(0,-1)*{...},
(0,-6)*+{_0}*\cir{}="b",
(0,-15)*+{_1}*\cir{}="c",
(0,4)*{}="-1",
(-9,-12)*{}="1'",
(-3.5,-12)*{}="2'",
(3.5,-12)*{}="3'",
(9,-12)*{}="4'",
(0,-21)*{}="u",
(-9,0)*{}="-1",
(-4.5,0)*{}="-2",
(4.5,0)*{}="-3",
(9,0)*{}="-4",
\ar @{-} "u";"c" <0pt>
\ar @{-} "b";"c" <0pt>
\ar @{-} "b";"-1" <0pt>
\ar @{-} "b";"1'" <0pt>
\ar @{-} "b";"2'" <0pt>
\ar @{-} "b";"3'" <0pt>
\ar @{-} "b";"4'" <0pt>
\ar @{-} "b";"-1" <0pt>
\ar @{-} "b";"-2" <0pt>
\ar @{-} "b";"-3" <0pt>
\ar @{-} "b";"-4" <0pt>
\endxy}
\Ea
=0
$$
With this description we see that $\nu$ factors through $\cQ$, $\nu:\gr\LoB_\infty \overset{p}\twoheadrightarrow \cQ \overset{q}\twoheadrightarrow  \gr\cP$.
Furthermore we claim that $q$ is a quasi-isomorphism.
First notice that  $\gr\cP = F(\LB_{\frac{1}{2}})$.
The $\frac{1}{2}$-prop $\LB_{\frac{1}{2}}$ is Koszul, i.~e., the natural projection
$\Omega_{\frac{1}{2}} ( \LB^{\text{!`}}_{\frac{1}{2}}) \twoheadrightarrow \LB_{\frac{1}{2}}$ is a quasi-isomorphism.
The result follows by applying the functor $F$ to this map and by Lemma {\ref{2: exact functor}}.
\bip

{\em Step 4: $p$ is a quasi-isomorphism}.

Consider a filtration of $\gr\LoB_\infty$ given by the sum of all decorations of non-bivalent vertices.
On the $0$-th page of this spectral sequence the differential acts only by splitting bivalent vertices. Then Proposition $\ref{2: propos on aux graph complexes}$ tells us that the first page of this spectral sequence consists of graphs with no bivalent vertices such that every vertex is decorated by a number $a\in \mathbb Z^+$ and every edge has either a decoration $\xy(0,0)*+{_{1}}*\cir{};
\endxy$ or no decoration.
The differential acts by
\begin{equation}\label{equ:diffspecs}
\resizebox{15mm}{!}{\xy
(-9,-6)*{};
(0,0)*+{_a}*\cir{}
**\dir{-};
(-5,-6)*{};
(0,0)*+{_a}*\cir{}
**\dir{-};
(9,-6)*{};
(0,0)*+{_a}*\cir{}
**\dir{-};
(5,-6)*{_a};
(0,0)*+{a}*\cir{}
**\dir{-};
(0,-6)*{\ldots};
(-10,-8)*{_1};
(-6,-8)*{_2};
(10,-8)*{_n};
(-9,6)*{};
(0,0)*+{_a}*\cir{}
**\dir{-};
(-5,6)*{};
(0,0)*+{_a}*\cir{}
**\dir{-};
(9,6)*{};
(0,0)*+{_a}*\cir{}
**\dir{-};
(5,6)*{};
(0,0)*+{_a}*\cir{}
**\dir{-};
(0,6)*{\ldots};
(-10,8)*{_1};
(-6,8)*{_2};
(10,8)*{_m};
\endxy}
\lon
\sum_{i=1}^m
\Ba{c}
\resizebox{17mm}{!}{\xy
(-9,12.5)*{^{1}},
(-4,12.5)*{^{^{i-1}}},
(0,21)*{^{^{i}}},
(5,12.5)*{^{^{i+1}}},
(9.9,12.5)*{^{^m}},
(-9,-1.5)*{_{_1}},
(-4.5,-1.5)*{_{_{2}}},
(4.5,-1.5)*{_{_{n-1}}},
(9.9,-1.5)*{_{_n}},
(-5,11)*{...},
(5,11)*{...},
(0,1)*{...},
(0,6)*+{_{a-1}}*\cir{}="b",
(0,15)*+{_1}*\cir{}="c",
(0,-4)*{}="-1",
(-9,12)*{}="1'",
(-3.5,12)*{}="2'",
(3.5,12)*{}="3'",
(9,12)*{}="4'",
(0,21)*{}="u",
(-9,0)*{}="-1",
(-4.5,0)*{}="-2",
(4.5,0)*{}="-3",
(9,0)*{}="-4",
\ar @{-} "u";"c" <0pt>
\ar @{-} "b";"c" <0pt>
\ar @{-} "b";"-1" <0pt>
\ar @{-} "b";"1'" <0pt>
\ar @{-} "b";"2'" <0pt>
\ar @{-} "b";"3'" <0pt>
\ar @{-} "b";"4'" <0pt>
\ar @{-} "b";"-1" <0pt>
\ar @{-} "b";"-2" <0pt>
\ar @{-} "b";"-3" <0pt>
\ar @{-} "b";"-4" <0pt>
\endxy}
\Ea
-
\sum_{i=1}^n
\Ba{c}
\resizebox{17mm}{!}{\xy
(-9,-13.5)*{_{_1}},
(-4,-13.5)*{_{_{i-1}}},
(0,-22)*{_{_{i}}},
(5,-13.5)*{_{_{i+1}}},
(9.9,-13.5)*{_{_n}},
(-9,1.5)*{_{_1}},
(-4.5,1.5)*{_{_{2}}},
(4.5,1.5)*{^{_{m-1}}},
(9.9,1.5)*{_{_m}},
(-5,-11)*{...},
(5,-11)*{...},
(0,-1)*{...},
(0,-6)*+{_{a-1}}*\cir{}="b",
(0,-15)*+{_1}*\cir{}="c",
(0,4)*{}="-1",
(-9,-12)*{}="1'",
(-3.5,-12)*{}="2'",
(3.5,-12)*{}="3'",
(9,-12)*{}="4'",
(0,-21)*{}="u",
(-9,0)*{}="-1",
(-4.5,0)*{}="-2",
(4.5,0)*{}="-3",
(9,0)*{}="-4",
\ar @{-} "u";"c" <0pt>
\ar @{-} "b";"c" <0pt>
\ar @{-} "b";"-1" <0pt>
\ar @{-} "b";"1'" <0pt>
\ar @{-} "b";"2'" <0pt>
\ar @{-} "b";"3'" <0pt>
\ar @{-} "b";"4'" <0pt>
\ar @{-} "b";"-1" <0pt>
\ar @{-} "b";"-2" <0pt>
\ar @{-} "b";"-3" <0pt>
\ar @{-} "b";"-4" <0pt>
\endxy}
\Ea
\end{equation}

The complex we obtain is precisely
$$
 \bigoplus_{\Gamma \in \overline{\text{Gr}}(m,n)} \left(\bigotimes_{v\in v(\Gamma)} \Omega_{\frac{1}{2}} ( \LB^{\text{!`}}_{\frac{1}{2}})(Out(v),In(v)) \otimes \odot C^*(\Gamma, \partial \Gamma)\right) _{Aut(\Gamma)},
$$
where $C^*(\Gamma, \partial \Gamma)$ are the simplicial co-chains of $\Gamma$ relative to its boundary; the differential in this complex is given by the standard differential in  $C^*(\Gamma, \partial \Gamma)$.
Indeed, we may identify $C^0(\Gamma, \partial \Gamma)\cong \K[V(\Ga)]$ and $C^1(\Gamma, \partial \Gamma)\cong \K[E(\Ga)]$.
A vertex $v=\xy(0,0)*+{_{a_v}}*\cir{};
\endxy$ with weight $a_v$ corresponds to the $a_v$-th power of the cochain representing the vertex, and an edge decorated
with the symbol $\xy(0,0)*+{_{1}}*\cir{};
\endxy$ corresponds to the cochain representing the edge.
The differential $d$ on $C^1(\Gamma, \partial \Gamma)$ is the map dual to the standard boundary map $\p: C_1(\Gamma, \partial \Gamma)\cong \K[E(\Ga)]
\lon C_0(\Gamma, \partial \Gamma)\cong \K[V(\Ga)]$. It is given, on a vertex $v\in V(\Ga)$, by
$$
dv =\sum_{e_v'\in Out(v)} e_v' -
 \sum_{e_v'\in In(v)} e_v'',
$$
where $Out(v)$ is the set of edges outgoing from $v$ and $In(v)$ is the set of edges ingoing to $v$. This exactly matches the
differential \eqref{equ:diffspecs} on the first page of the spectral sequence.
As $H^0(\Ga,\p \Ga)=0$ 
and since the symmetric product functor $\odot$ is exact we obtain $\cQ$ on the second page of the spectral sequence,
 $$
H^\bu(\gr\LoB_\infty)\cong
 \bigoplus_{\Gamma \in \overline{\text{Gr}}(m,n)} \left(\bigotimes_{v\in v(\Gamma)} \Omega_{\frac{1}{2}} ( \LB^{\text{!`}}_{\frac{1}{2}})(Out(v),In(v)) \otimes \odot H^1(\Gamma, \partial \Gamma)\right)_{Aut(\Gamma)}=
  F( \Omega_{\frac{1}{2}} ( \LB^{\text{!`}}_{\frac{1}{2}}))
  \cong Q
$$
 thus showing that $p$ is a quasi-isomorphism.
\end{proof}

\subsection{Auxiliary complexes} \label{sec:extracomplexes}
Let $\cA_n$ be a quadratic algebra generated by $x_1,\dots, x_n$ with relations $x_ix_{i+1}=x_{i+1}x_i$ for $i=1,\dots, n-1$.
We denote by $C_n=\cA_n^{\text{!`}}$ the Koszul dual coalgebra. Notice that $\cA_n$ and $C_n$ are weight graded and the weight $k$ component of $C_n$, $C_n^{(k)}$ is zero if $k\geq 3$, while $C_n^{(1)}=\vecspan \{x_1,\dots, x_n\}$ and
$C_n^{(2)}=\vecspan\{u_{1,2}=x_1x_2-x_2x_1,u_{2,3}=x_2x_3-x_3x_2, \dots, u_{n-1,n}=x_{n-1}x_n-x_nx_{n-1}\}$.

\begin{proposition}\label{2: toy problem}
The algebra $\cA_n$ is Koszul. In particular, the canonical projection map $A_n:=\Omega(C_n)\to \cA_n$ from the cobar construction of $C_n$ is a quasi-isomorphism.
\end{proposition}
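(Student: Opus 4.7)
The plan is induction on $n$. The base case $n=1$ is clear since $\cA_1=\K[x_1]$ is a polynomial ring, hence Koszul. For the inductive step $n\ge 2$, the structural observation is that the defining relations of $\cA_n$ involving $x_n$ consist only of the single commutation $[x_{n-1},x_n]=0$, so $\cA_n$ can be identified with the amalgamated free product
\[
\cA_n \;\cong\; \cA_{n-1} \ast_{\K[x_{n-1}]} \K[x_{n-1},x_n],
\]
where $\K[x_{n-1}]$ embeds into each factor through its natural copy of the generator $x_{n-1}$.

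The strategy is then to propagate Koszulness through this amalgamated product. By induction $\cA_{n-1}$ is Koszul, and the polynomial rings $\K[x_{n-1},x_n]$ and $\K[x_{n-1}]$ are trivially Koszul. I would invoke the standard criterion (going back to Backelin, and further developed by Polishchuk--Positselski) that an amalgamated free product $B\ast_A C$ of Koszul algebras over a common Koszul subalgebra $A$ is Koszul, provided $B$ is free as a right $A$-module and $C$ is free as a left $A$-module. The freeness of $\K[x_{n-1},x_n]=\K[x_{n-1}]\otimes\K[x_n]$ over $\K[x_{n-1}]$ is immediate.

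The main step, and the principal obstacle, is showing that $\cA_{n-1}$ is free as a right $\K[x_{n-1}]$-module. This is not automatic since in $\cA_{n-1}$ the generator $x_{n-1}$ commutes only with $x_{n-2}$ and no other generator. I would establish it by exhibiting an explicit basis via the Cartier--Foata (``trace monoid'') normal form for the commutation graph $P_{n-1}$: each element of $\cA_{n-1}$ has a unique expression as a product of maximal ``commutation blocks,'' and the powers of $x_{n-1}$ can be pushed to the rightmost admissible position using $x_{n-1}x_{n-2}=x_{n-2}x_{n-1}$. This yields a decomposition $\cA_{n-1}=\bigoplus_w w\cdot\K[x_{n-1}]$ indexed by the normal-form words $w$ not ending in $x_{n-1}$, providing the required free right $\K[x_{n-1}]$-basis.

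As an alternative, staying closer to the minimal-model framework of the paper, note that since $C_n$ is concentrated in weights $\le 2$, Koszulness of $\cA_n$ is equivalent to the exactness of the short complex
\[
0 \longrightarrow \cA_n\otimes R \longrightarrow \cA_n\otimes V \longrightarrow \cA_n \longrightarrow \K \longrightarrow 0,
\]
where $V=\vecspan(x_1,\ldots,x_n)$ and $R=\vecspan(u_{i,i+1})$. Injectivity at the left is straightforward since $\cA_n$ is a domain, and the crucial middle exactness -- that every linear syzygy $\sum a_i x_i=0$ in $\cA_n$ is an $\cA_n$-linear combination of commutation relations $x_i x_{i+1}-x_{i+1}x_i$ -- can again be verified directly using the Cartier--Foata basis, bypassing the amalgamated-free-product machinery.
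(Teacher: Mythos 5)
Your strategy is genuinely different from the paper's. The paper passes to the Koszul dual algebra $B_n=\cA_n^!$ (which is finite-dimensional, with $B_n^{(k)}=0$ for $k\geq 3$) and proves its Koszulness by writing down an explicit contracting homotopy $h$ for its Koszul complex, verified by a long but elementary case analysis; Koszulness of $\cA_n$ then follows by duality. You instead work with $\cA_n$ itself, exploiting its trace-monoid (free partially commutative) structure: either through the amalgamated-product decomposition $\cA_n\cong\cA_{n-1}\ast_{\K[x_{n-1}]}\K[x_{n-1},x_n]$ and induction, or by directly checking exactness of the four-term complex $0\to\cA_n\otimes R\to\cA_n\otimes V\to\cA_n\to\K\to 0$ (which is indeed equivalent to Koszulness here, since $C_n^{(k)}=0$ for $k\ge3$). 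Your approach is more structural and, via the first route, would generalize to graph algebras of arbitrary graphs, whereas the paper's homotopy is tailored to the path but is completely self-contained. Your identification of the free right $\K[x_{n-1}]$-module decomposition $\cA_{n-1}=\bigoplus_w w\,\K[x_{n-1}]$ over normal forms not ending in $x_{n-1}$ is correct (it follows from the standard ``last letters'' lemma for trace monoids), as is the injectivity argument on the left of the four-term complex.

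Two caveats, both concerning steps you assert rather than prove. First, the amalgamated-product criterion is not actually in Backelin or Polishchuk--Positselski in the form you need: those sources treat coproducts of quadratic algebras over the ground field, not over a nontrivial common Koszul subalgebra. The statement you want is true under your freeness hypotheses, but you would have to prove it --- e.g.\ via the normal-form decomposition of $B\ast_A C$ into alternating $A$-tensor products and the resulting Mayer--Vietoris sequence for $\mathrm{Tor}(\K,\K)$ --- so as written this outsources the theorem to a reference that does not contain it. Second, in your alternative route the ``crucial middle exactness'' is the entire content of the proposition; it does follow from the trace-monoid fact that if $mx_i=m'x_j$ with $i\neq j$ then $x_i$ and $x_j$ must commute and $m=m''x_j$, $m'=m''x_i$ (so every elementary syzygy is a multiple of some $u_{i,i+1}$), but this lemma and the reduction of a general kernel element to elementary syzygies need to be spelled out. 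With either of these gaps filled, your proof is complete; note also that this genuinely bypasses a PBW/Gr\"obner argument, which is unavoidable since the commutation relations of $\cA_n$ do not form a quadratic Gr\"obner basis (the overlap $x_{i+2}x_{i+1}x_i$ does not resolve).
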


The proof of this proposition is given in Appendix \ref{app:koszulnessproof}.


\sip

Proposition {\ref{2: toy problem}} in particular implies that the homology of the $A_n$ vanishes in positive degree. The complex
$A_n$ is naturally multigraded by the amount of times each index $j$ appears on each word and the differential respects this multigrading.
We will be interested in particular in the subcomplex $\cA_n^{1,1,\dots,1}$ of $A_n$ that is spanned by words in $x_j$ and $u_{i,i+1}$ such that each index occurs exactly once. Since $A_n^{1,1,\dots,1}$ is a direct summand of $A_n$, its homology also vanishes in positive degree.

Let us define a Lie algebra $\caL_n = Lie(x_1,\dots,x_n)/[x_i,x_{i+1}]$ and a complex $L_n=Lie(x_1,\dots,x_n, u_{1,2},\dots u_{n-1,n})$, with $dx_i=0$ and $d(u_{i,i+1})= x_ix_{i+1}-x_{i+1}x_i$. Here $Lie$ stands for the free Lie algebra functor.

\begin{lemma}\label{2: Ln quasi-iso}
The projection map $L_n \twoheadrightarrow \caL_n$ is a quasi-isomorphism.
\end{lemma}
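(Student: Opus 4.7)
The plan is to deduce this from Proposition~\ref{2: toy problem} by passing through universal enveloping algebras. First I would identify $\cA_n$ with the universal enveloping algebra $U(\caL_n)$: the relations $x_ix_{i+1}=x_{i+1}x_i$ defining $\cA_n$ are exactly the enveloping-algebra avatars of the Lie relations $[x_i,x_{i+1}]=0$ defining $\caL_n$. Similarly I would identify the cobar dg algebra $A_n=\Omega(C_n)$ with $U(L_n)$: as graded associative algebras both are freely generated by $x_1,\dots,x_n$ (degree $0$) and $u_{1,2},\dots,u_{n-1,n}$ (degree $1$) --- the right-hand side by PBW applied to the free graded Lie algebra $L_n$ --- and in both cases the differential is the unique derivation determined by $u_{i,i+1}\mapsto x_ix_{i+1}-x_{i+1}x_i$. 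Under these identifications the canonical surjection $A_n\twoheadrightarrow \cA_n$ becomes the map $U(L_n)\twoheadrightarrow U(\caL_n)$ induced by the projection $L_n\twoheadrightarrow \caL_n$.

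By Proposition~\ref{2: toy problem} this map is a quasi-isomorphism of dg associative algebras. To transfer the statement from the enveloping algebras back to the Lie algebras themselves I would invoke the dg PBW theorem in characteristic zero: for any dg Lie algebra $\fg$ the symmetrization map
\[
\sigma_\fg\colon \odot(\fg)\;\xrightarrow{\ \cong\ }\; U(\fg)
\]
is an isomorphism of dg vector spaces, and it is natural in $\fg$. Applying naturality to the projection $L_n\to\caL_n$ converts the quasi-isomorphism $U(L_n)\to U(\caL_n)$ into a quasi-isomorphism $\odot(L_n)\to \odot(\caL_n)$.

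Finally, since this latter map is induced from a linear map of underlying complexes, it preserves the polynomial weight grading on symmetric algebras, and the differential on each $\odot(\fg)$ also respects this grading. Restricting the quasi-isomorphism $\odot(L_n)\to \odot(\caL_n)$ to weight one recovers exactly the map $L_n\to \caL_n$, which is therefore itself a quasi-isomorphism.

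The one point requiring actual verification is the identification of differentials in $A_n\cong U(L_n)$: one must check that the cobar differential on $\Omega(C_n)$ agrees with the derivation extending $u_{i,i+1}\mapsto [x_i,x_{i+1}]$. This reduces to a direct computation using the coproduct on $C_n$, whose weight-two generators satisfy $\Delta(u_{i,i+1})=x_i\otimes x_{i+1}-x_{i+1}\otimes x_i$. Everything else is a formal consequence of PBW and the Koszulness statement of Proposition~\ref{2: toy problem}.
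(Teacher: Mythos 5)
Your proof is correct and follows essentially the same route as the paper's: both arguments rest on the PBW identification $\odot(L_n)\cong U(L_n)=A_n$ compatible with differentials, invoke Proposition \ref{2: toy problem} to get acyclicity in positive degrees, and then use the exactness of $\odot$ (equivalently, the preservation of the polynomial weight grading) to descend to $L_n\to\caL_n$ itself. The only cosmetic difference is that the paper checks $H^0(L_n)=\caL_n$ directly and proves vanishing of $H^{>0}(L_n)$, whereas you phrase the descent via naturality of the symmetrization isomorphism and restriction to weight one.
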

\begin{proof}
It is clear that $H^0(L_n) = \caL_n$, therefore it is enough to see that the homology of $L_n$ vanishes in positive degree.
The Poincar\' e-Birkhoff-Witt Theorem gives us an isomorphism $$\odot(\caL  ie(x_1,\dots,x_n, u_{1,2},\dots u_{n-1,n}))=\odot (L_n) \overset{\sim}\longrightarrow \cA ss(x_1,\dots,x_n, u_{1,2},\dots u_{n-1,n})= A_n.$$
This map commutes with with the differentials, therefore we have an isomorphism in homology $H_{\bullet}(\odot (L_n))= H_{\bullet}(A_n)$. Since $\odot$ is an exact functor it commutes with taking homology and since the homology of $A_n$ vanishes in positive degree by {\ref{2: toy problem}} the result follows.
\end{proof}

Let us define $A_{n_1,\dots, n_r}$ as the  coproduct of $A_{n_1}, \dots, A_{n_r}$ in the category of associative algebras; $A_{n_1,\dots, n_k}$ consists of words in $x_{1}^1,x_2^1,\dots x_{n_1}^1, x_{1}^2,\dots, x_{n_2}^2,\dots ,x_1^r,\dots, x_{n_r}^r,
u_{1,2}^1,\dots, u_{n_1-1,n_1}^1,u_{1,2}^2,\dots , u_{n_r-1,n_r}^r.$ We define similarly $L_{n_1,\dots, n_r}$ and $\caL_{n_1,\dots, n_r}$.

\begin{lemma}
The homology of $A_{n_1,\dots, n_r}$ vanishes in positive degree.
\end{lemma}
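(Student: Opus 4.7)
The plan is to exploit the standard vector-space decomposition of a free product of augmented associative dg algebras, combined with K\"unneth and Proposition \ref{2: toy problem}. First I would record that for augmented associative dg algebras $A_i = \K \oplus \bar A_i$, the coproduct in unital associative algebras admits the canonical reduced decomposition
\[
A_1 \sqcup \cdots \sqcup A_r \;\cong\; \K \;\oplus\; \bigoplus_{k \geq 1} \; \bigoplus_{\substack{(i_1, \dots, i_k) \in \{1,\dots,r\}^k \\ i_j \neq i_{j+1}}} \bar A_{i_1} \otimes \bar A_{i_2} \otimes \cdots \otimes \bar A_{i_k}.
\]
Since the differential of each $A_i$ preserves the augmentation ideal $\bar A_i$, the total differential on the free product respects this decomposition and acts on each summand as the ordinary tensor-product differential of the complexes $\bar A_{i_j}$.

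Working over a field of characteristic zero, the K\"unneth formula applies term by term, yielding
\[
H^\bullet(A_{n_1} \sqcup \cdots \sqcup A_{n_r}) \;\cong\; \K \;\oplus\; \bigoplus_{k \geq 1} \; \bigoplus_{\substack{(i_1, \dots, i_k) \\ i_j \neq i_{j+1}}} H^\bullet(\bar A_{n_{i_1}}) \otimes \cdots \otimes H^\bullet(\bar A_{n_{i_k}}).
\]
By Proposition \ref{2: toy problem}, each $A_{n_i}$ is quasi-isomorphic to $\cA_{n_i}$ concentrated in cohomological degree $0$, so every $H^\bullet(\bar A_{n_i}) = \bar \cA_{n_i}$ sits in degree $0$. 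Every tensor product on the right-hand side is therefore concentrated in degree $0$, giving
\[
H^\bullet(A_{n_1, \dots, n_r}) \;\cong\; \cA_{n_1} \sqcup \cdots \sqcup \cA_{n_r},
\]
which is concentrated in degree $0$ and in particular vanishes in all positive degrees.

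The only item that needs a bit of care is the reduced alternating-tensor decomposition of the free product. Because each $A_{n_i} = \Omega(C_{n_i})$ is free as a graded augmented associative algebra on $\bar C_{n_i}[-1]$, the free product $A_{n_1,\dots,n_r}$ is itself the tensor algebra on $\bigoplus_i \bar C_{n_i}[-1]$, and the alternating decomposition is simply the tensor-algebra basis regrouped by the index of the factor contributing each letter. I do not foresee any substantial obstacle: the whole argument is essentially K\"unneth applied summand by summand, once this book-keeping is set up.
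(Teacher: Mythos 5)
Your proof is correct and follows essentially the same route as the paper: both rest on the decomposition of the free product into alternating tensor words in the augmentation ideals (which the differential preserves) together with the degree-zero concentration of each $H^\bullet(A_{n_i})$ from Proposition \ref{2: toy problem}. The only difference is packaging --- you invoke K\"unneth directly and treat all $r$ at once, whereas the paper does $r=2$ and induction, arguing on each summand via the multigrading that a cycle of positive degree must be a boundary; your version is, if anything, the cleaner formulation of the same idea.
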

\begin{proof}
We prove the statement for $r=2$ and the result follows by induction.

We can write $A_{n_1,n_2} = \bigoplus_\epsilon A_\epsilon$ where $\epsilon=(e_1,\dots e_{|\epsilon|})$ represents a string of alternating $n_1$'s and $n_2$'s of finite size $|\epsilon|$. The differential preserves this decomposition.
$A_\epsilon$ is naturally $|\epsilon|$-multigraded.

Let $x = \sum \alpha_I x_I \in A_\epsilon$, where $x_I$ represents some word in $x_i^1, x_i^2, u_{i,i+1}^1, u_{i,i+1}^2$ be a cycle of nonzero degree.

Then, when we apply the Leibniz rule on the word $x_I$ each parcel has a different multigrading, therefore the equation $dx=0$ splits into $|\epsilon|$ equations whose right hand side is zero. For each one of them we can use the fact that the homology of $A_{n_1}$ and $A_{n_2}$ vanishes in non-zero degree to conclude that the left hand side of each of these equations is actually a boundary. It follows that $x$ is a boundary as well.
\end{proof}

\begin{lemma}
The map $L_{n_1,\dots, n_r} \twoheadrightarrow \caL_{n_1,\dots, n_r}$ is a quasi-isomorphism.
\end{lemma}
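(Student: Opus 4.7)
The plan is to repeat verbatim the strategy used in Lemma \ref{2: Ln quasi-iso}, replacing the input $A_n$ by the multi-variable version $A_{n_1,\dots,n_r}$ whose acyclicity in positive degree has just been established in the previous lemma.

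First, I would observe that $H^0(L_{n_1,\dots,n_r}) = \caL_{n_1,\dots,n_r}$ essentially by definition: in weight zero (no $u$'s) one has only the free Lie algebra on the $x_i^j$, and the image of $d$ in this degree is the Lie ideal generated by the elements $[x_i^j, x_{i+1}^j]$, which is precisely the ideal quotiented out in the definition of $\caL_{n_1,\dots,n_r}$. It therefore remains to show that $H^{>0}(L_{n_1,\dots,n_r}) = 0$.

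Next, I would apply the Poincar\'e--Birkhoff--Witt theorem to identify, as complexes,
\[
\odot\bigl(L_{n_1,\dots,n_r}\bigr) \;\cong\; A_{n_1,\dots,n_r},
\]
exactly as in the proof of Lemma \ref{2: Ln quasi-iso}: on the left, $L_{n_1,\dots,n_r}$ is free as a Lie algebra on the combined set of generators $\{x_i^j, u_{i,i+1}^j\}$ (since the coproduct of free Lie algebras is the free Lie algebra on the disjoint union of generators), on the right $A_{n_1,\dots,n_r}$ is free associative on the same set, and the universal enveloping algebra of a free Lie algebra is the free associative algebra. The differential on the right, extended from $L_{n_1,\dots,n_r}$ by the Leibniz rule, matches the differential on $A_{n_1,\dots,n_r}$ because both are characterized by the same formula on generators $u_{i,i+1}^j \mapsto x_i^j x_{i+1}^j - x_{i+1}^j x_i^j$.

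Finally, since $\odot$ is an exact functor in characteristic zero, $H_\bullet(\odot L_{n_1,\dots,n_r}) \cong \odot H_\bullet(L_{n_1,\dots,n_r})$. Combining this with the isomorphism above and the preceding lemma (which states that $H_\bullet(A_{n_1,\dots,n_r})$ is concentrated in degree zero), we conclude that $\odot H_\bullet(L_{n_1,\dots,n_r})$ is concentrated in degree zero, which forces $H_\bullet(L_{n_1,\dots,n_r})$ itself to be concentrated in degree zero. There is no real obstacle here beyond verifying that PBW respects the coproduct structure; this is standard, so the argument is essentially a direct transcription of Lemma \ref{2: Ln quasi-iso} using the multi-variable input.
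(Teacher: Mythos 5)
Your proposal is correct and is essentially the paper's own proof: the paper simply states ``The same argument from Lemma \ref{2: Ln quasi-iso} holds,'' and you have transcribed that PBW/exactness argument with the multi-variable input $A_{n_1,\dots,n_r}$, which is exactly what is intended. The only detail worth noting is the one you already flag — that $L_{n_1,\dots,n_r}$ is free Lie on the combined generators so its enveloping algebra is the free associative algebra $A_{n_1,\dots,n_r}$ — and this is standard.
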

\begin{proof}
The same argument from Lemma {\ref{2: Ln quasi-iso}} holds.
\end{proof}

\begin{corollary}\label{2: auxiliary corollary}
The map $L_{n_1,\dots, n_r}^{1,\dots 1} \twoheadrightarrow \caL_{n_1,\dots, n_r}^{1,\dots 1}$ is a quasi-isomorphism.
\end{corollary}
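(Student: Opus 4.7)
The plan is to deduce this from the preceding lemma by a multigrading argument: both $L_{n_1,\dots,n_r}$ and $\caL_{n_1,\dots,n_r}$ carry an extra $\Z^{n_1+\dots+n_r}_{\geq 0}$-grading which is preserved by the differential and by the projection, so the projection decomposes as a direct sum over multidegrees, and the result for the $(1,\dots,1)$-component follows from the result for the whole.

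Concretely, first I introduce the multigrading. Index the components by pairs $(i,j)$ with $1\leq j\leq r$ and $1\leq i\leq n_j$. Declare that the generator $x_i^j$ has multidegree $e_{(i,j)}$ (the standard basis vector), and that $u_{i,i+1}^j$ has multidegree $e_{(i,j)}+e_{(i+1,j)}$. Extend this additively to Lie brackets. Then the differential $du_{i,i+1}^j=[x_i^j,x_{i+1}^j]$ is homogeneous of multidegree $e_{(i,j)}+e_{(i+1,j)}$, so the differential on $L_{n_1,\dots,n_r}$ preserves this multigrading. The defining relations $[x_i^j,x_{i+1}^j]=0$ of $\caL_{n_1,\dots,n_r}$ are also homogeneous, so the quotient inherits the multigrading and the projection
\[
p\colon L_{n_1,\dots,n_r}\twoheadrightarrow \caL_{n_1,\dots,n_r}
\]
is a map of multigraded complexes.

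Consequently, both complexes split as direct sums
\[
L_{n_1,\dots,n_r}=\bigoplus_{\alpha}L_{n_1,\dots,n_r}^{\alpha},\qquad \caL_{n_1,\dots,n_r}=\bigoplus_{\alpha}\caL_{n_1,\dots,n_r}^{\alpha}
\]
indexed by $\alpha\in\Z^{n_1+\dots+n_r}_{\geq 0}$, and $p$ is the direct sum of its homogeneous components $p_\alpha$. By the preceding lemma $p$ is a quasi-isomorphism, hence each $p_\alpha$ is a quasi-isomorphism. Taking $\alpha=(1,\dots,1)$ yields the statement of the corollary. There is no real obstacle here; the only thing to verify is that the differential is indeed homogeneous with respect to the chosen multigrading, which is immediate from the formula for $d$ on generators.
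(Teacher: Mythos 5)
Your proposal is correct and is exactly the argument the paper intends: the corollary is stated without proof, but the multigrading splitting you describe is the same device the paper already used for $A_n^{1,1,\dots,1}$ being a direct summand of $A_n$, and your convention that $u_{i,i+1}^j$ has multidegree $e_{(i,j)}+e_{(i+1,j)}$ correctly makes the differential homogeneous. Nothing to add.
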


\subsection{Main Theorem}\label{2: Theorem on Koszulness} {\em The properad $\LoB$ is
Koszul, i.e.\ the natural surjection \eqref{2: pi quasi-iso}
a quasi-isomorphism.}

\begin{proof} The surjection  \eqref{2: pi quasi-iso} factors through the surjection
\eqref{2: nu quasi-iso to P},
$$
\pi: \LoB_\infty \stackrel{\nu}{\lon} \cP \stackrel{\rho}{\lon} \LoB.
$$
In view of Proposition {\ref{2: proposition on nu quasi-iso}}, the Main theorem is
proven once it is shown that the morphism $\rho$ is a quasi-isomorphism. The latter
statement is, in turn, proven once it is shown that the cohomology of the
non-positively graded dg properad $\cP$ is concentrated in degree zero. For notation reasons
it is suitable to work with the dg prop, $\cP \cP$, generated by the properad $\cP$. We also
denote by $\caL ie P$ the prop governing Lie algebras and by $\caL ie CP$ the prop
governing Lie coalgebras.

\sip

It is easy to see that the dg prop  $\cP \cP=\{\cP\cP(m,n)\}$ is isomorphic, as a dg $\mathbb{S}$-bimodule, to
the dg prop generated by a degree 1 corolla
$\begin{xy}
 <0mm,-0.55mm>*{};<0mm,-3mm>*{}**@{-},
 <0mm,0.5mm>*{};<0mm,3mm>*{}**@{-},
 <0mm,0mm>*{\bu};<0mm,0mm>*{}**@{},
 \end{xy}$\, , degree zero corollas
$
 \begin{xy}
 <0mm,-0.55mm>*{};<0mm,-2.5mm>*{}**@{-},
 <0.5mm,0.5mm>*{};<2.2mm,2.2mm>*{}**@{-},
 <-0.48mm,0.48mm>*{};<-2.2mm,2.2mm>*{}**@{-},
 <0mm,0mm>*{\circ};<0mm,0mm>*{}**@{},
 <0.5mm,0.5mm>*{};<2.7mm,2.8mm>*{^{_2}}**@{},
 <-0.48mm,0.48mm>*{};<-2.7mm,2.8mm>*{^{_1}}**@{},
 \end{xy}
=-
\begin{xy}
 <0mm,-0.55mm>*{};<0mm,-2.5mm>*{}**@{-},
 <0.5mm,0.5mm>*{};<2.2mm,2.2mm>*{}**@{-},
 <-0.48mm,0.48mm>*{};<-2.2mm,2.2mm>*{}**@{-},
 <0mm,0mm>*{\circ};<0mm,0mm>*{}**@{},
 <0.5mm,0.5mm>*{};<2.7mm,2.8mm>*{^{_1}}**@{},
 <-0.48mm,0.48mm>*{};<-2.7mm,2.8mm>*{^{_2}}**@{},
 \end{xy}
 $ and $\begin{xy}
 <0mm,0.66mm>*{};<0mm,3mm>*{}**@{-},
 <0.39mm,-0.39mm>*{};<2.2mm,-2.2mm>*{}**@{-},
 <-0.35mm,-0.35mm>*{};<-2.2mm,-2.2mm>*{}**@{-},
 <0mm,0mm>*{\circ};<0mm,0mm>*{}**@{},
   <0.39mm,-0.39mm>*{};<2.9mm,-4mm>*{^{_2}}**@{},
   <-0.35mm,-0.35mm>*{};<-2.8mm,-4mm>*{^{_1}}**@{},
\end{xy}=-
\begin{xy}
 <0mm,0.66mm>*{};<0mm,3mm>*{}**@{-},
 <0.39mm,-0.39mm>*{};<2.2mm,-2.2mm>*{}**@{-},
 <-0.35mm,-0.35mm>*{};<-2.2mm,-2.2mm>*{}**@{-},
 <0mm,0mm>*{\circ};<0mm,0mm>*{}**@{},
   <0.39mm,-0.39mm>*{};<2.9mm,-4mm>*{^{_1}}**@{},
   <-0.35mm,-0.35mm>*{};<-2.8mm,-4mm>*{^{_2}}**@{},
\end{xy}$,
 modulo the first three relations of \eqref{R for LieB}
and the following ones,
\Beq\label{2: relations for EP}
\xy
(0,-1.9)*{\bu}="0",
 (0,1.9)*{\bu}="1",
(0,-5)*{}="d",
(0,5)*{}="u",
\ar @{-} "0";"u" <0pt>
\ar @{-} "0";"1" <0pt>
\ar @{-} "1";"d" <0pt>
\endxy=0, \ \ \ \
\xy
(0,-1.9)*{\circ}="0",
 (0,1.9)*{\bu}="1",
(-2.5,-5)*{}="d1",
(2.5,-5)*{}="d2",
(0,5)*{}="u",
\ar @{-} "1";"u" <0pt>
\ar @{-} "0";"1" <0pt>
\ar @{-} "0";"d1" <0pt>
\ar @{-} "0";"d2" <0pt>
\endxy
=0\ , \ \ \ \
\xy
(0,1.9)*{\circ}="0",
 (0,-1.9)*{\bu}="1",
(-2.5,5)*{}="d1",
(2.5,5)*{}="d2",
(0,-5)*{}="u",
\ar @{-} "1";"u" <0pt>
\ar @{-} "0";"1" <0pt>
\ar @{-} "0";"d1" <0pt>
\ar @{-} "0";"d2" <0pt>
\endxy\ =\ 0\ .
\Eeq
 The latter complex can in turn be identified with the following
collection of dg  vector spaces,
\begin{equation}\label{2: basis in PP}
   W(n,m):= \bigoplus_N \left( \caL ieP(n,N) \otimes V^{\otimes N} \otimes \caL ieCP(N,m)
   \right)_{\bS_N}
 \end{equation}
where $V$ is a two-dimensional vector space $V_0 \oplus V_1$, where $V_0 = \vecspan\langle  \ \xy
(0,0)*{}="0",
(0,-4)*{}="d",
(0,4)*{}="u",
\ar @{-} "0";"u" <0pt>
\ar @{-} "0";"d" <0pt>
\endxy \ \rangle$ and
$V_1 = \vecspan\langle \xy
(0,0)*{\bu}="0",
(0,-4)*{}="d",
(0,4)*{}="u",
\ar @{-} "0";"u" <0pt>
\ar @{-} "0";"d" <0pt>
\endxy \rangle$
and the differential in this complex is given by formulae (\ref{2: differential in P}).\\

Let us consider a slightly different complex
\Beq \label{equ:V_mn}
\begin{aligned}
V_{n,m}
&=
\bigoplus_N \left( \caL ieP(n,N) \otimes V^{\otimes N} \otimes \cA ssCP(N,m)
\right)_{\bS_N}\\
&\cong  \bigoplus_N \bigoplus_{N=n_1+...+n_m} \left( \caL ieP(n,N) \otimes
V^{\otimes N}
\otimes \cA ssC(n_1) \ot \dots \ot \cA ssC(n_m)\right)_{\bS_{n_1}\times \dots \times \bS_{n_k}},
\end{aligned}
\Eeq
where  $\cA ssCP$ is the prop governing coassociative coalgebras.

The operad $\cA ss = \cC om \circ \caL ie = \bigoplus_k (\cC om \circ \caL ie)^{(k)}$ is naturally graded with respect to the arity in $\cC om $.
This decomposition induces a multigrading in $V_{n,m} = \displaystyle\bigoplus_{(k_1,\dots, k_m)} V_{n;k_1,\dots, k_m}$. It is clear that this decomposition is actually a splitting of complexes and in fact the direct summand $V_{1,\dots,1}$ is just $W(n,m)$, therefore to show the theorem it suffices to show that the cohomology of $V_{n,m}$ is zero in positive degree.\\

There is a natural identification $\caL ieP(n,N) \cong \left(\left(\caL ie(y_1,\cdots, y_N)\right)^{\ot n}\right)^{1,\dots,1}$ where, as before, we use the notation $1,\dots,1$ to represent the subspace spanned by words such that each index appears exactly once.

$\left(\caL ieP(n,N) \ot \cA ssC(n_1) \ot \cdots \cA ssC(n_m)\right)_{S_{n_1}\times \dots \times S_{n_k}}$ is isomorphic to  $\caL ieP(n,N) $ but there is a more natural identification than the one above, namely the $y_j$ can be gathered by blocks of size $n_j$, according to the action of $S_{1}\times\dots\times S_{n_m}$ on $\caL ieP(n,N)$ and can be relabeled accordingly:
\begin{eqnarray*}
y_1, y_2, \dots ,y_{N} \leadsto y_1^1,\dots , y_{n_1}^1, y_1^2,\dots , y_{n_2}^2 ,\dots , y_1^m,\dots , y_{n_m}^m.
\end{eqnarray*}

Since $V = V_0 \oplus V_1$, there is a natural decomposition of $V^{\ot N} = \displaystyle\bigoplus_{\epsilon} V_{\epsilon_1} \ot \dots \ot V_{\epsilon_N}$, where $\epsilon= (\epsilon_1,\dots,\epsilon_N)$ runs through all strings of $0$'s and $1$ of length $N$.
Then,
\begin{eqnarray*}
\left(\caL ieP(n,N) \otimes
V^{\otimes N}
\otimes \cA ssC(n_1) \ot \dots \ot \cA ssC(n_m)\right)_{\bS_{n_1}\times \dots \times \bS_{n_k}} =\hspace{55mm} \\
\ \ \ \ \ \ \ \ \ \ \bigoplus_{\epsilon}\left( \caL ieP(n,N) \otimes
V_{\epsilon_1} \ot \dots \ot V_{\epsilon_N}
\otimes \cA ssC(n_1) \ot \dots \ot \cA ssC(n_m)\right)_{\bS_{n_1}\times \dots \times \bS_{n_k}}
\end{eqnarray*}

Given a fixed a string $\epsilon$ we look at each summand individually. If $\epsilon_k$ is zero we wish to correspond $V_{\epsilon_k}$ to an element of the form $x_j^i$ and if $\epsilon_k=1$ we wish to correspond $V_{\epsilon_k}$ to an element of the form $u_{j,j+1}^i$. We do this recursively, to determine the correct indices:\\

\Bi
\item[-] For $k=1$: $\left\{\Ba{c} \mathrm{if}\ \epsilon_1 = 0, \mathrm{then}\ V_{\epsilon_1}= \text{span } \langle x_{1}^1\rangle;\\
 \mathrm{if}\ \epsilon_1 = 1\  \mathrm{then}\ V_{\epsilon_1}= \text{span } \langle u_{1,2}^1\rangle.
\Ea\right.$

\item[-] For $k>1$:
\Bi
\item[]	if $\epsilon_k = 0$ then
\Bi
\item[]		if $V_{\epsilon_{k-1}}= \text{span } \langle x_{j}^i\rangle$ or $V_{\epsilon_{k-1}}= \text{span } \langle u_{j-1,j}^i\rangle$ and the total number of variables with upper script $i$ is still smaller then $n_i$, then $V_{\epsilon_{k}} = \text{span }\langle x_{j+1}^i\rangle$.
\item[]
    Otherwise $V_{\epsilon_{k}} = \text{span }\langle x_{1}^{i+1}\rangle$.
\Ei

\item[]	if $\epsilon_k = 1$ then
\Bi
\item[]		if $V_{\epsilon_{k-1}}= \text{span } \langle x_{j}^i\rangle$ or $V_{\epsilon_{k-1}}= \text{span } \langle u_{j-1,j}^i\rangle$ and the total number of variables with upper script $i$ is still smaller then $n_i$, then $V_{\epsilon_{k}} = \text{span } \langle u_{j+1,j+2}^i \rangle$.
    \item[] Otherwise $V_{\epsilon_{k}} = \text{span } \langle u_{1,2}^{i+1}\rangle$.
\Ei
\Ei
\Ei
With this notation, the total space

$$V_{n,m} = \bigoplus_N \bigoplus_\epsilon \bigoplus_{N=n_1+...+n_m} \left( \caL ieP(n,N) \otimes
V_{\epsilon_1} \ot \dots \ot V_{\epsilon_N}
\otimes \cA ssC(n_1) \ot \dots \ot \cA ssC(n_m)\right)_{\bS_{n_1}\times \dots \times \bS_{n_k}}$$

can be seen as a sum of spaces of the form
$$\left(\left(\caL ie(x_1^1,\dots , x_{\tilde n_1}^1,\dots , x_1^m,\dots , x_{\tilde n_m}^m, u_{1,2}^1,\dots, u_{\tilde n_1-1,\tilde n_1}^1,u_{1,2}^2,\dots , u_{\tilde n_m-1,\tilde n_m}^m )\right)^{\ot n}\right)^{1,\dots,1},$$
where $\tilde n_i$ is the biggest index produced by the algorithm described above.

 Explicitly $\tilde n_i = n_i + \displaystyle\sum_{j=n_1+...+n_{i-1}+1}^{n_1+...+n_{i}} \epsilon_j$.

   For example, consider the following element of $\caL ieP(2,5)\otimes V^{\otimes 5}$:
   $$
\resizebox{22mm}{!}{   \xy
(-18,0)*{}="L",
(15,0)*{}="R",
(-5,15)*{}="u1",
(-5,10)*{\circ}="a1",
(-9,5)*{\circ}="a2",
(-13,0)*{}="L1",
(-5,0)*{\bu}="L2",
(5,14)*{}="u2",
(5,9)*{\circ}="b1",
(10,0)*{\bu}="L3",
(-5,-12)*{}="-u1",
(-5,-7)*{\circ}="-a1",
(5,-11)*{}="-u2",
(5,-6)*{\circ}="-b1",
\ar @{-} "u1";"a1" <0pt>
\ar @{-} "a1";"a2" <0pt>
\ar @{-} "a2";"L1" <0pt>
\ar @{-} "a2";"L2" <0pt>
\ar @{-} "u2";"b1" <0pt>
\ar @{-} "b1";"L3" <0pt>
\ar @{-} "-u1";"-a1" <0pt>
\ar @{-} "L1";"-a1" <0pt>
\ar @{-} "L2";"-a1" <0pt>
\ar @{-} "b1";"-a1" <0pt>
\ar @{-} "-b1";"-u2" <0pt>
\ar @{-} "-b1";"L3" <0pt>
\ar @{-} "-b1";"a1" <0pt>
\ar @{--} "L";"R" <0pt>
\endxy}
   $$

This element is mapped to the expression $[[x_1^1,u^1_{2,3}],x^2_1] \otimes [x^1_4,u^2_{2,3}]$.

Under this identification the differential sends the elements $u^i_{j,j+1}$ to $[x_j^i,x_{j+1}^i]$ and it is zero on the elements $x_j^i$.
Then the differential preserves the $\tilde n_i$'s therefore it preserves this direct sum.

\sip

We conclude that the complex $V_{n,m}$ splits as a sum of tensor products of complexes of the form
$L_{p_1,\dots, p_k}^{1,\dots, 1}$, so from Corollary {\ref{2: auxiliary corollary}} we obtain that its cohomology is concentrated in degree zero.
The proof of the Main theorem is completed.
\end{proof}

\subsection{Remark}\label{rem:degreeshifted} In applications of the theory of involutive Lie bialgebras to string topology, contact topology and quantum $\cA ss_\infty$ algebras  one is often interested in a version of the properad $\LoB$  in which degrees of Lie and coLie operations differ by an even number,
$$
|[\ ,\ ]| - |\vartriangle|=2d,\ \ \ \ d\in \N.
$$
The arguments proving Koszulness of $\LoB$ work also for such degree shifted versions of $\LoB$.
The same remark applies to the Koszul dual properads below.

\subsection{Properads of Frobenius algebras}
The properad of non-unital Frobenius algebras $\cF rob_d$ in dimension $d$ is the properad generated by operations
$
 \begin{xy}
 <0mm,-0.55mm>*{};<0mm,-2.5mm>*{}**@{-},
 <0.5mm,0.5mm>*{};<2.2mm,2.2mm>*{}**@{-},
 <-0.48mm,0.48mm>*{};<-2.2mm,2.2mm>*{}**@{-},
 <0mm,0mm>*{\circ};<0mm,0mm>*{}**@{},
 <0.5mm,0.5mm>*{};<2.7mm,2.8mm>*{^{_2}}**@{},
 <-0.48mm,0.48mm>*{};<-2.7mm,2.8mm>*{^{_1}}**@{},
 \end{xy}
=
(-1)^d
\begin{xy}
 <0mm,-0.55mm>*{};<0mm,-2.5mm>*{}**@{-},
 <0.5mm,0.5mm>*{};<2.2mm,2.2mm>*{}**@{-},
 <-0.48mm,0.48mm>*{};<-2.2mm,2.2mm>*{}**@{-},
 <0mm,0mm>*{\circ};<0mm,0mm>*{}**@{},
 <0.5mm,0.5mm>*{};<2.7mm,2.8mm>*{^{_1}}**@{},
 <-0.48mm,0.48mm>*{};<-2.7mm,2.8mm>*{^{_2}}**@{},
 \end{xy}
 $ (graded co-commutative comultiplication) of degree $d$
 and
 $
 \begin{xy}
 <0mm,0.66mm>*{};<0mm,3mm>*{}**@{-},
 <0.39mm,-0.39mm>*{};<2.2mm,-2.2mm>*{}**@{-},
 <-0.35mm,-0.35mm>*{};<-2.2mm,-2.2mm>*{}**@{-},
 <0mm,0mm>*{\circ};<0mm,0mm>*{}**@{},
   <0.39mm,-0.39mm>*{};<2.9mm,-4mm>*{^{_2}}**@{},
   <-0.35mm,-0.35mm>*{};<-2.8mm,-4mm>*{^{_1}}**@{},
\end{xy}=
\begin{xy}
 <0mm,0.66mm>*{};<0mm,3mm>*{}**@{-},
 <0.39mm,-0.39mm>*{};<2.2mm,-2.2mm>*{}**@{-},
 <-0.35mm,-0.35mm>*{};<-2.2mm,-2.2mm>*{}**@{-},
 <0mm,0mm>*{\circ};<0mm,0mm>*{}**@{},
   <0.39mm,-0.39mm>*{};<2.9mm,-4mm>*{^{_1}}**@{},
   <-0.35mm,-0.35mm>*{};<-2.8mm,-4mm>*{^{_2}}**@{},
\end{xy}
$ (graded commutative multiplication) of degree 0,
modulo the ideal generated by the following relations,
\begin{equation}\label{equ:Frobrelations}
\Ba{c}
\begin{xy}
 <0mm,0mm>*{\circ};<0mm,0mm>*{}**@{},
 <0mm,-0.49mm>*{};<0mm,-3.0mm>*{}**@{-},
 <0.49mm,0.49mm>*{};<1.9mm,1.9mm>*{}**@{-},
 <-0.5mm,0.5mm>*{};<-1.9mm,1.9mm>*{}**@{-},
 <-2.3mm,2.3mm>*{\circ};<-2.3mm,2.3mm>*{}**@{},
 <-1.8mm,2.8mm>*{};<0mm,4.9mm>*{}**@{-},
 <-2.8mm,2.9mm>*{};<-4.6mm,4.9mm>*{}**@{-},
   <0.49mm,0.49mm>*{};<2.7mm,2.3mm>*{^3}**@{},
   <-1.8mm,2.8mm>*{};<0.4mm,5.3mm>*{^2}**@{},
   <-2.8mm,2.9mm>*{};<-5.1mm,5.3mm>*{^1}**@{},
 \end{xy}\Ea
\ = \
\Ba{c}
\begin{xy}
 <0mm,0mm>*{\circ};<0mm,0mm>*{}**@{},
 <0mm,-0.49mm>*{};<0mm,-3.0mm>*{}**@{-},
 <0.49mm,0.49mm>*{};<1.9mm,1.9mm>*{}**@{-},
 <-0.5mm,0.5mm>*{};<-1.9mm,1.9mm>*{}**@{-},
 <2.3mm,2.3mm>*{\circ};<-2.3mm,2.3mm>*{}**@{},
 <1.8mm,2.8mm>*{};<0mm,4.9mm>*{}**@{-},
 <2.8mm,2.9mm>*{};<4.6mm,4.9mm>*{}**@{-},
   <0.49mm,0.49mm>*{};<-2.7mm,2.3mm>*{^1}**@{},
   <-1.8mm,2.8mm>*{};<0mm,5.3mm>*{^2}**@{},
   <-2.8mm,2.9mm>*{};<5.1mm,5.3mm>*{^3}**@{},
 \end{xy}\Ea, \ \ \ \ \
 \Ba{c}\begin{xy}
 <0mm,0mm>*{\circ};<0mm,0mm>*{}**@{},
 <0mm,0.69mm>*{};<0mm,3.0mm>*{}**@{-},
 <0.39mm,-0.39mm>*{};<2.4mm,-2.4mm>*{}**@{-},
 <-0.35mm,-0.35mm>*{};<-1.9mm,-1.9mm>*{}**@{-},
 <-2.4mm,-2.4mm>*{\circ};<-2.4mm,-2.4mm>*{}**@{},
 <-2.0mm,-2.8mm>*{};<0mm,-4.9mm>*{}**@{-},
 <-2.8mm,-2.9mm>*{};<-4.7mm,-4.9mm>*{}**@{-},
    <0.39mm,-0.39mm>*{};<3.3mm,-4.0mm>*{^3}**@{},
    <-2.0mm,-2.8mm>*{};<0.5mm,-6.7mm>*{^2}**@{},
    <-2.8mm,-2.9mm>*{};<-5.2mm,-6.7mm>*{^1}**@{},
 \end{xy}\Ea
\ = \
 \Ba{c}\begin{xy}
 <0mm,0mm>*{\circ};<0mm,0mm>*{}**@{},
 <0mm,0.69mm>*{};<0mm,3.0mm>*{}**@{-},
 <0.39mm,-0.39mm>*{};<2.4mm,-2.4mm>*{}**@{-},
 <-0.35mm,-0.35mm>*{};<-1.9mm,-1.9mm>*{}**@{-},
 <2.4mm,-2.4mm>*{\circ};<-2.4mm,-2.4mm>*{}**@{},
 <2.0mm,-2.8mm>*{};<0mm,-4.9mm>*{}**@{-},
 <2.8mm,-2.9mm>*{};<4.7mm,-4.9mm>*{}**@{-},
    <0.39mm,-0.39mm>*{};<-3mm,-4.0mm>*{^1}**@{},
    <-2.0mm,-2.8mm>*{};<0mm,-6.7mm>*{^2}**@{},
    <-2.8mm,-2.9mm>*{};<5.2mm,-6.7mm>*{^3}**@{},
 \end{xy}\Ea,\ \ \ \ \ \
 \begin{xy}
 <0mm,2.47mm>*{};<0mm,0.12mm>*{}**@{-},
 <0.5mm,3.5mm>*{};<2.2mm,5.2mm>*{}**@{-},
 <-0.48mm,3.48mm>*{};<-2.2mm,5.2mm>*{}**@{-},
 <0mm,3mm>*{\circ};<0mm,3mm>*{}**@{},
  <0mm,-0.8mm>*{\circ};<0mm,-0.8mm>*{}**@{},
<-0.39mm,-1.2mm>*{};<-2.2mm,-3.5mm>*{}**@{-},
 <0.39mm,-1.2mm>*{};<2.2mm,-3.5mm>*{}**@{-},
     <0.5mm,3.5mm>*{};<2.8mm,5.7mm>*{^2}**@{},
     <-0.48mm,3.48mm>*{};<-2.8mm,5.7mm>*{^1}**@{},
   <0mm,-0.8mm>*{};<-2.7mm,-5.2mm>*{^1}**@{},
   <0mm,-0.8mm>*{};<2.7mm,-5.2mm>*{^2}**@{},
\end{xy}
\  = \
\begin{xy}
 <0mm,-1.3mm>*{};<0mm,-3.5mm>*{}**@{-},
 <0.38mm,-0.2mm>*{};<2.0mm,2.0mm>*{}**@{-},
 <-0.38mm,-0.2mm>*{};<-2.2mm,2.2mm>*{}**@{-},
<0mm,-0.8mm>*{\circ};<0mm,0.8mm>*{}**@{},
 <2.4mm,2.4mm>*{\circ};<2.4mm,2.4mm>*{}**@{},
 <2.77mm,2.0mm>*{};<4.4mm,-0.8mm>*{}**@{-},
 <2.4mm,3mm>*{};<2.4mm,5.2mm>*{}**@{-},
     <0mm,-1.3mm>*{};<0mm,-5.3mm>*{^1}**@{},
     <2.5mm,2.3mm>*{};<5.1mm,-2.6mm>*{^2}**@{},
    <2.4mm,2.5mm>*{};<2.4mm,5.7mm>*{^2}**@{},
    <-0.38mm,-0.2mm>*{};<-2.8mm,2.5mm>*{^1}**@{},
    \end{xy}.
\end{equation}

For the purposes of this paper we will define the properad of non-unital Frobenius algebras to be
\[
 \cF rob :=\cF rob_2.
\]
For example, the cohomology $H^2(\Sigma)$ of any closed Riemann surface $\Sigma$ is a Frobenius algebra in this sense.
Comparing with section \ref{sec:explicit Koszul dual of Lob} we see that the properad $\cF rob$ is isomorphic to the Koszul dual properad of $\LoB$, up to a degree shift
\[
 \LoB^\Koz \cong \cF rob^*\{1\}.
\]

By Koszul duality theory of properads \cite{Va}, one hence obtains from Theorem {\ref{2: Theorem on Koszulness}} the following result.

 \begin{corollary}\label{2: corollary on Frob}
 The properad of non-unital (symmetric) Frobenius algebras $\cF rob$ is Koszul.
 \end{corollary}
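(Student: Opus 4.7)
The plan is essentially a one-line invocation of Koszul duality theory, combined with the identification already recorded just above the corollary statement.

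First I would recall the general fact from Vallette's theory of Koszul duality for properads \cite{Va}: a quadratic properad $\cP$ is Koszul if and only if its Koszul dual properad $\cP^!$ is Koszul. More precisely, the bar-cobar resolution of $\cP^!$ is quasi-isomorphic to $\cP^!$ exactly when the cobar-bar resolution of $\cP$ is quasi-isomorphic to $\cP$, because Koszulness is a self-dual property encoded symmetrically in the pairing between $\cP$ and its Koszul dual cooperad $\cP^{\Koz}$.

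Next I would invoke the identification noted in the paper, namely
\[
\LoB^{\Koz} \cong \cF rob^*\{1\}.
\]
Dualizing and undoing the degree shift, this expresses $\cF rob$ as the degree-shifted Koszul dual properad of $\LoB$. The degree shift $\{1\}$ is irrelevant for the question of Koszulness: for any quadratic properad $\cQ$ and any integer $k$, the properad $\cQ\{k\}$ is Koszul if and only if $\cQ$ is, since shifting the grading on generators induces a corresponding shift on the cobar differential and does not create or destroy cohomology.

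Combining these two ingredients with Theorem \ref{2: Theorem on Koszulness}, which asserts Koszulness of $\LoB$, I conclude that the Koszul dual properad, and hence $\cF rob$ itself, is Koszul. I expect no real obstacle here: the nontrivial content has already been established in the Main Theorem, and the present corollary is a formal consequence of Vallette's machinery. The only point worth double-checking is compatibility of the identification $\LoB^{\Koz} \cong \cF rob^*\{1\}$ with the conventions for duals and degree shifts in \cite{Va}, which is a routine bookkeeping step given the explicit description of $\LoB^{\Koz}$ in Section~\ref{sec:explicit Koszul dual of Lob}.
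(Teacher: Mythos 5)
Your proposal matches the paper's own argument: the paper likewise identifies $\cF rob$ with the Koszul dual properad of $\LoB$ up to a degree shift via $\LoB^\Koz \cong \cF rob^*\{1\}$ and then deduces the corollary from the Main Theorem by the self-duality of Koszulness in Vallette's theory. The only difference is that you spell out the two routine points (self-duality of Koszulness and invariance under the shift $\{1\}$) that the paper leaves implicit, which is fine.
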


 By adding the additional relation
 \[
 \xy
 (0,0)*{\circ}="a",
(0,6)*{\circ}="b",
(3,3)*{}="c",
(-3,3)*{}="d",
 (0,9)*{}="b'",
(0,-3)*{}="a'",
\ar@{-} "a";"c" <0pt>
\ar @{-} "a";"d" <0pt>
\ar @{-} "a";"a'" <0pt>
\ar @{-} "b";"c" <0pt>
\ar @{-} "b";"d" <0pt>
\ar @{-} "b";"b'" <0pt>
\endxy =0
 \]
(which is automatic for $d$ odd) to the presentation of $\Frob_d$ we obtain the properad(s) of involutive Frobenius algebras $\invFrob_d$. They are Koszul dual to the operads governing degree shifted Lie bialgebras (cf. Remark \ref{rem:degreeshifted}), and in particular $\LieBi^\Koz\cong \invcoFrobtwo\{1\}$. It then follows from the Koszulness of $\LieBi$ (and its degre shifted relatives) that the properads $\invFrob_d$ are Koszul, as noted in \cite{JF}.

 \sip

The properad $uc\cF rob$ of unital-counital Frobenius algebras is, by definition, a quotient of the free properad
generated by degree zero corollas $\Ba{c}
\xy
 <0mm,-2mm>*{\circ};<0mm,2mm>*{}**@{-},
 \endxy\Ea
$ (unit),
 $\Ba{c}
\xy
 <0mm,2mm>*{\circ};<0mm,-2mm>*{}**@{-},
 \endxy\Ea
$ (counit),
$
 \begin{xy}
 <0mm,-0.55mm>*{};<0mm,-2.5mm>*{}**@{-},
 <0.5mm,0.5mm>*{};<2.2mm,2.2mm>*{}**@{-},
 <-0.48mm,0.48mm>*{};<-2.2mm,2.2mm>*{}**@{-},
 <0mm,0mm>*{\circ};<0mm,0mm>*{}**@{},
 <0.5mm,0.5mm>*{};<2.7mm,2.8mm>*{^{_2}}**@{},
 <-0.48mm,0.48mm>*{};<-2.7mm,2.8mm>*{^{_1}}**@{},
 \end{xy}
=
\begin{xy}
 <0mm,-0.55mm>*{};<0mm,-2.5mm>*{}**@{-},
 <0.5mm,0.5mm>*{};<2.2mm,2.2mm>*{}**@{-},
 <-0.48mm,0.48mm>*{};<-2.2mm,2.2mm>*{}**@{-},
 <0mm,0mm>*{\circ};<0mm,0mm>*{}**@{},
 <0.5mm,0.5mm>*{};<2.7mm,2.8mm>*{^{_1}}**@{},
 <-0.48mm,0.48mm>*{};<-2.7mm,2.8mm>*{^{_2}}**@{},
 \end{xy}
 $ (graded co-commutative comultiplication)
 and
 $
 \begin{xy}
 <0mm,0.66mm>*{};<0mm,3mm>*{}**@{-},
 <0.39mm,-0.39mm>*{};<2.2mm,-2.2mm>*{}**@{-},
 <-0.35mm,-0.35mm>*{};<-2.2mm,-2.2mm>*{}**@{-},
 <0mm,0mm>*{\circ};<0mm,0mm>*{}**@{},
   <0.39mm,-0.39mm>*{};<2.9mm,-4mm>*{^{_2}}**@{},
   <-0.35mm,-0.35mm>*{};<-2.8mm,-4mm>*{^{_1}}**@{},
\end{xy}=
\begin{xy}
 <0mm,0.66mm>*{};<0mm,3mm>*{}**@{-},
 <0.39mm,-0.39mm>*{};<2.2mm,-2.2mm>*{}**@{-},
 <-0.35mm,-0.35mm>*{};<-2.2mm,-2.2mm>*{}**@{-},
 <0mm,0mm>*{\circ};<0mm,0mm>*{}**@{},
   <0.39mm,-0.39mm>*{};<2.9mm,-4mm>*{^{_1}}**@{},
   <-0.35mm,-0.35mm>*{};<-2.8mm,-4mm>*{^{_2}}**@{},
\end{xy}
$ (graded commutative multiplication)
modulo the ideal generated by the relations \eqref{equ:Frobrelations}
and the additional relations
\Beq\label{2: uFrob relations}
\begin{xy}
<0mm,-0.55mm>*{};<0mm,-2.5mm>*{}**@{-},
<0.5mm,0.5mm>*{};<2.2mm,2.2mm>*{}**@{-},
<-0.48mm,0.48mm>*{};<-2.5mm,2.5mm>*{}**@{-},
<0mm,0mm>*{\circ};
<-2.78mm,2.78mm>*{\circ};
 \end{xy} \ - \
 \begin{xy}
<0mm,-2.2mm>*{};<0mm,2.2mm>*{}**@{-},
 \end{xy}\ =0 \ \ \ , \ \ \
 \begin{xy}
<0mm,0.55mm>*{};<0mm,2.5mm>*{}**@{-},
<0.5mm,-0.5mm>*{};<2.2mm,-2.2mm>*{}**@{-},
<-0.48mm,-0.48mm>*{};<-2.5mm,-2.5mm>*{}**@{-},
<0mm,0mm>*{\circ};
<-2.78mm,-2.78mm>*{\circ};
 \end{xy} \ - \
 \begin{xy}
<0mm,-2.2mm>*{};<0mm,2.2mm>*{}**@{-}
 \end{xy}\ =0 \ \ \ .
\Eeq
where the vertical line\ $\begin{xy}
<0mm,-2.2mm>*{};<0mm,2.2mm>*{}**@{-}
 \end{xy}$ \ stands for the unit in the properad $\cF rob$.
Similarly one defines a properad $u\cF rob$ of unital Frobenius algebras, and a properad
$c\cF rob$ of counital algebras. Clearly, $u\cF rob$ and $c\cF rob$ are subproperads of $uc\cF rob$.

 \begin{theorem}
 The properads\ \, $u\cF rob$, $c\cF rob$ and $uc\cF rob$ are Koszul.
 \end{theorem}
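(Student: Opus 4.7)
The plan is to apply the curved Koszul duality of Hirsh--Mill\`es \cite{HM}, extended to the properadic setting, taking as input the Koszulness of $\cF rob$ just established in Corollary 2.8. Each of $u\cF rob$, $c\cF rob$ and $uc\cF rob$ is inhomogeneous quadratic: for instance $u\cF rob$ arises from $\cF rob$ by adjoining a unit $\eta$ of bi-arity $(1,0)$ and imposing the axiom $\mu \circ (\eta \otimes \id) = \id$, whose left-hand side has syntactic weight two while the right-hand side is the weight-zero identity in the properad. Analogous remarks apply to $c\cF rob$ (adjoin a counit $\epsilon$) and to $uc\cF rob$ (adjoin both).

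First I would introduce the quadratic analogues $q\cP$ obtained by replacing each unit/counit axiom with its purely quadratic part, e.g.\ $\mu \circ (\eta \otimes \id) = 0$ for $q(u\cF rob)$, and prove that these quadratic properads are Koszul. The argument should mimic Step 2 of the proof of Theorem 2.6: filter by the number of unit/counit generators, so that on the associated graded the complex decomposes as a tensor product of the $\cF rob$ complex and elementary complexes built from free $\eta$'s and $\epsilon$'s; Koszulness of $\cF rob$ (Corollary 2.8) then closes the argument. Equivalently one may produce a distributive law between $\cF rob$ and the free (co)unit piece, since in the quadratic analogue all compositions of units and counits with $\mu$ and $\Delta$ are set to zero.

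Second I would verify the Hirsh--Mill\`es lifting conditions (ql1) and (ql2): that the space of inhomogeneous relations $R$ meets the weight-two part of the free properad exactly in its quadratic projection $qR$, and that the partial differential on $qR$ induced by the linear parts of the relations extends to a square-zero coderivation on the Koszul dual coproperad (equivalently, its obstruction lies in $R$). Both are finite combinatorial checks on cubic monomials in the generators, using only the explicit unit/counit axioms composed through a single multiplication, comultiplication, or Frobenius vertex, and they reduce to the familiar fact that the unit and counit axioms are themselves consistent with associativity, coassociativity, and the Frobenius relation.

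The main obstacle will be the faithful transfer of the Hirsh--Mill\`es formalism from operads to properads in the presence of bi-arity $(1,0)$ and $(0,1)$ operations---precisely the issue that forced the twisting of Kontsevich's functor in Step 1 of the proof of Theorem 2.6. One must verify that the curvature element in the curved Koszul dual coproperad lives in the correct bi-arity and satisfies the appropriate Maurer--Cartan equation, so that $d^2 = 0$ on the curved cobar construction. Once these technicalities are addressed, curved Koszul duality produces a quasi-isomorphism from the curved cobar construction of $\cP^\Koz$ to $\cP$ for each $\cP \in \{u\cF rob,\, c\cF rob,\, uc\cF rob\}$, which is the desired Koszulness statement.
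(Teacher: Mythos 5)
Your opening move coincides with the paper's: invoke the curved Koszul duality of \cite{HM} to reduce the claim to the Koszulness of the quadratic analogues $qu\cF rob$, $qc\cF rob$, $quc\cF rob$, in which the unit and counit axioms are replaced by their quadratic parts. The gap lies in your second step, which is where essentially all of the work actually sits.

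Your argument for the Koszulness of the quadratic analogues rests on the assertion that ``in the quadratic analogue all compositions of units and counits with $\mu$ and $\Delta$ are set to zero.'' That is false: $q(u\cF rob)$ kills only $\mu\circ(\eta\otimes\id)$, while $\Delta\circ\eta$ and, more generally, every weight-$a$ Frobenius ladder applied to the unit survives --- this is precisely the extra direct summand (cocommutative ladders sitting on a unit) appearing in the paper's explicit decomposition of $qu\cF rob$, and dually for $qc\cF rob$. Consequently the associated graded of your filtration does not split as a tensor product of the $\cF rob$ complex with elementary complexes on the (co)units, and no trivial distributive law is available; even a genuine properadic distributive law would require an exactness/injectivity verification in the presence of $(1,0)$- and $(0,1)$-ary generators that you do not supply. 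In the paper this step is the substantive one: it passes to the Koszul dual properad $qc\LoB=(qc\cF rob)^{!}$ and reruns the entire proof of the Main Theorem \ref{2: Theorem on Koszulness} with the additional $(0,n)$-ary generators, which requires (i) a new Koszulness statement for the modified $\frac{1}{2}$-prop governing Lie bialgebras with a counit killed by the cobracket (a nontrivial cohomology computation imported from \cite{Me2}), (ii) the twisted exact functor $F$ applied to its cobar construction, and (iii) the enlarged auxiliary complexes $\tilde V_{n,m}$ built from the algebras of \S\ref{sec:extracomplexes}. Your closing paragraph correctly flags the $(1,0)$/$(0,1)$ arities as the danger point, but locates the difficulty in the curved Koszul duality formalism itself rather than where it actually lies, namely in computing the cohomology of the cobar construction of the quadratic analogue.
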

 \begin{proof} By curved Koszul duality theory \cite{HM}, it is enough to prove Koszulness
 of the associated quadratic properads, $qu\cF rob$, $qc\cF rob$ and $quc\cF rob$,
obtained from $u\cF rob$, $c\cF rob$ and, respectively, $uc\cF rob$ by replacing  inhomogeneous relations
(\ref{2: uFrob relations}) by the following ones (see \S 3.6 in \cite{HM}),
$$
\begin{xy}
<0mm,-0.55mm>*{};<0mm,-2.5mm>*{}**@{-},
<0.5mm,0.5mm>*{};<2.2mm,2.2mm>*{}**@{-},
<-0.48mm,0.48mm>*{};<-2.5mm,2.5mm>*{}**@{-},
<0mm,0mm>*{\circ};
<-2.78mm,2.78mm>*{\circ};
 \end{xy} \ = \ 0 \ \ \ , \ \ \
 \begin{xy}
<0mm,0.55mm>*{};<0mm,2.5mm>*{}**@{-},
<0.5mm,-0.5mm>*{};<2.2mm,-2.2mm>*{}**@{-},
<-0.48mm,-0.48mm>*{};<-2.5mm,-2.5mm>*{}**@{-},
<0mm,0mm>*{\circ};
<-2.78mm,-2.78mm>*{\circ};
 \end{xy} \ = \ 0
$$
so that we have  decompositions into direct sums of $\bS$-bimodules,
$$
qu\cF rob= \mathrm{span}\left\langle \hspace{-1.0mm} \Ba{c}
\resizebox{7mm}{!}{\xy
(0,-5)*{\circ};
(0,0)*+{a}*\cir{}
**\dir{-};
(0,6)*{_{co\cC om}};
(0,0)*+{a}*\cir{}
**\dir{-};
\endxy}
\Ea \hspace{-1.0mm} \right\rangle \  \oplus\  \cF rob
 \ \ \ \ , \ \ \ \
 qc\cF rob= \mathrm{span}\left\langle \hspace{-1.0mm}\Ba{c}
\resizebox{5mm}{!}{\xy
(0,5)*{\circ};
(0,0)*+{_a}*\cir{}
**\dir{-};
(0,-6)*{_{\cC om}};
(0,0)*+{_a}*\cir{}
**\dir{-};
\endxy}
\Ea  \hspace{-1.0mm} \right\rangle\ \oplus\   \cF rob
$$
and
$$
 quc\cF rob=
  \mathrm{span}\left\langle \hspace{-1.0mm} \Ba{c}
\resizebox{7mm}{!}{\xy
(0,-5)*{\circ};
(0,0)*+{_a}*\cir{}
**\dir{-};
(0,6)*{_{co\cC om}};
(0,0)*+{_a}*\cir{}
**\dir{-};
\endxy}
\Ea \hspace{-1.0mm} \right\rangle \
 \oplus \ \mathrm{span}\left\langle \hspace{-1.0mm}\Ba{c}
\resizebox{5mm}{!}{\xy
(0,5)*{\circ};
(0,0)*+{_a}*\cir{}
**\dir{-};
(0,-6)*{_{\cC om}};
(0,0)*+{_a}*\cir{}
**\dir{-};
\endxy}
\Ea  \hspace{-1.0mm} \right\rangle\ \ \oplus \ \
 \cF rob.
$$
where $\resizebox{4mm}{!}{
\xy
(0,-4)*{};
(0,0)*+{_a}*\cir{}
**\dir{-};
(0,4)*{};
(0,0)*+{_a}*\cir{}
**\dir{-};
\endxy}
$ stands for the graph given in (\ref{2: a weight in LieB-Koszul}).

\sip

Consider, for example, the properad  $qc\cF rob$ (proofs of Koszulness of properads $qu\cF rob$ and $quc\cF rob$ can be given by a similar argument).
 Its Koszul dual properad $qc\cF rob^!=:qc \LoB$
is generated by the properads $\LoB$ and   $\left\langle\hspace{-1.8mm}  \Ba{c}\xy
 <0mm,2mm>*{\circ};<0mm,-2mm>*{}**@{-},
 \endxy\Ea \hspace{-1.8mm} \right\rangle$  modulo the following relation,
 $$
  \begin{xy}
<0mm,0.55mm>*{};<0mm,3.5mm>*{\circ}**@{-},
<0.5mm,-0.5mm>*{};<2.2mm,-2.2mm>*{}**@{-},
<-0.48mm,-0.48mm>*{};<-2.5mm,-2.5mm>*{}**@{-},
<0mm,0mm>*{\circ};
<-2.78mm,-2.78mm>*{};
 \end{xy} \ = \ 0.
 $$
We have,
$$
qc\LoB^\Koz= (qc\cF rob)^*\{1\} \cong
 \mathrm{span}\left\langle \hspace{-1.0mm}\Ba{c}
\resizebox{4mm}{!}{\xy
(0,5)*{\circ};
(0,0)*+{_a}*\cir{}
**\dir{-};
(0,-6)*{_{\caL ie^\Koz}};
(0,0)*+{_a}*\cir{}
**\dir{-};
\endxy}
\Ea  \hspace{-1.0mm} \right\rangle\
\ \oplus \ \LoB^\Koz  .
 $$
It will suffice to show that the properad $qc\LoB$ is Koszul. To this end consider the dg properad $\Omega(qc\LoB^\Koz)$ which is a free properad generated by corollas
 (\ref{2: generating corollas of LoB infty}) and the following ones,
\Beq\label{2: 0-generators of qucLoB}
\Ba{c}
\resizebox{10mm}{!}{\xy
(-7,-8.5)*{_{_1}},
(-3.5,-8.5)*{_{_2}},
(7.5,-8.5)*{_{_n}},
(0,0)*+{_a}*\cir{}="o",
(-7,-7)*{}="d1",
(-3.5,-7)*{}="d2",
(1.6,-5.5)*{...},
(7,-7)*{}="d4",
\ar @{-} "o";"d1" <0pt>
\ar @{-} "o";"d2" <0pt>
\ar @{-} "o";"d4" <0pt>
\endxy}
\Ea=(-1)^\sigma
\Ba{c}
\resizebox{14mm}{!}{\xy
(-8.5,-8.5)*{_{_{\sigma(1)}}},
(-3,-8.5)*{_{_{\sigma(2)}}},
(7.9,-8.5)*{_{_{\sigma(n)}}},
(0,0)*+{_a}*\cir{}="o",
(-7,-7)*{}="d1",
(-3.5,-7)*{}="d2",
(1.6,-5.5)*{...},
(7,-7)*{}="d4",
\ar @{-} "o";"d1" <0pt>
\ar @{-} "o";"d2" <0pt>
\ar @{-} "o";"d4" <0pt>
\endxy}
\Ea
\Eeq
where $a\geq 0$, $n\geq 1$ and $\sigma\in \bS_n$ is an arbitrary permutation. The differential is given
on corollas  (\ref{2: generating corollas of LoB infty}) by the standard formula (\ref{2: d on Lie inv infty}) and on $(0,n)$-generators by
$$
d\Ba{c}
\resizebox{10mm}{!}{\xy
(-7,-8.5)*{_{_1}},
(-3.5,-8.5)*{_{_2}},
(7.5,-8.5)*{_{_n}},
(0,0)*+{_a}*\cir{}="o",
(-7,-7)*{}="d1",
(-3.5,-7)*{}="d2",
(1.6,-5.5)*{...},
(7,-7)*{}="d4",
\ar @{-} "o";"d1" <0pt>
\ar @{-} "o";"d2" <0pt>
\ar @{-} "o";"d4" <0pt>
\endxy}
\Ea
=
\sum_{a=b+c+l-1}\sum_{
[n]=J_1\sqcup J_2\atop \# J_1\geq 1} \pm \hspace{-6mm}
\Ba{c}
%
%
\resizebox{18mm}{!}{\xy
(0,0)*+{_b}*\cir{}="b",
(10,10)*+{_c}*\cir{}="c",
%
(-9,6)*{}="1",
(-7,6)*{}="2",
(-2,6)*{}="3",
(-4,-6)*{}="-1",
(-2,-6)*{}="-2",
(4,-6)*{}="-3",
(1,-5)*{...},
(0,-8)*{\underbrace{\ \ \ \ \ \ \ \ }},
(0,-11)*{_{J_1}},
(6,16)*{}="1'",
(8,16)*{}="2'",
(14,16)*{}="3'",
(11,6)*{}="-1'",
(16,6)*{}="-2'",
(18,6)*{}="-3'",
(13.5,6)*{...},
(15,4)*{\underbrace{\ \ \ \ \ \ \ }},
(15,1)*{_{J_2}},
%
(0,2)*-{};(8.0,10.0)*-{}
**\crv{(0,10)};
(0.5,1.8)*-{};(8.5,9.0)*-{}
**\crv{(0.4,7)};
(1.5,0.5)*-{};(9.1,8.5)*-{}
**\crv{(5,1)};
(1.7,0.0)*-{};(9.5,8.6)*-{}
**\crv{(6,-1)};
(5,5)*+{...};
\ar @{-} "b";"-1" <0pt>
\ar @{-} "b";"-2" <0pt>
\ar @{-} "b";"-3" <0pt>
\ar @{-} "c";"-1'" <0pt>
\ar @{-} "c";"-2'" <0pt>
\ar @{-} "c";"-3'" <0pt>
\endxy}
\Ea
$$
where $l$ counts the number of internal edges connecting
the two vertices
on the right-hand side. There is a natural morphism of properads
$$
\Omega(qc\LoB^\Koz) \lon qc\LoB,
$$
which is a quasi-isomorphism if and only if  $qc\LoB$ is Koszul. Thus to prove Koszulness
of $qc\LoB$ it is enough to establish an isomorphism $H^\bu(\Omega(qc\LoB^\Koz)) \cong qc\LoB$ of $\bS$-bimodules.

\sip

To do this, one may closely follow the proof of Theorem {\ref{2: Theorem on Koszulness}}, adjusting it slightly so as to allow for the additional $(0,n)$-ary generators.
First, we define a properad $\tilde \cP$ which is generated by the properad $\cP$ of section \ref{2: subsection on P}, together with an additional generator of arity $(0,1)$, in pictures $\hspace{-1.8mm}  \Ba{c}\xy
 <0mm,2mm>*{\circ};<0mm,-2mm>*{}**@{-},
 \endxy\Ea \hspace{-1.8mm}$, with the additional relations
\begin{align*}
   \begin{xy}
<0mm,0.55mm>*{};<0mm,3.5mm>*{\circ}**@{-},
<0.5mm,-0.5mm>*{};<2.2mm,-2.2mm>*{}**@{-},
<-0.48mm,-0.48mm>*{};<-2.5mm,-2.5mm>*{}**@{-},
<0mm,0mm>*{\circ};
<-2.78mm,-2.78mm>*{};
 \end{xy} &= \ 0
 &
 \xy
(0,-1.4)*{\bu}="0",
 (0,2.4)*{\circ}="1",
(0,-4.5)*{}="d2",
\ar @{-} "0";"1" <0pt>
\ar @{-} "0";"d2" <0pt>
\endxy
&=
0\,.
\end{align*}

The map $\Omega(qc\LoB^\Koz)\to qc\LoB$ clearly factors through $\tilde\cP$
\Beq\label{equ:Koszul factoring}
 \Omega(qc\LoB^\Koz) \to \tilde\cP \to qc\LoB
\Eeq
and it suffices to show that both of the above maps are quasi-isomorphisms.
Consider first the left-hand map.
The fact that this map is a quasi-isomorphism may be proven by copying the proof of Theorem \ref{2: proposition on nu quasi-iso}, except that now the functor $F$ (as in section \ref{2: subsection on P}) is applied not to the
cobar construction, $\Omega_{\frac{1}{2}} ( \LB^{\text{!`}}_{\frac{1}{2}}))$
but to $\Omega_{\frac{1}{2}} ( qc\LB^{\text{!`}}_{\frac{1}{2}}))$. Here
\[
 qc\LB_{\frac{1}{2}}(m,n)=
 \begin{cases}
  \LB_{\frac{1}{2}}(m,n) & \text{if $(m,n)\neq (0,1)$} \\
  \K & \text{if $(m,n)= (0,1)$}
 \end{cases}
\]
is the $\frac 1 2$-prop governing Lie bialgebras with a counit operation killed by the cobracket.
More concretely, $qc\LB^{\text{!`}}_{\frac{1}{2}}(m,n)$ is the same as $\LB^{\text{!`}}_{\frac{1}{2}}(m,n)$ in all arities $(m,n)$ with $m,n>0$, but $qc\LB^{\text{!`}}_{\frac{1}{2}}(0,n)$ is one-dimensional, the extra operations corresponding to corollas
\[
 \Ba{c}
\resizebox{10mm}{!}{\xy
(-7,-8.5)*{_{}},
(-3.5,-8.5)*{_{}},
(7.5,-8.5)*{_{}},
(0,0)*+{}*\cir{}="o",
(-7,-7)*{}="d1",
(-3.5,-7)*{}="d2",
(1.6,-5.5)*{...},
(7,-7)*{}="d4",
\ar @{-} "o";"d1" <0pt>
\ar @{-} "o";"d2" <0pt>
\ar @{-} "o";"d4" <0pt>
\endxy}
\Ea\, .
\]
One can check\footnote{The piece of the $\frac 1 2$-prop $\Omega_{\frac{1}{2}} ( qc\LB^{\text{!`}}_{\frac{1}{2}}))$ involving the additional generators is isomorphic to the complex $(E_1^{\mathsf{Lie}+},d_1^{\mathsf{Lie}+})$ from \cite{Me2} (see page 344). According to loc. cit. its cohomology is one-dimensional.} that the $\frac{1}{2}$-prop $qc\LB^{\text{!`}}_{\frac{1}{2}}$ is Koszul, i.~e., that
\[
 H(\Omega_{\frac{1}{2}}( qc\LB^{\text{!`}}_{\frac{1}{2}}))\cong qc\LB_{\frac{1}{2}}.
\]
The properad $\tilde\cP$ is obtained by applying the exact functor $F$ to this $\frac 1 2$-prop, and hence, by essentially the same arguments as in the proof of Theorem \ref{2: nu quasi-iso to P} the left-hand map of \eqref{equ:Koszul factoring} is a quasi-isomorphism.

\sip

Next consider the right hand map of \eqref{equ:Koszul factoring}. It can be shown to be a quasi-isomorphism along the lines of the proof of Theorem \ref{2: Theorem on Koszulness}.
Again, it is clear that the degree zero cohomology of $\tilde\cP$ is $qc\LoB$, so it will suffice to show that $H^{>0}(\tilde\cP)=0$.
First, let $\widetilde{\cP\cP}$ be the prop generated by the properad $\tilde\cP$. As a dg $\bS$-bimodule it is isomorphic to (cf. \eqref{2: basis in PP})
\begin{equation*}
   \tilde W(n,m):= \bigoplus_{N,M} \left( \caL ieP(n,N) \otimes V^{\otimes N} \otimes \K^{\otimes M}\otimes \caL ieCP(N+M,m)
   \right)_{\bS_N\times \bS_M}
\end{equation*}
where $V$ is as in \eqref{2: basis in PP}.
The above complex $\tilde W(n,m)$ is a direct summand of the complex (cf. \eqref{equ:V_mn})
\[
\tilde V_{n,m}
:=
\bigoplus_{N,M} \left( \caL ieP(n,N) \otimes V^{\otimes N} \otimes \K^{\otimes M} \otimes \cA ssCP(N+M,m)
\right)_{\bS_N\times \bS_M}
\]
by arguments similar to those following \eqref{equ:V_mn}.
Then again by the Koszulness results of section \ref{sec:extracomplexes} it follows that the above complex has no cohomology in positive degrees, hence neither can $\widetilde{\cP\cP}$ have cohomology in positive degrees. Hence we can conclude that the properad $qc\LoB$ is Koszul.

\end{proof}

\bip

{\large
\section{\bf Deformation complexes}\label{3: Def complexes}
}

As one application of the Koszulness of $\LoB$ and $\Frob$ we obtain minimal models $\LoB_\infty=\Omega(\LoB^\Koz)=\Omega(\coFrob\{1\})$ and $\Frob_\infty=\Omega(\invcoLieBi\{1\})$ of these properads and hence minimal models for their deformation complexes and for the deformation complexes of their algebras.

\sip

\subsection{\bf A deformation complex of an involutive Lie bialgebra}\label{3: Section on Def complex}

According to the general theory \cite{MV}, $\LoB_\infty$-algebra structures in a dg
vector space $(\fg, d)$ can be identified with Maurer-Cartan elements,
  $$
  \mathcal{MC}\left({\mathsf{InvLieB}}(\fg)\right):= \left\{\Ga\in \mathsf{InvLieB}(\fg)
  : |\Ga|=3\ \mbox{and}\  [\Ga,\Ga]_{CE}=0\right\},
  $$
  of a  graded Lie algebra,
\Beq\label{3:  InvLieB(g)}
{\mathsf{InvLieB}}(\fg):=\Def\left(\LoB_\infty \stackrel{0}{\lon} \cE nd_\fg\right)[-2],
\Eeq
which controls deformations of the zero morphism from $\LoB_\infty$ to the endomorphism
properad $\cE nd_\fg=\{\Hom(\fg^{\ot n}, \fg^{\ot m})\}$.
 As a $\Z$-graded vector space ${\mathsf{InvLieB}}(\fg)$ can be identified with the vector
space of homomorphisms of $\bS$-bimodules,
\Beqrn
{\mathsf{InvLieB}}(\fg)&=&\Hom_\bS\left((\LoB)^\Koz,  \cE nd_\fg\right)[-2]\\
&=&\prod_{a\geq0, m,n\geq 1\atop
m+n+a\geq 3} \Hom_{\bS_m\times \bS_n}\left(\sgn_n\ot\sgn_m[m+n+2a-2],
\Hom(\fg^{\ot  n}, \fg^{\ot m}) \right)[-2]\\
&=& \prod_{a\geq 0, m,n\geq 1\atop
m+n+a\geq 3} \Hom(\odot^n(\fg[-1]),  \odot^m(\fg[-1]) )[-2a]\\
&\subset& \displaystyle  \widehat{\odot^\bu}\left(\fg[-1]\oplus \fg^*[-1]\oplus
\K[-2]\right)\simeq \K[[\eta^i,\psi_i,\hbar]]
\Eeqrn

where $ \hbar$ a formal  parameter of degree $2$  (a basis vector of the summand $\K[-2]$  above), and, for a basis
$(e_1, e_2,\ldots, e_i,\ldots  )$ in $\fg$ and the associated dual basis
$(e^1, e^2,\ldots, e^i,\ldots  )$ in $\fg^*$ we set $\eta^i:=s\ e^i$,
$\psi_i:=s\ e_i$,
where $s: V\rar V[-1]$ is the suspension map. Therefore the Lie algebra ${\mathsf{InvLieB}}(\fg)$
has a canonical structure of a module over the algebra $\K[[\hbar]]$;
moreover, for finite dimensional $\alg g$ its elements can be identified with
formal power series\footnote{In fact, this is true for a class of infinite-dimensional vector spaces. Consider a
category
of graded vector spaces which are inverse limits of finite dimensional ones (with the corresponding topology and  with the completed tensor product), and also a category
of graded vector spaces which are direct limits of finite dimensional ones. If $\alg g$  belongs to one of these categories, then ${\alg g}^*$ belongs (almost by definition) to the other, and we have isomorphisms of the type $({\alg g} \ot {\alg g})^*= {\alg g}^* \ot {\alg g }^*$ which are required for the ``local coordinate" formulae to work.}, $f$, in variables $\psi_i$, $\eta^i$ and $\hbar$, which  satisfy the ``boundary"
conditions,
\Beq\label{Appendix: boundary conditions}
f(\psi,\eta,\hbar)|_{\psi_i=0}=0,\ \ \ \ f(\psi,\eta,\hbar)|_{\eta^i=0}=0,\ \ \ \
f(\psi,\eta,\hbar)|_{\hbar=0}\in I^3
\Eeq
where $I$ is the maximal ideal in $\K[[\psi,\eta]]$. The Lie brackets
in  ${\mathsf{InvLieB}}(\fg)$ can be read off either from the coproperad structure in
$(\LoB)^\Koz$
or directly from the formula (\ref{2: d on Lie inv infty}) for the differential,
and are given explicitly by (cf.\ \cite{DCTT}),
\Beq\label{3: brackets in InvLieB(g)}
[f,g]_\hbar:=f *_\hbar g - (-1)^{|f||g|} g *_\hbar f
\Eeq
where (up to  Koszul signs),
$$
f *_\hbar g:=\sum_{k=0}^\infty \frac{\hbar^{k-1}}{k!}\sum_{i_1,\ldots, i_k}
\pm \frac{\p^ k f}{\p \eta^{i_1}\cdots \eta^{i_k}}\frac{\p^ k g}{\p\psi_{i_1}\cdots
\p\psi_{i_k}}
$$
is an associative product. Note that the differential
$d_\fg$ in $\fg$ gives rise to a quadratic element, $D_\fg=\sum_{i,j}\pm d_j^i
\psi_i\eta^j$, of homological degree $3$ in $\K[[\eta^i,\psi_i,\hbar]]$, where $d_j^i$ are the
structure constants of $d_\fg$ in the chosen basis,
 $d_\fg(e_i)=:\sum_{j} d_i^j e_j$.

\bip

Finally, we can identify $\LoB_\infty$ structures in a finite dimensional dg vector space $(\fg,d_\fg)$ with
a homogenous formal power series,
$$
\Ga:= D_\fg + f \in  \K[[\eta^i,\psi_i,\hbar]],
$$
of homological degree 3 such that
\Beq\label{Appendix: equation for Gamma-h}
\Ga \star_\hbar \Ga=\sum_{k=0}^\infty \frac{\hbar^{k-1}}{k!}\sum_{i_1,\ldots, i_k}
\pm \frac{\p^ k \Ga}{\p \eta^{i_1}\cdots \eta^{i_k}}\frac{\p^ k \Ga}{\p\psi_{i_1}\cdots
\p\psi_{i_k}}=0,
\Eeq
 and the summand
 $f$ satisfies boundary conditions (\ref{Appendix: boundary conditions}).

\bip

For example, let
$$
\left(\bigtriangleup:V\rar \wedge^2 V,\ \ \  [\ ,\ ]:\wedge^2V \rar V\right)
$$
be a Lie bialgebra structure in a  vector space $V$ which we assume for simplicity to be concentrated in degree $0$. Let
$C_{ij}^k$ and $\Phi_k^{ij}$ be the associated structure constants,
$$
[x_i,x_j]=:\sum_{k\in I} C_{ij}^k x_k,\ \ \ \ \bigtriangleup(x_k)=:\sum_{i,j\in I} \Phi_k^{ij} x_i\wedge x_j.
$$
Then it is easy to check that all the involutive Lie bialgebra axioms (\ref{R for LieB}) get encoded into a single equation $\Ga *_\hbar \Ga=0$ for
$
\Ga:=\sum_{i,j,k\in I}\left( C_{ij}^k \psi_k\eta^i\eta^j + \Phi_k^{ij}\eta^k \psi_i\psi_j
\right).
$
\sip

Note that all the above formulae taken modulo the ideal generated by the formal variable $\hbar$
give us a Lie algebra,
\Beq\label{3:  LieB(g)}
{\mathsf{LieB}}(\fg):=\Def\left(\caL ie\cB_\infty \stackrel{0}{\lon} \cE nd_\fg\right)[-2]\cong \K[[\psi_i,\eta^i]]
\Eeq
controlling the deformation theory of (not-necessarily involutive) Lie bialgebra structures in a dg space $\fg$.  Lie brackets in (\ref{3:  LieB(g)}) are given in coordinates by the standard Poisson formula,
\Beq\label{3: Poisson brackets in LieB(g)}
\{f, g\}=\sum_{i\in I} (-1)^{|f||\eta^i|} \frac{\p f}{\p  \psi_i} \frac{\p g}{\p \eta^i} -
 (-1)^{|f||\psi_i|} \frac{\p f}{\p \eta^i} \frac{\p g}{\p \psi_i}
\Eeq
for any $f,g\in \K[[\psi_i,\eta^i]]$. Formal power series, $f\in  \K[[\psi_i,\eta^i]]$, which have homological degree $3$ and satisfy the equations,
$$
\{f,f\}=0, \ \ \ \ f(\psi,\eta)|_{\psi_i=0}=0,\ \ \ \ f(\psi,\eta)|_{\eta^i=0}=0,
$$
are in one-to-one correspondence with strongly homotopy Lie bialgebra structures in a finite dimensional dg vector space $\fg$.

\subsection{Deformation complexes of properads} \label{sec:defcomplexes}
The deformation complex of a properad $\cP$ is by definition the dg Lie algebra $\Der(\tilde P)$ of derivations of a cofibrant resolution $\tilde \cP\stackrel{\sim}{\to}\cP$. It may be identified as a complex with the deformation complex of the identity map $\tilde \cP\to \tilde \cP$ (which controls deformations of $\cP$-algebras) up to a degree shift:
\[
 \Der(\tilde \cP) \cong \Def(\tilde \cP\to \tilde \cP)[1].
\]
Note however that both $\Der(\cP)$ and $\Def(\tilde \cP\to \tilde \cP)$ have natural dg Lie (or $\Lie_\infty$) algebra structures that are \emph{not} preserved by the above map. Furthermore, there is a quasi-isomorphism of dg Lie algebras
\begin{equation}\label{equ:Defsimpl}
 \Def(\tilde \cP\to \tilde \cP)\to \Def(\tilde \cP\to \cP)
\end{equation}

The zeroth cohomology $H^0(\Der(\tilde \cP))$ is of particular importance. It is a differential graded Lie algebra whose elements act on the space of $\tilde \cP$ algebra structures on any vector space. We shall see that in the examples we are interested in this dg Lie algebra is very rich, and that it acts non-trivially in general.

\sip
Using the Koszulness of the properads $\LieBi$, $\invFrob$ from \cite{MaVo, Ko} and the Koszulness of $\invLieBi$ and $\Frob$ from Theorem \ref{2: Theorem on Koszulness} and Corollary \ref{2: corollary on Frob} we can write down the following models for the deformation complexes.
\begin{align*}
\Der(\LieBi_\infty) &=
\prod_{n,m\geq 1} \Hom_{\bS_n\times \bS_m}(\invcoFrob\{1\}(n,m), \LieBi_\infty(n,m))[1]
\\
&\cong \prod_{n,m\geq 1} (\LieBi_\infty(n,m) \otimes \sgn_n\otimes \sgn_m)^{\bS_n\times \bS_m}[-n-m+1]
\\
\Der(\LoB_\infty) &=  \prod_{n,m\geq 1} \Hom_{\bS_n\times \bS_m}(\coFrob\{1\}(n,m), \LoB_\infty(n,m))[1]
\\
&\cong \prod_{n,m\geq 1} (\LoB_\infty(n,m)\otimes \sgn_n\otimes \sgn_m )^{\bS_n\times \bS_m} [[\hbar]][-n-m+1]
 \\
\Der(\Frob_\infty) &=\prod_{n,m\geq 1} \Hom_{\bS_n\times \bS_m\geq 1}(\invcoLieBi\{1\}(n,m), \Frob_\infty(n,m))[1]
\\
\Der(\invFrob_\infty) &=\prod_{n,m\geq 1} \Hom_{\bS_n\times \bS_m\geq 1}(\coLieBi\{1\}(n,m), \invFrob_\infty(n,m))[1]
\end{align*}
Here $\hbar$ is a formal variable of degree 2.
Each of the models on the right has a natural combinatorial interpretation as a graph complex, cf. also \cite[section 1.7 ]{MaVo}.
For example $\Der(\LieBi_\infty)$ may be interpreted as a complex of directed  graphs which have incoming and outgoing legs but have no closed paths of directed edges. The differential is obtained by splitting vertices and by attaching new vertices at one of the external legs, see Figure \ref{fig:hairyGC}.

\begin{figure}
 \centering
 \begin{align*} &
\resizebox{15mm}{!}{ \xy
(0,0)*{\bu}="d1",
(10,0)*{\bu}="d2",
(-5,-5)*{}="dl",
(5,-5)*{}="dc",
(15,-5)*{}="dr",
(0,10)*{\bu}="u1",
(10,10)*{\bu}="u2",
(5,15)*{}="uc",
(15,15)*{}="ur",
(0,15)*{}="ul",
\ar @{->} "d1";"d2" <0pt>
\ar @{->} "d1";"dl" <0pt>
\ar @{->} "d1";"dc" <0pt>
\ar @{->} "d2";"dc" <0pt>
\ar @{->} "d2";"dr" <0pt>
\ar @{->} "u1";"d1" <0pt>
\ar @{->} "u1";"d2" <0pt>
\ar @{->} "u2";"d2" <0pt>
\ar @{->} "u2";"d1" <0pt>
\ar @{->} "uc";"u2" <0pt>
\ar @{->} "ur";"u2" <0pt>
\ar @{->} "ul";"u1" <0pt>
\endxy}
 %
 &
 \delta \Gamma
 &=
 \delta_{\LieBi_\infty }\Gamma
 \pm
 \sum\Ba{c}
 \resizebox{9mm}{!}{ \xy
 (0,0)*+{\Ga}="Ga",
(-5,5)*{\bu}="0",
(-8,2)*{}="-1",
(-8,8)*{}="1",
(-5,8)*{}="2",
(-2,8)*{}="3",
\ar @{-} "0";"Ga" <0pt>
\ar @{-} "0";"-1" <0pt>
\ar @{-} "0";"1" <0pt>
\ar @{-} "0";"2" <0pt>
\ar @{-} "0";"3" <0pt>
 \endxy}\Ea
 %
 %
  \pm
 \sum\Ba{c}
\resizebox{9mm}{!}{  \xy
 (0,0)*+{\Ga}="Ga",
(-5,-5)*{\bu}="0",
(-8,-2)*{}="-1",
(-8,-8)*{}="1",
(-5,-2)*{}="2",
(-2,-8)*{}="3",
\ar @{-} "0";"Ga" <0pt>
\ar @{-} "0";"-1" <0pt>
\ar @{-} "0";"1" <0pt>
\ar @{-} "0";"2" <0pt>
\ar @{-} "0";"3" <0pt>
 \endxy}\Ea
 &
 %
 \end{align*}
 \caption{\label{fig:hairyGC} A graph interpretation of an element of $\Der(\LieBi_\infty)$, and the pictorial description of the differential.}
\end{figure}

\sip

Similarly, $\Der(\LoB_\infty)$ may be interpreted as a complex of $\hbar$-power series of graphs with weighted vertices. The differential is obtained by splitting vertices and attaching vertices at external legs as indicated in Figure~ \ref{fig:hairyweightedGC}.

\begin{figure}
 \centering
 \begin{align*}
  &
\resizebox{15mm}{!}{  \xy
(0,0)*+{_3}*\cir{}="d1",
(10,0)*+{_2}*\cir{}="d2",
(-5,-5)*{}="dl",
(5,-5)*{}="dc",
(15,-5)*{}="dr",
(0,10)*+{_0}*\cir{}="u1",
(10,10)*+{_3}*\cir{}="u2",
(5,15)*{}="uc",
(15,15)*{}="ur",
(0,15)*{}="ul",
\ar @{->} "d1";"d2" <0pt>
\ar @{->} "d1";"dl" <0pt>
\ar @{->} "d1";"dc" <0pt>
\ar @{->} "d2";"dc" <0pt>
\ar @{->} "d2";"dr" <0pt>
\ar @{->} "u1";"d1" <0pt>
\ar @{->} "u1";"d2" <0pt>
\ar @{->} "u2";"d2" <0pt>
\ar @{->} "u2";"d1" <0pt>
\ar @{->} "uc";"u2" <0pt>
\ar @{->} "ur";"u2" <0pt>
\ar @{->} "ul";"u1" <0pt>
\endxy}
%
 &
 \delta \Gamma
 &=
 \delta_{\LoB_\infty }\Gamma
\pm
 \sum \hbar^{p+k-1}\Ba{c}
\resizebox{10mm}{!}{  \xy
  (-3,4)*+{_k},
 (0,0)*+{\Ga}="Ga",
(-7,7)*+{_p}*\cir{}="0",
(-10,4)*{}="-1",
(-10,11)*{}="1",
(-7,11)*{}="2",
(-4,11)*{}="3",
(-5.3,6.8)*-{};(-0.2,1.5)*-{}
**\crv{(0.4,5)};
(-6.9,5.1)*-{};(-1.4,0.3)*-{}
**\crv{(-5,-1.5)};
(-6.0,5.4)*-{};(-1.4,0.7)*-{}
**\crv{(-4,-0.5)};
%
\ar @{-} "0";"-1" <0pt>
\ar @{-} "0";"1" <0pt>
\ar @{-} "0";"2" <0pt>
\ar @{-} "0";"3" <0pt>
 \endxy}\Ea
 %
  \pm
 \sum \hbar^{p+k-1}\Ba{c}
 \resizebox{10mm}{!}{  \xy
  (-3,-4)*+{_k},
 (0,0)*+{\Ga}="Ga",
(-7,-7)*+{_p}*\cir{}="0",
(-10,-4)*{}="-1",
(-10,-11)*{}="1",
(-7,-11)*{}="2",
(-4,-11)*{}="3",
(-5.3,-6.8)*-{};(-0.2,-1.5)*-{}
**\crv{(0.4,-5)};
(-6.9,-5.1)*-{};(-1.4,-0.3)*-{}
**\crv{(-5,1.5)};
(-6.0,-5.4)*-{};(-1.4,-0.7)*-{}
**\crv{(-4,-0.5)};
%
\ar @{-} "0";"-1" <0pt>
\ar @{-} "0";"1" <0pt>
\ar @{-} "0";"2" <0pt>
\ar @{-} "0";"3" <0pt>
 \endxy}\Ea
 %
 \end{align*}
 \caption{\label{fig:hairyweightedGC} A graph interpretation of an element of $\Der(\LoB_\infty)$, and the pictorial description of the differential.}
\end{figure}

The Lie bracket is combinatorially obtained by inserting graphs into vertices of another.
We leave it to the reader to work out the structure of the graph complexes and the differentials for the complexes $\Der(\Frob_\infty)$ and $\Der(\invFrob_\infty)$.

\sip

The cohomology of all these graph complexes is hard to compute. We may however simplify the computation by using formula (\ref{equ:Defsimpl}) and equivalently compute instead
\begin{align*}
 \Def(\LieBi_\infty\to \LieBi) &= \prod_{n,m} \Hom_{\bS_n\times \bS_m}(\invcoFrob\{1\}(n,m), \LieBi(n,m))
 \\
 &\cong \prod_{n,m}  (\LieBi(n,m)\otimes \sgn_n\otimes \sgn_m)^{\bS_n\times \bS_m}[-n-m]
  \\
  \Def(\LoB_\infty\to \LoB) &=  \prod_{n,m} \Hom_{\bS_n\times \bS_m}(\coFrob\{1\}(n,m), \LoB(n,m))
 \\
 &\cong \prod_{n,m} (\LoB(n,m)\otimes \sgn_n\otimes \sgn_m)^{\bS_n\times \bS_m}[-n-m] [[\hbar]]
 \\
 \Def(\Frob_\infty\to \Frob) &=\prod_{n,m} \Hom_{\bS_m\times \bS_n}(\invcoLieBi\{1\}(n,m), \Frob(n,m))
 \\
 \Def(\invFrob_\infty\to \invFrob) &=\prod_{n,m} \Hom_{\bS_n\times \bS_m\geq 1}(\coLieBi\{1\}(n,m), \invFrob(n,m))
 \, .
\end{align*}
Note however that here we lose the dg Lie algebra structure, or rather  there is a different Lie algebra structure on the above complexes.
The above complexes may again be interepreted as graph complexes. For example $\Def(\LieBi_\infty\to \LieBi)$ consists of directed trivalent acyclic graphs with incoming and outgoing legs, modulo the Jacobi and Drinfeld five term relations. The differential is obtained by attaching a trivalent vertex at one external leg in all possible ways.


\sip
Finally we note that of the above four deformation complexes only two are essentially different.
For example, note that $\Hom_{\bS_n\times \bS_m}(\coLieBi\{1\}(n,m), \invFrob(n,m))$ is just a completion of
\[
\Hom_{\bS_n\times \bS_m}(\invcoFrob\{1\}(n,m), \LieBi(n,m))\cong (\invFrob(n,m)\otimes \sgn_n\otimes \sgn_m) \otimes_{\bS_n\times \bS_n}\LieBi(n,m)[-n+m]
\]
Concretely, the completion is with respect to the genus grading of $\LieBi$, and the differential preserves the genus grading. Hence the cohomology of one complex is just the completion of the cohomology of the other with respect to the genus grading.

\sip

Similar arguments show that the cohomologies $\Def(\LoB_\infty\to \LoB)$ and $\Def(\Frob_\infty\to \Frob)$ are the same up to completion issues. Here the differential does not preserve the genus but preserves the quantity (genus)-($\hbar$-degree).
Hence it suffices to discuss one of each pair of deformation complexes. We will discuss $\Def(\LieBi_\infty\to \LieBi)$ and $\Def(\LoB_\infty\to \LoB)$ in the next section.

\bip


{\large
\section{\bf Oriented graph complexes and the universal $\grt_1$ action}
}

The goal of this section is to reduce the computation of the above deformation complexes to the computation of the cohomology of M. Kontsevich's graph complex.
By a result of one of the authors \cite{Wi1} the degree zero cohomology of this graph complex agrees with the Grotendieck-Teichm\"uller Lie algebra $\grt_1$.
This will allow us to conclude that the Grothendieck-Teichm\"uller group universally acts on $\Frob_\infty$ and $\LoB_\infty$ structures. This extends the well known result that the Grothendieck-Teichm\"uller group universally acts on Lie bialgebra structures.

\bip

\subsection{Grothendieck-Teichm\"uller group} The profinite and prounipotent Grothendieck-Teich\"uller groups were introduced by Vladimir Drinfeld in his
study of braid groups and quasi-Hopf algebras. They turned out to be one of the
most interesting and mysterious objects in modern mathematics. The profinite Grothendieck-Teichm\"uller group $\widehat{GT}$ plays an important role in number theory
and algebraic geometry. The pro-unipotent Grothendieck-Teichm\"uller group $GT$ (and its graded version $GRT$) over a field of characteristic zero was used by Pavel Etingof and David Kazhdan to solve the Drinfeld's
quantization conjecture for Lie bialgebras. Maxim Kontsevich's and Dmitry Tamarkin's formality theory
unravels the role of the group ${GRT}$  in the deformation quantization of Poisson structures.  Later Anton Alekseev and Charles Torossian applied $GRT$ to the Kashiwara-Vergne problem in Lie theory. The Grothendieck-Teichm\"uller group unifies different fields, and every time this group appears
in a mathematical theory, there follows a breakthrough in that theory. We refer to Hidekazu Furusho's lecture note \cite{Fu} for precise definitions and references.

\sip

In this paper we consider the Grothendieck-Teichm\"uller group $GRT_1$ which is the kernel of the canonical morphism of groups $GRT\rar \K^*$.   As $GRT_1$  is prounipotent, it is of the form $\exp(\alg{grt}_1)$ for some Lie algebra $\alg{grt}_1$ whose definition can be found, for example, in \S 6 of \cite{Wi1}. Therefore to understand representations of $GRT_1$ is the same as to understand representations of the  Grothendieck-Teichm\"uller Lie algebra $\alg{grt}_1$.

\subsection{Completed versions of $\LoB$ and $\LieBi$}\label{sec:completed versions}
The properads $\LieBi$ and $\LoB$ are naturally graded by the genus of the graphs describing the operations.
We will denote by $\hLoB$ and $\hLieBi$ the completions with respect to this grading.
Similarly, we denote by $\hLieBi_\infty$ the completion of $\LieBi_\infty$ with respect to the genus grading.
The natural map $\hLieBi_\infty\to\hLieBi$ is a quasi-isomorphism. Furthermore, we denote by $\hLoB_\infty$ the completion of $\LoB_\infty$ with respect to the genus plus the total weight-grading, i.~e., with respect to the grading $||\cdot||$ described in section {\ref{sec:decomposition}}. Then the map $\hLoB_\infty\to \hLoB$ is a quasi-isomorphism.

We will call a continuous representation of $\hLieBi$ (respectively of $\hLoB$) a \emph{genus complete} (involutive) Lie bialgebra.
Here the topoloy on $\hLieBi$ (respectively on $\hLoB$) is the one induced by the genus filtration (respectively the filtration $||\cdot||$).
For example, the involutive Lie bialgebra discussed in section {\ref{2: subsection on cyclic words}} is clearly genus complete since both the cobracket and the bracket reduce the lengths of the cyclic words.

Abusing notation slightly we will denote by $\Der(\hLieBi_\infty)$ (respectively by $\Der(\hLoB_\infty)$) the complex of \emph{continuous} derivations. 
The sub-properads $\LieBi_\infty\subset \hLieBi_\infty$ and $\LoB_\infty \subset \hLoB_\infty$ are dense by definition and hence any continuous derivation is determined by its restriction to these sub-properads. It also follows that the above complexes of derivations are isomorphic as complexes to $\Def(\LieBi_\infty\to \hLieBi_\infty)[1]$ and  $\Def(\LoB_\infty\to \hLoB_\infty)[1]$.
Finally we note that the cohomology of these complexes is merely the completion of the cohomology of the complexes $\Der(\LieBi_\infty)$ and $\Der(\LoB_\infty)$, since the differential respects the gradings.

\subsection{An operad of graphs $\cG ra^\uparrow$} A graph is called {\em directed}\, if its edges are equipped with directions as in the following examples,
$$
\Ba{c}
\resizebox{15mm}{!}{ \xy
(0,17)*{\bu}="u",
(-5,12)*{\bu}="0",
 (0,7)*{\bu}="a",
(-10,7)*{\bu}="L",
(10,7)*{\bu}="R",
(-5,2)*{\bu}="b_1",
(5,2)*{\bu}="b_2",
\ar @{<-} "a";"0" <0pt>
\ar @{->} "a";"b_1" <0pt>
\ar @{<-} "a";"b_2" <0pt>
\ar @{->} "b_1";"L" <0pt>
\ar @{<-} "0";"L" <0pt>
\ar @{->} "b_2";"R" <0pt>
\ar @{->} "R";"u" <0pt>
\ar @{->} "0";"u" <0pt>
\endxy}
\Ea
\ \ \ ,\ \ \
\Ba{c}
\resizebox{15mm}{!}{ \xy
(0,17)*{\bu}="u",
(-5,12)*{\bu}="0",
 (0,7)*{\bu}="a",
(-10,7)*{\bu}="L",
(10,7)*{\bu}="R",
(-5,2)*{\bu}="b_1",
(5,2)*{\bu}="b_2",
\ar @{->} "a";"0" <0pt>
\ar @{<-} "a";"b_1" <0pt>
\ar @{<-} "a";"b_2" <0pt>
\ar @{->} "b_1";"L" <0pt>
\ar @{<-} "0";"L" <0pt>
\ar @{->} "b_2";"R" <0pt>
\ar @{->} "R";"u" <0pt>
\ar @{->} "0";"u" <0pt>
\endxy}
\Ea
$$
A directed graph is called {\em oriented}\, or {\em acyclic}\, if it contains no  directed {\em closed}\, paths of edges.
For example, the second graph above is oriented while the first one is not.
 For arbitrary integers $n\geq 1$ and $l\geq 0$ let ${\sG}^\uparrow_{n,l}$ stand for the
set of  oriented graphs, $\{\Ga\}$, with $n$ vertices and $l$ edges
such that the vertices of $\Ga$ are labelled by elements of $[n]:=\{1,\ldots, n\}$, i.e.\ an isomorphism $V(\Ga)\rar [n]$ is fixed.

\sip

Let
 $\K\langle \sG_{n,l}^\uparrow\rangle$  be the vector space over a  field $\K$ of characteristic zero which is  spanned by graphs from
$\sG_{n,l}^\uparrow$,
and consider a  $\Z$-graded $\bS_n$-module,
$$
\cG ra^\uparrow (n):=\bigoplus_{l=0}^\infty \K\langle \sG^\uparrow_{n,l}\rangle[2l].
$$
For example,  $\xy
(0,2)*{_{1}},
(7,2)*{_{2}},
 (0,0)*{\bullet}="a",
(7,0)*{\bu}="b",
\ar @{->} "a";"b" <0pt>
\endxy$ is a degree $-2$ element in  $\cG ra^\uparrow(2)$.
 The $\bS$-module, $\cG ra^\uparrow :=\{\cG ra (n)^\uparrow\}_{n\geq 1}$, is naturally an operad with the
 operadic compositions given by
\Beq\label{3: operad comp in Gra}
\Ba{rccc}
\circ_i: & \cG ra^\uparrow (n)\ot \cG ra^\uparrow (m) & \lon &  \cG ra^\uparrow (m+n-1)\\
&  \Ga_1 \ot \Ga_2   &\lon & \sum_{\Ga\in \sG_{\Ga_1, \Ga_2}^i}  \Ga
\Ea
\Eeq
where $ \sG_{\Ga_1, \Ga_2}^i$ is the subset of $\sG^\uparrow_{n+m-1, \# E(\Ga_1) + \#E(\Ga_2)}$ consisting
of graphs, $\Ga$, satisfying the condition: the full subgraph of $\Ga$ spanned by the vertices labeled by
the set $\{i,i+1, \ldots, i+m-1\}$ is isomorphic to $\Ga_2$ and the quotient graph $\Ga/\Ga_2$ (which is obtained from $\Ga$
obtained by contracting that subgraph $\Ga_2$ to a single vertex) is isomorphic to $\Ga_1$, see, e.g.,  \S 7 in \cite{Me} or \S 2 in
\cite{Wi1}  for explicit examples of this kind of operadic compositions.
The unique element in $\sG_{1,0}^\uparrow$ serves as the unit in the operad  $\cG ra^\uparrow$.

\subsubsection{\bf A representation of $\cG ra^\uparrow$ in $\mathsf{LieB}(\fg)$}
\label{3: subsec on canonical repr of Gra}
For any graded vector space $\fg$ the operad  $\cG ra^\uparrow$ has a natural representation in the associated graded vector space $\mathsf{LieB}(\fg)$ (see (\ref{3:  LieB(g)})),
\Beq\label{3: Gra representation in g_V}
\Ba{rccc}
\rho: & \cG ra^\uparrow(n) & \lon & \cE  nd_{\mathsf{LieB}(\fg)}(n)=
\Hom( \mathsf{LieB}(\fg)^{\ot n},\mathsf{LieB}(\fg))\\
      & \Ga &\lon & \Phi_\Ga
\Ea
\Eeq
given by the formula,
$$
\Ba{rccc}
\Phi_\Ga: & \ot^n \mathsf{LieB}(\fg)   & \lon & \mathsf{LieB}(\fg)\\
& \ga_1\ot \ldots \ot \ga_n   &\lon &
\Phi_\Ga(\ga_1,\ldots, \ga_n) :=\mu\left(\prod_{e\in E(\Ga)}
\Delta_e \left(\ga_1(\psi)\ot \ga_2(\psi)\ot \ldots\ot
\ga_n(\psi) \right)\right)
\Ea
$$
where, for an edge $e=\Ba{c}\xy
(0,2)*{_{a}},
(6,2)*{_{b}},
 (0,0)*{\bullet}="a",
(6,0)*{\bu}="b",
\ar @{->} "a";"b" <0pt>
\endxy \Ea$ connecting a vertex labeled by $a\in [n]$ and to a vertex labelled by $b\in [n]$, we set
$$
\Delta_e \left(\ga_1\ot \ga_2\ot \ldots\ot
\ga_n \right)=
\left\{\Ba{cc}\underset{i\in I}{\sum}(-1)^{|\psi^i|(|\ga_a| + |\ga_{a+1}|+\ldots+ |\ga_{b-1}|)} \ga_1\ot ...\ot \frac{\p\ga_a}{\p \psi_i}\ot ...
\ot \frac{\p\ga_b}{\p \psi^i}\ot ... \ot
\ga_n & \mbox{for} \ a< b   \\
\underset{i\in I}{\sum}(-1)^{|\psi_i|(|\ga_b| + |\ga_{b+1}|+\ldots+ |\ga_{a-1}| +1 )} \ga_1\ot ... \ot \frac{\p\ga_b}{\p \psi^i}\ot ...
\ot \frac{\p\ga_a}{\p \psi_i}\ot ... \ot
\ga_n & \mbox{for} \ b< a
\Ea\right.
$$
and where  $\mu$ is the standard multiplication map in the ring  $\mathsf{LieB}(\fg)\subset \K[[\psi_i,\eta^i]]$,
$$
\Ba{rccc}
\mu:&   \mathsf{LieB}(\fg)^{\ot n} & \lon & \mathsf{LieB}(\fg)\\
   & \ga_1\ot \ga_2\ot \ldots \ot \ga_n &\lon & \ga_1 \ga_2 \cdots
   \ga_n.
\Ea
$$
Note that this representation makes sense for both finite- and {\em infinite}\, dimensional vector spaces $\fg$ as graphs from $\cG ra^\uparrow$ do not contain oriented cycles.

\begin{remark}
The above action of $\cG ra^\uparrow$ on $\Def\left(\caL ie\cB_\infty \stackrel{0}{\lon} \cE nd_\fg \right)[-2]$ only uses
the properadic compositions in $\cE nd_\fg$ and no further data. It follows that the same formulas may in fact be used to define an action of $\cG ra^\uparrow$ on the deformation complex
\[
 \Def\left(\caL ie\cB_\infty \stackrel{0}{\lon} \cP \right)[-2]
\]
for any properad $\cP$.
\end{remark}

\subsection{An oriented graph complex}
Let $\caL ie\{2\}$ be a (degree shifted) operad of Lie algebras, and let
 $\caL ie_\infty\{2\}$ be its minimal resolution. Thus $\caL ie\{2\}$ is a quadratic operad generated by degree $-2$ skewsymmetric binary operation,
$$
\begin{xy}
 <0mm,0.66mm>*{};<0mm,3mm>*{}**@{-},
 <0.39mm,-0.39mm>*{};<2.2mm,-2.2mm>*{}**@{-},
 <-0.35mm,-0.35mm>*{};<-2.2mm,-2.2mm>*{}**@{-},
 <0mm,0mm>*{\bu};<0mm,0mm>*{}**@{},
   <0.39mm,-0.39mm>*{};<2.9mm,-4mm>*{^{_2}}**@{},
   <-0.35mm,-0.35mm>*{};<-2.8mm,-4mm>*{^{_1}}**@{},
\end{xy}=-
\begin{xy}
 <0mm,0.66mm>*{};<0mm,3mm>*{}**@{-},
 <0.39mm,-0.39mm>*{};<2.2mm,-2.2mm>*{}**@{-},
 <-0.35mm,-0.35mm>*{};<-2.2mm,-2.2mm>*{}**@{-},
 <0mm,0mm>*{\bu};<0mm,0mm>*{}**@{},
   <0.39mm,-0.39mm>*{};<2.9mm,-4mm>*{^{_1}}**@{},
   <-0.35mm,-0.35mm>*{};<-2.8mm,-4mm>*{^{_2}}**@{},
\end{xy}
$$
modulo the Jacobi relations,
\Beq\label{3: Jacobi relation}
\Ba{c}
\begin{xy}
 <0mm,0mm>*{\bu};<0mm,0mm>*{}**@{},
 <0mm,0.69mm>*{};<0mm,3.0mm>*{}**@{-},
 <0.39mm,-0.39mm>*{};<2.4mm,-2.4mm>*{}**@{-},
 <-0.35mm,-0.35mm>*{};<-1.9mm,-1.9mm>*{}**@{-},
 <-2.4mm,-2.4mm>*{\bu};<-2.4mm,-2.4mm>*{}**@{},
 <-2.0mm,-2.8mm>*{};<0mm,-4.9mm>*{}**@{-},
 <-2.8mm,-2.9mm>*{};<-4.7mm,-4.9mm>*{}**@{-},
    <0.39mm,-0.39mm>*{};<3.3mm,-4.0mm>*{^{_3}}**@{},
    <-2.0mm,-2.8mm>*{};<0.5mm,-6.7mm>*{^{_2}}**@{},
    <-2.8mm,-2.9mm>*{};<-5.2mm,-6.7mm>*{^{_1}}**@{},
 \end{xy}
\ + \
 \begin{xy}
 <0mm,0mm>*{\bu};<0mm,0mm>*{}**@{},
 <0mm,0.69mm>*{};<0mm,3.0mm>*{}**@{-},
 <0.39mm,-0.39mm>*{};<2.4mm,-2.4mm>*{}**@{-},
 <-0.35mm,-0.35mm>*{};<-1.9mm,-1.9mm>*{}**@{-},
 <-2.4mm,-2.4mm>*{\bu};<-2.4mm,-2.4mm>*{}**@{},
 <-2.0mm,-2.8mm>*{};<0mm,-4.9mm>*{}**@{-},
 <-2.8mm,-2.9mm>*{};<-4.7mm,-4.9mm>*{}**@{-},
    <0.39mm,-0.39mm>*{};<3.3mm,-4.0mm>*{^{_2}}**@{},
    <-2.0mm,-2.8mm>*{};<0.5mm,-6.7mm>*{^{_1}}**@{},
    <-2.8mm,-2.9mm>*{};<-5.2mm,-6.7mm>*{^{_3}}**@{},
 \end{xy}
\ + \
 \begin{xy}
 <0mm,0mm>*{\bu};<0mm,0mm>*{}**@{},
 <0mm,0.69mm>*{};<0mm,3.0mm>*{}**@{-},
 <0.39mm,-0.39mm>*{};<2.4mm,-2.4mm>*{}**@{-},
 <-0.35mm,-0.35mm>*{};<-1.9mm,-1.9mm>*{}**@{-},
 <-2.4mm,-2.4mm>*{\bu};<-2.4mm,-2.4mm>*{}**@{},
 <-2.0mm,-2.8mm>*{};<0mm,-4.9mm>*{}**@{-},
 <-2.8mm,-2.9mm>*{};<-4.7mm,-4.9mm>*{}**@{-},
    <0.39mm,-0.39mm>*{};<3.3mm,-4.0mm>*{^{_1}}**@{},
    <-2.0mm,-2.8mm>*{};<0.5mm,-6.7mm>*{^{_3}}**@{},
    <-2.8mm,-2.9mm>*{};<-5.2mm,-6.7mm>*{^{_2}}**@{},
 \end{xy}\Ea=0
\Eeq
while  $\caL ie_\infty\{2\}$ is the free operad generated by an
$\bS$-module $E=\{E(n)\}_{n\geq 2}$,
$$
E(n):=sgn_n[3n-4]=\left\langle\Ba{c}
 \resizebox{19mm}{!}  {\xy
(1,-5)*{\ldots},
(-13,-7)*{_1},
(-8,-7)*{_2},
(-3,-7)*{_3},
(7,-7)*{_{n-1}},
(13,-7)*{_n},
 (0,0)*{\bu}="a",
(0,5)*{}="0",
(-12,-5)*{}="b_1",
(-8,-5)*{}="b_2",
(-3,-5)*{}="b_3",
(8,-5)*{}="b_4",
(12,-5)*{}="b_5",
\ar @{-} "a";"0" <0pt>
\ar @{-} "a";"b_2" <0pt>
\ar @{-} "a";"b_3" <0pt>
\ar @{-} "a";"b_1" <0pt>
\ar @{-} "a";"b_4" <0pt>
\ar @{-} "a";"b_5" <0pt>
\endxy}=(-1)^{\sigma}
\resizebox{19mm}{!}  {\xy
(1,-6)*{\ldots},
(-13,-7)*{_{\sigma(1)}},
(-6.7,-7)*{_{\sigma(2)}},
(13,-7)*{_{\sigma(n)}},
 (0,0)*{\bu}="a",
(0,5)*{}="0",
(-12,-5)*{}="b_1",
(-8,-5)*{}="b_2",
(-3,-5)*{}="b_3",
(8,-5)*{}="b_4",
(12,-5)*{}="b_5",
\ar @{-} "a";"0" <0pt>
\ar @{-} "a";"b_2" <0pt>
\ar @{-} "a";"b_3" <0pt>
\ar @{-} "a";"b_1" <0pt>
\ar @{-} "a";"b_4" <0pt>
\ar @{-} "a";"b_5" <0pt>
\endxy}\Ea
\right\rangle_{\sigma\in \bS_n}
$$
and equipped with the following differential,
\Beq\label{3: Lie_infty differential}
\p\hspace{-3mm}
\resizebox{19mm}{!}
{ \xy
(1,-5)*{\ldots},
(-13,-7)*{_1},
(-8,-7)*{_2},
(-3,-7)*{_3},
(7,-7)*{_{n-1}},
(13,-7)*{_n},
 (0,0)*{\bu}="a",
(0,5)*{}="0",
(-12,-5)*{}="b_1",
(-8,-5)*{}="b_2",
(-3,-5)*{}="b_3",
(8,-5)*{}="b_4",
(12,-5)*{}="b_5",
\ar @{-} "a";"0" <0pt>
\ar @{-} "a";"b_2" <0pt>
\ar @{-} "a";"b_3" <0pt>
\ar @{-} "a";"b_1" <0pt>
\ar @{-} "a";"b_4" <0pt>
\ar @{-} "a";"b_5" <0pt>
\endxy}
=
\sum_{ [n]=I_1\sqcup I_2\atop
\# I_1\geq 1, \# I_2\geq 1}(-1)^{\sigma(I_1\sqcup I_2) +|I_1||I_2|}
\Ba{c}
\resizebox{21mm}{!}{
\begin{xy}
<10mm,0mm>*{\bu},
<10mm,0.8mm>*{};<10mm,5mm>*{}**@{-},
<0mm,-10mm>*{...},
<14mm,-5mm>*{\ldots},
<13mm,-7mm>*{\underbrace{\ \ \ \ \ \ \ \ \ \ \ \ \  }},
<14mm,-10mm>*{_{I_2}};
<10.3mm,0.1mm>*{};<20mm,-5mm>*{}**@{-},
<9.7mm,-0.5mm>*{};<6mm,-5mm>*{}**@{-},
<9.9mm,-0.5mm>*{};<10mm,-5mm>*{}**@{-},
<9.6mm,0.1mm>*{};<0mm,-4.4mm>*{}**@{-},
<0mm,-5mm>*{\bu};
<-5mm,-10mm>*{}**@{-},
<-2.7mm,-10mm>*{}**@{-},
<2.7mm,-10mm>*{}**@{-},
<5mm,-10mm>*{}**@{-},
<0mm,-12mm>*{\underbrace{\ \ \ \ \ \ \ \ \ \ }},
<0mm,-15mm>*{_{I_1}},
\end{xy}}
\Ea
\Eeq
where $\sigma(I_1\sqcup I_2)$ is the sign of the shuffle $[n]\rar [I_1\sqcup I_2]$.

\subsubsection{\bf Proposition \cite{Wi2}}\label{3: Prop on map from Lie2 to Gra}
{\em There is a morphism of operads
$$
\varphi: \caL ie\{2\} \lon \cG ra^\uparrow
$$
given on the generators  by
$$
\Ba{c}
\xy
 <0mm,0.55mm>*{};<0mm,3.5mm>*{}**@{-},
 <0.5mm,-0.5mm>*{};<2.2mm,-2.2mm>*{}**@{-},
 <-0.48mm,-0.48mm>*{};<-2.2mm,-2.2mm>*{}**@{-},
 <0mm,0mm>*{\bu};<0mm,0mm>*{}**@{},
 <0.5mm,-0.5mm>*{};<2.7mm,-3.2mm>*{_2}**@{},
 <-0.48mm,-0.48mm>*{};<-2.7mm,-3.2mm>*{_1}**@{},
 \endxy\Ea
   \ \ \ \ \lon \ \ \ \ \xy
(0,2)*{_{1}},
(7,2)*{_{2}},
 (0,0)*{\bullet}="a",
(7,0)*{\bu}="b",
\ar @{->} "a";"b" <0pt>
\endxy\
- \
\xy
(0,2)*{_{2}},
(7,2)*{_{1}},
 (0,0)*{\bullet}="a",
(7,0)*{\bu}="b",
\ar @{->} "a";"b" <0pt>
\endxy
$$
}

\begin{proof} Using the definition of the operadic composition in $\cG ra^\uparrow$ we get
\Beqr
\varphi\left(\Ba{c}
 \begin{xy}
 <0mm,0mm>*{\bu};<0mm,0mm>*{}**@{},
 <0mm,0.69mm>*{};<0mm,3.0mm>*{}**@{-},
 <0.39mm,-0.39mm>*{};<2.4mm,-2.4mm>*{}**@{-},
 <-0.35mm,-0.35mm>*{};<-1.9mm,-1.9mm>*{}**@{-},
 <-2.4mm,-2.4mm>*{\bu};<-2.4mm,-2.4mm>*{}**@{},
 <-2.0mm,-2.8mm>*{};<0mm,-4.9mm>*{}**@{-},
 <-2.8mm,-2.9mm>*{};<-4.7mm,-4.9mm>*{}**@{-},
    <0.39mm,-0.39mm>*{};<3.3mm,-4.0mm>*{^3}**@{},
    <-2.0mm,-2.8mm>*{};<0.5mm,-6.7mm>*{^2}**@{},
    <-2.8mm,-2.9mm>*{};<-5.2mm,-6.7mm>*{^1}**@{},
 \end{xy}\Ea\right)&=&
\Ba{c}\xy
(0,2)*{_{1}},
(6,2)*{_{2}},
(12,2)*{_{3}},
 (0,0)*{\bullet}="a",
(6,0)*{\bu}="b",
(12,0)*{\bu}="c",
\ar @{->} "a";"b" <0pt>
\ar @{->} "b";"c" <0pt>
\endxy\Ea
\ - \
\Ba{c}\xy
(0,2)*{_{2}},
(6,2)*{_{1}},
(12,2)*{_{3}},
 (0,0)*{\bullet}="a",
(6,0)*{\bu}="b",
(12,0)*{\bu}="c",
\ar @{->} "a";"b" <0pt>
\ar @{->} "b";"c" <0pt>
\endxy\Ea
\ + \
\Ba{c}\xy
(0,2)*{_{2}},
(6,2)*{_{1}},
(12,2)*{_{3}},
 (0,0)*{\bullet}="a",
(6,0)*{\bu}="b",
(12,0)*{\bu}="c",
\ar @{<-} "a";"b" <0pt>
\ar @{->} "b";"c" <0pt>
\endxy\Ea
-
\Ba{c}\xy
(0,2)*{_{1}},
(6,2)*{_{2}},
(12,2)*{_{3}},
 (0,0)*{\bullet}="a",
(6,0)*{\bu}="b",
(12,0)*{\bu}="c",
\ar @{<-} "a";"b" <0pt>
\ar @{->} "b";"c" <0pt>
\endxy\Ea \label{3: morhism f into Gra}
\\
&&
\Ba{c}\xy
(0,2)*{_{1}},
(6,2)*{_{2}},
(12,2)*{_{3}},
 (0,0)*{\bullet}="a",
(6,0)*{\bu}="b",
(12,0)*{\bu}="c",
\ar @{<-} "a";"b" <0pt>
\ar @{<-} "b";"c" <0pt>
\endxy\Ea
\ - \
\Ba{c}\xy
(0,2)*{_{2}},
(6,2)*{_{1}},
(12,2)*{_{3}},
 (0,0)*{\bullet}="a",
(6,0)*{\bu}="b",
(12,0)*{\bu}="c",
\ar @{<-} "a";"b" <0pt>
\ar @{<-} "b";"c" <0pt>
\endxy\Ea
\ + \
\Ba{c}\xy
(0,2)*{_{2}},
(6,2)*{_{1}},
(12,2)*{_{3}},
 (0,0)*{\bullet}="a",
(6,0)*{\bu}="b",
(12,0)*{\bu}="c",
\ar @{->} "a";"b" <0pt>
\ar @{<-} "b";"c" <0pt>
\endxy\Ea
-
\Ba{c}\xy
(0,2)*{_{1}},
(6,2)*{_{2}},
(12,2)*{_{3}},
 (0,0)*{\bullet}="a",
(6,0)*{\bu}="b",
(12,0)*{\bu}="c",
\ar @{->} "a";"b" <0pt>
\ar @{<-} "b";"c" <0pt>
\endxy\Ea \nonumber
\Eeqr
which implies
$$
\varphi\left(\Ba{c}
 \begin{xy}
 <0mm,0mm>*{\bu};<0mm,0mm>*{}**@{},
 <0mm,0.69mm>*{};<0mm,3.0mm>*{}**@{-},
 <0.39mm,-0.39mm>*{};<2.4mm,-2.4mm>*{}**@{-},
 <-0.35mm,-0.35mm>*{};<-1.9mm,-1.9mm>*{}**@{-},
 <-2.4mm,-2.4mm>*{\bu};<-2.4mm,-2.4mm>*{}**@{},
 <-2.0mm,-2.8mm>*{};<0mm,-4.9mm>*{}**@{-},
 <-2.8mm,-2.9mm>*{};<-4.7mm,-4.9mm>*{}**@{-},
    <0.39mm,-0.39mm>*{};<3.3mm,-4.0mm>*{^3}**@{},
    <-2.0mm,-2.8mm>*{};<0.5mm,-6.7mm>*{^2}**@{},
    <-2.8mm,-2.9mm>*{};<-5.2mm,-6.7mm>*{^1}**@{},
 \end{xy}
\ + \
 \begin{xy}
 <0mm,0mm>*{\bu};<0mm,0mm>*{}**@{},
 <0mm,0.69mm>*{};<0mm,3.0mm>*{}**@{-},
 <0.39mm,-0.39mm>*{};<2.4mm,-2.4mm>*{}**@{-},
 <-0.35mm,-0.35mm>*{};<-1.9mm,-1.9mm>*{}**@{-},
 <-2.4mm,-2.4mm>*{\bu};<-2.4mm,-2.4mm>*{}**@{},
 <-2.0mm,-2.8mm>*{};<0mm,-4.9mm>*{}**@{-},
 <-2.8mm,-2.9mm>*{};<-4.7mm,-4.9mm>*{}**@{-},
    <0.39mm,-0.39mm>*{};<3.3mm,-4.0mm>*{^2}**@{},
    <-2.0mm,-2.8mm>*{};<0.5mm,-6.7mm>*{^1}**@{},
    <-2.8mm,-2.9mm>*{};<-5.2mm,-6.7mm>*{^3}**@{},
 \end{xy}
\ + \
 \begin{xy}
 <0mm,0mm>*{\bu};<0mm,0mm>*{}**@{},
 <0mm,0.69mm>*{};<0mm,3.0mm>*{}**@{-},
 <0.39mm,-0.39mm>*{};<2.4mm,-2.4mm>*{}**@{-},
 <-0.35mm,-0.35mm>*{};<-1.9mm,-1.9mm>*{}**@{-},
 <-2.4mm,-2.4mm>*{\bu};<-2.4mm,-2.4mm>*{}**@{},
 <-2.0mm,-2.8mm>*{};<0mm,-4.9mm>*{}**@{-},
 <-2.8mm,-2.9mm>*{};<-4.7mm,-4.9mm>*{}**@{-},
    <0.39mm,-0.39mm>*{};<3.3mm,-4.0mm>*{^1}**@{},
    <-2.0mm,-2.8mm>*{};<0.5mm,-6.7mm>*{^3}**@{},
    <-2.8mm,-2.9mm>*{};<-5.2mm,-6.7mm>*{^2}**@{},
 \end{xy}\Ea\right)=0.
$$
\end{proof}

All possible morphisms of dg operads,  $\caL ie_\infty\{2\} \lon \cG ra^\uparrow$, can be usefully encoded as Maurer-Cartan elements in the graded Lie algebra,
$$
\mathsf{f} \sG\sC_3^{or}:= \Def(\caL ie_\infty\{2\} \stackrel{0}{\lon} \cG ra^\uparrow),
$$
which controls deformation theory of the zero morphism (cf.\ \cite{MV}). As a graded vector space,
\Beq \label{equ:GCordef}
\mathsf{f} \sG\sC_3^{or}\cong \prod_{n\geq 2} \Hom_{\bS_n}(E(n), \cG ra^\uparrow(n))[-1]=  \prod_{n\geq 2} \cG ra^\uparrow(n)^{\bS_n}[3-3n],
\Eeq
so that its elements can be understood as ($\K$-linear series of) graphs $\Ga$ from $\cG ra^\uparrow$ whose vertex labels are skewsymmetrized (so that we can often forget numerical labels of vertices in our pictures), and which are assigned the homological degree
$$
|\Ga|= 3\# V(\Ga) -3 - 2\# E(\Ga),
$$
where $V(\Ga)$ (resp.\, $E(\Ga)$) stands for the set of vertices (resp., edges) of $\Ga$.

\mip

The Lie brackets, $[\ , \ ]_{\mathsf{gra}}$, in $\mathsf{f} \sG\sC_3^{or}$  can be either read from the differential (\ref{3: Lie_infty differential}), or, equivalently, from the following explicit Lie algebra structure \cite{KM} associated with the degree shifted operad $\cG ra_3^\uparrow\{3\}$ (and which makes sense for any operad),
$$
\Ba{rccc}
[\ ,\ ]:&  \sP \ot \sP & \lon & \sP\\
& (a\in \cP(n), b\in \cP(m)) & \lon &
[a, b]:= \sum_{i=1}^n a\circ_i b - (-1)^{|a||b|}\sum_{i=1}^m b\circ_i a
\Ea
$$
where
$
\sP:= \prod_{n\geq 1}\cG ra(n)^\uparrow[3-3n]$. These Lie brackets in $\sP$ induce
Lie brackets, $[\ ,\ ]_{\mathsf{gra}}$, in the subspace of $\bS$-invariants \cite{KM},
$$
\sP^\bS:=  \prod_{n\geq 1}\cG ra^\uparrow(n)[3-3n]^{\bS_n}= \fGCor_3 
$$
via the standard symmetrization map $\sP\rar \sP^\bS$.

\mip

The graph
$$
\xy
 (0,0)*{\bullet}="a",
(7,0)*{\bu}="b",
\ar @{->} "a";"b" <0pt>
\endxy:=
\xy
(0,2)*{_{1}},
(7,2)*{_{2}},
 (0,0)*{\bullet}="a",
(7,0)*{\bu}="b",
\ar @{->} "a";"b" <0pt>
\endxy\
- \
\xy
(0,2)*{_{2}},
(7,2)*{_{1}},
 (0,0)*{\bullet}="a",
(7,0)*{\bu}="b",
\ar @{->} "a";"b" <0pt>
\endxy
$$
is a degree $2\cdot 3-3-2=1$ element in $\sf \sG\sC_3^{or}$, which, in fact, is a Maurer-Cartan element,
$$
\left[\xy
 (0,0)*{\bullet}="a",
(7,0)*{\bu}="b",
\ar @{->} "a";"b" <0pt>
\endxy, \xy
 (0,0)*{\bullet}="a",
(7,0)*{\bu}="b",
\ar @{->} "a";"b" <0pt>
\endxy \right]= \mathrm{skewsymmetrization\ of\ the\ r.h.s.\ in}\ (\ref{3: morhism f into Gra})=0,
$$
which represents the above morphism $\varphi$ in the Lie algebra $\fGCor_3$. This element makes, therefore, $\mathsf{f}\sG\sC_3^{or}$ into a {\em differential}\, graded Lie algebra
with the differential
\Beq\label{3: d in GC_3}
d\Ga:= \left[\xy
 (0,0)*{\bullet}="a",
(7,0)*{\bu}="b",
\ar @{->} "a";"b" <0pt>
\endxy, \Ga\right]_{\mathsf{gra}}.
\Eeq
Let $\sG\sC_3^{or}$  be a subspace of $\mathsf{f}\sG\sC_3^{or}$ spanned by connected graphs whose vertices are at least bivalent, and if bivalent do not have one incoming and one outgoing edge. It is easy to see that this is a dg Lie subalgebra.

\begin{remark}
 This definition of $\sG\sC_3^{or}$ in \cite{Wi1} differs slightly from the present one in that all bivalent vertices are allowed in loc. cit. However, it is easy to check that this extra condition does not change the cohomology.
\end{remark}

The cohomology of the  {\em oriented graph complex}\, $(\sG\sC_3^{or}, d)$ was partially computed in \cite{Wi1}.

\subsubsection{\bf Theorem \cite{Wi2}}\label{3: Willwacher theorem on GC_3} (i) {\em  $H^0(\sG\sC_3^{or}, d)=\fg\fr\ft_1$, where
$\fg\fr\ft_1$ is the Lie algebra of the prounipotent Grothendieck-Teichm\"uller group $GRT_1$
introduced by Drinfeld in \cite{D2}.  }

(ii) {\em $H^{-1}(\sG\sC_3^{or}, d)\cong  \K$. The single class is represented by the graph
$
\Ba{c}\resizebox{4mm}{!}{   \xy
   \ar@/^0.6pc/(0,5)*{\bullet};(0,-5)*{\bullet}
   \ar@/^{-0.6pc}/(0,5)*{\bullet};(0,-5)*{\bullet}
 \endxy}\Ea
$.

(iii) {\em $H^i(\sG\sC_3^{or}, d)=0$ for all $i\leq-2$.}}

\mip

\subsection{Action on $\hLieBi_\infty$}
There is a natural action of $\sG\sC_3^{or}$ on the properad $\hLieBi_\infty$ by properadic derivations.
Concretely, for any graph $\Gamma$ we define the derivation $F(\Gamma)\in \Der(\hLieBi_\infty)$ sending the generator $\mu_{m,n}$ of $\hLieBi_\infty$ to the linear combination of graphs
\Beq \label{equ:def GC action 1}
\mu_{m,n}\cdot \Gamma=
 \sum
    \overbrace{
 \underbrace{ \Ba{c}\resizebox{9mm}{!}  {\xy
(0,4.5)*+{...},
(0,-4.5)*+{...},
(0,0)*+{\Ga}="o",
(-5,6)*{}="1",
(-3,6)*{}="2",
(3,6)*{}="3",
(5,6)*{}="4",
(-3,-6)*{}="5",
(3,-6)*{}="6",
(5,-6)*{}="7",
(-5,-6)*{}="8",
\ar @{<-} "o";"1" <0pt>
\ar @{<-} "o";"2" <0pt>
\ar @{<-} "o";"3" <0pt>
\ar @{<-} "o";"4" <0pt>
\ar @{->} "o";"5" <0pt>
\ar @{->} "o";"6" <0pt>
\ar @{->} "o";"7" <0pt>
\ar @{->} "o";"8" <0pt>
\endxy}\Ea
 }_{n\times}
 }^{m\times}
%
%
%
\Eeq
where the sum is taken over all ways of attaching the incoming and outgoing legs such that all vertices are at least trivalent and have at least one incoming and one outgoing edge.

\begin{lemma}\label{lem:GC3 action on LieBi}
 The above formula defines a right action of $\GC_3^{or}$ on $\hLieBi_\infty$.
\end{lemma}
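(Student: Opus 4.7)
The plan is to identify both source and target as graph complexes with a common combinatorial structure, so that $F$ becomes the tautological ``add external legs in all possible ways'' map. Using Koszulness of $\LieBi$, the deformation complex $\Der(\hLieBi_\infty)\cong \Def(\hLieBi_\infty\stackrel{\Id}{\to}\hLieBi_\infty)[1]$ has the explicit graph-complex description recalled in Section \ref{sec:defcomplexes} and Figure \ref{fig:hairyGC}: the underlying space is spanned by directed acyclic graphs with $m$ outgoing and $n$ incoming external legs (appropriately antisymmetrized), the internal differential is the vertex-splitting $\delta_{\hLieBi_\infty}$, and there is an extra piece of the differential attaching a new trivalent vertex at one external leg. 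The Lie bracket on $\Der(\hLieBi_\infty)$ is the commutator of properadic derivations, which on the graph side is the sum over insertions of one graph at a vertex of another, matching the in/out legs of the inserted graph with the in/out edges at that vertex.

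By inspection, the formula \eqref{equ:def GC action 1} is precisely the map $F:\sf\sG\sC_3^{or}\to \Der(\hLieBi_\infty)$ that sends a legless graph $\Gamma$ to the sum of all decorations of $\Gamma$ by $m$ outgoing and $n$ incoming external legs, summed over $(m,n)$. The fact that the sum is restricted to graphs with each vertex carrying at least one incoming and one outgoing edge matches precisely the condition that the generators $\mu_{m,n}$ satisfy $m,n\geq 1$.

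I would then check in turn that $F$ is a morphism of dg Lie algebras (up to the sign convention for a right action). For the bracket, both $[\Gamma_1,\Gamma_2]_{\mathsf{gra}}$ and $[F(\Gamma_1),F(\Gamma_2)]$ are computed by inserting one graph at each vertex of the other and summing over all edge/leg reconnections; the extra sum over leg-attachment patterns inherent in $F$ commutes with vertex insertion because $F$ already sums over all such patterns, so the two operations give the same total on the nose. For the differential, $d\Gamma=[\tfrac{\bullet{\to}\bullet}{},\Gamma]_{\mathsf{gra}}$ either splits an existing vertex of $\Gamma$ (matching $\delta_{\hLieBi_\infty}$ applied after $F$, i.e.\ the commutator $[\delta_{\hLieBi_\infty},F(\Gamma)]$ on the generators $\mu_{m,n}$) or attaches an edge at a new univalent vertex, which under leg-attachment produces a new external hair with a trivalent vertex at its root; this matches the second piece of the differential on $\Der(\hLieBi_\infty)$ illustrated in Figure \ref{fig:hairyGC}. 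The terms producing bivalent vertices with one in- and one out-edge are killed by the restriction defining $\sG\sC_3^{or}$ and by the trivalency condition on $\Der(\hLieBi_\infty)$, and so do not contribute.

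The main obstacle is purely bookkeeping: arranging the signs from the antisymmetrization of external leg labels in $\hLieBi_\infty$, from the antisymmetrization of vertex labels in $\sf\sG\sC_3^{or}$, and from the degree shifts $[3-3n]$ in \eqref{equ:GCordef} into a coherent convention, and checking that this is compatible with the sign rule in the remark following Section \ref{3: subsec on canonical repr of Gra}. A conceptual shortcut that avoids some of this is to note that the same remark already produces a $\cG ra^\uparrow$-, hence $\sf\sG\sC_3^{or}$-, action on $\Def(\caL ie\cB_\infty \stackrel{0}{\to} \hLieBi_\infty)[-2]$; the Maurer-Cartan element corresponding to the identity $\hLieBi_\infty\to\hLieBi_\infty$ then yields, by standard deformation-theoretic machinery, an induced Lie action on $\Der(\hLieBi_\infty)$, which one identifies with $F$ by comparing their values on the generators $\mu_{m,n}$.
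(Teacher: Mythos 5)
Your argument is correct and follows essentially the same route as the paper's own proof sketch: the key point in both is that attaching external legs in all possible ways commutes with pre-Lie insertion of graphs (the identity $\mu_{m,n}\cdot(\Gamma\bullet\Gamma')=(\mu_{m,n}\cdot\Gamma)\cdot\Gamma'$), with the exclusion of bivalent one-in/one-out vertices being exactly what makes this hold, and the differential check reducing to matching the vertex-splitting and leg-attachment pieces. You spell out the differential compatibility that the paper leaves to the reader, but the underlying approach is the same.
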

\begin{proof}[Proof sketch]
 We denote by $\bullet$ the pre-Lie product on the deformation complex $\Def(\caL ie_\infty\{2\} \stackrel{0}{\lon} \cG ra^\uparrow)\supset \GC_3^{or}$, so that the Lie bracket on $\GC_3^{or}$ may be written as $[\Gamma, \Gamma']=\Gamma\bullet \Gamma' \pm \Gamma' \bullet \Gamma$ for $\Gamma,\Gamma'\in \GC_3^{or}$.
 Note that
 \[
\mu_{m,n} \cdot (\Gamma \bullet \Gamma')=(\mu_{m,n} \cdot \Gamma) \cdot \Gamma' ,
 \]
where it is important that we excluded graphs with bivalent vertices with one incoming and one outgoing edge from the definition of $\sG\sC_3^{or}$. It follows that the formula above defines an action of the graded Lie algebra $\GC_3^{or}$. We leave it to the reader to check that this action also commutes with the differential.
\end{proof}

Of course, by a change of sign the right action may be transformed into a left action and hence we obtain a map of Lie algebras
\[
 F\colon \sG\sC_3^{or}\to \Der(\hLieBi_\infty)\, .
\]
Interpreting the right hand side as a graph complex as in section \ref{sec:defcomplexes}, the map $F$ sends a graph $\Gamma\in \GC_3^{or}$ to the series of graphs
\[
\pm
\sum_{m,n}
 \sum
    \overbrace{
 \underbrace{\Ba{c}\resizebox{10mm}{!}  { \xy
(0,4.5)*+{...},
(0,-4.5)*+{...},
(0,0)*+{\Ga}="o",
(-5,6)*{}="1",
(-3,6)*{}="2",
(3,6)*{}="3",
(5,6)*{}="4",
(-3,-6)*{}="5",
(3,-6)*{}="6",
(5,-6)*{}="7",
(-5,-6)*{}="8",
\ar @{<-} "o";"1" <0pt>
\ar @{<-} "o";"2" <0pt>
\ar @{<-} "o";"3" <0pt>
\ar @{<-} "o";"4" <0pt>
\ar @{->} "o";"5" <0pt>
\ar @{->} "o";"6" <0pt>
\ar @{->} "o";"7" <0pt>
\ar @{->} "o";"8" <0pt>
\endxy}\Ea
 }_{n\times}
 }^{m\times}
%
%
%
 %
\]

\begin{theorem}\label{thm:Fqiso}
 The map $F : \GC_3^{or}\to \Der(\hLieBi_\infty)$ is a quasi-isomorphism, up to one class in  $\Der(\hLieBi_\infty)$ represented by the series
 \[
  \sum_{m,n}(m+n-2)
  \overbrace{
  \underbrace{
 \Ba{c}\resizebox{10mm}{!}  {\xy
(0,4.5)*+{...},
(0,-4.5)*+{...},
(0,0)*{\bu}="o",
(-5,5)*{}="1",
(-3,5)*{}="2",
(3,5)*{}="3",
(5,5)*{}="4",
(-3,-5)*{}="5",
(3,-5)*{}="6",
(5,-5)*{}="7",
(-5,-5)*{}="8",
\ar @{<-} "o";"1" <0pt>
\ar @{<-} "o";"2" <0pt>
\ar @{<-} "o";"3" <0pt>
\ar @{<-} "o";"4" <0pt>
\ar @{->} "o";"5" <0pt>
\ar @{->} "o";"6" <0pt>
\ar @{->} "o";"7" <0pt>
\ar @{->} "o";"8" <0pt>
\endxy}\Ea
 %
%
 }_{n\times}
 }^{m\times}.
 \]
\end{theorem}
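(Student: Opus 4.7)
The strategy is to interpret $\Der(\hLieBi_\infty)$ as a ``hairy'' oriented graph complex, reduce its cohomology to that of $\sG\sC_3^{or}$ by a spectral sequence, and then apply Willwacher's Theorem~\ref{3: Willwacher theorem on GC_3}. Concretely, using the identification $\Der(\hLieBi_\infty)\cong \Def(\LieBi_\infty\to\hLieBi_\infty)[1]$ together with the Koszul presentation $\LieBi_\infty=\Omega(\invcoFrob\{1\})$, one realises $\Der(\hLieBi_\infty)$ as a complex $\mathsf{HGC}$ of connected directed acyclic graphs equipped with antisymmetrised incoming and outgoing external legs and with internal vertices of valence at least three. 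The differential splits as $\delta_s+\delta_a$, where $\delta_s$ splits an internal vertex (the inherited $\LieBi_\infty$-differential) and $\delta_a$ attaches a new internal vertex at an external leg (coming from the action of $\hLieBi_\infty$ on itself by outer derivations). In this picture the map $F$ becomes the inclusion $\sG\sC_3^{or}\hookrightarrow \mathsf{HGC}$ obtained by summing over all admissible ways of attaching external legs to a given leg-free graph.

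Next, filter $\mathsf{HGC}$ by the decreasing number of external legs. Both pieces of the differential respect this filtration and only $\delta_s$ acts on the associated graded. Fixing the combinatorial skeleton describing how external legs are attached, the resulting piece of the $E_0$-page is a direct sum of (shifted) copies of the complexes $\hLieBi_\infty(m,n)$. By Koszulness of $\LieBi$ \cite{MaVo}, each such piece has cohomology concentrated in degree zero, equal to $\hLieBi(m,n)$. Hence $E_1$ is a complex of hairy graphs in which each internal component is decorated by an element of $\hLieBi$ at the appropriate arity, with differential induced by $\delta_a$.

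Finally, on $E_1$ separate the ``leg-free'' subcomplex, which by a standard contraction is quasi-isomorphic to $\sG\sC_3^{or}$ itself, from the ``positive-leg'' quotient. The crucial step is to construct a contracting chain homotopy on the positive-leg quotient showing that it is acyclic except for a single explicit degree-zero cocycle corresponding to the Euler-type derivation $\mu_{m,n}\mapsto (m+n-2)\mu_{m,n}$. This homotopy is produced by pairing a distinguished external leg with a suitable adjacent edge and running $\delta_a$ in reverse, in the spirit of the hair-contraction arguments of \cite{Wi2}. Combined with Theorem~\ref{3: Willwacher theorem on GC_3} (giving $H^0(\sG\sC_3^{or})\cong\grt_1$, $H^{-1}(\sG\sC_3^{or})\cong\K$, and $H^{<-1}(\sG\sC_3^{or})=0$), this yields the claim. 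The main technical obstacle is the careful construction of this contracting homotopy---one must identify precisely where it fails so as to match exactly with the Euler class, and verify convergence of the spectral sequence in the genus-completed setting of $\hLieBi_\infty$.
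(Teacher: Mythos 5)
Your setup is fine as far as it goes: interpreting $\Der(\hLieBi_\infty)$ as a hairy oriented graph complex, filtering by the total number of external legs, and using Koszulness of $\LieBi$ to identify the first page with $\Def(\LieBi_\infty\to\LieBi)[1]$ is exactly the reduction recorded in \eqref{equ:Defsimpl} and in Remark \ref{rem:alternative Fqiso proof}. The problem is the final step, which is where the entire content of the theorem lives, and there your argument is both structurally flawed and not actually carried out. First, there is no ``leg-free subcomplex'' to separate off: every graph appearing in $\Der(\hLieBi_\infty)$, and a fortiori every element of $\LieBi(n,m)$ on your $E_1$-page, has $n\geq 1$ incoming and $m\geq 1$ outgoing legs. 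The complex $\GC_3^{or}$ does not sit inside $E_1$ as the graphs with no legs; it maps in via $F$ by \emph{summing over all ways} of attaching legs, so the image is spread across all arities and cannot be split off as a direct summand complementary to a ``positive-leg quotient''. Second, the asserted contracting homotopy (``pairing a distinguished external leg with a suitable adjacent edge and running $\delta_a$ in reverse'') is not a defined operation on the antisymmetrized, acyclicity-constrained graphs in question, and identifying precisely where such a homotopy fails --- i.e., that the only surviving classes are $H(\GC_3^{or})$ plus the single Euler derivation --- is exactly the theorem, not a technicality one may defer. Computing the cohomology of $\Def(\LieBi_\infty\to\LieBi)[1]$ from scratch is the hard computation of \cite{Wi1} (for the complex $\Def(\hoe_2\to e_2)_{\conn}$), which you neither reproduce nor cite.

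The paper avoids this by factoring $F$ through the intermediate complex $\hGCor_3$ of graphs with only outgoing hairs: the first factor $\Psi:\GC_3^{or}\to\hGCor_3$ is a quasi-isomorphism up to exactly one class by Proposition 3 of \cite{Wi2} (this is where the Euler-type series arises), and the new work is Proposition \ref{prop:Gqiso}, showing that $G:\hGCor_3\to\Der(\LieBi_\infty)$ is an honest quasi-isomorphism. That proof does not pass through $\LieBi$ at all; it filters by the number of vertices of a ``skeleton'' obtained by stripping input forests and pass-through vertices, splits the associated graded over skeleta, computes the forest complex at each vertex (distinguishing ``bald'' and ``hairy'' vertices), and then runs an edge-by-edge analysis to kill all configurations except the fully hairy one. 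To repair your argument you would either need to import the result of \cite{Wi1} wholesale, or supply an analogue of this skeleton/forest analysis in place of the undelivered homotopy.
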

The result will not be used directly in this paper. The proof is an adaptation of the proof of \cite[Proposition 3]{Wi2} and is given in Appendix \ref{app:defproof1}.

\mip

\subsection{$GRT_1$ action on Lie bialgebra structures}
The action of the Lie algebra of closed degree zero cocycles $\GCor_{3,cl}\subset \GCor_3$ on $\hLieBi_\infty$ by derivations may be integrated
to an action of the exponential group $\Exp\GCor_{3,cl}$ on $\hLieBi_\infty$ by (continuous) automorphisms.
Hence this exponential group acts on the set of $\hLieBi_\infty$ algebra structures on any dg vector space $\fg$, i.~e., on the set of morphisms of properads
\[
 \hLieBi_\infty \to \End_\fg
\]
by precomposition.
Furthermore, it follows that the cohomology Lie algebra $H^0(\GCor_3)\cong \grt_1$ maps into the the Lie algebra of continuous derivations up to homotopy $H^0(\Der(\hLieBi_\infty))$ and the the exponential group $\Exp H^0(\GCor_3)\cong GRT_1$ maps into the set of homotopy classes of continuous automorphisms of $\hLieBi_\infty$.

\begin{remark}\label{rem:incomplete}
Note also that one may define a non-complete version $\GCor_{3,inc}$ of the graph complex $\GCor_3$ by merely replacing the direct product by a direct sum in \eqref{equ:GCordef}. The zeroth cohomology of $\GCor_{3,inc}$ is a non-complete version of the Grothendieck-Teichm\"uller group. Furthermore $\GCor_{3,inc}$ acts on the non-completed operad $\LieBi_\infty$
by derivations, using the formulas of the previous subsection, and hence also on $\LieBi_\infty$ algebra structures. However, these actions can in general not be integrated, whence we work with the completed properad $\hLieBi_\infty$ above.
\end{remark}

Finally, let us describe the action $\GCor_{3,cl}$ on Lie bialgebra structures in yet another form.
We have a sequence of morphisms of dg Lie algebras,
$$
\sG\sC_3^{or}\  \ \lon \ \ \Def(\caL ie_\infty\{2\} \rar \cG ra^\uparrow) \ \ \stackrel{\rho}{\lon} \ \
 \Def(\caL ie_\infty\{2\} \stackrel{\{\ ,\ \}}{\lon} \cE nd_{\mathsf{LieB}(\fg)})
$$
where the first arrow is just the inclusion, and the second arrow is induced by the canonical representation (\ref{3: Gra representation in g_V}) and which obviously satisfies
$$
\rho\circ \varphi \left(\Ba{c}
\xy
 <0mm,0.55mm>*{};<0mm,3.5mm>*{}**@{-},
 <0.5mm,-0.5mm>*{};<2.2mm,-2.2mm>*{}**@{-},
 <-0.48mm,-0.48mm>*{};<-2.2mm,-2.2mm>*{}**@{-},
 <0mm,0mm>*{\bu};<0mm,0mm>*{}**@{},
 <0.5mm,-0.5mm>*{};<2.7mm,-3.2mm>*{_2}**@{},
 <-0.48mm,-0.48mm>*{};<-2.7mm,-3.2mm>*{_1}**@{},
 \endxy\Ea\right)= \{\ ,\ \}\in \Hom\left(\wedge^2 \mathsf{LieB}, \mathsf{LieB}[-2]\right)\subset \Def(\caL ie_\infty\{2\} \stackrel{\{\ ,\ \}}{\lon} \cE nd_{\mathsf{LieB}(\fg)}).
$$

The dg Lie algebra,
$$
\Def(\caL ie_\infty\{2\} \stackrel{\{\ ,\ \}}{\lon} \cE nd_{\mathsf{LieB}(\fg)}) =: CE^\bu(\mathsf{LieB}(\fg)),
$$
is nothing but the classical Chevalley-Eilenberg complex controlling deformations of the Poisson brackets (\ref{3: Poisson brackets in LieB(g)}) in $\mathsf{LieB}(\fg)$.
In particular for any closed degree zero element $g\in \sG\sC_3^{or}$, and in particular for representatives of elements of $\fg\fr\ft_1$ in $\sG\sC_3^{or}$, we obtain a $\Lie_\infty$ derivation of $\mathsf{LieB}(\fg)$. This derivation may be integrated into a $\Lie_\infty$ automorphism $\exp(ad_g)$.
For a Maurer-Cartan element $\ga$ in $\mathsf{LieB}(\fg)$ corresponding to a graded complete $\LieBi_\infty$ structure on $\fg$ the series
$$
\ga \lon exp(ad_g) \ga
$$
converges and defines again a graded complete homotopy Lie bialgebra structure on $\fg$.

%
%

\begin{remark}
 By degree reasons the above action on $\hLieBi_\infty$ structures maps $\hLieBi$ structures again to $\hLieBi$ structures. In other words, no higher homotopies are created if there were none before.
\end{remark}

\subsection{Another oriented graph complex}\label{sec:GChbar}
We shall introduce next a new oriented graph complex and then use ({\em both}\, results of) Theorem {\ref{3: Willwacher theorem on GC_3} } to compute partially its cohomology and then deduce formulae for an action
of $GRT_1$ on {\em involutive}\, Lie bialgebra structures.

Let $\hbar$ be a formal variable of homological degree $2$. The Lie brackets $[\ ,\
]_{\mathrm{gra}}$ in $\sG\sC_3^{or}$ extend $\hbar$-linearly to the topological
vector space $\sG\sC_3^{or}[[\hbar]]$. 

\subsection{Proposition} \label{prop:phihbar} {\em The element
$$
\Phi_\hbar:= \sum_{k=1}^\infty \hbar^{k-1} \underbrace{
\Ba{c}\resizebox{6mm}{!}  {\xy
(0,-5)*{...},
   \ar@/^1pc/(0,0)*{\bullet};(0,-10)*{\bullet}
   \ar@/^{-1pc}/(0,0)*{\bullet};(0,-10)*{\bullet}
   \ar@/^0.6pc/(0,0)*{\bullet};(0,-10)*{\bullet}
   \ar@/^{-0.6pc}/(0,0)*{\bullet};(0,-10)*{\bullet}
 \endxy}
 \Ea}_{k\ \mathrm{edges}}
$$
is a Maurer-Cartan element in the Lie algebra $(\mathsf{f}\sG\sC_3^{or}[[\hbar]], [\ ,\ ]_{\mathsf{gra}})$}.
\begin{proof}
\Beqrn
[\Phi_\hbar, \Phi_\hbar] &=& 
\sum_{k=1}^\infty \sum_{l=1}^\infty \hbar^{k+l-2}\sum_{k=k'+k''}
\Ba{c}\resizebox{12mm}{!}  {\xy
(0,-5)*{\stackrel{l}{...}},
(0,5)*{\stackrel{k'}{...}},
(8,0)*{\stackrel{k''}{...}},
   \ar@/^{-1pc}/(0,0)*{\bullet};(0,-10)*{\bullet}
   \ar@/^0.6pc/(0,0)*{\bullet};(0,-10)*{\bullet}
   \ar@/^{-0.6pc}/(0,0)*{\bullet};(0,-10)*{\bullet}
   \ar@/^{-1pc}/(0,10)*{\bullet};(0,0)*{\bullet}
   \ar@/^0.6pc/(0,10)*{\bullet};(0,0)*{\bullet}
   \ar@/^{-0.6pc}/(0,10)*{\bullet};(0,0)*{\bullet}
   \ar@/^{2.4pc}/(0,10)*{\bullet};(0,-10)*{\bullet}
   \ar@/^{1.3pc}/(0,10)*{\bullet};(0,-10)*{\bullet}
 \endxy}
 \Ea -
 \sum_{k=1}^\infty \sum_{l=1}^\infty \hbar^{k+l-2}\sum_{k=k'+k''}
 \Ba{c}\resizebox{12.5mm}{!}  {
 \xy
(0,-5)*{\stackrel{k'}{...}},
(0,5)*{\stackrel{l}{...}},
(8,0)*{\stackrel{k''}{...}},
   \ar@/^{-1pc}/(0,0)*{\bullet};(0,-10)*{\bullet}
   \ar@/^0.6pc/(0,0)*{\bullet};(0,-10)*{\bullet}
   \ar@/^{-0.6pc}/(0,0)*{\bullet};(0,-10)*{\bullet}
   \ar@/^{-1pc}/(0,10)*{\bullet};(0,0)*{\bullet}
   \ar@/^0.6pc/(0,10)*{\bullet};(0,0)*{\bullet}
   \ar@/^{-0.6pc}/(0,10)*{\bullet};(0,0)*{\bullet}
   \ar@/^{2.4pc}/(0,10)*{\bullet};(0,-10)*{\bullet}
   \ar@/^{1.3pc}/(0,10)*{\bullet};(0,-10)*{\bullet}
 \endxy}
 \Ea \\
 &=& 0.
\Eeqrn
\end{proof}

Hence the degree one continuous map
$$
\Ba{rccc}
d_\hbar: & \sG\sC_3^{or}[[\hbar]] &\lon & \sG\sC_3^{or}[[\hbar]]\\
         &  \Ga &  \lon & d_\hbar\Ga:= [\Phi_\hbar,\Ga]_{\mathrm{gra}}
\Ea
$$
is a differential in $\sG\sC_3^{or}[[\hbar]]$. The induced differential, $d$,  in
$\sG\sC_3^{or}=\sG\sC_3^{or}[[\hbar]]/ \hbar \sG\sC_3^{or}[[\hbar]]$ is precisely the
original
differential (\ref{3: d in GC_3}).

\subsection{Action on $\hLoB_\infty$}\label{sec:action on LoB}
The dg Lie algebra $(\GC_3^{or}[[\hbar]], d_\hbar)$
acts naturally on the properad $\hLoB_\infty$ by continuous properadic derivations. More precisely, let $\Gamma\in \GC_3^{or}$ be a graph.
Then to the element $\hbar^N\Gamma\in \GC_3^{or}[[\hbar]]$ we assign the derivation of $\hLoB_\infty$ that sends the generator $\mu_{m,n}^k$ to zero if $k<N$ and to
\Beq \label{equ:def GC action 2}
\mu_{m,n}^k \cdot (\hbar^N\Gamma)
=
 \sum
\Ba{c}\resizebox{13mm}{!}{ \xy
 (-5,7)*{^{_1}},
 (-3,7)*{^{_2}},
 (5.5,7)*{^{_m}},
 (-5,-7.9)*{^{_1}},
 (-3,-7.9)*{^{_2}},
 (5.5,-7.9)*{^{_n}},
(1,4.5)*+{...},
(1,-4.5)*+{...},
(0,0)*+{\Ga}="o",
(-5,6)*{}="1",
(-3,6)*{}="2",
(5,6)*{}="3",
(5,-6)*{}="4",
(-3,-6)*{}="5",
(-5,-6)*{}="6",
\ar @{<-} "o";"1" <0pt>
\ar @{<-} "o";"2" <0pt>
\ar @{<-} "o";"3" <0pt>
\ar @{->} "o";"4" <0pt>
\ar @{->} "o";"5" <0pt>
\ar @{->} "o";"6" <0pt>
\endxy}\Ea
 %
%
\Eeq
where the sum is over all graphs obtained by (i) attaching the external legs to $\Gamma$ in all possible ways as in \eqref{equ:def GC action 2} and (ii) assigning weights to the vertices in all possible ways such that the weights sum to $k-N$.

\begin{lemma}\label{lem:action of GC on LoB}
 The above formula defines a right action of $(\GC_3^{or}[[\hbar]], d_\hbar)$ on $\hLoB_\infty$.
\end{lemma}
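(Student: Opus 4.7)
The plan is to follow the strategy of Lemma \ref{lem:GC3 action on LieBi}, but with careful tracking of the $\hbar$-grading. First I will check that the formula defines a right action of the graded Lie algebra $\GCor_3[[\hbar]]$ (forgetting differentials), and then verify compatibility with $d_\hbar$.

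For the graded Lie algebra action, introduce the pre-Lie product $\bullet$ on $\Def(\caL ie_\infty\{2\} \to \cG ra^\uparrow)[[\hbar]]$ as in the proof of Lemma \ref{lem:GC3 action on LieBi}, extended $\hbar$-linearly. The key identity to establish is
\[
\mu_{m,n}^k \cdot \bigl( (\hbar^{N_1}\Gamma_1)\bullet(\hbar^{N_2}\Gamma_2)\bigr)
\;=\; \bigl(\mu_{m,n}^k \cdot \hbar^{N_1}\Gamma_1\bigr)\cdot \hbar^{N_2}\Gamma_2 .
\]
Both sides are sums over (i) graph insertions attaching $\Gamma_2$ at a vertex of $\Gamma_1$, (ii) attachments of $m+n$ external legs to the combined graph so that all vertices remain at least trivalent with at least one incoming and outgoing edge, and (iii) distributions of total weight $k-N_1-N_2$ among the vertices. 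The combinatorial identification of these sums is identical to the non-weighted case, the only new point being that the weight distribution commutes with graph insertion (since weights are carried by vertices, not edges). Anti-symmetrising yields $[\Gamma,\Gamma']\cdot \mu = \Gamma\cdot (\Gamma'\cdot \mu) - (-1)^{|\Gamma||\Gamma'|}\Gamma'\cdot(\Gamma\cdot \mu)$, so the map $\GCor_3[[\hbar]] \to \Der(\hLoB_\infty)$ respects brackets.

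Next I would check $\delta_{\hLoB_\infty}\circ F = F\circ d_\hbar$ on a generator $\mu_{m,n}^k$, where $F(\Gamma)$ denotes the derivation produced by $\Gamma$. Decompose the internal differential \eqref{2: d on Lie inv infty} of $\mu_{m,n}^k$ according to the number $l$ of edges joining the two resulting vertices. The $l=1$ summands correspond precisely to attaching the single-edge graph $\bullet\!\to\!\bullet$ at one of the external legs of $\mu_{m,n}^k$, which is exactly $F(\text{stick})\cdot \mu_{m,n}^k$. For $l\geq 2$, the summand produces two vertices joined by $l$ parallel edges with total weight $b+c = k-l+1$, which matches term-for-term the action $\mu_{m,n}^k \cdot (\hbar^{l-1}\cdot(\text{$l$-edge bubble}))$: the coefficient $\hbar^{l-1}$ in $\Phi_\hbar$ accounts for the weight absorbed by creating the $l-1$ extra edges, and the free weight $k-l+1$ is distributed among the two vertices of the bubble, matching the sum over $b+c=k-l+1$. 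Summing over $l\geq 1$ shows
\[
\delta_{\hLoB_\infty} \mu_{m,n}^k \;=\; F(\Phi_\hbar)\cdot \mu_{m,n}^k .
\]
Since $F$ is a Lie algebra map (by the first step) and $d_\hbar = [\Phi_\hbar,-]$, this intertwining on generators extends to all of $\hLoB_\infty$ by the derivation property.

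The main obstacle will be bookkeeping: verifying that the signs and combinatorial multiplicities on both sides agree requires care, especially the symmetry factor between identical parallel edges in the $l$-edge bubble and the absence of such factor in the splitting formula \eqref{2: d on Lie inv infty}. The standard trick is to work in the degree-shifted version where all signs become $+1$ (cf.\ the footnote before \eqref{2: d on Lie inv infty}), reducing the verification to counting pairs (graph insertion, leg attachment) in two equivalent ways. Once the sign issue is dispatched, both the Lie-algebra action property and the differential compatibility are immediate from the combinatorics, completing the proof.
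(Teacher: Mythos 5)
Your proposal follows essentially the same route as the paper's (very brief) proof sketch: the heart of the matter is the pre-Lie compatibility $\mu_{m,n}^k\cdot(\hbar^{N}\Gamma\bullet\hbar^{M}\Gamma')=(\mu_{m,n}^k\cdot\hbar^{N}\Gamma)\cdot\hbar^{M}\Gamma'$, which is exactly the identity the paper records before referring back to Lemma \ref{lem:GC3 action on LieBi}. Your additional observation that $\delta_{\hLoB_\infty}$ on generators coincides with the action of the Maurer--Cartan element $\Phi_\hbar$ (matching the $l$-edge, weight $b+c=k-l+1$ terms of \eqref{2: d on Lie inv infty} with the $\hbar^{l-1}$-weighted $l$-edge bubble) is a correct and welcome filling-in of the differential-compatibility check that the paper leaves to the reader.
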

\begin{proof}[Proof sketch.]
The proof is similar to that of Lemma \ref{lem:GC3 action on LieBi} after noting that
\[
\mu_{m,n}^k \cdot (\hbar^N \Gamma \bullet \hbar^M \Gamma')=(\mu_{m,n} \cdot \hbar^N\Gamma) \cdot \hbar^M \Gamma'
\]
for all $M,N$ and $\Gamma,\Gamma'\in \GC_3^{or}$.
\end{proof}

Again, by a change of sign the right action may be transformed into a left action and hence we obtain a map of Lie algebras
\[
 F_\hbar \colon \sG\sC_3^{or}[[\hbar]]\to \Der(\hLoB_\infty)\, .
\]

\begin{theorem}\label{thm:Fhbarqiso}
 The map $F_\hbar$ is a quasi-isomorphism, up to classes $T\K[[\hbar]]\subset \Der(\hLoB_\infty)$ where
 \[
  T=
  \sum_{m,n,p}(m+n+2p-2) \hbar^{p}
   \overbrace{
 \underbrace{
 \xy
(0,4.5)*+{...},
(0,-4.5)*+{...},
(0,0)*+{_p}*\cir{}="o",
(-5,5)*{}="1",
(-3,5)*{}="2",
(3,5)*{}="3",
(5,5)*{}="4",
(-3,-5)*{}="5",
(3,-5)*{}="6",
(5,-5)*{}="7",
(-5,-5)*{}="8",
\ar @{<-} "o";"1" <0pt>
\ar @{<-} "o";"2" <0pt>
\ar @{<-} "o";"3" <0pt>
\ar @{<-} "o";"4" <0pt>
\ar @{->} "o";"5" <0pt>
\ar @{->} "o";"6" <0pt>
\ar @{->} "o";"7" <0pt>
\ar @{->} "o";"8" <0pt>
\endxy
%
 }_{n\times}
 }^{m\times}.
 \]
\end{theorem}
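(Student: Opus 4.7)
The plan is to deduce Theorem~\ref{thm:Fhbarqiso} from Theorem~\ref{thm:Fqiso} via a spectral sequence argument along the $\hbar$-adic direction. First I verify that $F_\hbar$ is a chain map. Writing $\Phi_\hbar=\sum_{k\geq1}\hbar^{k-1}\Theta_k$, where $\Theta_k$ denotes the two-vertex graph with $k$ parallel edges, the bracket $[\hbar^{k-1}\Theta_k,\cdot]$ on the left-hand side corresponds under $F_\hbar$ exactly to the $l=k$ piece of the $\LoB_\infty$-differential \eqref{2: d on Lie inv infty} on the right-hand side. By the same combinatorial reasoning used for Lemma~\ref{lem:action of GC on LoB}, signs and symmetry factors match, so $F_\hbar$ commutes with differentials.

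Next I introduce compatible filtrations. On $\sG\sC_3^{or}[[\hbar]]$ I filter by $\hbar$-adic degree: the $\hbar^0$ part of $d_\hbar$ is $[\Theta_1,\cdot]$, which is precisely the original differential \eqref{3: d in GC_3}. Hence the associated graded is $(\sG\sC_3^{or},d)\otimes\K[[\hbar]]$. On $\Der(\hLoB_\infty)$ I introduce the ``defect'' filtration: for a derivation $D$ sending $\mu_{m,n}^k$ to a sum of graphs, track, for each summand $\Ga$, the quantity $k-w(\Ga)$, where $w(\Ga)$ is the total vertex weight of $\Ga$. Inspection of \eqref{2: d on Lie inv infty} shows this defect is preserved by the $l=1$ components of the induced differential on $\Der(\hLoB_\infty)$ and strictly decreased by the $l\geq 2$ components. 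On the associated graded only single-edge vertex splittings survive, and since these are precisely the splittings of $\Der(\hLieBi_\infty)$ (with weight decorations playing the role of divided-power coefficients), the associated graded identifies with $\Der(\hLieBi_\infty)[[\hbar]]$, with the formal variable $\hbar$ tracking the defect.

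Under these two filtrations $F_\hbar$ induces on the associated graded level the map $F\otimes\mathrm{id}_{\K[[\hbar]]}$ of Theorem~\ref{thm:Fqiso}. That theorem identifies $H^\bu(F)$ as an isomorphism up to a single exceptional Euler-type class. Its $\hbar$-weighted version is exactly the series $T$ in the statement: at weight level $p$, the Euler derivation of $\LoB_\infty$ scales $\mu_{m,n}^p$ by $m+n+2p-2$, reproducing the coefficient in $T$. Hence at the $E_1$-page, $F_\hbar$ is a quasi-isomorphism modulo $T\K[[\hbar]]$, and convergence of both spectral sequences to the total cohomology yields the claim.

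The main obstacle I anticipate is convergence of the spectral sequence on the right, since $\Der(\hLoB_\infty)$ involves completion in both genus and vertex weight. For each fixed triple $(m,n,k)$, however, the defect filtration on $\Hom_{\bS_n\times\bS_m}(\invcoFrob\{1\}(n,m),\hLoB_\infty(n,m))$ is bounded above by $k$, and at fixed genus there are only finitely many graph topologies; thus on every homogeneous component the filtration is bounded and the spectral sequence converges. A completion argument analogous to section~\ref{sec:completed versions} then assembles the pieces into the global statement.
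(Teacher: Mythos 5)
Your overall strategy (reduce to Theorem \ref{thm:Fqiso} by a spectral sequence in the $\hbar$-direction) is reasonable, your verification that $F_\hbar$ is a chain map is fine, and the filtration on the source $\sG\sC_3^{or}[[\hbar]]$ with associated graded $(\sG\sC_3^{or},d_0)\otimes\K[[\hbar]]$ is exactly what the paper also uses. The gap is in the treatment of the target. You assert that the associated graded of $\Der(\hLoB_\infty)$ for the defect filtration $k-w(\Ga)$ "identifies with $\Der(\hLieBi_\infty)[[\hbar]]$, with weight decorations playing the role of divided-power coefficients." This is not an identification of complexes, and it is not obvious even as a quasi-isomorphism; it is in fact where essentially all of the work in the theorem lies. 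Two concrete problems: (i) the associated graded differential (the $l=1$ part) splits a weight-$a$ vertex into vertices of weights $b,c$ with $b+c=a$, i.e.\ it \emph{redistributes} weights over the newly created vertices, so a fixed underlying $\LieBi_\infty$-graph does not carry a well-defined $\K[[\hbar]]$-coefficient — one must sum over all weight distributions and then prove that this "sum over distributions" map $\Der(\hLieBi_\infty)[[\hbar]]\to \gr\Der(\hLoB_\infty)$ is a quasi-isomorphism; (ii) $\gr\Der(\hLoB_\infty)$ contains graphs built from the weighted bivalent generators $\mu^a_{1,1}$ (and chains thereof), which have no counterpart in $\LieBi_\infty$ at all. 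Disposing of these requires the bivalent-chain computation of Proposition \ref{2: propos on aux graph complexes} together with a vertex-by-vertex analysis of how the residual weights organize themselves — this is precisely the content of the weighted skeleton/forest argument that the paper carries out, and it does not reduce to "tensoring with $\K[[\hbar]]$." Relatedly, under your proposed identification the exceptional class of Theorem \ref{thm:Fqiso} would acquire the coefficient $m+n-2$ at every $\hbar$-level, whereas the correct class $T$ carries $m+n+2p-2$; the discrepancy is another symptom that the naive $E_1$-identification is off.

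For comparison, the paper avoids this issue by factoring $F_\hbar$ through the hairy oriented graph complex, $\GC_3^{or}[[\hbar]]\xrightarrow{\Psi_\hbar}\hGCor_3[[\hbar]]\xrightarrow{G_\hbar}\Der(\hLoB_\infty)$. The first map is handled exactly as you propose (filter by powers of $\hbar$ and quote the $\hbar=0$ statement), and this is where the classes $T\K[[\hbar]]$ appear. The second map is shown to be a quasi-isomorphism on the nose by a filtration by genus and $\hbar$-powers followed by a skeleton argument in which one classifies the surviving local cohomology at each vertex (bald vertices of arbitrary weight versus hairy vertices of weight zero) and at each edge; this is the weighted analogue of Proposition \ref{prop:Gqiso} and is the step your proposal skips. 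To repair your argument you would need to either supply this analysis directly for your defect filtration, or interpose the hairy graph complex as the paper does. A minor further point: your claim that the defect is bounded and hence the spectral sequence converges needs care, since for fixed generator weight $k$ the output weight $w$ is unbounded, so the defect is only bounded on one side; convergence should instead be argued using the $||\cdot||$-grading and Lemma 2.4.1 as in the paper.
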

The result will not be used in this paper, and the proof will be given in Appendix \ref{app:defproof2}.

\subsection{Action on involutive Lie bialgebra structures}
By the previous subsection the Lie algebra of degree 0 cocycles in $\sG\sC_3^{or}[[\hbar]]$ acts on the properad $\hLoB_\infty$ by derivations. The action may be integrated to an action of the corresponding exponential group on $\hLoB_\infty$ by continuous automorphisms, and hence also on the set of $\hLoB_\infty$ algebra structures on some dg vector space by precomposition.
Furthermore, the cohomology Lie algebra $H^0(\sG\sC_3^{or}[[\hbar]])$ maps into the the Lie algebra of continuous derivations up to homotopy $H^0(\Der(\hLoB_\infty))$, while the
the exponential group $\exp H^0(\GCor_3[[\hbar]])$ maps into the set of homotopy classes of continuous automorphisms of $\hLoB_\infty$.
By precomposition, we also have a map of Lie algebras $H^0(\sG\sC_3^{or}[[\hbar]])\to H^0(\Def(\hLoB_\infty\to \End_\fg))$ for any $\hLoB_\infty$ algebra $\fg$ and an action of $\exp H^0(\GCor_3[[\hbar]])$ on the set of such algebra structures up to homotopy.
Let us encode these findings in the following corollary.

\begin{corollary}  The Lie algebra $H^0(\GC_3^{or}[[\hbar]])$ and its exponential group $\exp( H^0(\GC_3^{or}[[\hbar]]))$ canonically act on the set of graded complete strong homotopy involutive Lie bialgebra structures up to homotopy.
\end{corollary}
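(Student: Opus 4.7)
The plan is to directly assemble the constructions of the preceding subsections. First, I would invoke Lemma~\ref{lem:action of GC on LoB} to obtain the map of dg Lie algebras
\[
F_\hbar\colon (\GCor_3[[\hbar]], d_\hbar)\longrightarrow \Der(\hLoB_\infty)
\]
into continuous properadic derivations, and restrict to the sub-dg Lie algebra $Z^0$ of degree zero cocycles in $\GCor_3[[\hbar]]$. Passing to cohomology immediately produces the Lie algebra homomorphism $H^0(\GCor_3[[\hbar]])\to H^0(\Der(\hLoB_\infty))$, and postcomposing with the natural map to $H^0(\Def(\hLoB_\infty\to \cE nd_\fg))$ for any $\hLoB_\infty$-algebra $\fg$ supplies the infinitesimal half of the claimed action on structures up to homotopy.

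Next, I would integrate to a group action. For any $\Gamma \in Z^0$, the derivation $F_\hbar(\Gamma)$ strictly raises the complete filtration on $\hLoB_\infty$ by the invariant $\|\cdot\|=\mathrm{genus}+\mathrm{weight}$ of section~\ref{sec:decomposition}: attaching external legs to a graph $\Gamma$ with at least trivalent vertices, or multiplying by a positive power of $\hbar$, produces a generator of strictly larger $\|\cdot\|$. Since $\hLoB_\infty$ is complete with respect to this filtration, the exponential series $\exp(F_\hbar(\Gamma))$ converges to a continuous dg properadic automorphism of $\hLoB_\infty$, and the Baker--Campbell--Hausdorff series converges analogously. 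This yields an action of $\exp(Z^0)$ on $\hLoB_\infty$ by continuous automorphisms of dg properads, and hence on the set of continuous morphisms $\rho\colon \hLoB_\infty\to \cE nd_\fg$, i.e.\ on graded complete strong homotopy involutive Lie bialgebra structures on $(\fg,d)$, via precomposition $\rho\mapsto \rho\circ \phi^{-1}$.

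The main obstacle, and essentially the only step not of a purely formal nature, is the descent to the homotopy category: one must show that if $\Gamma = d_\hbar h$ with $h\in \GCor_3[[\hbar]]^{-1}$, then the automorphism $\exp(F_\hbar(\Gamma))$ acts trivially on homotopy classes of $\hLoB_\infty$-algebra structures. My plan here is to construct, for each structure $\rho\colon \hLoB_\infty\to\cE nd_\fg$, an explicit Maurer--Cartan homotopy in the convolution dg Lie algebra $\Def(\hLoB_\infty\to \cE nd_\fg)$ between $\rho$ and $\rho\circ \exp(F_\hbar(\Gamma))^{-1}$. The natural candidate is the path $t\mapsto \rho\circ \exp(tF_\hbar(\Gamma))^{-1}$; differentiating gives the image of the exact element $F_\hbar(d_\hbar h)=[\delta,F_\hbar(h)]$ under the convolution map, which is itself a coboundary in $\Def(\hLoB_\infty\to \cE nd_\fg)$, and integrating this coboundary along the path assembles the desired homotopy. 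Equivalently one can appeal to the general fact that on a cofibrant resolution such as $\hLoB_\infty$ exact dg derivations exponentiate to inner, hence nullhomotopic, automorphisms. In either form, the integrated action of $\exp(Z^0)$ descends to an action of $\exp H^0(\GCor_3[[\hbar]])$ on homotopy classes of structures, and the tangent Lie algebra action factors through $H^0(\GCor_3[[\hbar]])$, completing the proof.
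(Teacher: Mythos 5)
Your proposal follows the same route as the paper: the action of degree-zero cocycles via $F_\hbar$ (Lemma~\ref{lem:action of GC on LoB}), integration to the exponential group using completeness of $\hLoB_\infty$ with respect to $\|\cdot\|$, precomposition on morphisms $\hLoB_\infty\to\cE nd_\fg$, and descent to homotopy classes because exact derivations integrate to homotopically trivial automorphisms. The paper states these steps without elaboration, so your additional detail (the filtration-raising argument for convergence and the Maurer--Cartan homotopy for well-definedness on homotopy classes) is a correct filling-in rather than a departure.
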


\begin{remark}
Note again that, analogously to Remark \ref{rem:incomplete}, we may define a non-complete version of the graph complex $\sG\sC_3^{or}[[\hbar]]$ which acts on the non-complete properad $\LoB_\infty$ by derivations, and hence also on ordinary Lie bialgebra structures. However, these actions can in general not be integrated, whence we prefer to work with the complete version of the graph complex and $\hLoB_\infty$.
\end{remark}

Finally let us give another yet another description of the action of $\GCor_3[[\hbar]]$ on involutive Lie bialgebra structures, strengthening the above result a little.
As usual, the deformation complex of the zero morphism of dg props,
$\mathsf{InvLieB}(\fg):=
\Def(\LoB_\infty\stackrel{0}{\rar} \cE nd_\fg)$
has a canonical dg Lie algebra structure, with the Lie bracket $[\ ,\ ]_{\hbar}$ given explicitly by (\ref{3: brackets in InvLieB(g)}), such that the
Maurer-Cartan
elements are in 1-to-1 correspondence with $\LoB_\infty$-structures on the dg vector space $\fg$. The Maurer-Cartan element $\Phi_\hbar$ in $\mathsf{f}\sG\sC_3^{or}[[\hbar]]$ corresponds to a continuous morphism of operads,
$$
\varphi_\hbar: \caL ie\{2\}[[\hbar]] \lon \cG ra^\uparrow[[\hbar]],
$$
given on the generator of  $\caL ie\{2\}$ by the formula
$$
\varphi_\hbar \left(\Ba{c}
\xy
 <0mm,0.55mm>*{};<0mm,3.5mm>*{}**@{-},
 <0.5mm,-0.5mm>*{};<2.2mm,-2.2mm>*{}**@{-},
 <-0.48mm,-0.48mm>*{};<-2.2mm,-2.2mm>*{}**@{-},
 <0mm,0mm>*{\bu};<0mm,0mm>*{}**@{},
 <0.5mm,-0.5mm>*{};<2.7mm,-3.2mm>*{_2}**@{},
 <-0.48mm,-0.48mm>*{};<-2.7mm,-3.2mm>*{_1}**@{},
 \endxy\Ea\right)=
 \sum_{k=1}^\infty \hbar^{k-1}\left(
 \Ba{c}\resizebox{7mm}{!}{
\xy
(0,2.2)*{_1},
(0,-12.2)*{_2},
(0,-3.5)*{_k},
(0,-5)*{...},
   \ar@/^1pc/(0,0)*{\bullet};(0,-10)*{\bullet}
   \ar@/^{-1pc}/(0,0)*{\bullet};(0,-10)*{\bullet}
   \ar@/^0.6pc/(0,0)*{\bullet};(0,-10)*{\bullet}
   \ar@/^{-0.6pc}/(0,0)*{\bullet};(0,-10)*{\bullet}
 \endxy}
 \Ea -
  \Ba{c}\resizebox{6mm}{!} {\xy
(0,2.2)*{_2},
(0,-12.2)*{_1},
(0,-3.5)*{_k},
(0,-5)*{...},
   \ar@/^1pc/(0,0)*{\bullet};(0,-10)*{\bullet}
   \ar@/^{-1pc}/(0,0)*{\bullet};(0,-10)*{\bullet}
   \ar@/^0.6pc/(0,0)*{\bullet};(0,-10)*{\bullet}
   \ar@/^{-0.6pc}/(0,0)*{\bullet};(0,-10)*{\bullet}
 \endxy}
 \Ea
 \right)\, .
$$
The representation (\ref{3: Gra representation in g_V}) of the operad $\cG ra^\uparrow$ in the deformation complex $\Def(\LieBi_\infty\stackrel{0}{\to} \cP)[-2]$ extends $\hbar$-linearly to a representation $\rho_\hbar$ of $\cG ra^\uparrow[[\hbar]]$ in $\Def(\LoB_\infty\stackrel{0}{\to} \cP)[-2]$ for any properad $\cP$.
Furthermore it is almost immediate to see that the action of $\Lie\{2\}$ on the latter deformation complex factors through the map $\Lie\{2\}\to \cG ra^\uparrow[[\hbar]]$.

%
%
%
%
It follows that one has a morphism of dg Lie algebras induced by $\varphi_\hbar$
$$
\left(\sG\sC_3^{or}[[\hbar]], d_\hbar\right)\ \  \lon \ \  \Def\left(\caL ie_\infty\{2\}[[\hbar]] \stackrel{\varphi_\hbar}{\rar} \cG ra^\uparrow[[\hbar]]\right)\ \ \stackrel{\rho_\hbar}{\lon}
\ \
 \Def\left(\caL ie_\infty\{2\}[[\hbar]] \stackrel{[\ ,\ ]_\hbar}{\lon} \cE nd_{D}\right)=:CE^\bu\left(D\right)
$$
from the graph complex $\left(\sG\sC_3^{or}[[\hbar]], d_\hbar\right)$ into the Chevalley-Eilenberg dg Lie algebra of $D:=\Def(\LieBi_\infty\stackrel{0}{\to} \cP)[-2]$.
In particular this implies  the following:

\subsection{\bf Theorem}\label{4: Theorem on cohomology of GC-hbar} {\em  $H^0( \GC_3^{or}[[\hbar]], d_\hbar)\simeq
H^0(\GC_3^{or},d_0)\simeq \grt_1$ as Lie algebras. Moreover, $H^i( \GC_3^{or}[[\hbar]], d_\hbar)=0$
for all $i\leq -2$ and $H^{-1}( \GC_3^{or}[[\hbar]], d_\hbar)\cong \K$, with the single class being represented by
\[
\sum_{k=2}^\infty (k-1)\hbar^{k-2} \underbrace{
\Ba{c}\resizebox{7mm}{!}{ \xy
(0,0)*{...},
   \ar@/^1pc/(0,5)*{\bullet};(0,-5)*{\bullet}
   \ar@/^{-1pc}/(0,5)*{\bullet};(0,-5)*{\bullet}
   \ar@/^0.6pc/(0,5)*{\bullet};(0,-5)*{\bullet}
   \ar@/^{-0.6pc}/(0,5)*{\bullet};(0,-5)*{\bullet}
 \endxy}
 \Ea}_{k\ \mathrm{edges}}
\]
.}
\begin{proof}
First note that the element above is exactly $\frac d {d\hbar} \Phi_\hbar$ and the fact that it is closed follows easily by differentiating the Maurer-Cartan equation
\[
 0 = \frac d {d\hbar} [\Phi_\hbar,\Phi_\hbar]_{\mathrm{gra}} = 2 [\Phi_\hbar,\frac d {d\hbar} \Phi_\hbar]_{\mathrm{gra}}
 =
 2d_\hbar \left( \frac d {d\hbar} \Phi_\hbar \right).
\]
It is easy to see that the cocyle $\frac d {d\hbar} \Phi_\hbar$ cannot be exact, by just considering the leading term in $\hbar$.

Let us write
$$
d_\hbar= \sum_{k=1}^\infty \hbar^{k-1} d_k.
$$
Consider a decreasing filtration of $\sG\sC_3^{or}[[\hbar]]$ by the powers in $\hbar$. The
first term of the associated spectral sequence is
$$
\cE_1= \bigoplus_{i\in \Z} \cE_1^i,\ \ \ \ \cE_1^i=\bigoplus_{p\geq 0}
H^{i-2p}(\sG\sC_3^{or}, d_0) \hbar^p
$$
with the differential equal to $\hbar d_1$.
The main result of \cite{Wi2} states that $H^0(\sG\sC_3^{or}, d_0)\simeq \fg\fr\ft_1$, $H^{\leq -2}(\sG\sC_3^{or}, d_0)=0$ and $H^{-1}(\sG\sC_3^{or}, d_0)\cong \K$. The desired results follow by degree reasons.
\end{proof}

\subsubsection{\bf Corollary} {\em The group $GRT_1$ acts on the set of all possible
$\LoB_\infty$-structures up to homotopy on an arbitrary (perhaps infinite-dimensional) differential graded vector space $\fg$}.

\subsubsection{\bf Iterative construction of graph representatives
of elements of $\fg\fr\ft_1$}\label{4: subsubsection on iterated construction}
The above Theorem {\ref{4: Theorem on cohomology of GC-hbar}} says that any degree zero graph $\Ga\in \sG\sC_3^{or}$
satisfying the cocycle condition, $d_0\Gamma_0=0$, can be extended to a formal power
series,
$$
\Ga_\hbar=\Ga_0+\hbar \Gamma_1 + \hbar^2 \Gamma_2+\ldots,
$$
satisfying the  cocycle condition
$
d_\hbar\Ga_\hbar=0.
$
Let us show how this inductive extension works in detail.
The equation $d_\hbar^2=0$ implies, for any $n\geq 0$,
$
\sum_{n=i+j \atop i,j\geq 0} d_id_j=0,
$
which in turn reads,
\Beqrn
d_0^2&=&0\\
d_0d_1+ d_1d_0&=&0\\
d_0d_2+ d_2d_0 + d_1^2&=&0\ \ \ \mbox{etc}.\\
\Eeqrn
Thus the equation $d_0\Ga_0=0$ implies
$$
0=d_1d_0 \Ga_0=- d_0d_1\Ga_0
$$
The oriented graph $d_1\Ga_0\in \sG\sC_3^{or} $ has degree $-1$ and $H^{-1}(
\sG\sC_3^{or}, d_0)\cong \K$.
Since the one cohomology class cannot be hit (its leading term has necessarily only two vertices), there exists a degree $-2$ graph $\Gamma_1$ such that
$
d_1\Ga_0=-d_0\Ga_1
$
so that
$$
d_\hbar (\Gamma_0 + \hbar \Ga_1)=0 \bmod \hbar^2
$$
Assume by induction that we constructed a degree zero polynomial,
$$
\Ga_0+\hbar \Ga_1 +\ldots + \hbar^n \Ga_n \in \sG\sC_3^{or}[[\hbar]]
$$
such that
\Beq\label{induction}
d_\hbar(\Ga_0+\hbar \Ga_1 +\ldots + \hbar^n \Ga_n)=0\bmod \hbar^{n+1}.
\Eeq
Let us show that there exists an oriented graph $\Ga_{n+1}$ of degree $-2n-2$
such that
$$
d_\hbar(\Ga_0+\hbar \Ga_1 +\ldots + \hbar^n \Ga_n + \hbar^{n+1}\Ga_{n+1})=0\bmod
\hbar^{n+2}.
$$
or, equivalently, such that
\Beq\label{n+1 induction step}
d_0\Ga_{n+1} + d_{n+1}\Ga_0 + \sum_{n+1=i+j\atop i,j\geq 1} d_i\Ga_j=0.
\Eeq

Equation (\ref{induction}) implies, for any $j\leq n$,
$$
d_0\Ga_j + d_j\Ga_0 + \sum_{j=p+q\atop p,q\geq 1} d_{p}\Ga_q=0.
$$
We have
\Beqrn
0&=& d_{n+1}d_0\Ga_0
=-d_0d_{n+1}\Ga_0 -  \sum_{n+1=i+j\atop i,j\geq 1} d_id_j\Ga_0\\
&=& -d_0d_{n+1}\Ga_0 +
 \sum_{n+1=i+j\atop i,j\geq 1} d_i d_0\Ga_j +  \sum_{n+1=i+p+q\atop i,p,q\geq 1} d_i
 d_{p}\Ga_q\\
 &=& -d_0d_{n+1}\Ga_0 -  \sum_{n+1=i+j\atop i,j\geq 1} d_0 d_i\Ga_j
 -   \sum_{n+1=i+p+q\atop i,p,q\geq 1} d_i d_{p}\Ga_q +  \sum_{n+1=i+p+q\atop i,p,q\geq 1}
 d_i d_{p}\Ga_q\\
 &=& -d_0\left(d_{n+1}\Ga_0 +  \sum_{n+1=i+j\atop i,j\geq 1}  d_i\Ga_j\right)
\Eeqrn
As $H^{-1-2n}(\sG\sC_3^{or}, d_0)=0$ for all $n\geq 1$, there exists a degree $-2-2n$ graph $\Gamma_{n+1}$
such that the required equation (\ref{n+1 induction step}) is satisfied. This completes an
inductive construction of $\Ga_\hbar$ from $\Ga_0$.

\subsection{Deformations of Frobenius algebra structures}
Note that the complexes $\Def(\invFrob_\infty\to \invFrob)$ and $\Def(\LieBi_\infty\to \hLieBi)$ are isomorphic. We hence have a zigzag of (quasi-)isomorphisms of complexes
\[
 \Der(\invFrob_\infty) \to \Def(\invFrob_\infty\to \invFrob)[1]\cong \Def(\LieBi_\infty\to \hLieBi)[1]\leftarrow \Der(\hLieBi_\infty).
\]
In particular we obtain a map\footnote{In fact, the first arrow is an injection and almost an isomorphism by Theorem {\ref{thm:Fqiso}}.}
\[
 \grt_1 \to H^0(\Der(\hLieBi_\infty))\cong H^0(\Der(\invFrob_\infty)).
\]
Hence we obtain a large class of homotopy non-trivial derivations of the properad $\invFrob_\infty$ and accordingly a large class of potentially homotopy non-trivial universal deformations of any $\invFrob_\infty$ algebra.

\begin{corollary}
There is a map $\grt_1\to H^0(\Der(\invFrob_\infty))$ and hence a map $\grt_1\to H^1(\Def(\invFrob_\infty\to \End_A))$ for any $\invFrob_\infty$ algebra $A$.
\end{corollary}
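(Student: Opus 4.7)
The plan is to chain the universal $\grt_1$-action on $\hLieBi_\infty$ established in the previous subsections with the Koszul-duality identification between the deformation complexes of $\invFrob$ and $\LieBi$ already observed in \S\ref{sec:defcomplexes}.

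First, I would invoke part (i) of Theorem \ref{3: Willwacher theorem on GC_3} to identify $\grt_1 \cong H^0(\sG\sC_3^{or})$, and then apply the chain map $F\colon \sG\sC_3^{or} \to \Der(\hLieBi_\infty)$ furnished by Lemma \ref{lem:GC3 action on LieBi} to obtain a Lie algebra morphism $\grt_1 \to H^0(\Der(\hLieBi_\infty))$. Only the fact that $F$ is a map of dg Lie algebras is needed here; the stronger content of Theorem \ref{thm:Fqiso} (near-quasi-isomorphism) plays no role in the present statement.

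Next, I would transport this to $\Der(\invFrob_\infty)$ via the zigzag of complexes
\[
 \Der(\hLieBi_\infty) \xrightarrow{\sim} \Def(\LieBi_\infty \to \hLieBi)[1] \,\cong\, \Def(\invFrob_\infty \to \invFrob)[1] \xleftarrow{\sim} \Der(\invFrob_\infty),
\]
in which the two outer arrows are instances of the standard quasi-isomorphism \eqref{equ:Defsimpl} and the middle isomorphism is precisely the identification of the two deformation complexes discussed in \S\ref{sec:defcomplexes}, coming from the Koszul duality $\invFrob^\Koz \cong \coLieBi\{1\}$ (up to the relevant degree shift, cf.\ Remark \ref{rem:degreeshifted}). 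Taking $H^0$ yields an isomorphism $H^0(\Der(\hLieBi_\infty)) \cong H^0(\Der(\invFrob_\infty))$ and, composed with the previous step, produces the desired map $\grt_1 \to H^0(\Der(\invFrob_\infty))$.

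For the second claim, given any $\invFrob_\infty$-algebra structure $f\colon \invFrob_\infty \to \End_A$, post-composition with $f$ defines a chain map $\Def(\invFrob_\infty \to \invFrob_\infty) \to \Def(\invFrob_\infty \to \End_A)$ and hence, via \eqref{equ:Defsimpl} once more, a chain map $\Der(\invFrob_\infty) \to \Def(\invFrob_\infty \to \End_A)[1]$. Taking $H^0$ and composing with the map constructed above gives the asserted morphism $\grt_1 \to H^1(\Def(\invFrob_\infty \to \End_A))$. The only delicate point in the whole argument is the bookkeeping of genus completions throughout: the middle isomorphism of the zigzag requires working with $\hLieBi$ rather than $\LieBi$, and the analogous completion of $\invFrob$ on the other side, as emphasized in \S\ref{sec:defcomplexes}; otherwise the identification of the two deformation complexes holds only up to completion. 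Once this is handled consistently, no new computations are needed.
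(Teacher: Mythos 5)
Your argument is correct and follows the paper's own route essentially verbatim: identify $\grt_1$ with $H^0(\sG\sC_3^{or})$, push it into $H^0(\Der(\hLieBi_\infty))$ via the action of Lemma \ref{lem:GC3 action on LieBi}, transport along the zigzag $\Der(\invFrob_\infty)\to\Def(\invFrob_\infty\to\invFrob)[1]\cong\Def(\LieBi_\infty\to\hLieBi)[1]\leftarrow\Der(\hLieBi_\infty)$, and then post-compose with a given algebra structure to land in $H^1(\Def(\invFrob_\infty\to\End_A))$. The one (harmless) imprecision is your remark about an ``analogous completion of $\invFrob$'': the diamond relation forces $\invFrob(n,m)$ to be concentrated in genus zero, so no completion is needed on that side of the zigzag --- the completion lives entirely on the $\LieBi$ side, which is exactly why the paper writes $\Def(\invFrob_\infty\to\invFrob)\cong\Def(\LieBi_\infty\to\hLieBi)$ with the uncompleted $\invFrob$.
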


Next consider the Frobenius properad $\Frob$ and let $\hFrob$ be its genus completion.
Analogously to section {\ref{sec:completed versions}} let $\hFrob_\infty$ be the completion of $\Frob_\infty$ with respect to the total genus and let $\Der(\hFrob_\infty)$ be the continuous derivations.
Note that the complex $\Def(\Frob_\infty\to \hFrob)$ is isomorphic to the complex $\Def(\LoB_\infty\to \hLoB)$.
We hence obtain a zigzag of quasi-isomorphisms
\[
  \Der(\hFrob_\infty) \to \Def(\Frob_\infty\to \hFrob)[1]\cong \Def(\LoB_\infty\to \hLoB)[1]\leftarrow \Der(\hLoB_\infty).
\]
In particular we obtain a map
\[
 \grt_1 \to H^0(\Der(\hLoB_\infty))\cong H^0(\Der(\hFrob_\infty)).
\]
Consider the explicit construction of representatives of $\grt_1$-elements of section {\ref{4: subsubsection on iterated construction}}.
The $\hbar^n$-correction term $\Gamma_n$ to some graph cohomology class $\Gamma$ of genus $g$ has genus $g+n$. It follows that the map $\grt_1\to H^0(\Der(\hFrob_\infty))$ in fact factors through $H^0(\Der(\Frob_\infty))$ and in particular we have a map
\[
\grt_1\to  H^0(\Der(\Frob_\infty))
\]
and hence a map from $\grt_1$ into the deformation complex of any $\Frob_\infty$ algebra.

\begin{corollary}
There is a map $\grt_1\to H^0(\Der(\Frob_\infty))$ and hence a map $\grt_1\to H^1(\Def(\Frob_\infty\to \End_A))$ for any $\Frob_\infty$ algebra $A$.
\end{corollary}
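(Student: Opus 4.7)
My plan is to mirror the proof of the preceding corollary for $\invFrob$, with one additional genus-tracking argument needed to ensure the map descends to the non-completed properad. First I would follow the zigzag
\[
\Der(\hFrob_\infty) \to \Def(\Frob_\infty\to \hFrob)[1]\cong \Def(\LoB_\infty\to \hLoB)[1]\leftarrow \Der(\hLoB_\infty)
\]
stated just before the corollary, combining it with the map $\grt_1 \to H^0(\Der(\hLoB_\infty))$ extracted from the $GRT_1$-action of the preceding subsection, to produce a map $\grt_1 \to H^0(\Der(\hFrob_\infty))$ landing in the completed version.

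The new step is to show this factors through $H^0(\Der(\Frob_\infty))$. For this I would invoke the explicit iterative construction of $\grt_1$-representatives described in \S\ref{4: subsubsection on iterated construction}: starting from a degree-zero cocycle $\Gamma_0 \in \sG\sC_3^{or}$ of genus $g$, the procedure yields a $d_\hbar$-cocycle $\Gamma_\hbar = \sum_{n\geq 0} \hbar^n \Gamma_n$ with $\mathrm{genus}(\Gamma_n) = g+n$. Under the chain-level identification $\Def(\Frob_\infty \to \hFrob) \cong \Def(\LoB_\infty \to \hLoB)$, the bigrading by ($\LoB$-genus, $\hbar$-weight) collapses, via the preserved combination $(\mathrm{genus})-(\hbar\text{-degree})$ noted in \S\ref{sec:defcomplexes}, to the single genus grading on the $\Frob$ side. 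For the representative $\Gamma_\hbar$, each term $\hbar^n \Gamma_n$ yields $(g+n)-n = g$, independent of $n$, so the whole series is sent to the genus-$g$ component of $\Def(\Frob_\infty \to \hFrob)$. Because this component is finite-dimensional in each fixed arity, the image actually lies in the non-completed $\Def(\Frob_\infty \to \Frob)$, yielding the desired map $\grt_1 \to H^0(\Der(\Frob_\infty))$.

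For the second statement, I would postcompose derivations with any $\Frob_\infty$-algebra structure map $f\colon \Frob_\infty \to \End_A$ to obtain the canonical map $\Der(\Frob_\infty) \to \Def(\Frob_\infty \to \End_A)[1]$, induce $H^0(\Der(\Frob_\infty)) \to H^1(\Def(\Frob_\infty \to \End_A))$, and compose with the map from $\grt_1$ already constructed in the first part.

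The main potential obstacle is carefully verifying the compatibility of gradings under the Koszul-duality isomorphism $\LoB^\Koz \cong \coFrob\{1\}$ at the level of deformation complexes --- specifically, checking that a bigraded element on the $\LoB$ side with $(\mathrm{genus})-(\hbar\text{-weight}) = g$ is sent to an element concentrated in genus $g$ on the $\Frob$ side. This is asserted in \S\ref{sec:defcomplexes}, but making it fully explicit requires tracing through the correspondence of generators and basis elements under Koszul duality, which is the real technical content hidden behind the short proof sketch.
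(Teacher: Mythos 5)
Your proposal follows essentially the same route as the paper: the zigzag of quasi-isomorphisms identifying $H^0(\Der(\hFrob_\infty))$ with $H^0(\Der(\hLoB_\infty))$, then the observation that the iteratively constructed representative $\Ga_\hbar=\sum_n \hbar^n\Ga_n$ has each term of constant value $(\mathrm{genus})-(\hbar\text{-degree})=g$, so that it lands in a single genus component on the $\Frob$ side and hence in the non-completed $\Der(\Frob_\infty)$, followed by composition with the structure map of any $\Frob_\infty$ algebra. The extra details you supply (the explicit $(g+n)-n=g$ computation and the finite-dimensionality in each arity) are consistent elaborations of the paper's sketch rather than a different argument.
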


\bip

{\large
\section{\bf Involutive Lie bialgebras as homotopy Batalin-Vilkovisky algebras}
}

Let $\fg$ be a $\LieBi$ algebra. Then it is a well known fact the Chevalley-Eilenberg complex of $\fg$ (as a Lie coalgebra) $CE(\fg)=\odot^\bu\fg[-1]$ carries a Gerstenhaber algebra structure.
Concretely, the commutative algebra structure is the obvious one.
To define the Lie bracket it is sufficient to define it on the generators $\fg[-1]$, where it is given by the Lie bracket on $\fg$.

\sip

Similarly, if $\fg$ is an $\LoB$ algebra, then $CE(\fg)=\odot^{\bu}\fg[-1]$ carries a natural Batalin-Vilkovisky (BV) algebra structure.
The product and Lie bracket are as before. The BV operator $\Delta$ is defined on a word $x_1\cdots x_n$ as
\[
 \Delta (x_1\cdots x_n) = -\sum_{i<j} (-1)^{i+j} [x_i,x_j]x_1\cdots\hat x_i \cdots\hat x_j \cdots x_n.
\]
The involutivity condition is needed for the BV operator to be compatible with the differential.
Now suppose that $\fg$ is a $\LieBi_\infty$ algebra.
We call it \emph{good} if for any fixed $m$ only finitely many of the operations $\mu_{m,n}\in \Hom(\fg^{\otimes m}, \fg^{\otimes n})$ are non-zero.
Then one may define the Chevalley complex $CE(\fg)=\odot^{\bu}\fg[-1]$ of $\fg$ as an $\caL ie_\infty$ coalgebra.
It is known (see, e.~g. Remark 1 of \cite{Wi2}) that $CE(\fg)=\odot^{\bu}\fg[-1]$ carries a natural homotopy Gerstenhaber structure.
In this section we show that similarly, if $\fg$ is a good $\LoB_\infty$ algebra, then the Chevalley-Eilenberg complex $CE(\fg)$ carries a natural homotopy BV algebra structure.

\bip

\subsection{The order of an operator}\label{5: subsec on order of operators} Let $V$ be a graded commutative algebra. For a linear operator $D: V\rar V$ define a collection,
$$
\Ba{rccc}
F_n^D: & \ot^n V &\lon & V\\
 & v_1\ot\ldots\ot v_n & \lon & F_n^D(v_1,\ldots,v_n)
\Ea
$$
of linear maps by induction: $F_1^D=D$,
$$
F_{n+1}^D(v_1,\ldots,v_n, v_{n+1})= F_n^D(v_1,\ldots,v_n\cdot v_{n+1}) -
F_n^D(v_1,\ldots,v_n)\cdot v_{n+1} - (-1)^{|v_n||v_{n+1}} F_n^D(v_1,\ldots,v_{n+1})\cdot v_{n}.
$$
The operator $D$ is said to have {\em order}\, $\leq n$ if $F_{n+1}^D=0$.

\mip

 The operators
$F_n^{D}$ are in fact graded symmetric; moreover, if $D$ is a differential in $V$ (that is, $|D|=1$ and $D^2=0$), then the collection, $\{F_n^D: \odot^n V\rar  V\}_{n\geq 1}$  defines a  $\caL ie_\infty$-structure on the space $V[-1]$ (see \cite{Kr}). Indeed, consider
a graded Lie algebra,
$$
\mathrm{CoDer}(\odot^{\bu \geq 1} V)\cong \prod_{n\geq 1} \Hom_\K(\odot^n V,V),
$$
of coderivations of the graded co-commutative coalgebra $\odot^{\bu\geq 1} V$. As the differential $D:V\rar V$ is a Maurer-Cartan element in this Lie algebra and the multiplication $\mu:\odot^2 V\rar V$
is its degree zero element, we can gauge transform $D$,
$$
D\lon F^D:=e^{-\mu} D e^{\mu}=\sum_{n=0}^\infty \frac{1}{n!}[\ldots[[D,\mu],\mu], \ldots, \mu],
$$
into a less trivial  Maurer-Cartan element which makes $V[-1]$ into a (less trivial) $\caL ie_\infty$-algebra. It is easy to see that
the associated components of the codifferential $F^D$,
$$
 F^D=\left\{F^D_{n+1}=\frac{1}{n!}[\ldots[[D,\mu],\mu], \ldots, \mu]: \odot^{n+1} V\rar V\right\}_{n\geq 0}
$$
coincide precisely with the defined above tensors $F^D_{n+1}$ which measure a failure of $D$ to respect the multiplication operation in $V$.

\subsection{Batalin-Vilkovisky algebras} A {\em Batalin-Vilkovisky algebra}\, is, by definition, a graded commutative algebra $V$ equipped with a degree $-1$ operator
$\Delta:V \rar V$ of order $\leq 2$ such that $\Delta^2=0$. Denote by $\cB\cV$ the operad whose representations
are Batalin-Vilkovisky algebras. This is, therefore,  a graded operad generated by corollas,
$$
\Ba{c}\resizebox{3.6mm}{!}{ \xy
(0,5)*{};
(0,0)*+{_1}*\cir{}
**\dir{-};
(0,-5)*{};
(0,0)*+{_1}*\cir{}
**\dir{-};
\endxy}\Ea\ \ \ \ \mbox{and}\ \ \ \
\begin{xy}
 <0mm,0.66mm>*{};<0mm,3mm>*{}**@{-},
 <0.39mm,-0.39mm>*{};<2.2mm,-2.2mm>*{}**@{-},
 <-0.35mm,-0.35mm>*{};<-2.2mm,-2.2mm>*{}**@{-},
 <0mm,0mm>*{\circ};<0mm,0mm>*{}**@{},
   <0.39mm,-0.39mm>*{};<2.9mm,-4mm>*{^2}**@{},
   <-0.35mm,-0.35mm>*{};<-2.8mm,-4mm>*{^1}**@{},
\end{xy}=
\begin{xy}
 <0mm,0.66mm>*{};<0mm,3mm>*{}**@{-},
 <0.39mm,-0.39mm>*{};<2.2mm,-2.2mm>*{}**@{-},
 <-0.35mm,-0.35mm>*{};<-2.2mm,-2.2mm>*{}**@{-},
 <0mm,0mm>*{\circ};<0mm,0mm>*{}**@{},
   <0.39mm,-0.39mm>*{};<2.9mm,-4mm>*{^1}**@{},
   <-0.35mm,-0.35mm>*{};<-2.8mm,-4mm>*{^2}**@{},
\end{xy}
$$
of homological degrees $-1$ and $0$ respectively, modulo the following relations,
$$
\Ba{c}\resizebox{5mm}{!}{ \xy
(0,0)*+{_1}*\cir{}="b",
(0,6)*+{_1}*\cir{}="c",
%
(0,-4)*{}="-1",
(0,10)*{}="1'",
\ar @{-} "b";"c" <0pt>
\ar @{-} "b";"-1" <0pt>
\ar @{-} "c";"1'" <0pt>
\endxy}
\Ea=0\ \ \ ,\ \ \ \Ba{c}\begin{xy}
 <0mm,0mm>*{\circ};<0mm,0mm>*{}**@{},
 <0mm,0.69mm>*{};<0mm,3.0mm>*{}**@{-},
 <0.39mm,-0.39mm>*{};<2.4mm,-2.4mm>*{}**@{-},
 <-0.35mm,-0.35mm>*{};<-1.9mm,-1.9mm>*{}**@{-},
 <-2.4mm,-2.4mm>*{\circ};<-2.4mm,-2.4mm>*{}**@{},
 <-2.0mm,-2.8mm>*{};<0mm,-4.9mm>*{}**@{-},
 <-2.8mm,-2.9mm>*{};<-4.7mm,-4.9mm>*{}**@{-},
    <0.39mm,-0.39mm>*{};<3.3mm,-4.0mm>*{^3}**@{},
    <-2.0mm,-2.8mm>*{};<0.5mm,-6.7mm>*{^2}**@{},
    <-2.8mm,-2.9mm>*{};<-5.2mm,-6.7mm>*{^1}**@{},
 \end{xy}\Ea
\ = \
 \Ba{c}\begin{xy}
 <0mm,0mm>*{\circ};<0mm,0mm>*{}**@{},
 <0mm,0.69mm>*{};<0mm,3.0mm>*{}**@{-},
 <0.39mm,-0.39mm>*{};<2.4mm,-2.4mm>*{}**@{-},
 <-0.35mm,-0.35mm>*{};<-1.9mm,-1.9mm>*{}**@{-},
 <2.4mm,-2.4mm>*{\circ};<-2.4mm,-2.4mm>*{}**@{},
 <2.0mm,-2.8mm>*{};<0mm,-4.9mm>*{}**@{-},
 <2.8mm,-2.9mm>*{};<4.7mm,-4.9mm>*{}**@{-},
    <0.39mm,-0.39mm>*{};<-3mm,-4.0mm>*{^1}**@{},
    <-2.0mm,-2.8mm>*{};<0mm,-6.7mm>*{^2}**@{},
    <-2.8mm,-2.9mm>*{};<5.2mm,-6.7mm>*{^3}**@{},
 \end{xy}\Ea,\ \ \
 \Ba{c}\resizebox{10mm}{!}{ \xy
(-6,-15.6)*+{_1};
(0,-15.6)*+{_2};
(3,-11.6)*+{_3};
(0,-1)*+{_1}*\cir{}="b",
(0,-6)*{\circ}="c1",
(-3,-10)*{\circ}="c2",
(3,-10)*{}="-3'",
(-6,-14)*{}="-1'",
(0,-14)*{}="-2'",
(0,5)*{}="1'",
\ar @{-} "b";"1'" <0pt>
\ar @{-} "b";"c1" <0pt>
\ar @{-} "c1";"c2" <0pt>
\ar @{-} "c1";"-3'" <0pt>
\ar @{-} "c2";"-1'" <0pt>
\ar @{-} "c2";"-2'" <0pt>
\endxy}\Ea
=
\sum_{n=0}^2\Ba{c}\resizebox{16mm}{!}{
\xy
(-6.9,-15.9)*+{_{_{\zeta^n(1)}}};
(2,-15.9)*+{_{_{\zeta^n(2)}}};
(7,-7.9)*+{_{\zeta^n(3)}};
(0,-6)*+{_1}*\cir{}="c1",
(3,-1)*{\circ}="b",
(-3,-10)*{\circ}="c2",
(6,-6)*{}="-3'",
(-6,-14)*{}="-1'",
(0,-14)*{}="-2'",
(3,4)*{}="1'",
\ar @{-} "b";"1'" <0pt>
\ar @{-} "b";"c1" <0pt>
\ar @{-} "c1";"c2" <0pt>
\ar @{-} "b";"-3'" <0pt>
\ar @{-} "c2";"-1'" <0pt>
\ar @{-} "c2";"-2'" <0pt>
\endxy}\Ea
-
\sum_{n=0}^2
 \Ba{c}\resizebox{16mm}{!}{ \xy
(-6,-20.6)*+{_{_{\zeta^n(1)}}};
(1,-15.6)*+{_{_{\zeta^n(2)}}};
(5,-11.6)*+{_{_{\zeta^n(3)}}};
(-6,-14)*+{_1}*\cir{}="b",
(0,-6)*{\circ}="c1",
(-3,-10)*{\circ}="c2",
(3,-10)*{}="-3'",
(-6,-19)*{}="-1'",
(0,-14)*{}="-2'",
(0,-1)*{}="1'",
\ar @{-} "c1";"1'" <0pt>
\ar @{-} "b";"c2" <0pt>
\ar @{-} "c1";"c2" <0pt>
\ar @{-} "c1";"-3'" <0pt>
\ar @{-} "b";"-1'" <0pt>
\ar @{-} "c2";"-2'" <0pt>
\endxy}\Ea
$$
where $\zeta$ is the cyclic permutation $(123)$.
 A nice non-minimal cofibrant resolution of the operad $\cB\cV$ has been constructed in \cite{GTV}. We denote this resolution by $\cB\cV^K_\infty$ in this paper, $K$ standing for {\em Koszul}. The minimal resolution, $\cB\cV_\infty$, has been constructed in \cite{DV}.

\subsection{An operad ${\cB\cV}_\infty^{com}$}
A ${\BV}_\infty^{com}$-algebra is, by definition \cite{Kr}, a dg graded commutative algebra  $(V,d)$
equipped with a countable collections of linear homogeneous maps, $\{\Delta_a: V\rar V,\
    |\Delta_a|=1-2a\}_{k\geq 1}$, such that each $\Delta_a$ is of order $\leq a+1$ and the equations,
    \Beq\label{5: BV_comm equation for Delta}
    \sum_{a=0}^n \Delta_a \circ \Delta_{n-a}=0,
    \Eeq
hold for any $n\in \N$,
    where $\Delta_0:=-d$.

\mip

Let $\BV_\infty^{com}$ be a dg operad of $\BV_\infty^{com}$-algebras. This operad is a quotient
of the free operad generated by one binary operation in degree zero,
$
\begin{xy}
 <0mm,0.66mm>*{};<0mm,3mm>*{}**@{-},
 <0.39mm,-0.39mm>*{};<2.2mm,-2.2mm>*{}**@{-},
 <-0.35mm,-0.35mm>*{};<-2.2mm,-2.2mm>*{}**@{-},
 <0mm,0mm>*{\circ};<0mm,0mm>*{}**@{},
   <0.39mm,-0.39mm>*{};<2.9mm,-4mm>*{^2}**@{},
   <-0.35mm,-0.35mm>*{};<-2.8mm,-4mm>*{^1}**@{},
\end{xy}=
\begin{xy}
 <0mm,0.66mm>*{};<0mm,3mm>*{}**@{-},
 <0.39mm,-0.39mm>*{};<2.2mm,-2.2mm>*{}**@{-},
 <-0.35mm,-0.35mm>*{};<-2.2mm,-2.2mm>*{}**@{-},
 <0mm,0mm>*{\circ};<0mm,0mm>*{}**@{},
   <0.39mm,-0.39mm>*{};<2.9mm,-4mm>*{^1}**@{},
   <-0.35mm,-0.35mm>*{};<-2.8mm,-4mm>*{^2}**@{},
\end{xy}
$,
and a countable family of unary operations,
$
\left\{ \Ba{c}\resizebox{3.1mm}{!}{  \xy
(0,5)*{};
(0,0)*+{_a}*\cir{}
**\dir{-};
(0,-5)*{};
(0,0)*+{_a}*\cir{}
**\dir{-};
\endxy}\Ea \right\}_{a\geq 1}
$
(of homological degree $1-2a$), modulo the ideal $I$ generated by the associativity relations
for the binary operation $\begin{xy}
 <0mm,0.66mm>*{};<0mm,3mm>*{}**@{-},
 <0.39mm,-0.39mm>*{};<2.2mm,-2.2mm>*{}**@{-},
 <-0.35mm,-0.35mm>*{};<-2.2mm,-2.2mm>*{}**@{-},
 <0mm,0mm>*{\circ};<0mm,0mm>*{}**@{},
\end{xy}$ and the compatibility relations between the latter and unary operations
encoding the requirement that each unary operation $\Ba{c}\resizebox{2.9mm}{!}{\xy
(0,5)*{};
(0,0)*+{_a}*\cir{}
**\dir{-};
(0,-5)*{};
(0,0)*+{_a}*\cir{}
**\dir{-};
\endxy}\Ea$  is of order $\leq a+1$ with
respect to the multiplication operation.
The differential $\delta$ in the operad $\BV_\infty^{com}$ is given by
$$
\delta\begin{xy}
 <0mm,0.66mm>*{};<0mm,3mm>*{}**@{-},
 <0.39mm,-0.39mm>*{};<2.2mm,-2.2mm>*{}**@{-},
 <-0.35mm,-0.35mm>*{};<-2.2mm,-2.2mm>*{}**@{-},
 <0mm,0mm>*{\circ};<0mm,0mm>*{}**@{},
   <0.39mm,-0.39mm>*{};<2.9mm,-4mm>*{^2}**@{},
   <-0.35mm,-0.35mm>*{};<-2.8mm,-4mm>*{^1}**@{},
\end{xy}=0, \ \ \ \
\delta\ \Ba{c}\resizebox{4mm}{!}{ \xy
(0,5)*{};
(0,0)*+{_a}*\cir{}
**\dir{-};
(0,-5)*{};
(0,0)*+{_a}*\cir{}
**\dir{-};
\endxy}\Ea: =\sum_{a=b+c\atop b,c\geq 1}\Ba{c} \resizebox{4mm}{!}{
\xy
(0,6,3)*{};
(0,0)*+{_c}*\cir{}
**\dir{-};
(0,-5)*{};
(0,0)*+{_c}*\cir{}
**\dir{-};
(0,13)*{};
(0,8)*+{_b}*\cir{}
**\dir{-};
\endxy}
\Ea
$$
There is an explicit morphism of dg operads (see  Proposition 23 in \cite{GTV})\footnote{We are grateful to Bruno Vallette for pointing out this result to us.},
$$
 \cB\cV_\infty^K \lon \cB\cV^{com}_\infty,
$$
which implies existence of a morphism of dg operads $\cB\cV_\infty \rar \cB\cV^{com}_\infty$. The existence of such a morphism  follows also from the following Theorem
whose proof is given in Appendix \ref{app:BV_com proof}.

\begin{theorem}\label{5: Theorem on cohom BV_com} { The dg operad $\BV_\infty^{com}$ is formal with the cohomology
operad $H^\bu(\BV_\infty^{com})$ isomorphic to the operad, $\BV$, of
Batalin-Vilkovisky algebras, i.e.\ there is a canonical surjective quasi-isomorphism of
operads,
$$
\pi: \BV_\infty^{com} \lon \BV
$$
which sends to zero all generators $\Ba{c}\resizebox{4mm}{!}{ \xy
(0,5)*{};
(0,0)*+{_a}*\cir{}
**\dir{-};
(0,-5)*{};
(0,0)*+{_a}*\cir{}
**\dir{-};
\endxy}\Ea$ with $a\geq 2$.
}
\end{theorem}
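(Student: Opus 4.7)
The plan is to prove the theorem in two steps: first verify that $\pi$ is a well-defined morphism of dg operads, then show it is a quasi-isomorphism (which automatically yields both the formality of $\cB\cV_\infty^{com}$ and the identification of its cohomology with $\cB\cV$). For the first step, one checks that $\pi$ respects each defining relation of $\cB\cV_\infty^{com}$. The associativity and commutativity of $m$ are tautologically preserved; the order-$(a{+}1)$ conditions on $\Delta_a$ for $a \geq 2$ become vacuous under $\Delta_a \mapsto 0$, and the order-$2$ condition on $\Delta_1$ is precisely the defining BV relation on $\Delta$. The only substantive verification is the chain-map property $\pi \circ \delta = 0$: for $a = 1$ the differential $\delta \Delta_1 = 0$ vanishes, for $a = 2$ we obtain $\pi(\delta \Delta_2) = \pi(\Delta_1 \circ \Delta_1) = \Delta \circ \Delta = 0$ by the BV nilpotency, and for $a \geq 3$ every summand $\Delta_b \Delta_c$ of $\delta \Delta_a$ has $b \geq 2$ or $c \geq 2$ and is killed by $\pi$. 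Surjectivity is immediate from the generators.

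For the quasi-isomorphism I propose to exploit the $\N$-grading of $\cB\cV_\infty^{com}$ by total weight $w = \sum_i a_i$, summing the subscripts of the $\Delta_{a_i}$'s in a composition. Inspection of $\delta \Delta_a = \sum_{b+c=a} \Delta_b \Delta_c$ shows this grading is preserved by $\delta$, hence $\cB\cV_\infty^{com} = \bigoplus_{w \geq 0} W_w$ as a complex and it suffices to compute $H^\bullet(W_w)$ separately. The target $\cB\cV$ splits analogously as $\cB\cV = \bigoplus_w \cB\cV^{[w]}$ by the number of $\Delta$-factors (well-defined since $\Delta^2 = 0$ preserves the count), and $\pi$ restricts to $W_w \to \cB\cV^{[w]}$. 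The cases $w = 0$ (both sides equal $\Com$) and $w = 1$ (where $\delta \Delta_1 = 0$ forces $W_1$ to carry the zero differential and $\pi$ to realise it isomorphically onto $\cB\cV^{[1]}$) follow immediately.

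The heart of the argument is $w \geq 2$. The key mechanism is the identity $\delta \Delta_2 = \Delta_1 \circ \Delta_1$ and its higher-weight analogues, which exhibit every nested composition $\Delta_1^{\circ k}$ as a $\delta$-boundary. I plan to show that $W_w \cap \ker \pi$ is acyclic by constructing an explicit contracting homotopy that substitutes $\Delta_k \leftrightarrow \Delta_1^{\circ k}$ systematically, modulo lower-weight corrections. An equivalent and more structured route, following Kravchenko's interpretation of a $\cB\cV_\infty^{com}$-algebra on $V$ as a graded commutative algebra equipped with a shifted $\Lie_\infty$ structure on $V[-1]$ acting by derivations in each argument, reformulates $\delta \Delta_a = \sum \Delta_b \Delta_c$ as the $\Lie_\infty$ Maurer--Cartan equation; the cohomology of $W_w$ is then identified as a readily collapsing cobar homology whose output matches $\cB\cV^{[w]}$ via the classical decomposition $\cB\cV \cong \mathrm{Ger}\{-1\} \otimes \K[\Delta]/(\Delta^2)$.

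The main obstacle is ensuring compatibility of the contracting homotopy with the order-$(a{+}1)$ relations on each $\Delta_a$. Since $\Delta_a$ is only determined modulo operators of strictly lower order by its Koszul symbol $F^{\Delta_a}_{a+1}$, each substitution $\Delta_1^{\circ k} \rightsquigarrow \Delta_k$ produces correction terms whose orders must be tracked carefully so that the homotopy descends to the quotient operad defining $\cB\cV_\infty^{com}$. Choosing a normal form for elements of $W_w$ in terms of iterated Koszul symbols, and using the fact that all such correction terms strictly decrease a subsidiary invariant (for instance the excess weight $\sum_i (a_i - 1)$ over the $\Delta_{a_i}$'s with $a_i \geq 2$), closes the induction on $w$ and completes the proof.
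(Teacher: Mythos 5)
Your verification that $\pi$ is a well-defined surjective morphism of dg operads is correct, and the reduction to the weight grading $w=\sum_i a_i$ is sound (the paper implicitly works with an even finer decomposition). The gap is in the heart of the argument: the contracting homotopy on $W_w\cap\ker\pi$ is never constructed, and the difficulty you yourself flag --- compatibility of the substitution $\Delta_1^{\circ k}\rightsquigarrow\Delta_k$ with the order-$(a{+}1)$ relations --- is exactly where all of the work lies. The paper resolves it by an explicit change of basis: it introduces the iterated Koszul brackets $F_k^{\Delta_a}$ (the ``square vertices''), defined only in the range $1\le k\le a+1$, and shows via an invertible triangular substitution that in this basis the ideal of relations is spanned precisely by trees containing a vertex $F_k^{\Delta_a}$ with $k>a+1$, whence $\BV_\infty^{com}\cong \Com\circ\caL^\diamond_\infty$ as complexes, where $\caL^\diamond_\infty$ is the free dg operad encoding $\caL ie_\infty\{1\}[[\hbar]]$-structures. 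Your ``normal form in terms of iterated Koszul symbols'' is the right instinct, but asserting that correction terms ``strictly decrease a subsidiary invariant'' is not a substitute for this construction.

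Second, and more seriously, the resulting complex is not ``readily collapsing''. The statement $H^\bu(\caL^\diamond_\infty)\cong\caL^\diamond$ is itself a Koszulness-type theorem; the paper proves it by recognizing $\caL^\diamond_\infty\{1\}$ as a direct summand of $\gr\LoB_\infty$ and invoking the main Koszulness computation of Section 2 (which in turn rests on the exact functor $F$ and the quadratic algebra $\cA_n$ of Appendix A). Finally, identifying $\Com\circ\caL^\diamond$ with $\cB\cV$ as $\bS$-modules requires the PBW-type isomorphism $q\cB\cV\cong\gr(\cB\cV)$ of \cite{GTV}, which your appeal to ``the classical decomposition $\cB\cV\cong\mathrm{Ger}\{-1\}\otimes\K[\Delta]/(\Delta^2)$'' uses without justification. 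Until these two inputs are supplied, the proposal is an outline of the correct strategy rather than a proof.
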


\subsection{From strongly homotopy involutive  Lie bialgebras to $\cB\cV_\infty$-algebras}

We call a $\LoB_\infty$ algebra $\fg$ \emph{good} if for any fixed $m$ and $k$ only finitely many of the operations $\mu_{m,n}^k\in \Hom(\fg^{\otimes m}, \fg^{\otimes n})$ are non-zero.
In this case we define the Chevalley-Eilenberg complex $CE(\fg) = {\odot}^\bu(\fg[-1])$ of $\fg$ as an $\caL ie_\infty$ coalgebra.
More concretely, for a finite dimensional $\fg$ we may understand the $\LoB_\infty$ algebra structure as a formal power series
$\Ga_\hbar=\Ga_\hbar(\psi_i,\eta^i,\hbar)$ as explained in section \ref{3: Section on Def complex}.
Using similar notation, we may the space $CE(\fg)$ as the space of polynomials in the variables $\psi_i$.
Then the differential $\Delta_0$ on $CE(\fg)$ is given by the formula
\[
\Delta_0:=\sum_{i}\frac{\p \Ga_\hbar}{\p \eta^{i}}|_{\hbar=\eta^i=0}\frac{\p}{\p \psi_i} .
\]

\begin{proposition}
Let $\fg$ be a good $\LoB_\infty$ algebra, with the $\LoB_\infty$ algebra structure being defined a power series $\Ga_\hbar=\Ga_\hbar(\psi_i,\eta^i,\hbar)$ as explained in section \ref{3: Section on Def complex}. Then there is a natural $\BV_\infty^{com}$ algebra structure $\rho$ on the complex $CE(\fg)$ given by the formulas:

$$
\rho\left(\begin{xy}
 <0mm,0.66mm>*{};<0mm,3mm>*{}**@{-},
 <0.39mm,-0.39mm>*{};<2.2mm,-2.2mm>*{}**@{-},
 <-0.35mm,-0.35mm>*{};<-2.2mm,-2.2mm>*{}**@{-},
 <0mm,0mm>*{\circ};<0mm,0mm>*{}**@{},
\end{xy}\right):=\mathrm{the\ standard\ multiplication\ in}\
{\odot}^\bu(\fg[-1])[[\hbar]]
$$
and, for any $a\geq 1$,
$$
\rho\left(\Ba{c}\resizebox{4mm}{!}{ \xy
(0,5)*{};
(0,0)*+{_a}*\cir{}
**\dir{-};
(0,-5)*{};
(0,0)*+{_a}*\cir{}
**\dir{-};
\endxy}\Ea\right):= \sum_{p+k=a+1} \frac{1}{p!k!}\sum_{i_1,\ldots,i_{c+1}} \frac{\p^{a+1} \Ga_\hbar}{\p^p\hbar \p \eta^{i_1}\cdots
\p \eta^{i_{k}}}|_{\hbar=\eta^i=0}\frac{\p^{k} }{\p \psi_{i_1}\cdots
\p \psi_{i_{k}}}
$$
\end{proposition}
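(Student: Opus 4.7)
The plan is to recognise the operators $\rho(\Delta_a)$ as the Taylor coefficients in $\hbar$ of a single total operator $\tilde\Delta_\hbar$ on $CE(\fg)$ built directly from the Maurer-Cartan element $\Ga_\hbar$, and then to deduce all the $\BV_\infty^{com}$-relations at once from the identity $\Ga_\hbar *_\hbar \Ga_\hbar = 0$. Identify $CE(\fg)=\odot^\bullet(\fg[-1])$ with polynomials in the $\psi_i=se_i$ and define
\[
 \tilde\Delta_\hbar\colon \K[\psi_i][[\hbar]] \lon \K[\psi_i][[\hbar]],\qquad P\longmapsto (\Ga_\hbar *_\hbar P)\big|_{\eta=0}.
\]
The goodness hypothesis on $\fg$ guarantees that at each fixed power of $\hbar$ only finitely many summands contribute, so $\tilde\Delta_\hbar$ is well defined. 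A direct Taylor expansion in $\hbar$ of the product $*_\hbar$ together with the Taylor expansion of $\partial^k \Ga_\hbar/\partial\eta^{i_1}\cdots\partial\eta^{i_k}|_{\eta=0}$ in powers of $\hbar$ shows that the coefficient of $\hbar^a$ in $\tilde\Delta_\hbar$ is exactly the operator $\rho(\Delta_a)$ stated in the proposition, so $\tilde\Delta_\hbar = \sum_{a\geq 0}\hbar^a\rho(\Delta_a)$.

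Next I would verify the two static requirements of a $\BV_\infty^{com}$-structure. Each summand of $\rho(\Delta_a)$ contains exactly $k\leq a+1$ partial derivatives in the $\psi$-variables, so $\rho(\Delta_a)$ is a polynomial differential operator on $\K[\psi_i]$ of order $\leq a+1$; equivalently $F^{\rho(\Delta_a)}_{a+2}=0$ as required by section 5.1. The degree check is immediate from $|\Ga_\hbar|=3$, $|\hbar|=2$ and $|\eta^i|+|\psi_i|=2$ (forced by the suspension $s\colon V\to V[-1]$): in every summand with $p+k=a+1$ the derivatives lower the degree by $2p+2k=2(a+1)$, leaving degree $3-2(a+1)=1-2a$ in agreement with the $\BV_\infty^{com}$-generator. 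The apparent $(p,k)=(1,0)$ summand in $\rho(\Delta_0)$ is killed by the boundary condition $\Ga_\hbar|_{\eta=0}=0$, so $\rho(\Delta_0)$ reduces to the Chevalley-Eilenberg differential $-d$ of $\fg$ as an $\caL ie_\infty$ coalgebra, consistent with the convention $\Delta_0=-d$ in the definition of $\BV_\infty^{com}$.

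The heart of the argument is to establish the master equation $\tilde\Delta_\hbar^2 = 0$, whose $\hbar^n$-coefficient is precisely the required relation $\sum_{a=0}^n\Delta_a\circ\Delta_{n-a}=0$. Central to this is the elementary restriction identity
\[
 (\Ga_\hbar *_\hbar Q)\big|_{\eta=0} = \bigl(\Ga_\hbar *_\hbar Q|_{\eta=0}\bigr)\big|_{\eta=0}\qquad \forall\, Q(\psi,\eta,\hbar),
\]
which holds because $*_\hbar$ only applies $\psi$-derivatives to its second argument, and such derivatives commute with the evaluation $\eta=0$. Applying this identity with $Q = \Ga_\hbar *_\hbar P$ and invoking the associativity of $*_\hbar$ stated in section 3.1 gives
\[
 \tilde\Delta_\hbar^2(P) = \bigl(\Ga_\hbar *_\hbar (\Ga_\hbar *_\hbar P)\bigr)\big|_{\eta=0} = \bigl((\Ga_\hbar *_\hbar \Ga_\hbar) *_\hbar P\bigr)\big|_{\eta=0}.
\]
Because $|\Ga_\hbar|$ is odd, $[\Ga_\hbar,\Ga_\hbar]_\hbar = 2\,\Ga_\hbar*_\hbar \Ga_\hbar$, so the Maurer-Cartan equation governing the $\LoB_\infty$-structure is equivalent to $\Ga_\hbar*_\hbar \Ga_\hbar=0$, and the right-hand side vanishes.

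The main technical nuisance will be tracking Koszul signs through the Taylor expansions and matching them with the sign conventions hidden in the differential of $\LoB_\infty$ and in the convention $\Delta_0=-d$ of $\BV_\infty^{com}$; once this bookkeeping is settled, the entire proposition reduces to the one-line computation above.
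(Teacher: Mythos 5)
Your proposal is correct, and it reaches the conclusion by a genuinely different route from the paper. The paper verifies the relations $\sum_{a=0}^n\Delta_a\circ\Delta_{n-a}=0$ one by one, pictorially: it encodes each summand of $\rho(\Delta_a)$ as a two-vertex graph indexed by $(p,k)$ with $p+k=a+1$, expands the composition $\rho(\Delta_b)\circ\rho(\Delta_c)$ into two-level graphs carrying the constraints $k'+k''+p\geq 2$, $l+q\geq 2$ (which come from $b,c\geq 1$), and then checks that the extra boundary terms appearing in $\delta\rho(\Delta_a)$ exactly account for the configurations excluded by those constraints. You instead package the whole family $\{\rho(\Delta_a)\}_{a\geq 0}$ into the single operator $\tilde\Delta_\hbar(P)=(\Ga_\hbar *_\hbar P)|_{\eta=0}$ and obtain all the relations at once from the restriction identity together with associativity of $*_\hbar$ and the Maurer--Cartan equation $\Ga_\hbar *_\hbar\Ga_\hbar=0$; the paper's graphical term-matching is in effect a hands-on expansion in powers of $\hbar$ of your one-line identity $\tilde\Delta_\hbar^2(P)=((\Ga_\hbar *_\hbar\Ga_\hbar)*_\hbar P)|_{\eta=0}$. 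What your route buys is brevity and a conceptual explanation of why the relations hold (they are the MC equation viewed through the augmentation $\eta=0$); what it costs is reliance on the associativity of $*_\hbar$, which the paper asserts in \S 3.1 without proof, and on the validity of rearranging the infinite sums in the triple product, which your goodness/finiteness remark addresses at the same level of rigor as the paper. Your treatment of the two static conditions (order $\leq a+1$, degree $1-2a$) and of the vanishing of the $k=0$ terms via the boundary conditions $\Ga_\hbar|_{\eta=0}=0$ matches what the paper needs and correctly identifies $\rho(\Delta_0)$ with the Chevalley--Eilenberg differential, so the remaining sign bookkeeping you flag is no worse than the paper's own ``up to Koszul signs'' conventions.
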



\begin{proof} It is clear that $\Delta_{a}:=\rho\left(\Ba{c}\resizebox{4mm}{!}{ \xy
(0,5)*{};
(0,0)*+{a}*\cir{}
**\dir{-};
(0,-5)*{};
(0,0)*+{a}*\cir{}
**\dir{-};
\endxy}\Ea\right)$ is an operator of order $\leq a+1$ with respect to the standard multiplication in
the graded commutative algebra
${\odot}^\bu(\fg[-1])$. The verficiation that the operators $\{\Delta_a\}_{a\geq 0}$ satisfy identities \eqref{5: BV_comm equation for Delta}
is best done pictorially. We represent the expression on the right hand side by the picture
$$
\rho\left(\Ba{c}\resizebox{4mm}{!}{ \xy
(0,5)*{};
(0,0)*+{a}*\cir{}
**\dir{-};
(0,-5)*{};
(0,0)*+{a}*\cir{}
**\dir{-};
\endxy}\Ea\right)=  \sum_{a+1=p+k\atop k\geq 1,p\geq 0}
\underbrace{
\Ba{c}\resizebox{7mm}{!}{ \xy
(0.0,-12.2)*{_1},
(0,-5)*{...},
   \ar@/^1pc/(0,0)*+{_{p}}*\frm{o};(0,-10)*{\circ}*\frm{}
   \ar@/^{-1pc}/(0,0)*+{_{p}}*\frm{o};(0,-10)*{\circ}*\frm{}
   \ar@/^0.6pc/(0,0)*+{_{p}}*\frm{o};(0,-10)*{\circ}*\frm{}
   \ar@/^{-0.6pc}/(0,0)*+{_{p}}*\frm{o};(0,-10)*{\circ}*\frm{}
 \endxy}
 \Ea}_{k\ \mathrm{edges}}
$$
Then we compute
\Beqrn
\rho\left(\delta \Ba{c}\resizebox{4mm}{!}{ \xy
(0,5)*{};
(0,0)*+{_a}*\cir{}
**\dir{-};
(0,-5)*{};
(0,0)*+{_a}*\cir{}
**\dir{-};
\endxy}\Ea\right)& =&\sum_{a=b+c\atop b,c\geq 1}\rho\left(
\Ba{c}\resizebox{3mm}{!}{  \xy
(0,6,3)*{};
(0,0)*+{_c}*\cir{}
**\dir{-};
(0,-5)*{};
(0,0)*+{_c}*\cir{}
**\dir{-};
(0,13)*{};
(0,8)*+{_b}*\cir{}
**\dir{-};
\endxy}
\Ea\right)
= \sum_{a=b+c\atop b,c\geq 1} \sum_{b+1=p+k\atop k\geq 1,p\geq 0} \sum_{c+1=q+l\atop k\geq 1,q\geq 0}
\rho\left(
\Ba{c}\resizebox{7mm}{!}{ \xy
(0.0,-12.2)*{_1},
(0,-3.5)*{_k},
(0,-5)*{...},
   \ar@/^1pc/(0,0)*+{_{p}}*\frm{o};(0,-10)*{\circ}*\frm{}
   \ar@/^{-1pc}/(0,0)*+{_{p}}*\frm{o};(0,-10)*{\circ}*\frm{}
   \ar@/^0.6pc/(0,0)*+{_{p}}*\frm{o};(0,-10)*{\circ}*\frm{}
   \ar@/^{-0.6pc}/(0,0)*+{_{p}}*\frm{o};(0,-10)*{\circ}*\frm{}
 \endxy}
 \Ea\right) \circ_1
 \rho\left(
\Ba{c}\resizebox{7mm}{!}{ \xy
(0.0,-12.2)*{_1},
(0,-3.5)*{_l},
(0,-5)*{...},
   \ar@/^1pc/(0,0)*+{_{q}}*\frm{o};(0,-10)*{\circ}*\frm{}
   \ar@/^{-1pc}/(0,0)*+{_{q}}*\frm{o};(0,-10)*{\circ}*\frm{}
   \ar@/^0.6pc/(0,0)*+{_{q}}*\frm{o};(0,-10)*{\circ}*\frm{}
   \ar@/^{-0.6pc}/(0,0)*+{_{q}}*\frm{o};(0,-10)*{\circ}*\frm{}
 \endxy}
 \Ea\right) \\
 &=&
  \sum_{a=b+c\atop b,c\geq 1} \sum_{b+1=p+k\atop k,p\geq 1} \sum_{c+1=q+l\atop k,p\geq 1}
  \sum_{k=k'+k''} \Ba{c}\resizebox{10mm}{!}{ \xy
(0,-5)*{\stackrel{l}{...}},
(0,5)*{\stackrel{k'}{...}},
(8,0)*{\stackrel{k''}{...}},
   \ar@/^{-1pc}/(0,0)*+{_{q}}*\frm{o};(0,-10)*{\circ}
   \ar@/^0.6pc/(0,0)*+{_{q}}*\frm{o};(0,-10)*{\circ}
   \ar@/^{-0.6pc}/(0,0)*+{_{q}}*\frm{o};(0,-10)*{\circ}
   \ar@/^{-1pc}/(0,10)*+{_{p}}*\frm{o};(0,0)*+{_{q}}*\frm{o}
   \ar@/^0.6pc/(0,10)*+{_{p}}*\frm{o};(0,0)*+{_{q}}*\frm{o}
   \ar@/^{-0.6pc}/(0,10)*+{_{p}}*\frm{o};(0,0)*+{_{q}}*\frm{o}
   \ar@/^{2.4pc}/(0,10)*+{_{p}}*\frm{o};(0,-10)*{\circ}*\frm{}
   \ar@/^{1.3pc}/(0,10)*+{_{p}}*\frm{o};(0,-10)*{\circ}*\frm{}
 \endxy}
 \Ea
=  \sum_{a=p+q+k'+k''+l-2\atop {p,q,k',k''\geq 0, k'+k'',l\geq 1,
\atop k'+k''+p\geq 2, l+q\geq 2}}
\Ba{c}\resizebox{10mm}{!}{ \xy
(0,-5)*{\stackrel{l}{...}},
(0,5)*{\stackrel{k'}{...}},
(8,0)*{\stackrel{k''}{...}},
   \ar@/^{-1pc}/(0,0)*+{_{q}}*\frm{o};(0,-10)*{\circ}
   \ar@/^0.6pc/(0,0)*+{_{q}}*\frm{o};(0,-10)*{\circ}
   \ar@/^{-0.6pc}/(0,0)*+{_{q}}*\frm{o};(0,-10)*{\circ}
   \ar@/^{-1pc}/(0,10)*+{_{p}}*\frm{o};(0,0)*+{_{q}}*\frm{o}
   \ar@/^0.6pc/(0,10)*+{_{p}}*\frm{o};(0,0)*+{_{q}}*\frm{o}
   \ar@/^{-0.6pc}/(0,10)*+{_{p}}*\frm{o};(0,0)*+{_{q}}*\frm{o}
   \ar@/^{2.4pc}/(0,10)*+{_{p}}*\frm{o};(0,-10)*{\circ}*\frm{}
   \ar@/^{1.3pc}/(0,10)*+{_{p}}*\frm{o};(0,-10)*{\circ}*\frm{}
 \endxy}
 \Ea\\
\Eeqrn
On the other hand 
\Beqrn
\delta \rho\left(
\Ba{c}\resizebox{4mm}{!}{ \xy
(0,5)*{};
(0,0)*+{a}*\cir{}
**\dir{-};
(0,-5)*{};
(0,0)*+{a}*\cir{}
**\dir{-};
\endxy}\Ea\right) &=&  \sum_{a+1=p+k\atop k\geq 1,p\geq 0} \delta
\Ba{c}\resizebox{7mm}{!}{ \xy
(0.0,-12.2)*{_1},
(0,-3.5)*{_k},
(0,-5)*{...},
   \ar@/^1pc/(0,0)*+{_{p}}*\frm{o};(0,-10)*{\circ}*\frm{}
   \ar@/^{-1pc}/(0,0)*+{_{p}}*\frm{o};(0,-10)*{\circ}*\frm{}
   \ar@/^0.6pc/(0,0)*+{_{p}}*\frm{o};(0,-10)*{\circ}*\frm{}
   \ar@/^{-0.6pc}/(0,0)*+{_{p}}*\frm{o};(0,-10)*{\circ}*\frm{}
 \endxy}
 \Ea
 =
 \sum_{}
 \Ba{c}\resizebox{12mm}{!}{ \xy
(0,-5)*{\stackrel{k'}{...}},
(0,5)*{\stackrel{l}{...}},
(8,0)*{\stackrel{k''}{...}},
   \ar@/^{-1pc}/(0,0)*+{_{p''}}*\frm{o};(0,-10)*{\circ}
   \ar@/^0.6pc/(0,0)*+{_{p''}}*\frm{o};(0,-10)*{\circ}
   \ar@/^{-0.6pc}/(0,0)*+{_{p''}}*\frm{o};(0,-10)*{\circ}
   \ar@/^{-1pc}/(0,10)*+{_{p'}}*\frm{o};(0,0)*+{_{p''}}*\frm{o}
   \ar@/^0.6pc/(0,10)*+{_{p'}}*\frm{o};(0,0)*+{_{p''}}*\frm{o}
   \ar@/^{-0.6pc}/(0,10)*+{_{p'}}*\frm{o};(0,0)*+{_{p''}}*\frm{o}
   \ar@/^{2.4pc}/(0,10)*+{_{p'}}*\frm{o};(0,-10)*{\circ}*\frm{}
   \ar@/^{1.3pc}/(0,10)*+{_{p'}}*\frm{o};(0,-10)*{\circ}*\frm{}
 \endxy}
 \Ea
 +
  \sum_{}
 \Ba{c}\resizebox{12mm}{!}{ \xy
(0,5)*{\stackrel{k'}{...}},
(8,0)*{\stackrel{k''}{...}},
   \ar@/^{-0.0pc}/(0,0)*+{_{0}}*\frm{o};(0,-10)*{\circ}
   \ar@/^{-1pc}/(0,10)*+{_{p}}*\frm{o};(0,0)*+{_{0}}*\frm{o}
   \ar@/^0.6pc/(0,10)*+{_{p}}*\frm{o};(0,0)*+{_{0}}*\frm{o}
   \ar@/^{-0.6pc}/(0,10)*+{_{p}}*\frm{o};(0,0)*+{_{0}}*\frm{o}
   \ar@/^{2.4pc}/(0,10)*+{_{p}}*\frm{o};(0,-10)*{\circ}*\frm{}
   \ar@/^{1.3pc}/(0,10)*+{_{p}}*\frm{o};(0,-10)*{\circ}*\frm{}
 \endxy}
 \Ea
 +
 \Ba{c}\resizebox{7mm}{!}{ \xy
(0,-5)*{\stackrel{k}{...}},
   \ar@/^{-1pc}/(0,0)*+{_{p}}*\frm{o};(0,-10)*{\circ}
   \ar@/^0.6pc/(0,0)*+{_{p}}*\frm{o};(0,-10)*{\circ}
   \ar@/^{-0.6pc}/(0,0)*+{_{p}}*\frm{o};(0,-10)*{\circ}
   \ar@/^{-0pc}/(0,10)*+{_{0}}*\frm{o};(0,0)*+{_{p}}*\frm{o}
   %
 \endxy}
 \Ea
  +
 \Ba{c}\resizebox{12mm}{!}{ \xy
(0,-5)*{\stackrel{k}{...}},
   \ar@/^{-1pc}/(0,0)*+{_{p}}*\frm{o};(0,-10)*{\circ}
   \ar@/^0.6pc/(0,0)*+{_{p}}*\frm{o};(0,-10)*{\circ}
   \ar@/^{-0.6pc}/(0,0)*+{_{p}}*\frm{o};(0,-10)*{\circ}
   \ar@/^{2pc}/(0,10)*+{_{0}}*\frm{o};(0,-10)*{\circ}
   %
 \endxy}
 \Ea
\Eeqrn

Notice that the three terms on the right just kill those terms that were excluded before due to the restrictions $k'+k''+p\geq 2, l+q\geq 2$.
\end{proof}

\bip

\bip
\appendix


\renewcommand{\thesection}{{\Alph{section}}}
\renewcommand{\thesubsection}{{\bf\Alph{section}.\arabic{subsection}}}
\renewcommand{\thesubsubsection}{\bf\Alph{section}.\arabic{subsection}.\arabic{subsubsection}}

\section{ Proof of Proposition \ref{2: toy problem}}\label{app:koszulnessproof}
In this section we show that the quadratic algebra $\cA_n$ of section {\ref{sec:extracomplexes}} is Koszul. In fact, we will show the equivalent statement that the Koszul dual algebra $B_n=\cA_n^!$ is Koszul.
Concretely, $B_n$ is generated by $x_1,\dots, x_n$ with relations $x_ix_j=0$ for $|i-j|\neq 1$ and $x_ix_{i+1}=-x_{i+1}x_{i}$.

We denote by $C_n \otimes_{\kappa} B_n$ the Koszul complex of $B_n$, i.~e., the complex $(C_n\otimes B_n, d_{\kappa})$, where $C_n$ is the coalgebra with quadratic corelations $R = \text{span}( \{x_ix_j | |i-j|\ne 1\} \cup
\{x_ix_{i+1} + x_{i+1}x_i\})$ and $\kappa : C_n \to B_n$ is the degree $-1$ map that is zero everywhere except on $V$, where it identifies $V\subset C_n$ with $V \subset B_n$.

Notice that $B_n$ and $C_n$ are weight graded and the weight $k$ component of $B_n$, $B_n^{(k)}$ is zero if $k\geq 3$.

The result will be shown by constructing a contracting homotopy $h$. Let us denote $x_{i_1}\dots x_{i_l} \otimes [x_jx_{j+1}]$ by $x$ and let us  define $h:V^{\otimes l} \otimes B_n^{(2)} \to V^{\otimes l+1} \otimes B_n^{(1)}$, for $n\geq 1$ by \\

$\begin{cases}
 h(x) = x_{i_1}\dots x_{i_l} x_j\otimes x_{j+1} & \text{ if } |j-i_l|\ne 1 \\
 h(x) = x_{i_1}\dots x_{i_{l-1}} (x_{i_l} x_j+ x_j x_{i_l})\otimes x_{j+1} & \text{ if }|j-i_l| = 1 \text{ and } |j-i_{l-1}| \ne 1 \\
 h(x) = -x_{i_1}\dots x_{i_l} x_{j+1}\otimes x_{j} & \text{ if }|j-i_l| = 1 \text{ and } |j-i_{l-1}| = 1
\end{cases}$

\begin{lemma}
$h$ maps $RV^{\otimes l-2}\otimes B_n^{(2)}$ to $RV^{\otimes l-1}\otimes B_n^{(1)} \cap V^{\otimes l-1}R\otimes B_n^{(1)}$ and it maps $V^{\otimes a}R V^{\otimes b} \otimes B_n^{(2)} $ to $V^{\otimes a}R V^{\otimes b+1} \otimes B_n^{(1)}$.
As a consequence $h$ restricts to a function $C_n^{(l)} \otimes B_n^{(2)} \to C_n^{(l+1)}\otimes B_n^{(1)}$ and  moreover $d_\kappa h = id_{C_n^{(l)} \otimes B_n^{(2)}}$.
\end{lemma}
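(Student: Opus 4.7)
The proof will be a direct case-by-case verification on the spanning elements $x = x_{i_1}\cdots x_{i_l} \otimes [x_jx_{j+1}]$, organized by the three defining cases of $h$. I plan to separately check: (i) that the pair at positions $l, l+1$ of the output lies in $R$, which establishes the ``$V^{\otimes l-1}R$'' factor in the image of (a); (ii) that positions $1, \ldots, l-1$ of the output reproduce the corresponding positions of the input, which establishes (b) in the range $b\geq 1$ and the ``$RV^{\otimes l-1}$'' factor of (a) when $l\geq 3$; and (iii) that $d_\kappa h=\mathrm{id}$ on pure tensors. Since $C_n^{(l)}$ sits inside every $V^{\otimes i}RV^{\otimes l-2-i}$, combining (a) and (b) then forces $h(C_n^{(l)}\otimes B_n^{(2)})$ to lie in every $V^{\otimes i}RV^{\otimes l-1-i}\otimes B_n^{(1)}$, hence in $C_n^{(l+1)}\otimes B_n^{(1)}$.

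For step (i), the three cases unfold transparently: in Case 1 one has $|j-i_l|\neq 1$ so $x_{i_l}x_j$ is by definition in $R$; in Case 2 the pair is $x_{i_l}x_j+x_jx_{i_l}$ with $|j-i_l|=1$, which is a generator of $R$; in Case 3 the pair is $x_{i_l}x_{j+1}$ with $i_l\in\{j-1,j+1\}$, so the two indices differ by $0$ or $2$ and the pair again lies in $R$. For step (iii), using that on $C_n^{(l+1)}\otimes B_n^{(1)}$ the differential $d_\kappa$ moves the last $V$-factor to the $B_n$-slot and multiplies there, Case 1 directly yields $x$; in Case 2 the second summand produces $x_{i_l}x_{j+1}$ in $B_n$, but $i_l=j\pm 1$ forces its indices to differ by $0$ or $2$, hence it vanishes in $B_n$, leaving only the first summand which recovers $x$; in Case 3 the factor $x_{j+1}x_j=-[x_jx_{j+1}]$ in $B_n$ absorbs the leading minus sign in the definition of $h$.

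The main obstacle, and the genuinely delicate part of the argument, is the boundary of (b) when $b=0$ (and the analogous sub-situation inside (a) for $l=2$), where the input $R$-relation sits at positions $l-1, l$ and can interact with the modifications introduced by Case 2 of $h$. Here the two summands of a basis element $s^p = x_p\otimes x_{p+1} + x_{p+1}\otimes x_p$ of $R$ may fall into different defining cases of $h$ depending on the value of $j$, so the output must be computed term by term and reassembled. I plan to handle this by partitioning according to the value of $j$ relative to $\{p-1, p, p+1, p+2\}$; in each subcase I will expand the output as a sum of pure tensors, group them by the pair at positions $l-1, l$, and verify that each grouped pair lies in $R$. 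A recurring useful fact is that the ``accidental'' diagonal pair $x_p\otimes x_p$ belongs to $R$ since $|p-p|=0\neq 1$, which absorbs the residual terms arising when the two defining cases of $h$ mix.

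Assembling (i)--(iii) yields the full statement: the two image containments give $h\colon C_n^{(l)}\otimes B_n^{(2)}\to C_n^{(l+1)}\otimes B_n^{(1)}$, and the identity $d_\kappa h=\mathrm{id}$ finishes the proof.
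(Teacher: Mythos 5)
Your steps (i) and (iii) are correct and your overall route is the paper's own: split by the position of the $R$-block, argue that $h$ does not disturb positions $1,\dots,l-1$, and do an explicit case analysis when the $R$-block sits at the end. (Your step (iii) is in fact a welcome addition, since the paper's proof of this lemma never verifies $d_\kappa h=\mathrm{id}$, and your $b=0$ analysis, including the role of the ``diagonal'' pairs $x_px_p\in R$, matches the paper's computations.) The gap is in step (ii). The observation that $h$ leaves positions $1,\dots,l-1$ of every pure tensor unchanged only shows that $h$ factors as $\mathrm{id}_{V^{\otimes l-2}}\otimes S$ for some map $S$ acting on the last \emph{two} $V$-slots together with the $B_n$-slot, because the case selection of $h$ reads $i_{l-1}$ as well as $i_l$. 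This yields (b) only for $b\geq 2$, and the $RV^{\otimes l-1}$ half of (a) only for $l\geq 4$. When $b=1$ the relation occupies positions $l-2,l-1$, so the two pure-tensor summands of a generator $x_px_{p+1}+x_{p+1}x_p$ placed there can fall into \emph{different} cases of $h$ and receive different tail modifications, and the $R$ need not survive. Concretely, for $l=3$ take $y=(x_1x_2+x_2x_1)x_2\otimes[x_1x_2]\in RV\otimes B_n^{(2)}$: the summand $x_1x_2x_2\otimes[x_1x_2]$ has $|j-i_3|=|j-i_2|=1$ and falls into Case 3, giving $-x_1x_2x_2x_2\otimes x_1$, while $x_2x_1x_2\otimes[x_1x_2]$ has $|j-i_2|=0$ and falls into Case 2, giving $x_2x_1(x_2x_1+x_1x_2)\otimes x_2$; applying the functional $(x_1x_2)^*-(x_2x_1)^*$ (which annihilates $R$) to the first two slots of the sum gives $-(x_2x_2\otimes x_1+x_2x_1\otimes x_2+x_1x_2\otimes x_2)\neq 0$, so $h(y)\notin RV^{\otimes 2}\otimes B_n^{(1)}$.

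To be fair, the paper's proof asserts the $b\geq 1$ case with exactly the same ``not touching the $R$ part'' justification, so you have faithfully reproduced its weak point (note that for the companion lemma about $h$ on $C_n^{(l)}\otimes B_n^{(1)}$ the authors \emph{do} single out $b=1$ for separate treatment, which is precisely the phenomenon at issue). As written, though, both your argument and the paper's leave $b=1$ unproved, and the example above shows that claim (b) for $b=1$ is false on all of $V^{\otimes a}RV\otimes B_n^{(2)}$. A complete write-up must therefore either carry out for $b=1$ (equivalently $l=3$ in part (a)) the same explicit regrouping you reserve for $b=0$, restricted to inputs that also lie in the neighbouring spaces $V^{\otimes l-3}RV$ and $V^{\otimes l-2}R$ — i.e., work directly with $C_n^{(l)}$ rather than with a single $V^{\otimes a}RV^{\otimes b}$ — or otherwise repair the intermediate statement before the intersection argument at the end can be invoked.
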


\begin{proof}
For the first part of the lemma notice that $RV^{\otimes l-2}$ is generated by the elements $x_{i_1}x_{i_2}\dots x_{i_l}$ such that $|i_1-i_2|\ne 1$ and the elements $(x_{i+1}x_i + x_ix_{i+1})x_{i_3}\dots x_{i_l}$ ($l\geq 2$).

If we suppose $l$ to be at least 3, it is clear that $h$ does not do anything to the initial two terms, keeping them in $R$ (thus their image will be in $RV^{\otimes l-1}\otimes B_n^{(1)}$) and by the construction of $h$ it is also clear that the image of a generator is in $V^{\otimes l-1}R\otimes B_n^{(1)}$.

In fact, the same ``not touching the $R$ part" argument also shows that $h$ maps $V^{\otimes a}R V^{\otimes b}$ to $V^{\otimes a}R V^{\otimes b+1}$ if $b$ is at least $1$.

It remains to show this for the case where $b=0$. In this case, $V^{ \ot l-2}R$ has two classes of generators: $x_{i_1}\dots x_{i_{l-1}}x_{i_l} \otimes [x_jx_{j+1}]$ where $|i_l-i_{l-1}|\ne 1$ and  $x_{i_1}\dots x_{i_{l-2}}(x_ix_{i+1}+x_{i+1}x_i) \otimes [x_jx_{j+1}]$.

For the first class of generators, if $|j-i_n|\ne 1$ the result is clear. If   $|j-i_n|= 1$ and $|j-i_{l-1}| \ne 1$ we use the second formula from the definition of $h$ and since $x_{i_{l-1}}x_{i_l} \in R$ and $x_{i_{l-1}}x_{j}\in R$ the result is also clear. If $|j-i_n| = 1 \text{ and } |j-i_{l-1}| = 1$ we use the third formula and the elements in $R$ are not touched.

For the second class of generators, this verification must be split into some cases.

Let us consider $x=x_{i_1}\dots x_{i_{l-2}}(x_ix_{i+1}+x_{i+1}x_i) \otimes [x_jx_{j+1}]$. To simplify the verification of the calculations, notice that for these generators the third formula of the definition of $h$ will never be used. If $j<i-1$ or $j>i+2$ then $h$ only moves moves $x_j$ before the tensor product and thus $h(x) \in V^{n-2}RV\otimes B_n^{(1)}$.

If $j=i-1$,

$$h(x) = x_{i_1}\dots x_{i_{l-2}} x_i x_{i+1}x_{i-1} \otimes x_i + x_{i_1}\dots x_{i_{l-2}} x_{i+1}(x_ix_{i-1}+x_{i-1}x_i)\otimes x_i \in V^{\otimes l-2}RV\otimes B_n^{(1)}$$

If $j=i$,
$$h(x) = x_{i_1}\dots x_{i_{l-2}} x_{i}(x_{i+1}x_{i}+x_{i}x_{i+1})\otimes x_{i+1} + x_{i_1}\dots x_{i_{l-2}} x_{i+1} x_{i}x_{i} \otimes x_{i+1}$$
$$= x_{i_1}\dots x_{i_{l-2}} (x_{i}x_{i+1}+x_{i+1}x_{i}) x_{i}\otimes x_{i+1} + x_{i_1}\dots x_{i_{l-2}} x_{i}x_{i}x_{i+1}\otimes x_{i+1} \in V^{\otimes l-2}RV\otimes B_n^{(1)}$$

If $j=i+1$,
$$h(x) = x_{i_1}\dots x_{i_{l-2}} x_i x_{i+1}x_{i+1} \otimes x_{i+2} + x_{i_1}\dots x_{i_{l-2}} x_{i+1}(x_ix_{i+1}+x_{i+1}x_i)\otimes x_{i+2} $$
$$=    x_{i_1}\dots x_{i_{l-2}} (x_i x_{i+1}+x_{i+1}x_i)x_{i+1} \otimes x_{i+2}    +   x_{i_1}\dots x_{i_{l-2}} x_{i+1}x_{i+1}x_i\otimes x_{i+2}             \in V^{\otimes l-2}RV\otimes B_n^{(1)}$$

If $j=i+2$,
$$h(x) = x_{i_1}\dots x_{i_{l-2}} x_{i}(x_{i+1}x_{i+2}+x_{i+2}x_{i+1})\otimes x_{i+3} + x_{i_1}\dots x_{i_{l-2}} x_{i+1} x_{i}x_{i+2} \otimes x_{i+3} \in V^{\otimes l-2}RV\otimes B_n^{(1)}$$

\end{proof}

We define $h$ on $V^{\otimes 0} \otimes B_n^{(2)} \to V^{\otimes 1} \otimes B_n^{(1)}$ by $h(1\otimes [x_jx_{j+1}]) = x_j\otimes x_{j+1}$.

To define $h:C_n^{(l)} \otimes B_n^{(1)} \to C_n^{(n+1)} \otimes B_n^{(0)}= C^{(l+1)}$, as before we define it on $V^{\otimes l} \otimes B_n^{(1)}$ and we verify that it restricts properly.

Let us denote $x_{i_1}\dots x_{i_{l-1}}x_{i_l} \otimes x_j$ by $x$ and let us  define $h:V^{\otimes l} \otimes B_n^{(1)} \to V^{\otimes l+1}$, by $h(x)=$

$$\begin{cases}
 x_{i_1}\dots x_{i_l} x_j & \text{if } |j-i_l|\ne 1 \\
 0 & \text{if } j=i_l+1 \text{ and } |i_l-i_{l-1}| \ne 1 \\
  -x_{i_1}\dots x_{i_{l-2}}x_{i_l}x_{i_{l-1}} x_{j} & \text{if } j=i_l+1 \text{, } |i_l-i_{l-1}| = 1 \text{, } |i_l-i_{l-2}|\ne 1 \\
  x_{i_1}\dots x_{i_{l-2}}x_{i_{l-1}}(x_{i_{l}} x_{j}+ x_{j}x_{i_{l}}) & \text{if } j=i_l+1 \text{, } |i_l-i_{l-1}| = 1 \text{, } |i_l-i_{l-2}|= 1 \\
 x_{i_1}\dots x_{i_{l-1}} (x_{i_{l}} x_{j}+x_jx_{i_{l}}) & \text{if } j=i_l-1 \text{ and } |j-i_{l-1}| \ne 1\\
 x_{i_1}\dots x_{i_{l-2}}(x_{i_{l-1}}x_j+x_jx_{i_{l-1}})x_{i_{l}} +  x_{i_1}\dots x_{i_l} x_j           & \text{if }j=i_l-1 \text{, } |j-i_{l-1}| = 1 \text{, } |j-i_{l-2}| \ne 1\\
 0 & \text{if }j=i_l-1 \text{, } |j-i_{l-1}| = 1 \text{, } |j-i_{l-2}| = 1
\end{cases}$$
Interpret this definition for $l\leq 2$ in the following way: Whenever $i_{l-2}$ or $i_{l-1}$ are not defined, take the case in the definition where the absolute value of the difference is different from $1$.

Notice that the image of $h$ sits inside $V^{\otimes l-1}R$.

\begin{lemma}
$h$ maps $V^{\otimes a}R V^{\otimes b} \otimes B_n^{(1)}$ to $V^{\otimes a}R V^{\otimes b+1}$.
Therefore $h$ restricts to a function $C_n^{(l)} \otimes B_n^{(1)} \to C_n^{(l+1)}$.
Moreover, $dh+hd=id$.
\end{lemma}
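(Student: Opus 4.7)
The plan is to mimic the structure of the proof of the previous lemma, splitting the verification into two separate claims: first, that $h$ respects the ideal generated by $R$ (so that it descends to the Koszul complex), and second, that $h$ is a contracting homotopy. Throughout, the key technical fact to exploit is that on the generators of $V^{\otimes a} R V^{\otimes b}$ the operator $h$ only acts at the last two or three positions of $V^{\otimes l}$ and on the single $B_n^{(1)}$-factor, so any copy of $R$ sitting entirely to the left of these positions is untouched.

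For the first claim I would separate the case $b\geq 1$, where $R$ sits away from the active region and hence is preserved verbatim, from the case $b=0$. In the latter case the two families of generators $x_{i_1}\cdots x_{i_{l-2}}x_{i_{l-1}}x_{i_l}\otimes x_j$ (with $|i_{l-1}-i_l|\neq 1$) and $x_{i_1}\cdots x_{i_{l-2}}(x_ix_{i+1}+x_{i+1}x_i)\otimes x_j$ have to be fed through each of the seven branches of the definition of $h$. For the first family the branching conditions match the condition $|i_{l-1}-i_l|\neq 1$ directly, and in each branch one checks that the rewritten word contains either the original $R$-factor $x_{i_{l-1}}x_{i_l}$ or a new $R$-factor of the form $x_{i_{l-1}}x_j$ (when $|j-i_{l-1}|\neq 1$) in the last two or three slots. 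For the second family the verification reduces to checking the sub-cases $j<i-1$, $j=i-1$, $j=i$, $j=i+1$, $j=i+2$, $j>i+2$ as in the preceding lemma; in each sub-case one rewrites the image using the commutation relations $x_ax_b=-x_bx_a$ modulo $R$ for $|a-b|\neq 1$ (which only costs signs in $B_n$ but allows the generator of $R$ to be rebuilt in the correct place) and verifies that the result lies in $V^{\otimes l-2}RV$.

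For the second claim, I would fix a generator $x_{i_1}\cdots x_{i_l}\otimes x_j\in V^{\otimes l}\otimes B_n^{(1)}$ and compute $(dh+hd)(x_{i_1}\cdots x_{i_l}\otimes x_j)$. The differential $d_\kappa$ removes the rightmost $V$-factor from the $C_n$ side and multiplies it against $x_j$ in $B_n$, producing either a term in $V^{\otimes l-1}\otimes B_n^{(2)}$ or (when the product vanishes or satisfies a relation in $B_n$) nothing. Applying $h$ to $x_{i_1}\cdots x_{i_l}\otimes x_j$ first and then $d$ produces terms with one more $V$-factor that $d$ then partially collapses; the combinatorics is designed so that the ``extra'' terms produced by $d\circ h$ cancel the terms produced by $h\circ d$ except for the identity. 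Again the check is organized by the seven branches of $h$, together with the two branches of the previously defined $h:V^{\otimes l}\otimes B_n^{(2)}\to V^{\otimes l+1}\otimes B_n^{(1)}$ used inside $hd$.

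The main obstacle will be the bookkeeping in the case analysis, especially in the third and fourth branches of $h$ (where $j=i_l+1$ and $i_{l-1}$ is adjacent to $i_l$), since these introduce sign changes and reorderings that must be tracked carefully when verifying both the preservation of $R$ and the cancellation identity $dh+hd=\id$. However, no new conceptual ingredient beyond the pattern already established in the previous lemma is required: the argument is a finite, mechanical verification, and the fact that $h$ is defined only in terms of the last two or three positions of a word guarantees that, once the ``last positions'' cases are settled, the general case follows automatically by leaving the leading $V^{\otimes a}R V^{\otimes b-1}$ (or initial $R$) undisturbed.
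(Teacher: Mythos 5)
Your overall strategy---a branch-by-branch verification of (i) preservation of the corelation subspaces and (ii) the homotopy identity $dh+hd=\mathrm{id}$---is the same as the paper's. However, there is a genuine gap in the first part. You claim that for all $b\geq 1$ the factor $R$ in $V^{\otimes a}RV^{\otimes b}$ ``sits away from the active region and hence is preserved verbatim.'' This is false for $b=1$: the active region of $h$ is the last \emph{three} tensor slots, since the third branch ($j=i_l+1$, $|i_l-i_{l-1}|=1$, $|i_l-i_{l-2}|\neq 1$) permutes $x_{i_{l-1}}$ and $x_{i_l}$, and the sixth branch ($j=i_l-1$, $|j-i_{l-1}|=1$, $|j-i_{l-2}|\neq 1$) inserts $x_j$ on both sides of $x_{i_{l-1}}$. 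When $b=1$ the $R$-factor occupies exactly the slots $l-2,l-1$ that these branches disturb. The paper's proof isolates precisely this situation (the sixth branch applied to $x_{i_1}\cdots x_{i_{l-3}}(x_ix_{i+1}+x_{i+1}x_i)x_{j+1}\otimes x_j$) and resolves it by an explicit computation in the two sub-cases $|j-i|=1$ and $|j-(i+1)|=1$, regrouping the resulting words so that an $R$-generator reappears in the correct slots. Without this computation the claim is unproven for $b=1$, so your dichotomy must be $b\geq 2$ versus $b\in\{0,1\}$, with $b=1$ requiring its own analysis.

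A secondary issue: you propose to verify membership in $V^{\otimes a}RV^{\otimes b}$ by ``rewriting modulo $R$'' using sign relations from $B_n$. But $V^{\otimes a}RV^{\otimes b}$ is a genuine subspace of the tensor power on the coalgebra side, not a quotient; one cannot commute adjacent letters at the cost of a sign there. The paper instead exhibits each image explicitly as a sum of words, adding and subtracting auxiliary terms so that each summand visibly contains a generator of $R$ in the prescribed positions. Your plan for the identity $dh+hd=\mathrm{id}$ (a finite case check organized by the branches of $h$ and of the previously defined homotopy on $C_n\otimes B_n^{(2)}$) matches the paper's and would go through once carried out.
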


\begin{proof}

As before, the $R$ part in $V^{\otimes a}R V^{\otimes b}$ is not touched by $h$ if $b$ is at least $2$ hence $V^{\otimes a}R V^{\otimes b}\otimes B_n^{(1)}$ is sent to $V^{\otimes a}R V^{\otimes b+1}$.

If $b=1$ the same argument holds, except for the 6th case of the definition of $h$, when $j=i_l-1 \text{, } |j-i_{l-1}| = 1 \text{ and } |j-i_{l-2}| \ne 1$. The only way to use this case of the definition of $h$ is when we apply $h$ to the element
$$ x=x_{i_1}\dots x_{i_{l-3}}(x_{i}x_{i+1}+x_{i+1}x_{i})x_{j+1} \otimes x_j$$
and either $|j-i|=1$ (which implies $|j-(i+1)|\ne 1$) or $|j-(i+1)|=1$ (which implies $|j-i|\ne 1$).\\

If $|j-i|=1$,

\begin{align*}
h(x) &= x_{i_1}\dots x_{i_{l-3}}\left[ x_{i}x_{i+1}(x_{j+1}x_{j} + x_{j}x_{j+1}) + x_{i+1}(x_ix_j+x_jx_i)x_{j+1} + x_{i+1}x_{i}x_{j+1}x_{j} \right] \\
&= x_{i_1}\dots x_{i_{l-3}}\left[(x_{i}x_{i+1} + x_{i+1}x_{i})x_{j}x_{j+1} + (x_{i}x_{i+1}+x_{i+1}x_{i})x_{j+1}x_{j}) + x_{i+1}x_{j}x_{i}x_{j+1}\right] \in V^{\otimes a}R V^{\otimes 2}
\end{align*}\\

If $|j-(i+1)|=1$,

\begin{align*}
h(x) &= x_{i_1}\dots x_{i_{l-3}}\left[ x_{i}(x_{i+1}x_{j}+x_{j}x_{i+1})x_{j+1} + x_{i}x_{i+1}x_{j+1}x_{j} + x_{i+1}x_{i}(x_{j+1}x_{j}+x_{j}x_{j+1}) \right] \\
&= x_{i_1}\dots x_{i_{l-3}}\left[(x_{i}x_{i+1} + x_{i+1}x_{i})x_{j}x_{j+1} + (x_{i}x_{i+1}+x_{i+1}x_{i})x_{j+1}x_{j}) + x_{i}x_{j}x_{i+1}x_{j+1}\right] \in V^{\otimes a}R V^{\otimes 2}
\end{align*}\\

Let us show that $V^{\otimes l-2}R$ is sent to $V^{\otimes l-2}RV$ by looking at each generator and meanwhile we will check that $dh+hd=id$. $V^{ l-2}R$ has two types of generators: $x_{i_1}\dots x_{i_{l-1}}x_{i_l} \otimes x_j$ where $|i_l-i_{l-1}|\ne 1$ and  $x_{i_1}\dots x_{i_{l-2}}(x_ix_{i+1}+x_{i+1}x_i) \otimes x_j$.

For the first type of generators, let us denote $x_{i_1}\dots x_{i_{l-1}}x_{i_l} \otimes x_j$ by $x$. If $|j-i_l|\ne 1$, $h(x) = x_{i_1}\dots x_{i_l} x_j$ clearly belongs to $V^{ \ot l-2}RV$ and since $dx=0$ we have that $(dh+hd)(x)=x$.\\

If $j=i_l + 1$, $h(x)=0$  and $(dh+hd)(x)=x$.\\

If $j=i_l - 1$ and $|j-i_{l-1}|\ne 1$, $$h(x)= x_{i_1}\dots x_{i_{l-1}} (x_{i_{l}} x_{j}+x_jx_{i_{l}}) \in V^{ \ot l-2}RV.$$
$hd(x) = h(-x_{i_1}\dots x_{i_{l-1}} \otimes x_jx_{i_l}) = -x_{i_1}\dots x_{i_{l-1}} x_j\otimes x_{i_l}$ and thus

$(hd+dh)(x)=x$.\\

If $j=i_l - 1$, $|j-i_{l-1}|= 1$ and $|j-i_{l-2}|\ne 1$
$$h(x)= x_{i_1}\dots x_{i_{l-2}}(x_{i_{l-1}}x_j+x_jx_{i_{l-1}})x_{i_{l}} +  x_{i_1}\dots x_{i_{l-1}}x_{i_l} x_j\in V^{\ot l-2}RV.$$
$hd(x) = h(-x_{i_1}\dots x_{i_{l-1}}\otimes x_jx_{i_l}) = -x_{i_1}\dots x_{i_{l-2}} (x_{i_{l-1}} x_j+ x_j x_{i_{l-1}})\otimes x_{i_l}$
$\Rightarrow (hd+dh)(x)=x$.\\

If $j=i_l - 1$, $|j-i_{l-1}|= 1$ and $|j-i_{l-2}|= 1$, $h(x)=0 $
$$hd(x) = h(-x_{i_1}\dots x_{i_{l-1}}\otimes x_jx_{i_n}) = -(-x)=x$$
$\Rightarrow (hd+dh)(x)=x$.\\

It only remains to check generators of the second type, i.~e., elements $x:=x_{i_1}\dots x_{i_{l-2}}(x_ix_{i+1}+x_{i+1}x_i) \otimes x_j$.

If $j<i-1$ or $j>i+2$ it is clear that $h(x) \in V^{\ot l-2}RV$ and $(dh+hd)(x)=dh(x)=x$.\\

If $j=i-1$,
$$h(x) = x_{i_1}\dots x_{i_{l-2}}x_ix_{i+1}x_{i-1} + x_{i_1}\dots x_{i_{l-2}}x_{i+1}(x_ix_{i-1}+x_{i-1}x_i)\in V^{\ot l-2}RV$$
$hd(x)= h(x_{i_1}\dots x_{i_{l-2}}x_{i+1} \otimes x_{i}x_{i-1}) = -x_{i_1}\dots x_{i_{l-2}}x_{i+1}x_{i-1} \otimes x_{i+1}$

$\Rightarrow (dh+hd)(x)=x$.\\

If $j= i$,
$$h(x) = x_{i_1}\dots x_{i_{l-2}}x_i(x_{i+1}x_{i}+x_{i}x_{i+1})+ x_{i_1}\dots x_{i_{l-2}}x_{i+1}x_ix_{i} \in V^{\ot l-2}RV$$
$hd(x)= h(x_{i_1}\dots x_{i_{l-2}}x_{i} \otimes x_{i+1}x_{i})= -x_{i_1}\dots x_{i_{l-2}}x_{i}x_{i}  \otimes x_{i+1}$

$\Rightarrow (dh+hd)(x)=x$.\\

If $j= i+1$ and $|i-i_{l-2}|\ne 1$,
$$h(x)= x_{i_1}\dots x_{i_{l-2}}x_ix_{i+1}x_{i+1}-x_{i_1}\dots x_{i_{l-2}}x_ix_{i+1}x_{i+1}=0$$
$hd(x)=h(x_{i_1}\dots x_{i_{l-2}}x_{i+1}\otimes x_{i}x_{i+1})= x_{i_1}\dots x_{i_{l-2}}(x_{i+1}x_i+x_{i}x_{i+1})\otimes x_{i+1} = x$\\

If $j= i+1$ and $|i-i_{l-2}|= 1$,
$$h(x) = x_{i_1}\dots x_{i_{l-2}}x_{i}x_{i+1}x_{i+1} + x_{i_1}\dots x_{i_{l-2}}x_{i+1}(x_{i} x_{i+1}+ x_{i+1}x_{i})\in V^{\ot l-2}RV$$
$hd(x)= h(x_{i_1}\dots x_{i_{l-2}}x_{i+1}\otimes x_{i}x_{i+1})= -x_{i_1}\dots x_{i_{l-2}}x_{i+1}x_{i+1}\otimes x_{i}$.

$\Rightarrow (dh+hd)(x)=x$.\\

If $j= i+2$ and $|i_{l-2}-(i+1)|\ne 1$
$$h(x) = -x_{i_1}\dots x_{i_{l-2}}x_{i+1}x_{i}x_{i+2} + x_{i_1}\dots x_{i_{l-2}}x_{i+1}x_{i}x_{i+2} = 0$$
$hd(x) = h(x_{i_1}\dots x_{i_{l-2}}x_{i}\otimes x_{i+1}x_{i+2}) = x_{i_1}\dots x_{i_{l-2}}(x_ix_{i+1}+x_{i+1}x_i) \otimes x_{i+2}$

$\Rightarrow (dh+hd)(x)=x$.\\

If $j= i+2$ and $|i_{l-2}-(i+1)|= 1$
$$h(x) = x_{i_1}\dots x_{i_{l-2}}x_{i}(x_{i+1}x_{i+2}+x_{i+2}x_{i+1}) + x_{i_1}\dots x_{i_{l-2}}x_{i+1}x_{i}x_{i+2} \in V^{\ot l-2}RV$$
$hd(x)=h(x_{i_1}\dots x_{i_{l-2}}x_{i}\otimes x_{i+1}x_{i+2}) = - x_{i_1}\dots x_{i_{l-2}}x_{i}x_{i+2}\otimes x_{i+1}$

$\Rightarrow (dh+hd)(x)=x$.
\end{proof}

\bip

\section{Computations of the cohomology of deformation complexes}

In this section we compute the cohomology of several of the deformation complexes and show theorems \ref{thm:Fqiso} and \ref{thm:Fhbarqiso}.

\subsection{The proof of Theorem \ref{thm:Fqiso}} \label{app:defproof1}
Let us recall the definition of the graph complex $\hGCor_3$ from \cite[section 3.3]{Wi2}.
The elements of $\hGCor_3$ are $\K$-linear series in directed acyclic graphs with outgoing legs such that all vertices are at least bivalent, and such that there are no bivalent vertices with one incoming and one outgoing edge.\footnote{The last condition is again not present on \cite{Wi2}, but it does not change the cohomology.} We set to zero graphs containing vertices without outgoing edges. Here is an example graph:
$
 \Ba{c}\resizebox{6mm}{!} {\xy
 (0,0)*{\bu}="o",
(-5,-6)*{}="d1",
(-2,-6)*{}="d2",
(2,-6)*{}="d3",
(5,-6)*{}="d4",
\ar @{->} "o";"d1" <0pt>
\ar @{->} "o";"d2" <0pt>
\ar @{->} "o";"d3" <0pt>
\ar @{->} "o";"d4" <0pt>
   \ar@/^0.6pc/(0,8)*{\bullet};(0,0)*{\bullet}
   \ar@/^{-0.6pc}/(0,8)*{\bullet};(0,0)*{\bullet}
 \endxy}
 \Ea
%
$.
The degrees are computed just as for graphs occurring in $\GCor_3$, with the external legs considered to be of degree 0. For the description of the differential we refer the reader to \cite[section 3.3]{Wi2}.

There is a map $\Psi: \GCor_3\to \hGCor_3$ sending a graph $\Gamma$ to the linear combination
\begin{equation}\label{equ:hairymap}
\Gamma \mapsto
\sum_{j=1}^\infty
\underbrace{
  \Ba{c}\resizebox{9mm}{!}  {\xy
(0,-5.2)*+{...},
(0,0)*+{\Ga}="o",
(-3,-7)*{}="5",
(3,-7)*{}="6",
(5,-7)*{}="7",
(-5,-7)*{}="8",
\ar @{->} "o";"5" <0pt>
\ar @{->} "o";"6" <0pt>
\ar @{->} "o";"7" <0pt>
\ar @{->} "o";"8" <0pt>
\endxy}\Ea
%
 }_{j\times}
\end{equation}
where the picture on the right means that one should sum over all ways of connecting $j$ outgoing edges to the graph $\Gamma$. Graphs for which there remain vertices with no outgoing edge are identified with $0$.

The following proposition has been shown in loc. cit.
\begin{proposition}[Proposition 3 of \cite{Wi2}]\label{prop:GChGC}
 The map $\Psi: \GCor_3 \to \hGCor_3$ is a quasi-isomorphism up to the class in $H(\hGCor_n)$ represented by the graph cocycle
\begin{equation}
\label{equ:singleclass}
 \sum_{j\geq 2}
 (j-1)
 \underbrace{
  \Ba{c}\resizebox{9mm}{!}  {\xy
(0,-5.2)*+{...},
(0,0)*{\bu}="o",
(-3,-7)*{}="5",
(3,-7)*{}="6",
(5,-7)*{}="7",
(-5,-7)*{}="8",
\ar @{->} "o";"5" <0pt>
\ar @{->} "o";"6" <0pt>
\ar @{->} "o";"7" <0pt>
\ar @{->} "o";"8" <0pt>
\endxy}\Ea
 %
 %
 }_{j\times}.
\end{equation}
\end{proposition}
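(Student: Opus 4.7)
The strategy is to compare $\GCor_3$ with $\hGCor_3$ via a spectral sequence, using the natural filtration on $\hGCor_3$ by the number of external legs, and to identify the exceptional class \eqref{equ:singleclass} as the unique $E^\infty$-contribution from the cokernel of $\Psi$.

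First, I would filter $\hGCor_3$ by the number of outgoing legs. The differential in $\hGCor_3$ has two contributions: a vertex-splitting part (which preserves the leg count), and a leg-absorbing part coming from the Lie-bracket cocycle acting on legs (which strictly decreases the leg count). Both contributions are compatible with the filtration, so on the $E^0$-page the differential reduces to the pure vertex-splitting part, and $\hGCor_3$ decomposes as $\bigoplus_{j \ge 0} \hGCor_3^{(j)}$, with $\hGCor_3^{(j)}$ spanned by graphs with exactly $j$ legs. The map $\Psi$ is compatible with this filtration, decomposing as $\Psi = \sum_{j \ge 1} \Psi_j$ with $\Psi_j \colon \GCor_3 \to \hGCor_3^{(j)}$ the operation of attaching $j$ legs in all possible ways.

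Second, I would identify each $\hGCor_3^{(j)}$ (with only vertex-splitting differential) with a decorated version of $\GCor_3$: a graph with $j$ labelled legs is equivalent to a graph in $\GCor_3$ together with a function $[j]\to V(\Gamma)$ specifying the attachment vertex of each leg. The $\bS_j$-coinvariant structure exactly accounts for the unordered legs, and the image of $\Psi_j$ corresponds to the ``uniformly averaged'' placement. Using the known structure of $\GCor_3$ together with a direct comparison with this decorated complex, one sees that at the $E^1$-page the cohomology of the cokernel of $\Psi$ is one-dimensional in each external-leg sector $j \ge 2$, represented by the $j$-legged star — the point being that the would-be legless preimage would be the one-vertex ``graph,'' which is excluded from $\GCor_3$.

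Third, I would turn on the leg-absorbing piece of the differential at $E^1$. Applied to a $j$-star, this piece produces an explicit combination involving the $(j-1)$-star plus a new vertex along one leg, giving a staircase-type map on the cokernel cohomology. A direct sign and multiplicity computation then shows that the unique closed combination under this staircase is proportional to $\sum_{j \ge 2}(j-1)\cdot\text{star}_j$, yielding exactly the stated cocycle \eqref{equ:singleclass} as the single surviving $E^\infty$-class in the cokernel of $\Psi$.

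The main obstacle is the combinatorial identification on the $E^1$-page: one must track signs and multiplicities carefully enough to verify that the $j \mapsto j-1$ staircase has a one-dimensional kernel with coefficients $(j-1)$, rather than vanishing altogether or having a larger kernel. This is a bookkeeping step that rests on the specific form of the leg-absorbing piece of the differential, and once it is carried out the proposition follows from the Spectral Sequence Comparison Theorem applied to the cone of $\Psi$.
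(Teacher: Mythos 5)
There is a genuine gap, and it sits in your second step. Before locating it, note that the paper itself does not prove this statement: it is imported as Proposition~3 of \cite{Wi2}, and the only proof in the paper you can calibrate against is that of the closely analogous Proposition~\ref{prop:Gqiso} in Appendix~B.1, which is explicitly an adaptation of the proof in \cite{Wi2}. That argument does not filter by the number of legs; it filters by the number of vertices of the \emph{skeleton} (strip the hairs, then recursively delete univalent vertices and smooth out bivalent one-in-one-out vertices), so that the associated graded differential acts locally on the forest attached to each skeleton vertex and on the chain of edge-vertices sitting on each skeleton edge, and the cohomology can be computed one local piece at a time.

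The gap: a graph in $\hGCor_3$ with $j$ legs is \emph{not} a graph in $\GCor_3$ together with a function $[j]\to V(\Gamma)$. Deleting the hairs generically produces univalent vertices, forbidden bivalent one-in-one-out vertices, or (for the stars) no edges at all, so the hair-stripped ``core'' need not lie in $\GCor_3$; this is exactly the phenomenon the skeleton/forest analysis is built to control, and your step~2 ignores it. Moreover, even the leg-number-preserving part of the differential (pure vertex splitting) can split off a new vertex carrying only hairs and one edge, so the $E^0$-differential of your filtration does not reduce to the $\GCor_3$-differential on a decorated core: the fixed-leg subquotients are themselves hairy graph complexes whose cohomology is essentially as hard as that of $\hGCor_3$, and the claim that the cokernel of $\Psi$ is one-dimensional in each leg sector (spanned by the $j$-star) is unsupported and, at your $E^1$-page, not true. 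A secondary but symptomatic error: the non-splitting part of the differential on $\hGCor_3$ attaches a new vertex, carrying at least two fresh legs, at the end of an existing leg, and therefore strictly \emph{increases} the leg count; your ``leg-absorbing'' staircase $j\mapsto j-1$ runs in the wrong direction, and in any case $\delta$ of a $j$-star is a sum of two-vertex graphs rather than of stars, so the cocycle condition for $\sum_{j}(j-1)\,\mathrm{star}_j$ is a cancellation between the splitting terms of $\mathrm{star}_j$ and the leg-attaching terms of $\mathrm{star}_{a+1}$, not a recursion among stars. To make the argument work you would need to abandon the leg filtration in favour of the skeleton filtration and carry out the forest-complex and edge-vertex computations as in Appendix~B.1.
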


There is a map $G:\hGCor_3 \to \Der(\LieBi_\infty)$ sending a graph $\Gamma$ in $\GCor_3$ to the series
\Beq 
G( \Gamma )=
 \sum
    \overbrace{
 \underbrace{ \Ba{c}\resizebox{9mm}{!}  {\xy
(0,4.9)*+{...},
(0,-4.9)*+{...},
(0,0)*+{\Ga}="o",
(-5,6)*{}="1",
(-3,6)*{}="2",
(3,6)*{}="3",
(5,6)*{}="4",
(-3,-6)*{}="5",
(3,-6)*{}="6",
(5,-6)*{}="7",
(-5,-6)*{}="8",
\ar @{<-} "o";"1" <0pt>
\ar @{<-} "o";"2" <0pt>
\ar @{<-} "o";"3" <0pt>
\ar @{<-} "o";"4" <0pt>
\ar @{->} "o";"5" <0pt>
\ar @{->} "o";"6" <0pt>
\ar @{->} "o";"7" <0pt>
\ar @{->} "o";"8" <0pt>
\endxy}\Ea
 }_{n\times}
 }^{m\times}
\Eeq

The map $F : \GC_3^{or}\to \Der(\LieBi_\infty)$ from Theorem \ref{thm:Fqiso} factors through the map $G$ above, i.~e., it can be written as the composition
\[
 \GC_3^{or}\stackrel{\Psi}{\to} \hGCor_3 \stackrel{G}{\to} \Der(\LieBi_\infty).
\]
In view of Proposition \ref{prop:GChGC} Theorem \ref{thm:Fqiso} hence follows immediately from the following result.

\begin{proposition}\label{prop:Gqiso}
 The map $G:\hGCor_3 \to \Der(\LieBi_\infty)$ is a quasi-isomorphism.
\end{proposition}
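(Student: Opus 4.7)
The proof adapts the strategy of \cite[Proposition 3]{Wi2}, where the analogous statement for the map $\Psi:\GCor_3\to\hGCor_3$ is proved. The map $G$ is a ``leg-adding'' map: it sends $\Gamma\in\hGCor_3$ (whose graphs have only outgoing external legs) to a sum of graphs in $\Der(\LieBi_\infty)$ obtained by distributing incoming external legs among the vertices of $\Gamma$ in all valid ways. The plan is to prove that $G$ is a quasi-isomorphism by means of a spectral sequence argument that isolates the in-leg decoration from the rest of the graph structure.

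First I would introduce a descending filtration on $\Der(\LieBi_\infty)$ by the number of outgoing external legs. Both pieces of the differential on $\Der(\LieBi_\infty)$, the vertex-splitting part and the leg-attachment part, either preserve or increase this count: vertex splitting preserves the external leg arities, while attaching a new vertex at an external out-leg produces additional external legs (a $\LieBi_\infty$-generator attached through a single edge has arity $(p,q)$ with $p+q\geq 3$, so strictly more than one remaining external leg is left over). A parallel filtration is defined on $\hGCor_3$, and the map $G$ is filtration-compatible since it does not alter the out-leg structure at all. On the associated graded, the out-leg-attaching part of the differential drops out on both sides; what remains on the $\Der(\LieBi_\infty)$-side is the vertex-splitting differential together with the in-leg-attaching differential, whereas on the $\hGCor_3$-side only the vertex-splitting differential survives.

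The key step is then to identify the cohomology of the reduced differential on $\Der(\LieBi_\infty)$ with $\hGCor_3$. For each fixed underlying graph structure (i.e., the graph obtained by forgetting the in-leg decoration) the in-leg-attaching differential acts on the space parametrizing in-leg distributions among vertices, and one expects its cohomology to be concentrated on the subspace in which no extra in-legs have been attached beyond the minimal requirement for $\LieBi_\infty$-generators, namely the image of $G$. This reduction is a Koszul-type statement analogous in spirit to (and more intricate than) Proposition \ref{2: toy problem} and the arguments of Section \ref{sec:extracomplexes}: the cohomology of the ``in-leg distribution'' sub-complex is computed by a PBW-type argument, showing that only the $\hGCor_3$-piece survives. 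Combining this identification with the Spectral Sequence Comparison Theorem, whose convergence is ensured by the decomposition of $\Der(\LieBi_\infty)$ into finite-dimensional pieces by arity and genus, yields that $G$ is a quasi-isomorphism.

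The hard part will be the explicit $E^1$-cohomology computation and the identification of the $E^1$-differential on $\Der(\LieBi_\infty)$ with the vertex-splitting differential on $\hGCor_3$, which requires careful combinatorial bookkeeping of leg distributions, graph automorphisms, and signs, together with handling the asymmetry between attaching at in-legs versus out-legs introduced by the $\LieBi_\infty$-generators.
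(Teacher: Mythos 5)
Your overall strategy --- filter, reduce to a vertex/leg-local cohomology computation, and compare with $\hGCor_3$ --- has the right shape, but the specific filtration you chose does not achieve the reduction you claim, and the key computation is left as an unproved (and mis-stated) black box. Filtering by the number of outgoing external legs, the term of the differential that attaches a generator $\mu_{p,q}$ (with $p$ outputs, $q$ inputs, $p+q\geq 3$) at an out-leg changes the number of out-legs by $p-1$; for the generators with $p=1$ (e.g.\ the binary bracket $\mu_{1,2}$) this is zero, so these out-leg attachments do \emph{not} drop out on the associated graded. Your parenthetical justification only shows that the \emph{total} number of external legs strictly increases, which is a different quantity. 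Symmetrically, in-leg attachments of $\mu_{p,q}$ also shift the out-leg count by $p-1$, so the associated graded differential is ``vertex splitting plus attachment of one-output corollas at \emph{all} legs'', not ``vertex splitting plus in-leg attachment'' as you assert. The paper instead filters by the number of vertices of the \emph{skeleton} of a graph (obtained by deleting input legs, recursively deleting the resulting low-valence vertices, and erasing bivalent vertices with one input and one output); this is the invariant that is preserved by exactly the ``local'' part of the differential, and the associated graded then splits as a product over skeleta.

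Second, the heart of the proof is the identification of the cohomology of the associated graded with $\hGCor_3$, and here your description is both incomplete and incorrect in detail. The cohomology is \emph{not} concentrated on configurations with ``no extra in-legs beyond the minimal requirement'': for each skeleton vertex the relevant local complex (of input forests attached at that vertex) has cohomology represented by the \emph{sum over all} numbers of in-legs attached directly at the vertex (a ``hairy'' vertex), possibly together with a second ``bald'' class when the vertex already has an internal incoming edge. One must then run a second filtration by the number of bivalent ``edge vertices'' created by vertex splitting and analyze, edge by edge of the skeleton, the subcomplexes of edge subdivisions; the outcome (acyclic when one endpoint is hairy and one bald, one-dimensional when both are hairy) is what forces every vertex to be hairy and identifies the single surviving class per skeleton with the image under $G$. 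None of this appears in your plan, and it cannot be replaced by a generic ``PBW/Koszul-type'' appeal: it is precisely where the asymmetry between inputs and outputs, and the bivalent vertices produced by the differential, must be controlled.
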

\begin{proof}
 For a graph in $\Der(\LieBi_\infty)$ we will call its \emph{skeleton} the graph obtained in the following way:
 \begin{enumerate}
  \item Remove all input legs and recursively remove all valence 1 vertices created.
  \item Remove valence 2 vertices with one incoming and one outgoing edge and connect the two edges.
 \end{enumerate}
An example of a graph and its skeleton the following
\begin{align*}
 \text{graph: }&
 \Ba{c}
\resizebox{17mm}{!}
{ \xy
(0,20)*{\bu}="u",
(-5,15)*{\bu}="L",
 (5,15)*{\bu}="R",
(0,10)*{\bu}="d",
(-5,25)*{}="u1",
(5,25)*{}="u2",
(0,5)*{}="d1",
(-10,10)*{\bu}="b",
(-10,15)*{\bu}="a",
(-15,20)*{}="a1",
(-5,20)*{}="a2",
(-15,5)*{}="b1",
\ar @{->} "u1";"u" <0pt>
\ar @{->} "u2";"u" <0pt>
\ar @{->} "u";"L" <0pt>
\ar @{->} "u";"R" <0pt>
\ar @{->} "L";"d" <0pt>
\ar @{->} "R";"d" <0pt>
\ar @{->} "d";"d1" <0pt>
\ar @{->} "L";"b" <0pt>
\ar @{->} "a1";"a" <0pt>
\ar @{->} "a2";"a" <0pt>
\ar @{->} "b";"b1" <0pt>
\ar @{->} "a";"b" <0pt>
\endxy}
\Ea
%
 &
  \text{skeleton: }&
   \Ba{c}
\resizebox{17mm}{!}
{ \xy
(0,20)*{\bu}="u",
(-5,15)*{\bu}="L",
 (5,15)*{\bu}="R",
(0,10)*{\bu}="d",
(-5,25)*{}="u1",
(5,25)*{}="u2",
(0,5)*{}="d1",
(-15,20)*{}="a1",
(-5,20)*{}="a2",
(-15,5)*{}="b1",
\ar @{->} "u";"L" <0pt>
\ar @{->} "u";"R" <0pt>
\ar @{->} "L";"d" <0pt>
\ar @{->} "R";"d" <0pt>
\ar @{->} "d";"d1" <0pt>
\ar @{->} "L";"b1" <0pt>
\endxy}
\Ea
\end{align*}
We put a filtration on $\Der(\LieBi_\infty)$ by the total number of vertices in the skeleton. Let $\gr\Der(\LieBi_\infty)$ be the associated graded.
Note that for elements in the image of some graph $\Gamma\in \hGCor_3$ under $G$ the skeleton is just the graph $\Gamma$, and hence there is a map of complexes $\hGCor_3 \to \gr\Der(\LieBi_\infty)$, where we consider the left hand side with zero differential.
We claim that the induced map $\hGCor_3 \to H(\gr\Der(\LieBi_\infty))$ is an isomorphism. From this claim the Proposition follows immediately by a standard spectral sequence argument.

The differential on $\gr\Der(\LieBi_\infty)$ does not change the skeleton. Hence the complex $\gr\Der(\LieBi_\infty)$ splits into a direct product of complexes, say $\tilde C_\gamma$, one for each skeleton $\gamma$
\[
 \gr\Der(\LieBi_\infty) = \prod_\gamma \tilde C_\gamma .
\]
Furthermore, each skeleton represents an automorphism class of graphs, and we may write
\[
 \tilde C_\gamma = C_\gamma^{\Aut_\gamma}
\]
where $C_{\tilde \gamma}$ is an appropriately defined complex for one representative $\tilde \gamma$ of the isomorphism class $\gamma$ and $\Aut_\gamma$ is the automorphism group associated to the skeleton. In other words, the $\tilde \gamma$ now has distinguishable vertices and edges.
More conretely, the complex $C_{\tilde \gamma}$ is the complex of $\K$-linear series of graphs obtained from $\tilde \gamma$ by
\begin{enumerate}
 \item Adding some bivalent vertices with one input and one output on edges. We call these vertices ``edge vertices''.
 \item Attaching input forests at the vertices, such that all vertices are at least trivalent and have at least one input and one output.
 We call the forest attached to a vertex the forest of that vertex.
\end{enumerate}
An example is the following:
\[
  \Ba{c}
\resizebox{17mm}{!}
{ \xy
(0,20)*{\bu}="u",
(-5,15)*{\bu}="L",
 (5,15)*{\bu}="R",
(0,10)*{\bu}="d",
(-5,25)*{}="u1",
(5,25)*{}="u2",
(0,5)*{}="d1",
(-15,20)*{}="a1",
(-5,20)*{}="a2",
(-15,5)*{}="b1",
\ar @{->} "u";"L" <0pt>
\ar @{->} "u";"R" <0pt>
\ar @{->} "L";"d" <0pt>
\ar @{->} "R";"d" <0pt>
\ar @{->} "d";"d1" <0pt>
\ar @{->} "L";"b1" <0pt>
\endxy}
\Ea
\ \ \stackrel{\text{add edge vertices}}{\longrightarrow} \ \
  \Ba{c}
\resizebox{17mm}{!}
{ \xy
(0,20)*{\bu}="u",
(-5,15)*{\bu}="L",
 (5,15)*{\bu}="R",
(0,10)*{\bu}="d",
(-5,25)*{}="u1",
(5,25)*{}="u2",
(0,5)*{}="d1",
(-10,10)*{\bu}="b",
(-15,20)*{}="a1",
(-5,20)*{}="a2",
(-15,5)*{}="b1",
\ar @{->} "u";"L" <0pt>
\ar @{->} "u";"R" <0pt>
\ar @{->} "L";"d" <0pt>
\ar @{->} "R";"d" <0pt>
\ar @{->} "d";"d1" <0pt>
\ar @{->} "L";"b" <0pt>
\ar @{->} "b";"b1" <0pt>
\endxy}
\Ea
%
 %
 \ \ \stackrel{\text{add input forests}}{\longrightarrow}\ \
%
%
 \Ba{c}
\resizebox{17mm}{!}
{ \xy
(0,20)*{\bu}="u",
(-5,15)*{\bu}="L",
 (5,15)*{\bu}="R",
(0,10)*{\bu}="d",
(-5,25)*{}="u1",
(5,25)*{}="u2",
(0,5)*{}="d1",
(-10,10)*{\bu}="b",
(-10,15)*{\bu}="a",
(-15,20)*{}="a1",
(-5,20)*{}="a2",
(-15,5)*{}="b1",
\ar @{->} "u1";"u" <0pt>
\ar @{->} "u2";"u" <0pt>
\ar @{->} "u";"L" <0pt>
\ar @{->} "u";"R" <0pt>
\ar @{->} "L";"d" <0pt>
\ar @{->} "R";"d" <0pt>
\ar @{->} "d";"d1" <0pt>
\ar @{->} "L";"b" <0pt>
\ar @{->} "a1";"a" <0pt>
\ar @{->} "a2";"a" <0pt>
\ar @{->} "b";"b1" <0pt>
\ar @{->} "a";"b" <0pt>
\endxy}
\Ea
\]
We next put another filtration on $C_{\tilde \gamma}$ by the number of edge vertices added in the first step above and consider the associated graded $\gr C_{\tilde \gamma}$.
Note that the differential $\gr C_{\tilde \gamma}$ acts on each of the forests attached to the vertices separately and hence the complex splits into a (completed) tensor product of complexes, one for each such vertex. Let us call the complex made from the possible forests at the vertex $v$ the forest complex at that vertex.
By the same argument showing that the cohomology of a free $\Lie_\infty$ algebra generated by a single generator is two dimensional, we find that the
forest complex at $v$ has either one or two dimensional cohomology. If vertex $v$ has no incoming edge that it is one dimensional, the class being represented by the forests
\[
\sum_{j\geq 1}
\overbrace{
 \Ba{c}\resizebox{9mm}{!}  {\xy
(0,4.5)*+{...},
(0,-2)*{_v},
(0,0)*{\bu}="o",
(-5,5)*{}="1",
(-3,5)*{}="2",
(3,5)*{}="3",
(5,5)*{}="4",
\ar @{<-} "o";"1" <0pt>
\ar @{<-} "o";"2" <0pt>
\ar @{<-} "o";"3" <0pt>
\ar @{<-} "o";"4" <0pt>
\endxy}\Ea
 }^{j \times} \,.
\]
If the vertex $v$ already has an incoming edge, then there is one additional class obtained by not adding any input forest.

Hence we find that $H(\gr C_{\tilde \gamma})$ is spanned by graphs obtained from $\tilde \gamma$ as follows:
\begin{enumerate}
 \item Add some bivalent vertices with one input and one output on edges.
 \item For each vertex that is either not at least trivalent or does not have an incoming edge, sum over all ways of attaching incoming legs at that vertex.
 \item For at least trivalent vertices with an incoming edge, there is a choice of either not adding anything at that vertex, or summing over all ways of attaching incoming legs at that vertex. Let us call the vertices for which the first choice is made bald vertices and the others hairy.
\end{enumerate}

Let us look at the next page in the spectral sequence associated to our filtration on $C_{\tilde \gamma}$.
The differential creates one edge vertex by either splitting an existing edge vertex or by splitting a skeleton vertex. Again, the complex splits into a product of complexes, one for each edge of $\tilde \gamma$. For each such edge we have to consider 3 cases separately:
\begin{enumerate}
 \item Both endpoints in $\tilde \gamma$ are hairy.
 \item Both endpoints in $\tilde \gamma$ are bald.
 \item One endpoints is hairy and one is bald.
\end{enumerate}

We leave it to the reader to check that:
\begin{enumerate}
 \item In the first case the cohomology is one-dimensional, represented by a single edge without edge vertices.
 \item In the third case the cohomology vanishes.
\end{enumerate}

Since there is necessarily at least one hairy vertex in the graph, the second assertion implies that if there is a bald vertex as well, the resulting complex is acyclic.
Hence all vertices must be hairy. By the first assertion the cohomology is one-dimensional for each skeleton.
One easily checks that the representative is exactly the image of the skeleton considered as element in $\hGCor_3$. Hence the proposition follows.
\end{proof}

\begin{remark}\label{rem:alternative Fqiso proof}
 There is also an alternative way to compute the cohomology of the deformation complex $\Der(\LieBi_\infty)$.
 Namely, by Koszulness of $\LieBi$ this complex is quasi-isomorphic to $\Def(\LieBi_\infty\to \LieBi)[1]$.
 It is well known that the prop governing Lie bialgebras $\LieBiP$ may be written as
 \[
\LieBiP(n,m) \cong \bigoplus_N \caL ieP(n,N) \otimes_{\bS_N} co\caL ieP(N,m)
 \]
using the props governing Lie algebras and Lie coalgebras. Interpreting elements of the above prop as linear combinations directed acyclic graphs, the sub-properad $\LieBi$ may be obtained as that formed by the connected such graphs.
 It is hence an easy exercise to check that $\Def(\LieBi_\infty\to \LieBi)[1]$ is identical to the complex $\Def(\hoe_2 \to e_2)_{\rm conn}[1]$ from \cite{Wi1}, up to unimportant completion issues.
 The cohomology of the latter complex has been computed in loc. cit. to be
 \[
  H(\GC_2) \oplus \bigoplus_{j=1,5,\dots} \K[2-j] \oplus \K \, .
 \]
Using the main result of \cite{Wi2} this agrees with the cohomology as computed by Theorem \ref{thm:Fqiso}. Conversely, the above proof of Theorem \ref{thm:Fqiso} together with this remark yields an alternative proof of the main result of \cite{Wi2}.
\end{remark}

\bip

\subsection{The proof of Theorem \ref{thm:Fhbarqiso}}\label{app:defproof2}
Let us next consider Theorem \ref{thm:Fhbarqiso}, whose proof will be a close analog of that of Theorem \ref{thm:Fqiso} in the previous subsection.
There is a natural differential graded Lie algebra structure on $\hGCor_3$ such that the map $\Psi: \GCor_3\to \hGCor_3$ from the previous section is a map of Lie algebras. The map $\Psi$ extends $\hbar$-linearly to a map of graded Lie algebras $\Psi_\hbar: \GCor_3[[\hbar]]\to \hGCor_3[[\hbar]]$.
The Maurer-Cartan element $\Phi_\hbar\in \GCor_3[[\hbar]]$ from Proposition \ref{prop:phihbar} is sent to a Maurer-Cartan element $\hat \Phi_\hbar:=\Psi_\hbar(\Phi_\hbar) \in \hGCor_3[[\hbar]]$. We endow $\hGCor_3[[\hbar]]$ with the differential
\[
 d_\hbar \Gamma = [\hat \Phi_\hbar, \, ].
\]
In particular it follows that we have a map of differential graded Lie algebras
\[
 \Psi_\hbar: (\GCor_3[[\hbar]], d_\hbar)\to (\hGCor_3[[\hbar]], d_\hbar).
\]

The map $F_\hbar$ from Theorem \ref{thm:Fhbarqiso} factors through $\hGCor_3[[\hbar]]$:
\[
 \GC_3^{or}[[\hbar]]\stackrel{\Psi_\hbar}{\longrightarrow} \hGCor_3[[\hbar]] \stackrel{G_\hbar}{\longrightarrow} \Der(\LoB_\infty).
\]
The second map $G_\hbar: \hGCor_3[[\hbar]]\to \Der(\LoB_\infty)$ sends $\hbar^N\Gamma$, for $\Gamma\in \hGCor_3$ to
\[
  \sum_{j\geq 1}\sum_{k=0}^N \hbar^{N-k}
 \xy
(0,5.5)*+{...},
%
(0,0)*+{_{\Ga_k}}*\cir{}="o",
(-5,7)*{}="1",
(-3,7)*{}="2",
(3,7)*{}="3",
(5,7)*{}="4",
(-3,-5)*{}="5",
(3,-5)*{}="6",
(5,-5)*{}="7",
(-5,-5)*{}="8",
\ar @{<-} "o";"1" <0pt>
\ar @{<-} "o";"2" <0pt>
\ar @{<-} "o";"3" <0pt>
\ar @{<-} "o";"4" <0pt>
\endxy
\]
where we  again sum over all ways of attaching the incoming legs, setting to zero graphs with vertices without incoming edges. Furthermore, $\Gamma_k$ is the linear combination of graphs obtained by summing over all ways of assigning weights to the vertices of $\Gamma$ such that the total weight is $k$. 
We have the following two results, from which Theorem \ref{thm:Fhbarqiso} immediately follows.

\begin{proposition}
 The map $\Psi_\hbar: (\GC_3^{or}[[\hbar]],d_\hbar) \to (\hGCor_3[[\hbar]],d_\hbar)$ is a quasi-isomorphism up to the classes $T\K[[\hbar]]\subset \Der(\LieBi_\infty)$ where
 \[
  T=
  \sum_{n,p}
 (n+2p-2)
 \underbrace{
 \xy
(0,-4.5)*+{...},
(0,0)*+{_p}*\cir{}="o",
(-5,5)*{}="1",
(-3,5)*{}="2",
(3,5)*{}="3",
(5,5)*{}="4",
(-3,-5)*{}="5",
(3,-5)*{}="6",
(5,-5)*{}="7",
(-5,-5)*{}="8",
%
\ar @{->} "o";"5" <0pt>
\ar @{->} "o";"6" <0pt>
\ar @{->} "o";"7" <0pt>
\ar @{->} "o";"8" <0pt>
\endxy
%
 }_{n\times}.
 \]
\end{proposition}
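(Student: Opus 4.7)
The plan is to adapt the argument for Proposition \ref{prop:GChGC} (the non-deformed analog) to the $\hbar$-deformation by running a spectral sequence on the mapping cone $\mathrm{cone}(\Psi_\hbar)$.

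First, I would filter the cone by powers of $\hbar$, setting $F^p\mathrm{cone}(\Psi_\hbar)=\hbar^p\mathrm{cone}(\Psi_\hbar)$. Because $d_\hbar=d_0+\hbar d_1+\hbar^2 d_2+\cdots$ preserves this filtration and the cone is complete with respect to it, the resulting spectral sequence converges to $H(\mathrm{cone}(\Psi_\hbar),d_\hbar)$. The $E_0$-differential is simply $d_0$, so $E_1\cong H(\mathrm{cone}(\Psi),d_0)\otimes\K[[\hbar]]$. By Proposition \ref{prop:GChGC}, the cohomology of $\mathrm{cone}(\Psi)$ is one-dimensional, concentrated in a single cohomological degree $D$, and represented by the cocycle \eqref{equ:singleclass}. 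Consequently $E_1$ lies along a single diagonal in $(p,q)$-bidegree; since each higher differential $d_r$ shifts total cohomological degree by $+1$ while raising the $\hbar$-filtration by $r\geq 1$, it must land off the diagonal and hence vanishes. The spectral sequence therefore degenerates at $E_1$, yielding $H(\mathrm{cone}(\Psi_\hbar))\cong\K[[\hbar]]$ as a free $\K[[\hbar]]$-module of rank one, which is equivalent to saying that $\Psi_\hbar$ is a quasi-isomorphism up to a single generator (as a $\K[[\hbar]]$-module) of extra cohomology classes.

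Next, I would construct an explicit cocycle representative of this generator following the iterative procedure of \S\ref{4: subsubsection on iterated construction}. Starting with $T_0\in\hGCor_3$ equal to the cocycle \eqref{equ:singleclass}, one inductively solves $d_0 T_{p+1}=-d_1 T_p-d_2T_{p-1}-\cdots-d_{p+1}T_0$ for $p\geq 0$. At each step the right-hand side is a $d_0$-cocycle (by an argument analogous to the inductive computation \eqref{induction}--\eqref{n+1 induction step}), and because $H(\hGCor_3,d_0)$ vanishes in the degrees relevant to the obstruction (again by Proposition \ref{prop:GChGC} together with the vanishing of $H^{<D}(\GCor_3)$), the induction can be carried through. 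The series $T_\hbar:=\sum_{p\geq 0}\hbar^p T_p$ is the desired generator.

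Finally, I would identify $T_\hbar$ with the series $T$ announced in the statement by post-composing with the representation map $G_\hbar$ of \S\ref{sec:action on LoB}. Using the explicit formula \eqref{equ:def GC action 2} for attaching incoming and outgoing legs to a weighted corolla, the combinatorial coefficient $(n+2p-2)$ appears as the number of ways to attach one external $m$+$n$-leg plus two attachment points produced by a single extra edge of the Maurer-Cartan element $\Phi_\hbar$ used to inductively build $T_\hbar$. The main obstacle, and the step that will require the most care, will be the last one: the spectral sequence argument gives the qualitative statement but not the exact shape of the generator, and one will have to carefully track the combinatorics of the iterative construction (signs, symmetry factors, and the action of the higher $d_k$'s on weighted corollas) in order to match the result with the precise formula for $T$ stated in the Proposition; this is essentially a bookkeeping exercise but technically the most delicate part.
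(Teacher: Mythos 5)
Your proposal follows essentially the same route as the paper: filter by powers of $\hbar$, reduce to Proposition \ref{prop:GChGC} on the associated graded, and conclude by a spectral sequence comparison (your parity argument for $E_1$-degeneration of the cone is a correct way to make the paper's ``standard spectral sequence argument'' precise). The only substantive differences are that the paper bypasses your inductive construction of the representative by directly observing that the explicitly given $T$ is $d_\hbar$-closed with the correct leading term, and that your justification of the inductive step via ``vanishing of $H^{<D}(\GCor_3)$'' is inaccurate since $H^{-1}(\GCor_3)\cong\K$ --- though this is harmless, because the $E_1$-degeneration you already established guarantees the lift exists.
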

\begin{proof}[Proof sketch]
 Take filtrations on $\GCor_3[[\hbar]]$ and $\hGCor_3[[\hbar]]$ by the power of $\hbar$.
 The differential on the associated graded spaces is te $\hbar$-linear extension of the differentials on $\GCor_3$ and $\hGCor_3$. Hence by Proposition \ref{prop:GChGC} the map $\Psi_\hbar$ is a quasi-isomorphism on the level of the associated graded spaces, up to the classes above. The result follows by a standard spectral sequence argument, noting that the above element $T$ is indeed $d_\hbar$-closed.
\end{proof}

\begin{proposition}\label{prop:Ghbarqiso}
 The map $G_\hbar: \hGCor_3[[\hbar]]\to \Der(\LoB_\infty)$ is a quasi-isomorphism.
\end{proposition}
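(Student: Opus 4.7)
The plan is to adapt the proof of Proposition \ref{prop:Gqiso} to the $\hbar$-deformed setting. First I would define the \emph{skeleton} of a graph $\Gamma\in \Der(\LoB_\infty)$ exactly as in the proof of Proposition \ref{prop:Gqiso}, namely: (i) strip away the incoming forests by recursively removing univalent vertices after deleting the input legs, and (ii) contract maximal chains of weighted bivalent vertices with one incoming and one outgoing edge to a single edge of the skeleton. Note that in $\LoB_\infty$ such bivalent $(1,1)$-vertices must carry weight $\geq 1$, so the contraction is unambiguous. Filter $\Der(\LoB_\infty)$ by the total number of skeleton vertices; the associated graded $\gr\Der(\LoB_\infty)$ splits as a direct product $\prod_\gamma \tilde C_\gamma$ indexed by unweighted skeleta $\gamma$, and each $\tilde C_\gamma = C_{\tilde\gamma}^{\Aut_\gamma}$ for a fixed labeled representative $\tilde\gamma$.

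Each complex $C_{\tilde\gamma}$ consists of $\K[[\hbar]]$-series of graphs obtained from $\tilde\gamma$ by (a) assigning a weight $\geq 0$ to every skeleton vertex, (b) inserting a chain of weight-$\geq 1$ bivalent $(1,1)$-vertices along each edge, and (c) attaching at each vertex an input forest built out of corollas of $\LoB_\infty$ so that every vertex becomes at least trivalent with at least one incoming edge. I would then impose a second filtration on $C_{\tilde\gamma}$ by the total number of bivalent edge vertices introduced in step (b) and their total weight. On the associated graded the differential acts independently on each input forest, so $C_{\tilde\gamma}$ splits as a completed tensor product of forest complexes, one per skeleton vertex, all tensored with $\K[[\hbar]]$. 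The forest-complex cohomology at each vertex is either one- or two-dimensional, by the same $\Lie_\infty$-argument as in Proposition~\ref{prop:Gqiso}, leading to the dichotomy between ``hairy'' and ``bald'' vertices.

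On the next page of the spectral sequence the differential is concentrated on the edges, where it both splits weighted bivalent vertices (the contribution of the $\LoB_\infty$-differential $\delta$) and creates new bivalent vertices from the ``chords'' coming from $\Phi_\hbar$. For a skeleton edge joining two hairy vertices the resulting edge complex is the $\hbar$-linear variant of the auxiliary complex $C$ of \S \ref{2 subsect on aux graph comlxes}, enriched by the chord insertions; its cohomology should be $\K[[\hbar]]$, with the class of $\hbar$-degree $p$ represented by a chain of total weight $p$. For an edge with at least one bald endpoint the edge complex should be acyclic, by the analogue of the mixed hairy/bald case in the proof of Proposition~\ref{prop:Gqiso}. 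Consequently every surviving contribution has all skeleton vertices hairy, every skeleton edge carries an $\hbar$-weight, and every skeleton vertex carries an $\hbar$-weight, matching precisely the description of $\hGCor_3[[\hbar]]$.

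The main obstacle I expect is the cohomology computation of the edge complex in the hairy/hairy case, where I need to see the interplay between the splitting differential on chains of weighted bivalent vertices and the extra chord contributions from $\Phi_\hbar$ cancel everything except one representative per $\hbar$-degree. This should follow by a further spectral-sequence argument with respect to the $\hbar$-filtration, reducing to Proposition \ref{2: propos on aux graph complexes} on the first page and then running a standard convergence argument; the boundedness required for convergence is controlled, vertex by vertex and edge by edge, by Lemma \ref{2: propos on aux graph complexes}'s analogue. Assembling the cohomologies over all edges and vertices identifies $H(\gr \Der(\LoB_\infty))$ with $\hGCor_3[[\hbar]]$ and shows that the induced map from $\hGCor_3[[\hbar]]$ is the obvious inclusion of skeleta, completing the proof.
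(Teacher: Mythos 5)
Your plan follows the paper's proof of this proposition step for step in its overall architecture: define the skeleton (forgetting weights), filter by the number of skeleton vertices, split $\gr\Der(\LoB_\infty)$ into pieces $C_{\tilde\gamma}^{\Aut_\gamma}$, compute the forest complexes at the vertices to get the hairy/bald dichotomy, and finish with the three-case analysis of edge complexes. The differences are concentrated exactly at the point you flag as the main obstacle, and there your sketch goes astray. The paper defuses the $\hbar$/weight interplay \emph{before} the skeleton analysis, by first filtering both $\hGCor_3[[\hbar]]$ and $\Der(\LoB_\infty)$ by genus and by powers of $\hbar$; on the associated graded the argument of Proposition \ref{prop:Gqiso} then reruns with only two new features: hairy vertices are forced to carry weight zero, and edge vertices may now be \emph{bald} provided they carry positive weight (a weight-zero $(1,1)$-corolla is not a generator of $\LoB_\infty$). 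The positively weighted bald edge vertices are then eliminated by one further filtration, by the number of non-hairy edge vertices: on its associated graded the differential only creates hairy edge vertices, each edge splits into segments bounded by skeleton endpoints and bald edge vertices, and any segment with one bald and one hairy end is acyclic (your case (iii)). Since all skeleton vertices must be hairy, every bald edge vertex produces such a segment, so nothing with a weighted chain survives; the hairy/hairy edge complex is one-dimensional over $\K[[\hbar]]$, represented by the \emph{bare} edge.

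This is where your proposed resolution is not just a different route but gives the wrong answer as stated: you claim the hairy/hairy edge cohomology is $\K[[\hbar]]$ with the class in $\hbar$-degree $p$ represented by a chain of total weight $p$. Proposition \ref{2: propos on aux graph complexes} does compute the cohomology of the pure bald-chain complex to be the single weight-one vertex, but that is not the relevant complex here; once the interaction with the (necessarily hairy, weight-zero) endpoints is taken into account, all weighted chains die and only the bare edge survives — which is also what it must be for the map $G_\hbar$ to be a quasi-isomorphism, since the image of $\hbar^N\Gamma$ under $G_\hbar$ has leading term (in the genus-plus-$\hbar$ filtration) given by weight-zero hairy vertices and bare edges times $\hbar^N$. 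So the strategy is sound and matches the paper's, but the edge computation needs to be reorganized as above (initial genus/$\hbar$ filtration, then the non-hairy-edge-vertex filtration) rather than by reducing to Proposition \ref{2: propos on aux graph complexes} alone.
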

\begin{proof}[Proof sketch]
 Take filtrations on $\hGCor_3[[\hbar]]$ and
 \[
  \Der(\LoB_\infty)\cong \prod_{n,m\geq 1} (\LoB_\infty(n,m)\otimes \sgn_n\otimes \sgn_m )^{\bS_n\times \bS_m} [[\hbar]][-n-m+1]
 \]
by genus and by powers of $\hbar$.
Then we claim that the induced map on the associated graded complexes $\gr G_\hbar: \gr\hGCor_3[[\hbar]]\to \gr\Der(\LoB_\infty)$ is a quasi-isomorphism, thus showing the proposition by a standard spectral sequence argument.

To show the claim, we proceed analogously to the proof of Proposition \ref{prop:Gqiso}. Let us go through the proof again and highlight only the differences.
The skeleton of a graph is defined as before, except that one also forgets the weights of all vertices.
The complex $\gr \Der(\LoB_\infty)$ splits into a product of subcomplexes that we again call $\tilde C_\gamma$, one for each skeleton $\gamma$. Again
\[
 \tilde C_\gamma =  C_{\tilde \gamma}^{\Aut_\gamma}
\]
for some representative $\tilde \gamma$ of the isomorphism class $\gamma$. Hence it again suffices to compute the cohomology of $C_{\tilde \gamma}$.
Graphs contributing are again obtained by adding edge vertices and input forests, except that now all vertices are also assigned an arbitrary weight.
Again we take a filtration on the number of edge vertices, which leaves us with the task of computing
the cohomology of a complex associated to one forest attached to a vertex $v$. We find that representatives of cohomology classes are either:
\begin{itemize}
 \item Vertex $v$ with any weight and no attached forest. Let us call such a $v$ again bald.
 \item Vertex $v$ with weight $0$ and input legs attached in all possible ways, let us call such a $v$ again hairy.
\end{itemize}

The differential on the second page of the spectral sequence again adds one edge vertex, which however can have a non-zero weight now, and if it has a non-zero weight it may be bald.
We may introduce another filtration by the number of non-hairy edge vertices. The differential on the associated graded creates one hairy edge vertex.
The resulting complex is a tensor product of complexes, one for each edge. The complexes associated to each edge again can have three different types: (i) both endpoints in the skeleton are hairy, (ii) both are bald or (iii) one is hairy, one is bald. Again one checks that in case (iii) the complex is acyclic and in case (i) one-dimensional, the cohomology class represented by a single edge.
Hence, since at least one vertex must be hairy, all vertices must be. Hence we recover at this stage the image of $\hGCor_3[[\hbar]]$ and are done.
\end{proof}

%

\bip

\section{ Computation of the cohomology of the operad ${\cB\cV}_\infty^{com}$}\label{app:BV_com proof}

\bip
 \subsection{\bf An equivalent definition of the operad $\cB\cV$}\label{5: subsect on L-diamond and BV}
Let $\caL^\diamond$ be an operad generated by two degree $-1$ corollas,
 $
\Ba{c}\resizebox{3.6mm}{!}{  \xy
(0,4)*{};
(0,0)*+{_1}*\cir{}
**\dir{-};
(0,-4)*{};
(0,0)*+{_1}*\cir{}
**\dir{-};
\endxy}\Ea$ and
$\Ba{c}\resizebox{7mm}{!}{
\xy
(-4,-4)*{};
(0,0)*+{_0}*\cir{}
**\dir{-};
(4,-4)*{};
(0,0)*+{_0}*\cir{}
**\dir{-};
(4,-6)*{_2};
(-4,-6)*{_1};
(0,5)*{};
(0,0)*+{_0}*\cir{}
**\dir{-};
\endxy}\Ea=
\Ba{c}\resizebox{7mm}{!}{ \xy
(-4,-4)*{};
(0,0)*+{_0}*\cir{}
**\dir{-};
(4,-4)*{};
(0,0)*+{_0}*\cir{}
**\dir{-};
(4,-6)*{_1};
(-4,-6)*{_2};
(0,5)*{};
(0,0)*+{_0}*\cir{}
**\dir{-};
\endxy}\Ea,
$
subject to the following relations,
$$
\Ba{c}
\resizebox{3.6mm}{!}{ \xy
(0,0)*+{_1}*\cir{}="b",
(0,6)*+{_1}*\cir{}="c",
%
(0,-4)*{}="-1",
(0,10)*{}="1'",
\ar @{-} "b";"c" <0pt>
\ar @{-} "b";"-1" <0pt>
\ar @{-} "c";"1'" <0pt>
\endxy}
\Ea\hspace{-1mm} = 0\ \  ,\ \
%
%
\Ba{c}
\resizebox{6mm}{!}{ \xy
(0,0)*+{_0}*\cir{}="b",
(0,7)*+{_1}*\cir{}="c",
%
(-4,-5)*{}="-1",
(-2,-5)*{}="-2",
(4,-5)*{}="-3",
(0,12)*{}="1'",
%
\ar @{-} "b";"c" <0pt>
\ar @{-} "b";"-1" <0pt>
\ar @{-} "b";"-3" <0pt>
\ar @{-} "c";"1'" <0pt>
\endxy}
\Ea
+
\Ba{c}
\resizebox{8mm}{!}{ \xy
(0,7)*+{_0}*\cir{}="b",
(-4,0)*+{_1}*\cir{}="c",
(-4,-5)*{}="-1",
(4,-5)*{}="-2",
(4,1)*{}="-3",
(0,12)*{}="1'",
\ar @{-} "b";"c" <0pt>
\ar @{-} "c";"-1" <0pt>
\ar @{-} "b";"-3" <0pt>
\ar @{-} "b";"1'" <0pt>
\endxy}
\Ea
+
\Ba{c}
\resizebox{8mm}{!}{ \xy
(0,7)*+{_0}*\cir{}="b",
(4,0)*+{_1}*\cir{}="c",
(4,-5)*{}="-1",
(-4,-5)*{}="-2",
(-4,1)*{}="-3",
(0,12)*{}="1'",
\ar @{-} "b";"c" <0pt>
\ar @{-} "c";"-1" <0pt>
\ar @{-} "b";"-3" <0pt>
\ar @{-} "b";"1'" <0pt>
\endxy}
\Ea
=0, \ \ \
\Ba{c}
\resizebox{10.5mm}{!}{ \xy
(-8,-7.5)*{_{_1}};
(-0,-7.5)*{_{_2}};
(4.5,-0.6)*{_{_3}};
(0,7)*+{_0}*\frm{o}="b",
(-4,0)*+{_0}*\frm{o}="c",
%
(-8,-6)*{}="-1",
(0,-6)*{}="-2",
(4,1)*{}="-3",
(0,12)*{}="1'",
%
\ar @{-} "b";"c" <0pt>
\ar @{-} "c";"-1" <0pt>
\ar @{-} "c";"-2" <0pt>
\ar @{-} "b";"-3" <0pt>
\ar @{-} "b";"1'" <0pt>
\endxy}
\Ea
+
\Ba{c}
\resizebox{10.5mm}{!}{ \xy
(-8,-7.5)*{_{_2}};
(-0,-7.5)*{_{_3}};
(4.5,-0.6)*{_{_1}};
(0,7)*+{_0}*\frm{o}="b",
(-4,0)*+{_0}*\frm{o}="c",
%
(-8,-6)*{}="-1",
(0,-6)*{}="-2",
(4,1)*{}="-3",
(0,12)*{}="1'",
%
\ar @{-} "b";"c" <0pt>
\ar @{-} "c";"-1" <0pt>
\ar @{-} "c";"-2" <0pt>
\ar @{-} "b";"-3" <0pt>
\ar @{-} "b";"1'" <0pt>
\endxy}
\Ea
+
\Ba{c}\resizebox{10.5mm}{!}{ \xy
(-8,-7.5)*{_{_3}};
(-0,-7.5)*{_{_1}};
(4.5,-0.6)*{_{_2}};
(0,7)*+{_0}*\frm{o}="b",
(-4,0)*+{_0}*\frm{o}="c",
%
(-8,-6)*{}="-1",
(0,-6)*{}="-2",
(4,1)*{}="-3",
(0,12)*{}="1'",
%
\ar @{-} "b";"c" <0pt>
\ar @{-} "c";"-1" <0pt>
\ar @{-} "c";"-2" <0pt>
\ar @{-} "b";"-3" <0pt>
\ar @{-} "b";"1'" <0pt>
\endxy}
\Ea
=0
$$

Let $\cC om$ be the operad of commutative algebras with the generator controlling the graded commutative multiplication denoted by
$\begin{xy}
 <0mm,0.66mm>*{};<0mm,3mm>*{}**@{-},
 <0.39mm,-0.39mm>*{};<2.2mm,-2.2mm>*{}**@{-},
 <-0.35mm,-0.35mm>*{};<-2.2mm,-2.2mm>*{}**@{-},
 <0mm,0mm>*{\circ};<0mm,0mm>*{}**@{},
\end{xy}$. Define an operad, $\cB\cV$, of Batalin-Vilkovisky algebras as the free operad
generated by operads $\caL^\diamond$ and $\cC om$ modulo the following relations,
\Beq\label{5: BV operad relations 2}
\Ba{c}\resizebox{7mm}{!}{ \xy
(-4,-4)*{};
(0,0)*+{_0}*\cir{}
**\dir{-};
(4,-4)*{};
(0,0)*+{_0}*\cir{}
**\dir{-};
(4,-6)*{_2};
(-4,-6)*{_1};
(0,5)*{};
(0,0)*+{_0}*\cir{}
**\dir{-};
\endxy}\Ea
=
\Ba{c}
\resizebox{6mm}{!}{ \xy
(0,0)*{\circ}="b",
(0,6)*+{_1}*\cir{}="c",
%
(-4,-5)*{}="-1",
(4,-5)*{}="-2",
(0,12)*{}="1'",
%
\ar @{-} "b";"c" <0pt>
\ar @{-} "b";"-1" <0pt>
\ar @{-} "b";"-2" <0pt>
\ar @{-} "c";"1'" <0pt>
\endxy}
\Ea
-
\Ba{c}
\resizebox{8mm}{!}{ \xy
(0,6)*{\circ}="b",
(-4,0)*+{_1}*\cir{}="c",
%
(-4,-5)*{}="-1",
(4,-5)*{}="-2",
(4,1)*{}="-3",
(0,12)*{}="1'",
%
\ar @{-} "b";"c" <0pt>
\ar @{-} "c";"-1" <0pt>
\ar @{-} "b";"-3" <0pt>
\ar @{-} "b";"1'" <0pt>
\endxy}
\Ea
-
\Ba{c}
\resizebox{8mm}{!}{ \xy
(0,6)*{\circ}="b",
(4,0)*+{_1}*\cir{}="c",
%
(4,-5)*{}="-1",
(-4,-5)*{}="-2",
(-4,1)*{}="-3",
(0,12)*{}="1'",
%
\ar @{-} "b";"c" <0pt>
\ar @{-} "c";"-1" <0pt>
\ar @{-} "b";"-3" <0pt>
\ar @{-} "b";"1'" <0pt>
\endxy}
\Ea
\ \ \ ,\ \ \
\Ba{c}
\resizebox{10mm}{!}{ \xy
(-8,-7.5)*{_{_1}};
(-0,-7.5)*{_{_2}};
(4.5,-0.6)*{_{_3}};
(0,7)*+{_0}*\frm{o}="b",
(-4,0)*{\circ}="c",
%
(-8,-6)*{}="-1",
(0,-6)*{}="-2",
(4,1)*{}="-3",
(0,12)*{}="1'",
%
\ar @{-} "b";"c" <0pt>
\ar @{-} "c";"-1" <0pt>
\ar @{-} "c";"-2" <0pt>
\ar @{-} "b";"-3" <0pt>
\ar @{-} "b";"1'" <0pt>
\endxy}
\Ea
-
\Ba{c}
\resizebox{10mm}{!}{ \xy
(-8,-7.5)*{_{_2}};
(-0,-7.5)*{_{_3}};
(4.5,-0.6)*{_{_1}};
(0,7)*{\circ}="b",
(-4,0)*+{_0}*\frm{o}="c",
%
(-8,-6)*{}="-1",
(0,-6)*{}="-2",
(4,1)*{}="-3",
(0,12)*{}="1'",
%
\ar @{-} "b";"c" <0pt>
\ar @{-} "c";"-1" <0pt>
\ar @{-} "c";"-2" <0pt>
\ar @{-} "b";"-3" <0pt>
\ar @{-} "b";"1'" <0pt>
\endxy}
\Ea
-
\Ba{c}
\resizebox{10mm}{!}{ \xy
(-8,-7.5)*{_{_1}};
(-0,-7.5)*{_{_3}};
(4.5,-0.6)*{_{_2}};
(0,7)*{\circ}="b",
(-4,0)*+{_0}*\frm{o}="c",
%
(-8,-6)*{}="-1",
(0,-6)*{}="-2",
(4,1)*{}="-3",
(0,12)*{}="1'",
%
\ar @{-} "b";"c" <0pt>
\ar @{-} "c";"-1" <0pt>
\ar @{-} "c";"-2" <0pt>
\ar @{-} "b";"-3" <0pt>
\ar @{-} "b";"1'" <0pt>
\endxy}
\Ea
=0.
\Eeq
In fact, the second relation in (\ref{5: BV operad relations 2}) follows from the previous ones.
We keep it the list in order to define, following \cite{GTV}, an operad $q\cB\cV$ as an operad freely generated by $\caL^\diamond$ and $\cC om$ modulo a version of relations (\ref{5: BV operad relations 2}) in which the first relation is replaced by the following one,
$$
\Ba{c}
\resizebox{6mm}{!}{ \xy
(0,0)*{\circ}="b",
(0,6)*+{_1}*\cir{}="c",
%
(-4,-5)*{}="-1",
(4,-5)*{}="-2",
(0,12)*{}="1'",
%
\ar @{-} "b";"c" <0pt>
\ar @{-} "b";"-1" <0pt>
\ar @{-} "b";"-2" <0pt>
\ar @{-} "c";"1'" <0pt>
\endxy}
\Ea
-
\Ba{c}
\resizebox{8mm}{!}{ \xy
(0,6)*{\circ}="b",
(-4,0)*+{_1}*\cir{}="c",
%
(-4,-5)*{}="-1",
(4,-5)*{}="-2",
(4,1)*{}="-3",
(0,12)*{}="1'",
%
\ar @{-} "b";"c" <0pt>
\ar @{-} "c";"-1" <0pt>
\ar @{-} "b";"-3" <0pt>
\ar @{-} "b";"1'" <0pt>
\endxy}
\Ea
-
\Ba{c}
\resizebox{8mm}{!}{ \xy
(0,6)*{\circ}="b",
(4,0)*+{_1}*\cir{}="c",
%
(4,-5)*{}="-1",
(-4,-5)*{}="-2",
(-4,1)*{}="-3",
(0,12)*{}="1'",
%
\ar @{-} "b";"c" <0pt>
\ar @{-} "c";"-1" <0pt>
\ar @{-} "b";"-3" <0pt>
\ar @{-} "b";"1'" <0pt>
\endxy}
\Ea=0.
$$
Being a quotient of a free operad, the operad $\cB\cV$ inherits an increasing filtration by the number of vertices in the trees. It is clear that there is a morphism,
$$
g: q\cB\cV\lon gr(\cB\cV),
$$
from $q\cB\cV$ into the associated graded operad.

\subsection{\bf Proposition \cite{GTV}}\label{5: Propos on qBV and BV} {\em The morphism $g:q\cB\cV\lon gr(\cB\cV)$ is an isomorphism. }

\subsection{\bf Remark}\label{5: remark on qBV} The relations in the operad $q\cB\cV$ are homogeneous. It is easy to see that, as an $\bS$-module, $q\cB\cV$ is isomorphic to  $\cC om \circ \caL^\diamond$,
the vector space spanned by graphs from $\cC om$ whose legs are decorated with elements from $\caL^\diamond$.

\subsection{An auxiliary dg operad}

\mip

For any natural number $a\geq 1$ define by induction (over the number, $k=1,2,\ldots, a+1$, of input legs) a collection of $a+1$ elements,
\Beq\label{5: square vertices}
\Ba{c}
{\resizebox{4.0mm}{!}{ \xy
(0,5)*{};
(0,0)*+{_a}*\frm{-}
**\dir{-};
(0,-5)*{};
(0,0)*+{_a}*\frm{-}
**\dir{-};
\endxy}}\Ea :=\Ba{c}
{\resizebox{4.0mm}{!}{ \xy
(0,5)*{};
(0,0)*+{_a}*\cir{}
**\dir{-};
(0,-5)*{};
(0,0)*+{_a}*\cir{}
**\dir{-};
\endxy}}\Ea\ \ , \ \ \ldots \ \ ,\
\Ba{c}\resizebox{14mm}{!}{ \xy
(-7.5,-8.6)*{_{_1}};
(-4.1,-8.6)*{_{_2}};
(4.5,-8.6)*{_{_{k\hspace{-0.3mm}-\hspace{-0.3mm}1}}};
(9.0,-8.5)*{_{_{k}}};
(0.0,-6)*{...};
(0,5)*{};
(0,0)*+\hbox{$_{{a}}$}*\frm{-}
**\dir{-};
(-4,-7)*{};
(0,0)*+\hbox{$_{{a}}$}*\frm{-}
**\dir{-};
(-7,-7)*{};
(0,0)*+\hbox{$_{{a}}$}*\frm{-}
**\dir{-};
(8,-7)*{};
(0,0)*+\hbox{$_{{a}}$}*\frm{-}
**\dir{-};
(4,-7)*{};
(0,0)*+\hbox{$_{{a}}$}*\frm{-}
**\dir{-};
\endxy}\Ea:=
\Ba{c}
\resizebox{18mm}{!}{ \xy
(-7.5,-8.6)*{_{_1}};
(-4.1,-8.6)*{_{_2}};
(4.3,-8.6)*{_{_{k\hspace{-0.3mm}-\hspace{-0.3mm}2}}};
(6,-14.6)*{_{_{k\hspace{-0.3mm}-\hspace{-0.3mm}1}}};
(13.9,-14.6)*{_{_{k}}};
(0.0,-6)*{...};
(0,5)*{};
(0,0)*+\hbox{$_{{a}}$}*\frm{-}
**\dir{-};
(-4,-7)*{};
(0,0)*+\hbox{$_{{a}}$}*\frm{-}
**\dir{-};
(-7,-7)*{};
(0,0)*+\hbox{$_{{a}}$}*\frm{-}
**\dir{-};
(9,-7)*{\circ};
(0,0)*+\hbox{$_{{a}}$}*\frm{-}
**\dir{-};
(4,-7)*{};
(0,0)*+\hbox{$_{{a}}$}*\frm{-}
**\dir{-};
(9,-7)*{\circ};
 <9.3mm,-7.3mm>*{};<13mm,-13mm>*{}**@{-},
 <8.7mm,-7.3mm>*{};<6mm,-13mm>*{}**@{-},
\endxy}
\Ea
-
\Ba{c}
\resizebox{18mm}{!}{ \xy
(-7.5,-8.6)*{_{_1}};
(-4.1,-8.6)*{_{_2}};
(4.3,-8.6)*{_{_{k\hspace{-0.3mm}-\hspace{-0.3mm}2}}};
(11,-8.6)*{_{_{k\hspace{-0.3mm}-\hspace{-0.3mm}1}}};
(11,-1)*{_{_{k}}};
(0.0,-6)*{...};
(5,7)*{\circ};
(0,0)*+\hbox{$_{{a}}$}*\frm{-}
**\dir{-};
(-4,-7)*{};
(0,0)*+\hbox{$_{{a}}$}*\frm{-}
**\dir{-};
(-7,-7)*{};
(0,0)*+\hbox{$_{{a}}$}*\frm{-}
**\dir{-};
(9,-7)*{};
(0,0)*+\hbox{$_{{a}}$}*\frm{-}
**\dir{-};
(4,-7)*{};
(0,0)*+\hbox{$_{{a}}$}*\frm{-}
**\dir{-};
(9,-7)*{};
 <5.0mm,7.5mm>*{};<5.0mm,13.9mm>*{}**@{-},
 <5.4mm,6.6mm>*{};<10mm,1mm>*{}**@{-},
\endxy}
\Ea
-
\Ba{c}
\resizebox{18mm}{!}{ \xy
(-7.5,-8.6)*{_{_1}};
(-4.1,-8.6)*{_{_2}};
(4.3,-8.6)*{_{_{k\hspace{-0.3mm}-\hspace{-0.3mm}2}}};
(11,-1)*{_{_{k\hspace{-0.3mm}-\hspace{-0.3mm}1}}};
(10,-8.6)*{_{_{k}}};
(0.0,-6)*{...};
(5,7)*{\circ};
(0,0)*+\hbox{$_{{a}}$}*\frm{-}
**\dir{-};
(-4,-7)*{};
(0,0)*+\hbox{$_{{a}}$}*\frm{-}
**\dir{-};
(-7,-7)*{};
(0,0)*+\hbox{$_{{a}}$}*\frm{-}
**\dir{-};
(9,-7)*{};
(0,0)*+\hbox{$_{{a}}$}*\frm{-}
**\dir{-};
(4,-7)*{};
(0,0)*+\hbox{$_{{a}}$}*\frm{-}
**\dir{-};
(9,-7)*{};
 <5.0mm,7.5mm>*{};<5.0mm,13.9mm>*{}**@{-},
 <5.4mm,6.6mm>*{};<10mm,1mm>*{}**@{-},
\endxy}
\Ea
\Eeq
of the operad $\cB\cV_\infty^{com}$. If $\rho: \cB\cV_\infty^{com}\rar \cE nd_V$ is a representation,
then, in the notation of \S{\ref{5: subsec on order of operators}},
$$
\rho\left(\Ba{c}\resizebox{14mm}{!}{ \xy
(-7.5,-8.6)*{_{_1}};
(-4.1,-8.6)*{_{_2}};
(4.5,-8.6)*{_{_{k\hspace{-0.3mm}-\hspace{-0.3mm}1}}};
(9.0,-8.5)*{_{_{k}}};
(0.0,-6)*{...};
(0,5)*{};
(0,0)*+\hbox{$_{{a}}$}*\frm{-}
**\dir{-};
(-4,-7)*{};
(0,0)*+\hbox{$_{{a}}$}*\frm{-}
**\dir{-};
(-7,-7)*{};
(0,0)*+\hbox{$_{{a}}$}*\frm{-}
**\dir{-};
(8,-7)*{};
(0,0)*+\hbox{$_{{a}}$}*\frm{-}
**\dir{-};
(4,-7)*{};
(0,0)*+\hbox{$_{{a}}$}*\frm{-}
**\dir{-};
\endxy}\Ea\right)= F_k^{\Delta_a}.
$$
Note that $F_k^{\Delta_a}$ identically vanishes for $k\geq a+2$ as the operator $\Delta_a$ is, by its definition, of order $\leq a+1$; this is the reason why we defined the above elements of $\cB\cV_\infty^{com}$ only in the range
$1\leq k\leq a+1$: for all other $k$ these elements vanish identically due to the relations between the generators of $\cB\cV_\infty^{com}$.

\mip

Consider next a free operad, $\caL ^\diamond_{\infty}$, generated, for all integers $p\geq 0$, $k\geq 1$ with
 $p+k\geq 2$, by the symmetric corollas
$$
\Ba{c}\resizebox{14mm}{!}{ \xy
(-7.5,-8.6)*{_{_1}};
(-4.1,-8.6)*{_{_2}};
(9.0,-8.5)*{_{_{k}}};
(0.0,-6)*{...};
(0,5)*{};
(0,0)*+\hbox{$_{{p}}$}*\frm{o}
**\dir{-};
(-4,-7)*{};
(0,0)*+\hbox{$_{{p}}$}*\frm{o}
**\dir{-};
(-7,-7)*{};
(0,0)*+\hbox{$_{{p}}$}*\frm{o}
**\dir{-};
(8,-7)*{};
(0,0)*+\hbox{$_{{p}}$}*\frm{o}
**\dir{-};
(4,-7)*{};
(0,0)*+\hbox{$_{{p}}$}*\frm{o}
**\dir{-};
\endxy}\Ea=
\Ba{c}\resizebox{16.2mm}{!}{ \xy
(-8.5,-8.6)*{_{_{\sigma(1)}}};
(-3.1,-8.6)*{_{_{\sigma(2)}}};
(9.0,-8.5)*{_{_{\sigma(k)}}};
(0.0,-6)*{...};
(0,5)*{};
(0,0)*+\hbox{$_{{p}}$}*\frm{o}
**\dir{-};
(-4,-7)*{};
(0,0)*+\hbox{$_{{p}}$}*\frm{o}
**\dir{-};
(-7,-7)*{};
(0,0)*+\hbox{$_{{p}}$}*\frm{o}
**\dir{-};
(8,-7)*{};
(0,0)*+\hbox{$_{{p}}$}*\frm{o}
**\dir{-};
(4,-7)*{};
(0,0)*+\hbox{$_{{p}}$}*\frm{o}
**\dir{-};
\endxy}\Ea\ \ \ \forall\ \sigma\in \bS_n,
$$
of homological degree $3-2k-2p$, and equipped with the following differential
$$
d\Ba{c}
\resizebox{14mm}{!}{ \xy
(-7.5,-8.6)*{_{_1}};
(-4.1,-8.6)*{_{_2}};
(9.0,-8.5)*{_{_{k}}};
(0.0,-6)*{...};
(0,5)*{};
(0,0)*+\hbox{$_{{p}}$}*\frm{o}
**\dir{-};
(-4,-7)*{};
(0,0)*+\hbox{$_{{p}}$}*\frm{o}
**\dir{-};
(-7,-7)*{};
(0,0)*+\hbox{$_{{p}}$}*\frm{o}
**\dir{-};
(8,-7)*{};
(0,0)*+\hbox{$_{{p}}$}*\frm{o}
**\dir{-};
(4,-7)*{};
(0,0)*+\hbox{$_{{p}}$}*\frm{o}
**\dir{-};
\endxy}\Ea
=
\sum_{p=q+r\atop [k]=I_1\sqcup I_2}
\Ba{c}
%
%
\resizebox{19mm}{!}{ \xy
(0,0)*+{q}*\cir{}="b",
(10,10)*+{r}*\cir{}="c",
%
(-4,-6)*{}="-1",
(-2,-6)*{}="-2",
(4,-6)*{}="-3",
(1,-5)*{...},
(0,-8)*{\underbrace{\ \ \ \ \ \ \ \ }},
(0,-11)*{_{I_1}},
(10,16)*{}="2'",
(11,4)*{}="-1'",
(16,4)*{}="-2'",
(18,4)*{}="-3'",
(13.5,4)*{...},
(15,2)*{\underbrace{\ \ \ \ \ \ \ }},
(15,-1)*{_{I_2}},
%
\ar @{-} "b";"c" <0pt>
\ar @{-} "b";"-1" <0pt>
\ar @{-} "b";"-2" <0pt>
\ar @{-} "b";"-3" <0pt>
%
\ar @{-} "c";"2'" <0pt>
\ar @{-} "c";"-1'" <0pt>
\ar @{-} "c";"-2'" <0pt>
\ar @{-} "c";"-3'" <0pt>
\endxy}
\Ea
$$
Representations, $\rho: \caL^\diamond_\infty\rar \cE nd_V$, of this operad in a dg vector space $(V,d)$
is the same thing as continuous representations of  the  operad $\caL ie_\infty\{1\}[[\hbar]]$
in the topological vector space $V[[\hbar]]$ equipped with the differential
$$
d + \sum_{p\geq 1}\hbar^p \Delta_p, \ \ \ \Delta_p:=\rho \left(\Ba{c}
\resizebox{4mm}{!}{ \xy
(0,5)*{};
(0,0)*+{_a}*\cir{}
**\dir{-};
(0,-5)*{};
(0,0)*+{_a}*\cir{}
**\dir{-};
\endxy}\Ea\right).
$$
where the formal parameter $\hbar$ is assumed to have homological degree $2$.

\begin{proposition}\label{5: Propos on cohom of L diamnd infty}  { The cohomology of the dg operad $\caL^\diamond_\infty$ is the operad $\caL^\diamond$ defined in \S {\ref{5: subsect on L-diamond and BV}}, i.e.\
$\caL^\diamond_\infty$ is a minimal resolution of $\caL^\diamond$.
}
\end{proposition}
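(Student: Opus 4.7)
The plan is to prove Koszulness of the quadratic operad $\caL^\diamond$ by the method of distributive laws, and then to identify $\caL^\diamond_\infty$ with the cobar construction $\Omega((\caL^\diamond)^\Koz)$ of the Koszul dual cooperad.

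First, I would define a morphism of dg operads $\pi\colon \caL^\diamond_\infty \to \caL^\diamond$ by sending $\Ba{c}\resizebox{5mm}{!}{\xy(-3,-4)*{};(0,0)*+{_0}*\cir{}**\dir{-};(3,-4)*{};(0,0)*+{_0}*\cir{}**\dir{-};(0,5)*{};(0,0)*+{_0}*\cir{}**\dir{-};\endxy}\Ea$ to the bracket of $\caL^\diamond$, $\Ba{c}\resizebox{3mm}{!}{\xy(0,5)*{};(0,0)*+{_1}*\cir{}**\dir{-};(0,-5)*{};(0,0)*+{_1}*\cir{}**\dir{-};\endxy}\Ea$ to $\Delta$, and every other generator to zero. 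Compatibility with the differential needs to be checked only on those generators $\nu_{p,k}$ whose image under $\pi\circ d$ can be non-zero, namely $\nu_{0,3}$, $\nu_{1,2}$ and $\nu_{2,1}$. A direct case analysis of the differential formula shows that the summands of $d\nu_{0,3}$, $d\nu_{1,2}$ and $d\nu_{2,1}$ become, under $\pi$, respectively the Jacobi identity, the derivation relation, and $\Delta^2=0$ — all of which hold in $\caL^\diamond$. Since $d$ has no linear term, $(\caL^\diamond_\infty,d,\pi)$ is a quasi-free, minimal candidate model of $\caL^\diamond$, and the remaining task is to show $\pi$ is a quasi-isomorphism.

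Second, I would establish Koszulness of $\caL^\diamond$ by realising it as a composite operad via a distributive law. Set $\caA := \Lie\{1\}$, generated by the (anti)symmetric degree $-1$ bracket modulo Jacobi, and $\caB := \K[\Delta]/(\Delta^2)$ viewed as an operad concentrated in arity $1$ (the enveloping operad of the Koszul algebra of dual numbers). Both are Koszul; $\caA^\Koz$ is a suitably shifted co-commutative cooperad, while $\caB^\Koz$ is one-dimensional in each weight $p\geq 1$ (dual to the polynomial Koszul dual of dual numbers). The derivation relation defines a distributive law $\lambda\colon \caA \circ \caB \to \caB \circ \caA$. Its coherence (``diamond'') condition amounts to checking that commuting a $\Delta$ past a ternary bracket expression produces a well-defined element of $\caB\circ\caA$, independently of the two binary rewritings of the ternary tree; this follows from a direct computation using Jacobi, bracket antisymmetry, and $\Delta^2=0$. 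By the standard theorem on composite operads (see e.g.\ \cite{LV}), $\caL^\diamond \cong \caB \circ_\lambda \caA$ is Koszul, with $(\caL^\diamond)^\Koz \cong \caA^\Koz \circ \caB^\Koz$ as $\bS$-modules.

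Third, I would identify $\caL^\diamond_\infty$ with $\Omega((\caL^\diamond)^\Koz)$. The decomposition $(\caL^\diamond)^\Koz \cong \caA^\Koz\circ\caB^\Koz$ contributes, in arity $k$, one generator of internal weight $p$ for each pair $(p,k)$ with $p\geq 0$, $k\geq 1$ and $p+k\geq 2$. A direct degree count reproduces the generators $\nu_{p,k}$ of degree $3-2k-2p$ of the paper, and the cobar differential reproduces the differential given in the statement. Koszulness of $\caL^\diamond$ then means that the canonical surjection $\Omega((\caL^\diamond)^\Koz)\twoheadrightarrow \caL^\diamond$, which under this identification coincides with $\pi$, is a quasi-isomorphism — whence the proposition.

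The main obstacle is the verification of the diamond condition for $\lambda$, together with the precise degree and sign bookkeeping needed to match $\caL^\diamond_\infty$ with $\Omega((\caL^\diamond)^\Koz)$ on the nose. A more hands-on alternative would be to exploit the total-weight decomposition $\caL^\diamond_\infty = \bigoplus_{s\geq 0}(\caL^\diamond_\infty)^{(s)}$ (preserved by $d$) and, within each weight, run a spectral sequence filtering by the number of positive-weight vertices: on the first page only the $\nu_{0,k}$-differential survives, producing a copy of $\Lie\{1\}$ at each unweighted vertex, while the positive-weight vertices contribute via the complex of Proposition~\ref{2: propos on aux graph complexes}, yielding $H^\bu((\caL^\diamond_\infty)^{(0)}) \cong \Lie\{1\}$, $H^\bu((\caL^\diamond_\infty)^{(1)}) \cong \Delta\cdot\Lie\{1\}$, and $H^\bu((\caL^\diamond_\infty)^{(s)}) = 0$ for $s\geq 2$. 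This route circumvents the general distributive-law machinery but is technically more cumbersome.
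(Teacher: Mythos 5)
Your main route is genuinely different from the paper's. The paper disposes of this proposition in one line: $\caL^\diamond_\infty\{1\}$ is a direct summand of the associated graded properad $\gr\LoB_\infty$ for the genus filtration (namely the genus-zero, one-output part, where every vertex is forced to be a one-output corolla), and the cohomology of $\gr\LoB_\infty$ was already computed in the proof of Theorem \ref{2: proposition on nu quasi-iso}, the relevant summand of $\gr\cP$ being exactly $\caL^\diamond\{1\}$. Your distributive-law argument is self-contained and does not lean on that much harder properadic computation, which is a genuine advantage; the price is that you must check the coherence condition by hand, and there the oddness of $\Delta$ is essential: rewriting $\Delta^2[x,y]$ through the derivation rule produces two copies of $[\Delta x,\Delta y]$ with opposite Koszul signs, consistently with $\Delta^2=0$.

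As written, however, your composite is backwards, and this is not just a convention issue. The derivation relation pushes $\Delta$ from the output of a bracket to its inputs, so the rewriting map goes $\caB\circ\caA\to\caA\circ\caB$ (left factor applied last, as in Loday--Vallette) and the normal form is $\caL^\diamond\cong\caA\circ_\lambda\caB$: Lie monomials whose inputs each carry at most one $\Delta$. Indeed $\dim\caL^\diamond(2)=4$, spanned by $[x_1,x_2]$, $[\Delta x_1,x_2]$, $[x_1,\Delta x_2]$, $[\Delta x_1,\Delta x_2]$; this matches $\caA\circ\caB$, while $\caB\circ\caA$ is only $2$-dimensional in arity $2$. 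Correspondingly $(\caL^\diamond)^\Koz\cong\caB^\Koz\circ\caA^\Koz$, which has exactly one cogenerator per pair $(p,k)$ --- the count you state --- whereas $\caA^\Koz\circ\caB^\Koz$, which is what you wrote, has one cogenerator per multi-index $(p_1,\dots,p_k)$ and is far too big. With the roles of $\caA$ and $\caB$ swapped throughout, the argument goes through.

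The alternative sketched in your last paragraph is wrong as stated: the claim $H^\bu((\caL^\diamond_\infty)^{(s)})=0$ for $s\geq 2$ contradicts the proposition itself, since $\caL^\diamond$ contains nonzero elements of every total weight $s\leq k$ in arity $k$ (e.g.\ $[\Delta x_1,\Delta x_2]$ has weight $2$). The error lies in reducing the positive-weight vertices to Proposition \ref{2: propos on aux graph complexes}, which governs chains of weighted \emph{unary} vertices only; a filtration by the number of positive-weight vertices does not make the higher-arity weighted corollas disappear. The paper's own Step~4 filtration (by the sum of weights of non-bivalent vertices) leads instead to a first page carrying relative simplicial cochains of the underlying tree, which is a more delicate computation than the one you sketch.
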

\begin{proof}
The dg operad $\caL_\infty^\diamond\{1\}$ is a direct summand of the graded properad $gr\LoB_\infty$
associated with the genus filtration of the properad $\LoB_\infty$ .
Hence the required result follows from the proof of  Proposition~{\ref{2: proposition on nu quasi-iso}}.
\end{proof}

\mip
We are interested in the operad $\caL_\infty^\diamond$ because of the following property.

\begin{lemma}\label{5: monomorphism chi} { There is a monomorphism of dg operads,
$$
\chi: \caL_\infty^\diamond \lon \cB\cV_\infty^{com}
$$
given on generators as follows,
$$
\chi\left(\Ba{c}
\resizebox{12mm}{!}{ \xy
(-7.5,-7.6)*{_{_1}};
(-4.1,-7.6)*{_{_2}};
(4.5,-7.6)*{_{_{k\hspace{-0.3mm}-\hspace{-0.3mm}1}}};
(9.0,-7.5)*{_{_{k}}};
(0.0,-5)*{...};
(0,5)*{};
(0,0)*+\hbox{$_{{p}}$}*\frm{o}
**\dir{-};
(-4,-6)*{};
(0,0)*+\hbox{$_{{p}}$}*\frm{o}
**\dir{-};
(-7,-6)*{};
(0,0)*+\hbox{$_{{p}}$}*\frm{o}
**\dir{-};
(8,-6)*{};
(0,0)*+\hbox{$_{{p}}$}*\frm{o}
**\dir{-};
(4,-6)*{};
(0,0)*+\hbox{$_{{p}}$}*\frm{o}
**\dir{-};
\endxy}\Ea\right)
=
\Ba{c}\resizebox{12mm}{!}{
\xy
(-7.5,-7.6)*{_{_1}};
(-4.1,-7.6)*{_{_2}};
(4.5,-7.6)*{_{_{k\hspace{-0.3mm}-\hspace{-0.3mm}1}}};
(9.0,-7.5)*{_{_{k}}};
(0.0,-5)*{...};
(0,5)*{};
(0,0)*+\hbox{$_{{p+k-1}}$}*\frm{-}
**\dir{-};
(-4,-6)*{};
(0,0)*+\hbox{$_{{p+k-1}}$}*\frm{-}
**\dir{-};
(-7,-6)*{};
(0,0)*+\hbox{$_{{p+k-1}}$}*\frm{-}
**\dir{-};
(8,-6)*{};
(0,0)*+\hbox{$_{{p+k-1}}$}*\frm{-}
**\dir{-};
(4,-6)*{};
(0,0)*+\hbox{$_{{p+k-1}}$}*\frm{-}
**\dir{-};
\endxy}\Ea
$$
}
\end{lemma}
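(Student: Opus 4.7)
The plan is to verify three things about the assignment $\chi$:
(a) the squares $\Ba{c}\resizebox{4mm}{!}{\xy (0,5)*{}; (0,0)*+{_a}*\frm{-}**\dir{-}; (0,-5)*{}; (0,0)*+{_a}*\frm{-}**\dir{-}; \endxy}\Ea$ with $k$ input legs are well-defined graded-symmetric operations, so that $\chi$ descends from the free operad on the $(p,k)$-corollas;
(b) $\chi$ is a chain map;
(c) $\chi$ is injective.

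First I would show that the inductively-defined elements \eqref{5: square vertices} are precisely the $k$-ary \emph{defect operators} $F_k^{\Delta_a}$ from \S\ref{5: subsec on order of operators}, applied to the tautological $\cB\cV^{com}_\infty$-algebra. Indeed, unwinding the inductive formula shows that it reproduces the recursive definition of $F_k^D$ given there. The graded $\bS_k$-symmetry of $F_k^{\Delta_a}$ was already observed in \S\ref{5: subsec on order of operators} via the identification of the collection $\{F_k^D\}_k$ with a gauge-twisted coderivation of the graded-cocommutative coalgebra $\odot^{\bu}V$, and hence the squares define genuine symmetric operations in $\cB\cV^{com}_\infty$. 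This establishes (a) and shows that $\chi$ is a well-defined morphism of graded operads from the free operad $\caL^\diamond_\infty$.

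Next, to prove (b), I would argue inductively on the arity $k$. For $k=1$ the claim reduces to the identity $d\Delta_a=\sum_{a=b+c,\,b,c\geq 1}\Delta_b\Delta_c$, which is exactly the $\cB\cV^{com}_\infty$-relation \eqref{5: BV_comm equation for Delta} (after absorbing the internal differential) and it matches the image of the differential on the weight-$p$ unary corolla of $\caL^\diamond_\infty$. For the inductive step I apply $d$ to the right-hand side of \eqref{5: square vertices}, feed in the induction hypothesis for the squares with $k$ inputs, and then regroup using that $d$ is a derivation of the operadic composition. The key identity needed is a Koszul-type formula
$$
F^{\,\Delta_b\,\Delta_c}_k\;=\;\sum_{[k]=I_1\sqcup I_2}\pm\,F^{\Delta_b}_{|I_2|}\circ_\mu F^{\Delta_c}_{|I_1|},
$$
expressing how the defect operators of a composition of unary operators decompose through $\mu$. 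This identity is essentially the Taylor coefficient in $\hbar$ of the $L_\infty$-relations for the twisted total operator $D_\hbar = d+\sum_p\hbar^p\Delta_p$ (compare \S\ref{sec:GChbar}). The resulting sum over $[k]=I_1\sqcup I_2$ and $a=b+c$ matches exactly the explicit differential on the $(p,k)$-corolla of $\caL^\diamond_\infty$ recalled before the statement of the lemma, with the bookkeeping $b=q+|I_1|-1$, $c=r+|I_2|-1$. I expect this step to be the main obstacle, mostly because of the Koszul signs and shuffle factors; I would first verify the identity by hand for $k=2$ and $k=3$ to fix the signs, and then leverage the coderivation interpretation from \S\ref{5: subsec on order of operators} to handle general $k$ without a brute-force computation.

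Finally, for (c), I would use that $\caL^\diamond_\infty$ is a free operad and argue that no nontrivial tree-monomial in the generating $(p,k)$-corollas can lie in the kernel of $\chi$. I would filter $\cB\cV^{com}_\infty$ by the total number of binary multiplication vertices and examine the induced filtration on the image of $\chi$: by the inductive definition \eqref{5: square vertices}, the image of a $(p,k)$-corolla has a distinguished top term consisting of $\Delta_{p+k-1}$ at the root with a $(k-1)$-comb of multiplications attached below, and distinct $(p,k)$-corollas produce distinct top terms. In the associated graded, these top terms generate a free sub-operad, so any relation among tree-monomials on the $\caL^\diamond_\infty$ side would have to be visible already in the free operad generated by them in the associated graded, contradiction. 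This rules out any nontrivial element of $\ker\chi$, completing the proof.
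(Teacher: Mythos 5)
Your plan for the chain-map property is, at bottom, the same mechanism the paper uses, but the paper packages it far more efficiently: rather than proving a component identity $F_k^{[\Delta_b,\Delta_c]}=[F^{\Delta_b},F^{\Delta_c}]_k$ by induction on the arity $k$, it argues at the level of representations. Given any representation $\rho$ of $\cB\cV_\infty^{com}$ on $(V,d)$ one forms the total operator $\Delta=d+\sum_a\hbar^a\Delta_a$ on $V[[\hbar]]$, whose square-zero condition is exactly relation \eqref{5: BV_comm equation for Delta}; the gauge transformation $F^\Delta=e^{-\mu}\Delta e^{\mu}$ of \S\ref{5: subsec on order of operators} then turns $V[[\hbar]][-1]$ into a $\caL ie_\infty\{1\}[[\hbar]]$-algebra, and after stripping the overall factor $\hbar^{k-1}$ from $F_k^{\Delta}$ (using the homogeneity of $\caL ie_\infty$-structures under $\mu_n\mapsto\la^{n-1}\mu_n$ --- a normalization step your write-up omits, and the reason the $(p,k)$-corolla must go to the operator built from $\Delta_{p+k-1}$ rather than $\Delta_p$) one reads off a representation $\rho'$ of $\caL_\infty^\diamond$ with $\rho'=\rho\circ\chi$. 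Since this holds for every $\rho$, the map $\chi$ respects the differentials, and all of your Koszul signs and shuffle factors are absorbed into the single identity $[F^\Delta,F^\Delta]=F^{[\Delta,\Delta]}=0$. In short, your ``fallback'' via the coderivation interpretation is the intended proof; I would lead with it and drop the arity induction, whose key identity you have in any case stated with the arities not quite matching (the root factor should have $|I_1|+1$ inputs).

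On injectivity, note that the paper does not prove it inside this lemma at all: it follows later from the explicit triangular change of basis \eqref{C: from T to T}--\eqref{C: from t to T} in the proof of Theorem \ref{5: Theorem on cohom BV_com}, which exhibits the squares, and the trees built from them, as part of a basis of the free operad $\f$. Your leading-term idea is the same triangularity, but the filtration you propose --- by the total number of multiplication vertices --- fails as stated: every term in the expansion of $F_k^{\Delta_a}$ contains exactly $k-1$ multiplications, so the associated graded of that filtration does not isolate your ``distinguished top term''. You need a finer invariant, for instance whether the tree monomial has a $\circ$ at its root (equivalently, the depth of the $\Delta$-vertex), with respect to which the passage from the corollas ``$\Delta_a$ atop a $k$-fold product'' to the squares is upper triangular with invertible diagonal; that is precisely what the paper's maps $\phi$ and $\psi$ implement. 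With that repair your argument goes through.
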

\begin{proof} For notation reason, we prove the proposition in terms of representations: for any representation
$\rho: \cB\cV_\infty^{com}\rar \cE nd_V$ we construct an associated representation $\rho':  \caL_\infty^\diamond\rar \cE nd_V$ such that $\rho'=\rho\circ \chi$.

\sip

Let $\{\Delta_a: V\rar V[1-2a] , \mu:\odot^2 V\rar V\}_{a\geq 1}$ be a $\cB\cV_\infty^{com}$-structure
in a dg vector space $(V,d)$. Then
$$
\Delta:= d + \sum_{a\geq 1}\hbar^a \Delta_{a}
$$
is a degree 1 differential in the graded vector space $V[[\hbar]]$, $\hbar$ being a formal parameter of homological degree $2$.  As explained in \S {\ref{5: subsec on order of operators}}, this differential makes
the graded commutative algebra $(V[[\hbar]], \mu)$ into a $\caL ie_\infty\{1\}[[\hbar]]$ algebra over the ring
$\K[[\hbar]]$, with higher Lie brackets given by,
$$
F^\Delta_k:= \frac{1}{(k-1)!}\underbrace{[\ldots[[\Delta,\mu],\mu], \ldots, \mu]}_{k-1\ \mathrm{brackets}}=
\frac{\hbar^{k-1}}{(k-1)!}\sum_{p=0}^\infty\hbar^p
\underbrace{[\ldots[[\Delta_{p+k-1},\mu],\mu], \ldots, \mu]}_{k-1\ \mathrm{brackets}}
$$
It is well-known that $\caL ie_\infty$-algebra structures are homogeneous in the sense that if $\{\mu_n\}_{n\geq 1}$ is a $\caL ie_\infty$-algebra structure in some vector space, then, for any $\la\in \R$, the collection
 $\{\la^{n-1}\mu_n\}_{n\geq 1}$ is again a  $\caL ie_\infty$-algebra structure in the same space. Therefore, the rescaled collection of operations,
 $$
 \hat{F}^\Delta_k:=\frac{1}{(k-1)!}\sum_{p=0}^\infty\hbar^p
\underbrace{[\ldots[[\Delta_{p+k-1},\mu],\mu], \ldots, \mu]}_{k-1\ \mathrm{brackets}}
 $$
 also defines a continuous representation of $\caL ie_\infty\{1\}[[\hbar]]$ in the dg space $(V[[\hbar]],\Delta)$, and hence a representation of $\caL_\infty^\diamond$ in the space $(V,d)$ given on the generators as follows,
 $$
 \rho'\left(\Ba{c}
\resizebox{12mm}{!}{ \xy
(-7.5,-7.6)*{_{_1}};
(-4.1,-7.6)*{_{_2}};
(4.5,-7.6)*{_{_{k\hspace{-0.3mm}-\hspace{-0.3mm}1}}};
(9.0,-7.5)*{_{_{k}}};
(0.0,-5)*{...};
(0,5)*{};
(0,0)*+\hbox{$_{{p}}$}*\frm{o}
**\dir{-};
(-4,-6)*{};
(0,0)*+\hbox{$_{{p}}$}*\frm{o}
**\dir{-};
(-7,-6)*{};
(0,0)*+\hbox{$_{{p}}$}*\frm{o}
**\dir{-};
(8,-6)*{};
(0,0)*+\hbox{$_{{p}}$}*\frm{o}
**\dir{-};
(4,-6)*{};
(0,0)*+\hbox{$_{{p}}$}*\frm{o}
**\dir{-};
\endxy}\Ea\right)
= \frac{1}{(k-1)!}\underbrace{[\ldots[[\Delta_{p+k-1},\mu],\mu], \ldots, \mu]}_{k-1\ \mathrm{brackets}}
 $$
As
$
\rho\left(
\Ba{c}\resizebox{12mm}{!}{ \xy
(-7.5,-7.6)*{_{_1}};
(-4.1,-7.6)*{_{_2}};
(4.5,-7.6)*{_{_{k\hspace{-0.3mm}-\hspace{-0.3mm}1}}};
(9.0,-7.5)*{_{_{k}}};
(0.0,-5)*{...};
(0,5)*{};
(0,0)*+\hbox{$_{{p+k-1}}$}*\frm{-}
**\dir{-};
(-4,-6)*{};
(0,0)*+\hbox{$_{{p+k-1}}$}*\frm{-}
**\dir{-};
(-7,-6)*{};
(0,0)*+\hbox{$_{{p+k-1}}$}*\frm{-}
**\dir{-};
(8,-6)*{};
(0,0)*+\hbox{$_{{p+k-1}}$}*\frm{-}
**\dir{-};
(4,-6)*{};
(0,0)*+\hbox{$_{{p+k-1}}$}*\frm{-}
**\dir{-};
\endxy}\Ea\right)$ equals $\frac{1}{(k-1)!} \underbrace{[\ldots[[\Delta_{p+k-1},\mu],\mu], \ldots, \mu]}_{k-1\ \mathrm{brackets}}
$
as well, the proof is completed.
\end{proof}


Finally we can give the proof of Theorem \ref{5: Theorem on cohom BV_com}.

\begin{proof}[Proof of Theorem \ref{5: Theorem on cohom BV_com}] The map $\pi$ induces obviously a morphism of operads,
$$
[\pi]: H^\bu(\BV_\infty^{com})\lon \cB\cV.
$$
Therefore to prove the theorem it is enough to show that $[\pi]$ induces an isomorphism of $\bS$-modules.
\sip

Denote  the following (equivalence class of a) graph in $\cB\cV_\infty^{com}$ by
$$
\underbrace{\xy
(1,-4)*{...};
(0,5)*{};
(0,0)*+\hbox{$_k$}*\frm{.}
**\dir{-};
(-2,-5)*{};
(0,0)*+\hbox{$_k$}*\frm{.}
**\dir{-};
(-4,-5)*{};
(0,0)*+\hbox{$_k$}*\frm{.}
**\dir{-};
(4,-5)*{};
(0,0)*+\hbox{$_k$}*\frm{.}
**\dir{-};
\endxy}_{k\geq 2\ \mathrm{legs}}:=
\underbrace{
\Ba{c}
\xy
(0,-1)*+{}="0",
(0,-6)*{\circ}="-1",
(-3,-9)*{\circ}="-2",
(3,-9)*{}="-2'",
(-6,-12)*{}="-3",
(-0,-12)*{}="-3'",
(-6.7,-12.7)*{\cdot};
(-7.7,-13.7)*{\cdot};
(-9,-15)*{\circ}="-4",
(-12,-18)*{}="-4'",
(-6,-18)*{}="-4''",
%
\ar @{-} "-1";"0" <0pt>
\ar @{-} "-1";"-2" <0pt>
\ar @{-} "-1";"-2'" <0pt>
\ar @{-} "-2";"-3" <0pt>
\ar @{-} "-2";"-3'" <0pt>
\ar @{-} "-4";"-4'" <0pt>
\ar @{-} "-4";"-4''" <0pt>
\endxy
\Ea}_{k\ \mathrm{legs}}
$$
and call it a {\em dashed square vertex}. Consider an operad, $\f$, freely  generated by
 the operad $\cC om$ and  a countable family of unary operations,
$
\left\{ \Ba{c}\resizebox{3.1mm}{!}{  \xy
(0,5)*{};
(0,0)*+{_a}*\cir{}
**\dir{-};
(0,-5)*{};
(0,0)*+{_a}*\cir{}
**\dir{-};
\endxy}\Ea \right\}_{a\geq 1}
$
of homological degree $1-2a$ (so that $\cB\cV_\infty^{com}$ is a quotient of $\f$ by the ideal
 encoding the requirement that each unary operation $\Ba{c}\resizebox{2.9mm}{!}{\xy
(0,5)*{};
(0,0)*+{_a}*\cir{}
**\dir{-};
(0,-5)*{};
(0,0)*+{_a}*\cir{}
**\dir{-};
\endxy}\Ea$  is of order $\leq a+1$ with
respect to the multiplication operation).
Let $t^{(1)}$ be  any tree built from the following ``corollas",
$$
\Ba{c}
\resizebox{9mm}{!}{
\xy
(-4,-8)*{_{_1}};
(-2,-8)*{_{_2}};
(4.9,-8)*{_{_{k}}};
(0,0)*++{_a}*\frm<8pt,8pt>{ee}="b",
(-4,-6)*{}="-1",
(-2,-6)*{}="-2",
(4,-6)*{}="-3",
(1,-5)*{...},
(0,7)*{}="1'",
\ar @{-} "b";"1'" <0pt>
\ar @{-} "b";"-1" <0pt>
\ar @{-} "b";"-2" <0pt>
\ar @{-} "b";"-3" <0pt>
\endxy}
\Ea
:=
\Ba{c}
\resizebox{6mm}{!}{ \xy
(0,0)*+{k}*\frm{.}="b",
(0,7)*+{a}*\cir{}="c",
%
(-4,-6)*{}="-1",
(-2,-6)*{}="-2",
(4,-6)*{}="-3",
(1,-5)*{...},
(0,13)*{}="1'",
%
\ar @{-} "b";"c" <0pt>
\ar @{-} "b";"-1" <0pt>
\ar @{-} "b";"-2" <0pt>
\ar @{-} "b";"-3" <0pt>
\ar @{-} "c";"1'" <0pt>
\endxy}\Ea\ \mbox{with}\ \ k\geq 1, a\geq 1
$$
(where we  assume implicitly that for $k=1$ the l.h.s.\ corolla equals  $\Ba{c}
{\resizebox{3.0mm}{!}{ \xy
(0,5)*{};
(0,0)*+{_a}*\cir{}
**\dir{-};
(0,-5)*{};
(0,0)*+{_a}*\cir{}
**\dir{-};
\endxy}}\Ea$) and let $t^{(2)}$ be any graph obtained by attaching to one or more (or none) input leg of a dashed
square vertex a tree of the type $t^{(1)}$, e.g.
$$
t^{(2)}=
\Ba{c}
\resizebox{16mm}{!}{ \xy
(0,0)*+{_{t_1^{(1)}}}="b1",
(7,0)*+{_{t_2^{(1)}}}="b2",
(14,2)*+{}="b3",
(22,0)*+{_{t_k^{(1)}}}="b4",
(10,10)*+{k}*\frm{.}="c",
(10,18)*+{}="u",
%
\ar @{-} "u";"c" <0pt>
\ar @{-} "b1";"c" <0pt>
\ar @{-} "b2";"c" <0pt>
\ar @{-} "b3";"c" <0pt>
\ar @{-} "b4";"c" <0pt>
\endxy}
\Ea
$$
The family $\{t^{(1)}, t^{(2)}\}$ forms a basis of $\f$ as an $\bS$-module.

\sip

Define for any $a,k\geq 1$ a linear combination,
\Beq\label{C: from T to T}
\Ba{c}
\resizebox{13mm}{!}{ \xy
(-7.5,-8.6)*{_{_1}};
(-4.1,-8.6)*{_{_2}};
(4.5,-8.6)*{_{_{k\hspace{-0.3mm}-\hspace{-0.3mm}1}}};
(9.0,-8.5)*{_{_{k}}};
(0.0,-6)*{...};
(0,5)*{};
(0,0)*+\hbox{$_{{a}}$}*\frm{-}
**\dir{-};
(-4,-7)*{};
(0,0)*+\hbox{$_{{a}}$}*\frm{-}
**\dir{-};
(-7,-7)*{};
(0,0)*+\hbox{$_{{a}}$}*\frm{-}
**\dir{-};
(8,-7)*{};
(0,0)*+\hbox{$_{{a}}$}*\frm{-}
**\dir{-};
(4,-7)*{};
(0,0)*+\hbox{$_{{a}}$}*\frm{-}
**\dir{-};
\endxy}\Ea
=
\Ba{c}
\resizebox{9mm}{!}{
\xy
(-4,-8)*{_{_1}};
(-2,-8)*{_{_2}};
(4.9,-8)*{_{_{k}}};
(0,0)*++{_a}*\frm<8pt,8pt>{ee}="b",
(-4,-6)*{}="-1",
(-2,-6)*{}="-2",
(4,-6)*{}="-3",
(1,-5)*{...},
(0,7)*{}="1'",
\ar @{-} "b";"1'" <0pt>
\ar @{-} "b";"-1" <0pt>
\ar @{-} "b";"-2" <0pt>
\ar @{-} "b";"-3" <0pt>
\endxy}
\Ea
-\sum_{\sigma\in \bS_k}\frac{1}{(k-1)!}\hspace{-7mm}
\Ba{c}
\resizebox{19mm}{!}{ \xy
(-8.5,-8.6)*{_{_{\sigma(1)}}};
(-3.1,-8.6)*{_{_{\sigma(2)}}};
(11,-8.6)*{_{_{\sigma(k\hspace{-0.3mm}-\hspace{-0.3mm}1)}}};
(11,-1)*{_{_{\sigma(k)}}};
(0.0,-6)*{...};
(5,7)*{\circ};
(0,0)*++\hbox{$_{{a}}$}*\frm<8pt,8pt>{ee}
**\dir{-};
(-4,-7)*{};
(0,0)*++\hbox{$_{{}}$}*\frm<8pt,8pt>{}
**\dir{-};
(-7,-7)*{};
(0,0)*++\hbox{$_{{}}$}*\frm<8pt,8pt>{}
**\dir{-};
(9,-7)*{};
(0,0)*++\hbox{$_{{}}$}*\frm<8pt,8pt>{}
**\dir{-};
(4,-7)*{};
(0,0)*++\hbox{$_{{}}$}*\frm<8pt,8pt>{}
**\dir{-};
(9,-7)*{};
 <5.0mm,7.5mm>*{};<5.0mm,13.9mm>*{}**@{-},
 <5.4mm,6.6mm>*{};<10mm,1mm>*{}**@{-},
\endxy}
\Ea
+
\sum_{\sigma\in \bS_k}\frac{1}{2!(k-2)!}\hspace{-3mm}
\Ba{c}
\resizebox{19mm}{!}{ \xy
(-7.5,-8.6)*{_{_1}};
(-4.1,-8.6)*{_{_2}};
(7,-1)*{_{_{\sigma(k\hspace{-0.3mm}-\hspace{-0.3mm}1)}}};
(10,-8.6)*{_{_{\sigma(k\hspace{-0.3mm}-\hspace{-0.3mm}2)}}};
(14,-1)*{_{_{\sigma(k)}}};
(0.0,-6)*{...};
(6,7)*{\circ};
(0,0)*++\hbox{$_{{a}}$}*\frm<8pt,8pt>{ee}
**\dir{-};
(-4,-7)*{};
(0,0)*++\hbox{$_{{}}$}*\frm<8pt,8pt>{}
**\dir{-};
(-7,-7)*{};
(0,0)*++\hbox{$_{{}}$}*\frm<8pt,8pt>{}
**\dir{-};
(9,-7)*{};
(0,0)*++\hbox{$_{{}}$}*\frm<8pt,8pt>{}
**\dir{-};
(4,-7)*{};
(0,0)*++\hbox{$_{{}}$}*\frm<8pt,8pt>{}
**\dir{-};
(9,-7)*{};
 <6.0mm,7.5mm>*{};<6.0mm,13.9mm>*{}**@{-},
 <6.0mm,6.6mm>*{};<6.0mm,1mm>*{}**@{-},
 <6.4mm,6.6mm>*{};<11mm,1mm>*{}**@{-},
\endxy}
\Ea
+ \ldots,
\Eeq

Consider next (i) a set  $\{T^{(1)}\}$  of all possible trees generated
by these ``square" corollas, e.g.
%
%
$$
T^{(1)}=\Ba{c}
\resizebox{25mm}{!}{ \xy
(0,0)*+{a_2}*\frm{-}="b",
(10,10)*+{a_1}*\frm{-}="c",
(22,0)*+{a_3}*\frm{-}="r",
(15,-10)*+{a_4}*\frm{-}="r1",
(32,-10)*+{}="r2",
(15,-18)*+{}="d",
(-4,-6)*{}="-1",
(-2,-6)*{}="-2",
(4,-6)*{}="-3",
(1,-6)*{}="-4",
(10,16)*{}="2'",
(11,4)*{}="-1'",
(16,4)*{}="-2'",
(18,4)*{}="-3'",
%
\ar @{-} "b";"c" <0pt>
\ar @{-} "r";"c" <0pt>
\ar @{-} "r";"r1" <0pt>
\ar @{-} "r";"r2" <0pt>
\ar @{-} "r1";"d" <0pt>
\ar @{-} "b";"-1" <0pt>
\ar @{-} "b";"-2" <0pt>
\ar @{-} "b";"-3" <0pt>
\ar @{-} "b";"-4" <0pt>
\ar @{-} "c";"2'" <0pt>
\ar @{-} "c";"-1'" <0pt>
\endxy}\Ea
$$
and also (ii) a set  $\{T^{(2)}\}$ of all possible trees  obtained by attaching to (some) legs of a dashed square vertex trees from the set $\{T^{(1)}\}$, e.g.
$$
T^{(2)}=
\Ba{c}
\resizebox{16mm}{!}{ \xy
(0,0)*+{_{T_1^{(1)}}}="b1",
(7,0)*+{_{T_2^{(1)}}}="b2",
(14,2)*+{}="b3",
(22,0)*+{_{T_k^{(1)}}}="b4",
(10,10)*+{k}*\frm{.}="c",
(10,18)*+{}="u",
%
\ar @{-} "u";"c" <0pt>
\ar @{-} "b1";"c" <0pt>
\ar @{-} "b2";"c" <0pt>
\ar @{-} "b3";"c" <0pt>
\ar @{-} "b4";"c" <0pt>
\endxy}
\Ea
$$
Formulae (\ref{C: from T to T}) define a natural linear map
$$
\phi: \mathrm{span}\langle T^{(1)}, T^{(2)}\rangle \lon  \mathrm{span}\langle t^{(1)}, t^{(2)} \rangle =
\f.
$$
The expressions  (\ref{C: from T to T}) can be (inductively) inverted,
\Beq\label{C: from t to T}
\Ba{c}
\resizebox{9mm}{!}{
\xy
(-4,-8)*{_{_1}};
(-2,-8)*{_{_2}};
(4.9,-8)*{_{_{k}}};
(0,0)*++{_a}*\frm<8pt,8pt>{ee}="b",
(-4,-6)*{}="-1",
(-2,-6)*{}="-2",
(4,-6)*{}="-3",
(1,-5)*{...},
(0,7)*{}="1'",
\ar @{-} "b";"1'" <0pt>
\ar @{-} "b";"-1" <0pt>
\ar @{-} "b";"-2" <0pt>
\ar @{-} "b";"-3" <0pt>
\endxy}
\Ea
=
\Ba{c}
\resizebox{13mm}{!}{ \xy
(-7.5,-8.6)*{_{_1}};
(-4.1,-8.6)*{_{_2}};
(4.5,-8.6)*{_{_{k\hspace{-0.3mm}-\hspace{-0.3mm}1}}};
(9.0,-8.5)*{_{_{k}}};
(0.0,-6)*{...};
(0,5)*{};
(0,0)*+\hbox{$_{{a}}$}*\frm{-}
**\dir{-};
(-4,-7)*{};
(0,0)*+\hbox{$_{{a}}$}*\frm{-}
**\dir{-};
(-7,-7)*{};
(0,0)*+\hbox{$_{{a}}$}*\frm{-}
**\dir{-};
(8,-7)*{};
(0,0)*+\hbox{$_{{a}}$}*\frm{-}
**\dir{-};
(4,-7)*{};
(0,0)*+\hbox{$_{{a}}$}*\frm{-}
**\dir{-};
\endxy}\Ea
+\sum_{\sigma\in \bS_k}\frac{1}{(k-1)!}\hspace{-7mm}
\Ba{c}
\resizebox{19mm}{!}{ \xy
(-8.5,-8.6)*{_{_{\sigma(1)}}};
(-3.1,-8.6)*{_{_{\sigma(2)}}};
(11,-8.6)*{_{_{\sigma(k\hspace{-0.3mm}-\hspace{-0.3mm}1)}}};
(11,-1)*{_{_{\sigma(k)}}};
(0.0,-6)*{...};
(5,7)*{\circ};
(0,0)*+\hbox{$_{{a}}$}*\frm{-}
**\dir{-};
(-4,-7)*{};
(0,0)*+\hbox{$_{{a}}$}*\frm{-}
**\dir{-};
(-7,-7)*{};
(0,0)*+\hbox{$_{{a}}$}*\frm{-}
**\dir{-};
(9,-7)*{};
(0,0)*+\hbox{$_{{a}}$}*\frm{-}
**\dir{-};
(4,-7)*{};
(0,0)*+\hbox{$_{{a}}$}*\frm{-}
**\dir{-};
(9,-7)*{};
 <5.0mm,7.5mm>*{};<5.0mm,13.9mm>*{}**@{-},
 <5.4mm,6.6mm>*{};<10mm,1mm>*{}**@{-},
\endxy}
\Ea
-
\sum_{\sigma\in \bS_k}\frac{1}{2!(k-2)!}\hspace{-3mm}
\Ba{c}
\resizebox{19mm}{!}{ \xy
(-7.5,-8.6)*{_{_1}};
(-4.1,-8.6)*{_{_2}};
(7,-1)*{_{_{\sigma(k\hspace{-0.3mm}-\hspace{-0.3mm}1)}}};
(10,-8.6)*{_{_{\sigma(k\hspace{-0.3mm}-\hspace{-0.3mm}2)}}};
(14,-1)*{_{_{\sigma(k)}}};
(0.0,-6)*{...};
(6,7)*{\circ};
(0,0)*+\hbox{$_{{a}}$}*\frm{-}
**\dir{-};
(-4,-7)*{};
(0,0)*+\hbox{$_{{a}}$}*\frm{-}
**\dir{-};
(-7,-7)*{};
(0,0)*+\hbox{$_{{a}}$}*\frm{-}
**\dir{-};
(9,-7)*{};
(0,0)*+\hbox{$_{{a}}$}*\frm{-}
**\dir{-};
(4,-7)*{};
(0,0)*+\hbox{$_{{a}}$}*\frm{-}
**\dir{-};
(9,-7)*{};
 <6.0mm,7.5mm>*{};<6.0mm,13.9mm>*{}**@{-},
 <6.0mm,6.6mm>*{};<6.0mm,1mm>*{}**@{-},
 <6.4mm,6.6mm>*{};<11mm,1mm>*{}**@{-},
\endxy}
\Ea
+ \ldots,
\Eeq
and hence give us, again by induction, a linear map
$$
\psi: \mathrm{span}\langle t^{(1)}, t^{(2)}\rangle \lon  \mathrm{span}\langle T^{(1)}, T^{(2)} \rangle
$$
as follows. On $1$-vertex trees from the family  $\{ t^{(1)}, t^{(2)}\}$ the map $\psi$ is given
by
$$
\psi\left(\Ba{c}\xy
(1,-4)*{...};
(0,5)*{};
(0,0)*+\hbox{$_k$}*\frm{.}
**\dir{-};
(-2,-5)*{};
(0,0)*+\hbox{$_k$}*\frm{.}
**\dir{-};
(-4,-5)*{};
(0,0)*+\hbox{$_k$}*\frm{.}
**\dir{-};
(4,-5)*{};
(0,0)*+\hbox{$_k$}*\frm{.}
**\dir{-};
\endxy\Ea\right)= \Ba{c}\xy
(1,-4)*{...};
(0,5)*{};
(0,0)*+\hbox{$_k$}*\frm{.}
**\dir{-};
(-2,-5)*{};
(0,0)*+\hbox{$_k$}*\frm{.}
**\dir{-};
(-4,-5)*{};
(0,0)*+\hbox{$_k$}*\frm{.}
**\dir{-};
(4,-5)*{};
(0,0)*+\hbox{$_k$}*\frm{.}
**\dir{-};
\endxy\Ea, \ \ \ \ \ \
\psi\left(
\Ba{c}
\resizebox{8mm}{!}{
\xy
(-4,-8)*{_{_1}};
(-2,-8)*{_{_2}};
(4.9,-8)*{_{_{k}}};
(0,0)*++{_a}*\frm<8pt,8pt>{ee}="b",
(-4,-6)*{}="-1",
(-2,-6)*{}="-2",
(4,-6)*{}="-3",
(1,-5)*{...},
(0,7)*{}="1'",
\ar @{-} "b";"1'" <0pt>
\ar @{-} "b";"-1" <0pt>
\ar @{-} "b";"-2" <0pt>
\ar @{-} "b";"-3" <0pt>
\endxy}
\Ea
\right)=\mathrm{the\ r.h.s.\ of}\ (\ref{C: from t to T}).
$$
Assume that the map $\psi$ is constructed on $n$-vertex trees from the family  $\{ t^{(1)}, t^{(2)}\}$. Let $t$ be a tree with $n+1$-vertices. The complement to the root vertex of $t$
is a disjoint union of trees, $\{t'\}$, with at most $n-1$ vertices. To get $\psi(t)$ apply first  $\psi$ to the subtrees $t'$ to get a linear combination of trees, $\sum t''$, where each  $t''$ is  obtained by attaching to (some) input legs of a one-vertex tree $v$ from $\{ t^{(1)}, t^{(2)}\}$ an element of the set $\{ T^{(1)}, T^{(2)}\}$; finally, apply $\psi$ to the root vertex $v$ of each summand $ t''$.
By construction,  $\psi\circ \phi=\Id$ and $\phi\circ \psi=\Id$  so that  the map $\phi$ is an isomorphism,
$$
 \f\cong \mathrm{span}\langle T^{(1)}, T^{(2)} \rangle.
$$
The ideal $I$ in $\f$ defining the operad $\cB\cV_{\infty}^{com}$ takes now a very simple form --- this is an $\bS$-submodule of $\f$ spanned by trees
from the family $\{T^{(1)}, T^{(2)}\}$ which contain at least one ``bad" square vertex $\Ba{c}
\resizebox{13mm}{!}{ \xy
(-7.5,-8.6)*{_{_1}};
(-4.1,-8.6)*{_{_2}};
(4.5,-8.6)*{_{_{k\hspace{-0.3mm}-\hspace{-0.3mm}1}}};
(9.0,-8.5)*{_{_{k}}};
(0.0,-6)*{...};
(0,5)*{};
(0,0)*+\hbox{$_{{a}}$}*\frm{-}
**\dir{-};
(-4,-7)*{};
(0,0)*+\hbox{$_{{a}}$}*\frm{-}
**\dir{-};
(-7,-7)*{};
(0,0)*+\hbox{$_{{a}}$}*\frm{-}
**\dir{-};
(8,-7)*{};
(0,0)*+\hbox{$_{{a}}$}*\frm{-}
**\dir{-};
(4,-7)*{};
(0,0)*+\hbox{$_{{a}}$}*\frm{-}
**\dir{-};
\endxy}\Ea$ with $a<k-1$. If  $ \{T_{+}^{(1)}, T_{+}^{(2)}\}\subset  \{T^{(1)}, T^{(2)}\}$ is a subset of trees containing no bad vertices, then we can write an isomorphism of $\bS$-modules,
$$
\cB\cV_\infty^{com}\cong \mathrm{span}\langle T_{+}^{(1)}, T_{+}^{(2)}\rangle
$$
Therefore, as a complex $\cB\cV_\infty^{com}$ splits into a direct sum of subcomplexes,
$$
 \cB\cV_\infty^{com}\cong \mbox{span} \left\langle T_+^{(1)}  \right\rangle\ \oplus \ \mbox{span} \left\langle T_+^{(2)}  \right\rangle.
$$
The subcomplex $\mbox{span} \left\langle T_+^{(1)} \right\rangle$ is the image of the dg operad $\caL^\diamond_\infty$ under the monomorphism $\chi$ (see Lemma {\ref{5: monomorphism chi}}) so that we get eventually an isomorphism of complexes,
$$
\cB\cV_\infty^{com}\cong \cC om\circ \caL^\diamond_\infty,
$$
and the above splitting corresponds to the augmentation splitting of the operad $\cC om$,
$$
\cC om= \mbox{span}\left\langle 1\right\rangle \oplus \overline{\cC om}.
$$
Therefore, by Proposition {\ref{5: Propos on cohom of L diamnd infty}} and Remark {\ref{5: remark on qBV}},
$$
H^\bu(\cB\cV_\infty^{com})\cong \cC om \circ \caL^\diamond\cong q\cB\cV.
$$
By Proposition {\ref{5: Propos on qBV and BV}}, we get isomorphisms of $\bS$-modules,
$$
H^\bu(\cB\cV_\infty^{com})\cong Gr(\cB\cV)\cong \cB\cV
$$
which completes the proof of the Theorem.
\end{proof}

\bip

\def\cprime{$'$}

\end{document}